\documentclass[12pt]{amsart}

\usepackage{pb-diagram}

%
%\usepackage{showkeys}  % THIS SHOWS LABELS

%\oddsidemargin 1.2 cm
%\evensidemargin 1 cm
%\addtolength{\textwidth}{1.5cm}
%\addtolength{\rightmargin}{10cm}
%\addtolength{\textheight}{0.5cm}
%\addtolength{\topmargin}{-0.5cm}

\oddsidemargin 0.5 cm
\evensidemargin 0.5 cm
\addtolength{\textwidth}{ 2.8 cm}
\addtolength{\textheight}{1.4cm}
\addtolength{\topmargin}{-1.2cm}

%
%\usepackage[backref]{hyperref}
%to check whether a reference is cited
\usepackage{amsfonts}
\usepackage{amsmath}
\usepackage{amssymb}

\allowdisplaybreaks

\newcommand{\tr}{\textnormal{tr}}
\newcommand{\ric}{\textnormal{Ric}}

\newcommand{\dbar}{\overline{\partial}}

\newcommand{\ddt}[1]{\frac{\partial #1}{\partial t}}

\newcommand{\ddbar}{\sqrt{-1}\partial\dbar}

\newtheorem{theorem}{Theorem}[section]
\newtheorem{proposition}{Proposition}[section]
\newtheorem{lemma}{Lemma}[section]

\newtheorem{corollary}{Corollary}[section]

\renewcommand{\thefootnote}{\fnsymbol{footnote}}
\newcommand{\starttext}{ \setcounter{footnote}{0}
\renewcommand{\thefootnote}{\arabic{footnote}}}

\newcommand{\beq}{\begin{equation}}
\newcommand{\bea}{\begin{eqnarray}}
\newcommand{\eea}{\end{eqnarray}} \newcommand{\ee}{\end{equation}}

\def\ba{\begin{eqnarray}}
\def\ea{\end{eqnarray}}

\def\cS{{\mathcal S}}

\def\ti\tilde

\def\u{\underline}

\def\sub{\subseteq}

\def\tr{{\rm tr}}

\def\ti{\tilde}

\begin{document}

\starttext \baselineskip=18pt \setcounter{footnote}{0}

\title{Diameter estimates in K\"ahler geometry}

\author[{Bin Guo, Duong H. Phong, Jian Song and Jacob Sturm}
]{Bin Guo$^*$, Duong H. Phong$^\dagger$, Jian Song$^\ddagger$ and Jacob Sturm$^{\dagger\dagger}$ }

\thanks{Work supported in part by the National Science Foundation under grants DMS-22-03273 and DMS-22-03607, and the collaboration grant 946730 from Simons Foundation.}

\address{$^*$ Department of Mathematics \& Computer Science, Rutgers University, Newark, NJ 07102}

\email{bguo@rutgers.edu}

\address{$^\dagger$ Department of Mathematics, Columbia University, New York, NY 10027}

\email{phong@math.columbia.edu}

\address{$^\ddagger$ Department of Mathematics, Rutgers University, Piscataway, NJ 08854}

\email{jiansong@math.rutgers.edu}

\address{$^{\dagger\dagger}$ Department of Mathematics \& Computer Science, Rutgers University, Newark, NJ 07102}

\email{sturm@andromeda.rutgers.edu}

\begin{abstract}

{\footnotesize Diameter estimates for K\"ahler metrics are established which require only an entropy bound and no lower bound on the Ricci curvature. The proof builds on recent PDE techniques for $L^\infty$ estimates for the Monge-Amp\`ere equation, with a key improvement allowing degeneracies of the volume form of codimension strictly greater than one. As a consequence, diameter bounds are obtained for long-time solutions of the K\"ahler-Ricci flow and finite-time solutions when the limiting class is big, as well as for special fibrations of Calabi-Yau manifolds.}

\end{abstract}

\maketitle

\section{Introduction} \label{secintro}

\setcounter{equation}{0}

The diameter is one of the most important geometric invariants defined by a metric. Bounds for the diameter are for example essential in the study of convergence of manifolds, which is of particular interest in moduli problems and geometric flows, where one hopes to arrive at a canonical model by taking limits. Unfortunately, in Riemannian geometry, there are very few tools for estimating the diameter, besides comparison theorems and the Bonnet-Myers theorem which require that the Ricci curvature be strictly positive. The situation did not seem markedly different in K\"ahler geometry, although we can exploit there the fact that the potential can be viewed as the solution of a complex Monge-Amp\`ere equation with right hand side given by its volume form (see e.g. \cite{FGS, SW1, L, GPTW1}). However, there has been considerable progress recently in PDE methods for $L^\infty$ estimates for fully non-linear equations \cite{GPT,GP,GP22}. 
These new methods turn out to be particularly amenable to geometric estimates, and have been shown to imply some promising estimates for non-collapse \cite{GS} and for the Green's function \cite{GPS}.

\smallskip
The main goal of the present paper is to develop a general theory of diameter estimates in K\"ahler geometry. We shall be particularly interested in estimates which require only an upper bound for the entropy of the volume form, but not a lower bound for the Ricci curvature. For geometric applications, it is also important that the diameter estimates be uniform with respect to suitable subsets which can approach the boundary of the K\"ahler cone. To obtain such estimates, we build on the PDE methods mentioned above \cite{GPT, GS, GPS}, but with an essential improvement. These methods originally applied to fully non-linear elliptic equations which satisfy a specific structural condition, corresponding to a condition of nowhere vanishing of the volume form in the case of Monge-Amp\`ere. It has been recently shown by Harvey and Lawson \cite{HL} that this condition is natural and applies to very broad classes of equations. Nevertheless, for many applications which we shall consider in this paper, notably the K\"ahler-Ricci flow and the analytic Minimal Model Program, it is necessary to allow the volume form to be arbitrarily close to vanishing along subsets which are suitably small, such as a proper complex subvariety. It turns out that such a generalization is indeed possible, and it plays a major role in this whole paper.

\medskip

We now state our general diameter estimates more precisely. Let $(X, \omega_X)$ be an $n$-dimensional compact K\"ahler manifold equipped with a K\"ahler metric $\omega_X$. Let ${\mathcal{K}}(X)$ be the space of K\"ahler metrics on $X$. We define the $p$-Nash entropy of a K\"ahler metric $\omega\in \mathcal{K}(X)$ associated to $(X, \omega_X)$ by
\begin{equation}
\mathcal{N}_{X, \omega_X, p}(\omega) = \frac{1}{V_\omega}  \int_X \left| \log \left((V_\omega)^{-1}\frac{\omega^n}{\omega_X^n} \right)\right|^p \omega^n, ~ V_\omega= \int_X \omega^n = [\omega]^n, 
\end{equation}
for $p>0$. If we write $e^F= \frac{1}{V_\omega}  \frac{\omega^n}{\omega_X^n}$, then 
$$\mathcal{N}_{X, \omega_X, p} (\omega)= \int_X e^F \left|  F \right|^p \omega_X^n = \left\| e^F \right\|_{L^1\log L^p(X, \omega_X)}$$
and $$\mathcal{N}_{X, \omega_X, p} (\omega) \leq \int_{F\geq 0} e^F F^p \omega_X^n + C$$ for some $C=C(X, \omega_X, p)>0$.

We introduce the following set of admissible functions for given parameters $A, B, K>0$, $p>n$,
\begin{equation} \label{admiss}
{\mathcal V}(X, \omega_X, n, A, p, K ) = \left\{ \omega\in {\mathcal{K}}(X):   [\omega]\cdot[\omega_X]^{n-1}\le A, ~  \mathcal{N}_{X, \omega_X, p}(\omega)  \le K   \right\}. 
\end{equation}
Let $\gamma$ be a non-negative continuous function. 
We further define a subset of ${\mathcal V}(X, \omega_X, n, A, p, K )$ by 
\begin{equation}\label{admiss2}
 {\mathcal  W} (X, \omega_X, n, A, p, K;   \gamma ) = \left\{ \omega \in {\mathcal{V}}(X, \omega_X, n, A, p, K): ~(V_\omega)^{-1}\frac{\omega^n} {\omega_X^n} \geq \gamma \right\}.   
\end{equation}
We also define the Green's function $G(x, y)$ associated to a Riemannian manifold $(X, g)$ by (see e.g. \cite{SY})
$$\Delta_g G(x, \cdot) = -\delta_x(\cdot) + ({\rm Vol}_g(X))^{-1},$$
where $\Delta_g$ is the Laplace operator associated to $g$. The following is the main theorem of our paper:

\begin{theorem}\label{thm:main1} Let $X$ be an $n$-dimensional connected K\"ahler manifold equipped with a K\"ahler metric $\omega_X$ and let $\gamma$ be a nonnegative continuous function on $X$ satisfying 
\begin{equation}\label{mindim}
\dim_{\mathcal{H}}\{\gamma=0\}< 2n-1,~\gamma \geq 0,
\end{equation}
where $\dim_{\mathcal{H}}$ is the Hausdorff dimension. 
Then for any $A, K>0$ and $p>n$, there exist $C=C(X, \omega_X, n, A, p, K, \gamma)>0$, $c=c(X, \omega_X, n, A, p, K, \gamma)>0$ and $\alpha = \alpha(n, p)>0$ such that for any $\omega\in {\mathcal{W}}(X, \omega_X, n, A, p, K; \gamma)$, we have the following bounds for

{\rm (a)} The Green's function:
$$
\int_X |G(x, \cdot)| \omega^n +\int_X |\nabla  G(x, \cdot)| \omega^n + \left( - \inf_{y\in X} G(x, y) \right)  {\rm Vol}_\omega(X) \leq C
$$
 for any $x\in X$;
 
 {\rm (b)} The diameter:
 \begin{equation}
 {\rm diam}(X, \omega) \leq C;\nonumber
 \end{equation}

 {\rm (c)} The volume element:  for any $x\in X$ and any $R\in (0,1]$,
 \begin{equation} 
\frac{{\mathrm{Vol}}_{\omega} (B_{\omega}(x, R))}{\textnormal{Vol}_\omega(X)}\ge c R^\alpha.\nonumber
\end{equation}

\end{theorem}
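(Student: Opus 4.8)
The strategy is to reduce everything to a single quantitative $L^\infty$ bound on a Monge-Ampère potential, obtained via the Guo-Phong-Tong auxiliary-equation method \cite{GPT}, with the key new ingredient being that the volume form is allowed to degenerate (i.e.\ be close to zero) along the set $\{\gamma=0\}$ provided that set has Hausdorff dimension $<2n-1$. Concretely, given $\omega\in\mathcal{W}(X,\omega_X,n,A,p,K;\gamma)$, write $\omega=\omega_X+\ddbar\varphi$ normalized by $\sup_X\varphi=0$; the entropy bound $\mathcal{N}_{X,\omega_X,p}(\omega)\le K$ together with $[\omega]\cdot[\omega_X]^{n-1}\le A$ feeds into the auxiliary complex Monge-Ampère equation whose solution dominates $\varphi$, and the lower bound $(V_\omega)^{-1}\omega^n/\omega_X^n\ge\gamma$ is what makes that auxiliary equation solvable even though $\gamma$ vanishes somewhere. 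The first step is therefore to establish a uniform estimate $\|\varphi\|_{L^\infty}\le C$ and, more importantly, a uniform \emph{modulus of continuity / capacity-type} estimate controlling how fast $\omega^n$ can concentrate; the condition $\dim_{\mathcal{H}}\{\gamma=0\}<2n-1$ enters precisely because one needs to cut out a neighborhood of $\{\gamma=0\}$ whose $\omega$-volume is controllably small (codimension strictly greater than one in the real sense), so that the bad set does not contribute to the sup bound.

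The second step is to pass from PDE estimates to the Green's function bounds in part (a). Following \cite{GPS}, one represents $G(x,\cdot)$ through the heat kernel or directly through an integral identity, and the required integral bounds $\int_X|G(x,\cdot)|\,\omega^n$, $\int_X|\nabla G(x,\cdot)|\,\omega^n$, and $-\inf_y G(x,y)$ are all reduced to testing $G(x,\cdot)$ against suitable Monge-Ampère equations: the point is that the Laplacian $\Delta_\omega$ appearing in the defining equation of $G$ can be written in terms of $\omega^n$, so integrating against an auxiliary potential and invoking the $L^\infty$ estimate from Step 1 yields the three bounds simultaneously. The uniformity over the whole family $\mathcal{W}$ is automatic because every constant in Step 1 depended only on $(X,\omega_X,n,A,p,K,\gamma)$.

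The third step derives the diameter bound (b) from (a): given two points, one uses the gradient estimate $\int_X|\nabla G(x,\cdot)|\,\omega^n\le C$ together with the pointwise lower bound on $-\inf G$ to run the now-standard argument (as in \cite{GPS}, going back to ideas for controlling distances via the Green's function) that produces a uniform bound on $d_\omega(x,y)$; the non-collapsing volume estimate (c) is obtained in parallel, by combining the $L^\infty$ estimate with a De Giorgi-type iteration on balls, exactly as in \cite{GS}, to get $\mathrm{Vol}_\omega(B_\omega(x,R))\ge cR^\alpha$ with $\alpha=\alpha(n,p)$ coming from the exponent in the Sobolev-type inequality that the iteration uses.

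The main obstacle is Step 1, and specifically the degeneration of the volume form along $\{\gamma=0\}$: the original $L^\infty$ technique of \cite{GPT, GP, GP22} requires the density to be bounded below by a positive constant, and making it work when the density only satisfies $\ge\gamma\ge0$ with $\gamma$ vanishing on a set of Hausdorff dimension $<2n-1$ requires a genuinely new estimate — essentially one must show that the auxiliary Monge-Ampère energy picks up only a negligible contribution from a shrinking tubular neighborhood of $\{\gamma=0\}$, which is where the codimension-$>1$ hypothesis is used in an essential way. Everything after that is a careful but routine adaptation of \cite{GPS} and \cite{GS}.
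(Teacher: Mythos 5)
Your high-level three-step skeleton (Monge--Amp\`ere $L^\infty$ estimates $\to$ Green's function estimates $\to$ diameter and non-collapsing) matches the structure of the paper, but the substance of the argument is misplaced in two essential respects, so the proposal as written has a genuine gap.

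First, you locate the role of the degeneracy $\gamma$ and the dimension condition $\dim_{\mathcal{H}}\{\gamma=0\}<2n-1$ inside the $L^\infty$ estimate for the potential (``the auxiliary Monge--Amp\`ere energy picks up only a negligible contribution from a shrinking tubular neighborhood''), asserting that the Guo--Phong--Tong method ``requires the density to be bounded below by a positive constant.'' This is not the case, and it is not what the paper does. The $L^\infty$ estimate (Proposition \ref{linf} and Corollary \ref{linf2}) uses only the entropy bound and requires no lower bound at all on $\omega^n/\omega_X^n$; the auxiliary-equation argument of Lemma \ref{lemma key} likewise works without $\gamma$ and yields
$\sup_X v \le C\bigl(a + \tfrac{1}{[\omega]^n}\int_X |v|\,\omega^n\bigr)$
for $\Delta_\omega v \ge -a$. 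The place where $\gamma$ genuinely enters is the \emph{uniform $L^1$ bound} $\tfrac{1}{[\omega]^n}\int_X |v|\,\omega^n \le C$ for solutions of $|\Delta_\omega v|\le 1$ with $\int v\,\omega^n=0$ (Lemma \ref{lemma 6}). This is proved by a compactness argument: normalize $\tilde v_j=v_j/N_j$, pass to the limit, use the Dirichlet-energy decay $\tfrac{1}{[\omega_j]^n}\int_X |\nabla\tilde v_j|^2_{\omega_j}\omega_j^n\to 0$ together with the pointwise lower bound $e^{F_j}\ge\gamma>\delta$ on a large connected set to conclude (via Weyl's lemma) that $\tilde v_\infty$ is a \emph{constant}, and then derive a contradiction with the normalization $\tfrac{1}{[\omega_j]^n}\int|\tilde v_j|\,\omega_j^n=1$ using the H\"older--Young trick and the smallness of $|\{\gamma=0\}|_{\omega_X}$. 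Nothing like an energy estimate on a shrinking tubular neighborhood is involved; the mechanism is qualitatively different.

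Second, and related, you attribute the hypothesis $\dim_{\mathcal{H}}\{\gamma=0\}<2n-1$ to a volume-smallness requirement (``codimension strictly greater than one'' so a tubular neighborhood has small measure). In fact any exponent $<2n$ already forces $|\{\gamma=0\}|_{\omega_X}=0$, which is more than enough smallness. The strict inequality $<2n-1$ is used for a completely different reason: to guarantee that $\{\gamma>0\}=X\setminus\{\gamma=0\}$ is \emph{connected}, which the compactness argument above needs in order to promote ``locally constant'' to ``constant.'' The paper establishes this by Poincar\'e--Alexander--Lefschetz duality: $\check H^{2n-1}(S)=\check H^{2n}(S)=0$ when $\dim_{\mathcal{M}}S<2n-1$, giving $H_1(X,X\setminus S)=H_0(X,X\setminus S)=0$ and hence $\tilde H_0(X\setminus S)=\tilde H_0(X)$. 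Your proposal contains no substitute for this connectedness step, and without it the contradiction argument fails (the limit $\tilde v_\infty$ could be a nonconstant locally constant function). Finally, a smaller point: the non-collapsing estimate (c) is not obtained by De Giorgi iteration on balls as in \cite{GS}; the paper derives it directly from the Green's formula applied to $d(y)\eta(y)$ (distance times cutoff) together with H\"older and the $L^s$ gradient bound on $G$ from Lemma \ref{lemma gradient final}, which is what produces the explicit exponent $\alpha=s^*$.
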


\bigskip
This is the first general result on uniform diameter bounds and volume non-collapsing estimates for K\"ahler manifolds without any curvature assumption. We note that the assumption on the Hausdorff dimension of the set $\{\gamma=0\}$ can be replaced by the weaker assumption that this set have small measure together with the connectedness of $\{ \gamma>0\}$ (c.f. Proposition \ref{gres!} and Proposition \ref{diaest}), and that this is in fact how the theorem is proved. In practice, the theorem is often applied with $\{\gamma=0\}$ supported on a proper analytic subvariety of $X$. It may be instructive to compare it with the situation in Riemannian geometry. There the sharp result is the theorem of S.Y. Cheng and P. Li \cite{CL}, where a lower bound for the Green's function requires a lower bound on the Ricci curvature. \iffalse and an upper bound on the diameter.\fi Since $\ric(\omega)=-i\partial\bar\partial \log\,\omega^n$ in K\"ahler geometry, and we allow the lower bound $\gamma$ for the volume form $\omega^n$ to vanish, we see that Theorem \ref{thm:main1} can give lower bounds for the Green's function even when no lower bound for the Ricci curvature is available. As we shall see, this flexibility is particularly important in the study of the K\"ahler-Ricci flow and of fibrations of Calabi-Yau manifolds.

\medskip

As a consequence of Theorem \ref{thm:main1}, we obtain the following theorem, which can be viewed as a K\"ahler analogue of Gromov's precompactness theorem for metric spaces:
\begin{theorem}\label{cor:main1} Let $X$ be an $n$-dimensional connected K\"ahler manifold equipped with a K\"ahler metric $\omega_X$ and let $\gamma$ be a nonnegative continuous functionon $X$ with 
$$
\dim_{\mathcal{H}}\{\gamma=0\}< 2n-1.
$$
Then for any $A, K>0$, $p>n$ and any sequence $\{\omega_j \}_{j=1}^\infty \subset  {\mathcal  W}(X, \omega_X, n, A, p, K;  \gamma ) $,  after passing to a subsequence, $(X, \omega_j)$ converges in Gromov-Hausdorff topology to a compact metric space $(X_\infty, d_\infty)$.

\end{theorem}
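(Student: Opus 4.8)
The plan is to deduce Theorem~\ref{cor:main1} from Theorem~\ref{thm:main1} via Gromov's precompactness criterion: a sequence of compact metric spaces admits a Gromov--Hausdorff convergent subsequence, with compact limit, provided it is \emph{uniformly totally bounded}, i.e.\ the diameters are uniformly bounded and, for every $\varepsilon>0$, there is an integer $N(\varepsilon)$, independent of the member of the sequence, such that each space can be covered by $N(\varepsilon)$ balls of radius $\varepsilon$. Since $X$ is a compact manifold and each $\omega_j$ is a smooth K\"ahler metric, each $(X,\omega_j)$, equipped with the Riemannian distance $d_{\omega_j}$ induced by $\omega_j$, is a compact (hence complete) metric space, so Gromov's theorem applies once these two conditions are checked.

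The uniform diameter bound is immediate from Theorem~\ref{thm:main1}(b): since every $\omega_j$ lies in ${\mathcal W}(X,\omega_X,n,A,p,K;\gamma)$, we have ${\rm diam}(X,\omega_j)\le C$ with $C$ independent of $j$. For the uniform covering bound I would run the standard maximal-separated-set argument fed by the volume non-collapsing estimate of Theorem~\ref{thm:main1}(c). Fix $\varepsilon\in(0,1]$ and let $\{x_1,\dots,x_{m_j}\}\subset X$ be a maximal $\varepsilon$-separated set for $d_{\omega_j}$. The balls $B_{\omega_j}(x_i,\varepsilon/2)$ are pairwise disjoint, so
\[
\sum_{i=1}^{m_j}{\rm Vol}_{\omega_j}\bigl(B_{\omega_j}(x_i,\varepsilon/2)\bigr)\le {\rm Vol}_{\omega_j}(X).
\]
By Theorem~\ref{thm:main1}(c) each term on the left is at least $c\,(\varepsilon/2)^{\alpha}\,{\rm Vol}_{\omega_j}(X)$, with $c,\alpha$ independent of $j$; dividing by ${\rm Vol}_{\omega_j}(X)>0$ gives $m_j\le c^{-1}(\varepsilon/2)^{-\alpha}=:N(\varepsilon)$, independent of $j$. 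Maximality forces the set to be an $\varepsilon$-net, so $(X,\omega_j)$ is covered by $N(\varepsilon)$ balls of radius $\varepsilon$. This establishes uniform total boundedness, and Gromov's precompactness theorem yields a subsequence of $(X,\omega_j)$ converging in the Gromov--Hausdorff topology to a compact metric space $(X_\infty,d_\infty)$.

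There is essentially no hard analytic step here, as Theorem~\ref{thm:main1} does all the work; the only points requiring a little care are (i) that the total volumes ${\rm Vol}_{\omega_j}(X)=[\omega_j]^n$ cancel in the packing estimate, so no two-sided control on them is needed, and (ii) that the Gromov--Hausdorff limit of a uniformly totally bounded sequence of complete metric spaces is automatically compact. If one wishes to avoid the Hausdorff-dimension hypothesis, the same argument goes through verbatim using instead the measure-smallness together with the connectedness of $\{\gamma>0\}$ (cf.\ the remark following Theorem~\ref{thm:main1}), since those assumptions still yield the volume non-collapsing bound of part (c). The Green's function bound (a) is not needed for this deduction.
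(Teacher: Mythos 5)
Your argument is correct and is essentially the same as the paper's: the authors also deduce Theorem~\ref{cor:main1} from Theorem~\ref{thm:main1} by the maximal $\varepsilon$-separated-set packing argument (in the Remark following Proposition~\ref{diaest}), using the diameter bound (b) and the volume non-collapsing estimate (c) to verify the hypotheses of Gromov's precompactness theorem. Your observations that the total volumes cancel and that the Green's-function bound (a) is not needed are both consistent with the paper's reasoning.
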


Geometric compactness is  fundamental for understanding degeneration and moduli problems for complex Riemannian manifolds. Curvature bounds are usually necessary such as in the general theory of Cheeger-Colding \cite{CC}.  Theorem \ref{cor:main1} bypasses the curvature requirement to provide boundedness for families of K\"ahler manifolds. It might also be combined with techniques from \cite{CCT}  to explore formation of singularities and achieve stronger geometric regularity for the limiting metric spaces.

\section{Applications to the K\"ahler-Ricci flow and cscK} \label{secapp}
\setcounter{equation}{0}

We describe now the applications of Theorem \ref{thm:main1} to the K\"ahler-Ricci flow and  constant scalar curvature K\"ahler metrics.
We shall be particularly interested in the analytic Minimal Model Program introduced in \cite{ST} in relation to the formation of both finite time and long time singularities in the K\"ahler-Ricci flow. 

\medskip

We first consider the following unnormalized K\"ahler-Ricci flow on a K\"ahler manifold $X$ with an initial K\"ahler metric $g_0$ 
 \begin{equation}\label{krflow}
\left\{
\begin{array}{l}
{ \displaystyle \ddt{g} = -\ric(g),}\\
\\
g|_{t=0} =g_0 .
\end{array} \right.
\end{equation}

Let 
\begin{equation}\label{singT}
T= \sup\{ t >0~|~ [g_0]+t[K_X] >0 \} \in \mathbb{R} \cup\{\infty\}.
\end{equation}
 It is shown in \cite{Ts, TZ} that the K\"ahler-Ricci flow has a maximal solution $g(t)$ for $t\in [0, T)$.

\subsection{The case of finite-time singularities}

If $T<\infty$, the flow (\ref{krflow}) must develop singularities at $t=T$. In this case, $\alpha_T= [g_0]+T[K_X]$ is a nef class on $X$.  If $[g_0]\in H^2(X, \mathbb{Q})$, $\alpha_T$ is semi-ample by Kawamata's base point free theorem and the numerical dimension of $\alpha_T$ coincides with its Kodaira dimension. In general, it is unclear if there exists a smooth semi-positive closed $(1,1)$-form in $\alpha_T$. The most interesting case is when $\alpha_T$ is big, i.e., there exists a K\"ahler current in $\alpha_T$ or equivalently, $(\alpha_T)^n >0$. Such a bigness condition can be also interpreted by the total volume along the K\"ahler-Ricci flow as  
\bea
\label{big}
(\alpha_T)^n= \lim_{t\rightarrow T} {\rm Vol}_{g(t)}(X) >0.
\eea
This bigness assumption for the limiting class is in fact generic for finite-time singularities.

It is conjectured in \cite{ST} as part of the analytic minimal model program that when $\alpha_T$ is big, $(X, g(t))$ should converge to a compact K\"ahler variety and the flow will extend uniquely through the singular time through a canonical metric surgery. Such a surgery corresponds to either a divisorial contraction or a flip in birational geometry.  
After suitable blow-ups, the singularity model is expected to be a transition from a shrinking soliton to an expanding soliton (c.f. \cite{S0}). This is confirmed in the case of K\"ahler surfaces by \cite{SW1, SW2}, where it is shown that the K\"ahler-Ricci flow contracts finitely many holomorphic $S^2$ of $(-1)$ self-intersection in Gromov-Hausdorff topology. 
The following diameter bound is the first step to understand formation of finite time singularities of the K\"ahler-Ricci flow in general dimension:

\begin{theorem}\label{thm:main2} Let $(X, g_0)$ be a K\"ahler manifold equipped with a K\"ahler metric $g_0$. If $g(t)$ is the maximal solution of the K\"ahler-Ricci flow (\ref{krflow}) for $t\in [0, T)$ for some $T\in \mathbb{R}^+$ and if the limiting class $[g_0]+T [K_X]$ is big, then there exist $C=C(X, g_0)>0$, $c=c(X, g_0)>0$ and $\alpha=\alpha(X, g_0)>0$ such that for any $t\in [0, T)$, 
$${\mathrm{diam}}(X, g(t)) \leq C,$$
$$
\int_X |G_t(x, \cdot)| dV_{g(t)} +\int_X |\nabla  G_t(x, \cdot)| dV_{g(t)} +  \left( - \inf_{y\in X} G_t(x, y) \right)  {\rm Vol}_{g(t)} (X) \leq C,$$
$$\frac{{\mathrm{Vol}}_{g(t)} (B_{g(t)}(x, R))}{\textnormal{Vol}_{g(t)}(X)}\ge c R^\alpha, $$
for any $x\in X$ and $R\in (0,1]$, where $G_t$ is the Green's function for $(X, g(t))$.
 
\end{theorem}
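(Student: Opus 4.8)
The plan is to deduce Theorem~\ref{thm:main2} from Theorem~\ref{thm:main1} by exhibiting the entire family $\{g(t)\}_{t\in[0,T)}$ of metrics along the flow as a subset of one fixed admissible class ${\mathcal W}(X,\omega_X,n,A,p,K;\gamma)$. Fix once and for all $\omega_X:=g_0$, the exponent $p:=n+1$, and a smooth volume form $\Omega$ on $X$ normalised so that $\chi:=-\ric(\Omega)$ is a fixed smooth representative of $[K_X]=-c_1(X)$. Put $\theta_t:=\omega_0+t\chi$, so that $[\theta_t]=[g_0]+t[K_X]$ is the K\"ahler class along the flow; then $g(t)$ corresponds to $\omega(t)=\theta_t+\ddbar\varphi(t)$, where, after the standard normalisation of the time-dependent additive constant, $\varphi(0)=0$ and $\partial_t\varphi=\log\!\big(\omega(t)^n/\Omega\big)$. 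Throughout, all constants depend only on $(X,g_0)$.

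First the two cohomological facts. The intersection number $[\omega(t)]\cdot[\omega_X]^{n-1}=(\omega_0+t\chi)\cdot\omega_0^{\,n-1}$ is affine in $t$ and hence $\le A$ on $[0,T]$. The volume $V_{\omega(t)}=[\omega(t)]^n=(\omega_0+t\chi)^n$ is a polynomial in $t$ which is positive for $t<T$ (the class is K\"ahler) and equals $(\alpha_T)^n>0$ at $t=T$ by the bigness hypothesis, so being continuous on $[0,T]$ it satisfies $0<c_0\le V_{\omega(t)}\le C_0$ for all $t\in[0,T)$; this is where bigness keeps $V_{\omega(t)}$ away from $0$. Next, the entropy. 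The scalar curvature of $g(t)$ admits a uniform lower bound $R\ge-C_1$ by the maximum principle, its spatial minimum being nondecreasing along the flow; since $\partial_t\dot\varphi=-R$ this yields $\dot\varphi(t)\le\dot\varphi(0)+C_1T\le C_2$, i.e. $\omega(t)^n\le e^{C_2}\Omega$. Hence $e^{F_t}:=V_{\omega(t)}^{-1}\,\omega(t)^n/\omega_X^n\le C_3$, so $F_t\le\log C_3$ pointwise, and since $s\mapsto e^s|s|^p$ is bounded on $(-\infty,\log C_3]$ by a constant $M=M(p,C_3)$,
\begin{equation}
{\mathcal N}_{X,\omega_X,p}(\omega(t))=\int_X e^{F_t}|F_t|^p\,\omega_X^n\ \le\ M\,{\rm Vol}_{\omega_X}(X)\ =:\ K
\end{equation}
uniformly in $t$. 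Thus $\omega(t)\in{\mathcal V}(X,\omega_X,n,A,p,K)$ for every $t\in[0,T)$.

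It remains to construct the continuous function $\gamma$, which is the crux. Since $\alpha_T$ is big it contains a K\"ahler current with analytic singularities: there is $\omega_+=\chi_T+\ddbar\Psi\in\alpha_T$, with $\chi_T:=\omega_0+T\chi$, where $\Psi$ is $\chi_T$-plurisubharmonic, $\Psi\le0$, $\Psi$ is smooth on $X\setminus V$ and $\equiv-\infty$ on a proper analytic subvariety $V\subsetneq X$, and $\omega_+\ge\delta_0\omega_0$ for some $\delta_0>0$. The one input I draw from K\"ahler--Ricci flow theory is the uniform degenerate lower bound for the evolving volume form in the finite-time big case,
\begin{equation}
\omega(t)^n\ \ge\ e^{-C_4}\,e^{\Psi}\,\Omega,\qquad t\in[0,T),
\end{equation}
which is established by the maximum principle applied to $\dot\varphi$ modified by terms in $\varphi$ and $\Psi$, using the decomposition $\theta_t=\tfrac{t}{T}\chi_T+\tfrac{T-t}{T}\omega_0$ (so $\theta_t+\ddbar(\tfrac{t}{T}\Psi)\ge c\,\omega_0>0$) together with the classical uniform bound $\|\varphi(t)\|_{L^\infty(X)}\le C$; see \cite{ST} and the references therein. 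Combining this with $V_{\omega(t)}\le C_0$ and $\Omega\ge c_5\,\omega_X^n$ gives
\begin{equation}
V_{\omega(t)}^{-1}\,\frac{\omega(t)^n}{\omega_X^n}\ \ge\ C_0^{-1}c_5\,e^{-C_4}\,e^{\Psi}\ =:\ \gamma .
\end{equation}
The function $\gamma=c_6e^{\Psi}$ (with the convention $e^{-\infty}=0$) is nonnegative, smooth on $X\setminus V$, and continuous at each point of $V$ because $\Psi$ is upper semicontinuous with value $-\infty$ there; moreover $\{\gamma=0\}=V$ is a proper analytic subvariety of the connected $n$-dimensional complex manifold $X$, so $\dim_{\mathcal H}\{\gamma=0\}=2\dim_{\C}V\le2n-2<2n-1$ (and $\{\gamma>0\}=X\setminus V$ is connected, as in the Remark after Theorem~\ref{thm:main1}).

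Therefore $\omega(t)\in{\mathcal W}(X,\omega_X,n,A,p,K;\gamma)$ for every $t\in[0,T)$, with $A,p,K$ and $\gamma$ fixed and determined by $(X,g_0)$. Applying Theorem~\ref{thm:main1} to each $\omega(t)$ produces constants $C,c$ and $\alpha=\alpha(n,p)$, all depending only on $(X,g_0)$, for which conclusions (a)--(c) hold with $\omega=\omega(t)$; since the Green's function $G_t$ of $(X,g(t))$ is exactly the one appearing in Theorem~\ref{thm:main1}, this is Theorem~\ref{thm:main2}. The genuine content of the argument is the uniform lower bound $\omega(t)^n\ge e^{-C_4}e^{\Psi}\Omega$, whose degeneracy is confined to the analytic set $V$ of complex codimension at least one; it is precisely in order to allow such degeneracies of the volume form of codimension strictly greater than one that the $L^\infty$ and Green's-function estimates underpinning Theorem~\ref{thm:main1} had to be sharpened, so the main difficulty has already been dealt with there rather than in this deduction.
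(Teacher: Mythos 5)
Your proof follows the same overall strategy as the paper's Section~8: verify that the family $\{g(t)\}_{t\in[0,T)}$ sits inside a single admissible class ${\mathcal W}(X,\omega_X,n,A,p,K;\gamma)$ with $\gamma$ vanishing only on the singular locus of a K\"ahler current in $\alpha_T$, then invoke Theorem~\ref{thm:main1}. Within that strategy there are two small deviations worth noting. For the uniform upper bound on $\dot\varphi$ (hence on the entropy), you use $\partial_t\dot\varphi=-R$ together with the monotonicity of $\min_X R$ under the unnormalized flow; the paper instead applies the maximum principle to the auxiliary function $u=t\dot\varphi-\varphi-nt$ (so that $(\partial_t-\Delta)u=-\tr_\omega\omega_0\le 0$). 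Both are standard and give the same conclusion; yours is a shade more direct, the paper's is self-contained at the level of the Monge--Amp\`ere flow without appealing to the scalar-curvature evolution. For the key degenerate lower bound $\omega(t)^n\ge e^{-C}e^{\Psi}\Omega$, you sketch the argument and cite the literature, whereas the paper proves it in full (Lemma~\ref{8psi} via Demailly--Paun, Lemma~\ref{8c0} giving $\varphi\ge\psi-C$, and the unnumbered lemma giving $\dot\varphi\ge A\psi-C$ via the barrier $u=\dot\varphi+2A(\varphi-\psi)$). One inaccuracy in your sketch: you invoke a uniform bound $\|\varphi(t)\|_{L^\infty(X)}\le C$, but the paper only establishes (and only needs) the weaker one-sided bound $\varphi\ge\psi-C$ with $\psi$ singular along the analytic set; the full $L^\infty$ bound does hold for nef and big limit classes, but relying on it is an unnecessary hypothesis. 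Everything else — the cohomological bounds, the observation that $e^s|s|^p$ is bounded above on $(-\infty,\log C_3]$, the construction and continuity of $\gamma=ce^{\Psi}$, the Hausdorff-dimension check — matches the paper and is correct.
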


We stress that Theorem \ref{thm:main2} holds for general K\"ahler manifolds, and no projectiveness assumption is needed. We prove it in section \ref{seckrff} by establishing a uniform upper bound for the $p$-Nash entropy for any $p>0$ and a lower bound for the volume form along the flow.

%\begin{corollary} Let $(X, g_0)$ be a K\"ahler manifold equipped with a K\"ahler metric $g_0 \in {\rm H}^2(X, \mathbb{Q})\cap {\rm H}^{1,1}(X, \mathbb{R})$. If $g(t)$ is the maximal solution of the K\"ahler-Ricci flow (\ref{krflow}) for $t\in [0, T)$ for some $T\in \mathbb{R}^+$ and if the limiting class $[g_0]+T [K_X]$ is big, then for any sequence $t_j \rightarrow T^-$, after passing to a subsequence, 
%
%$(X, g_j) $ converges in Gromov-Hausdorff topology to a compact metric space $(X_T, d_T)$ 
%\end{corollary}

\subsection{The case of long-time solutions}

It is well-known that the K\"ahler-Ricci flow has a long-time solution if and only if the canonical bundle $K_X$ is nef. The underflying manifold $X$ is then called a minimal model. The Kodaira dimension of $X$ is defined by
$${\rm Kod}(X) = \lim_{m\rightarrow \infty} \frac{\log h^0(X, mK_X)}{\log m}$$
if $h^0(X, mK_X)\neq 0$ for some $m\in \mathbb{Z}^+$. 
The Kodaira dimension of $X$ is always no greater than $n$ and is nonnegative as long as there exists one holomorphic pluricanonical section. The abundance conjecture predicts that if $X$ is minimal, $K_X$ must be semi-ample and hence the Kodaira dimension is always nonnegative. We would like now to obtain a uniform diameter bound for long time solutions of the K\"ahler-Ricci flow.

\smallskip

We consider the following normalized K\"ahler-Ricci flow with initial metric $g_0$ if the Kodaira dimension of $X$ is nonnegative. 
\begin{equation}\label{nkrflow}
\left\{
\begin{array}{l}
{ \displaystyle \ddt{g} = -{\rm Ric} (g) - g,}\\
\\
g|_{t=0} =g_0 .
\end{array} \right.
\end{equation}
Obviously, the flow (\ref{nkrflow}) exists for $t\in [0,\infty)$ since $K_X$ is nef.

\begin{theorem}\label{thm:main3} Let $(X, g_0)$ be an $n$-dimensional K\"ahler manifold with nef $K_X$ and non-negative Kodaira dimension. Let $g(t)$ be the solution of the normalized K\"ahler-Ricci flow (\ref{nkrflow}). 
Then there exist $C=C(X, g_0)>0$, $c=c(X, g_0)>0$ and $\alpha=\alpha(X, g_0)>0$ such that for any $t\geq 0$, 
$${\mathrm{diam}}(X, g(t)) \leq C,$$
$$
\int_X |G_t(x, \cdot)| dV_{g(t)} +\int_X |\nabla  G_t(x, \cdot)| dV_{g(t)} +  \left( - \inf_{y\in X} G_t(x, y) \right)  {\rm Vol}_{g(t)} (X) \leq C,$$
$$\frac{{\mathrm{Vol}}_{g(t)} (B_{g(t)}(x, R))}{\textnormal{Vol}_{g(t)}(X)}\ge c R^\alpha, $$
for any $x\in X$ and $R\in (0,1]$, where $G_t$ is the Green's function for $(X, g(t))$.

\end{theorem}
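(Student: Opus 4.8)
The plan is to reduce Theorem \ref{thm:main3} to an application of Theorem \ref{thm:main1} by identifying a fixed background K\"ahler metric $\omega_X$ and a fixed continuous function $\gamma$ vanishing on a proper analytic subvariety, such that the whole family $\{g(t)\}_{t\geq 0}$ lies in $\mathcal{W}(X,\omega_X,n,A,p,K;\gamma)$ for suitable uniform constants $A,K$ and some $p>n$. Since $K_X$ is nef and $\mathrm{Kod}(X)\geq 0$, the normalized flow (\ref{nkrflow}) has cohomology classes $[\omega(t)]=e^{-t}[\omega_0]+(1-e^{-t})c_1(K_X)$, which converge to $c_1(K_X)$ and stay in a bounded region of the K\"ahler cone; in particular $[\omega(t)]\cdot[\omega_X]^{n-1}\leq A$ for a uniform $A$ once $\omega_X$ is fixed. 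The natural choice for $\gamma$ comes from the limiting Monge--Amp\`ere equation on the canonical model: writing the flow as a parabolic complex Monge--Amp\`ere equation $\ddt{\varphi}=\log\frac{(\omega(t)+\ddbar\varphi)^n}{\Omega}-\varphi$ with $\Omega$ a fixed smooth volume form representing $-c_1(K_X)$, one expects $\omega(t)^n$ to be comparable to a volume form that degenerates only along the locus where the semi-ample fibration structure (from abundance-type input, or just from the section-ring analysis available since $\mathrm{Kod}(X)\geq 0$) degenerates, a proper analytic set of complex codimension $\geq 1$, hence real Hausdorff dimension $\leq 2n-2<2n-1$.

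The key steps, in order, are: (i) fix $\omega_X$ and rewrite the normalized flow as a scalar parabolic Monge--Amp\`ere equation for the potential $\varphi(t)$ relative to reference forms in the classes $[\omega(t)]$, absorbing the volume form into a fixed $\Omega$; (ii) establish the uniform class bound $[\omega(t)]\cdot[\omega_X]^{n-1}\leq A$, which is immediate from the explicit cohomological evolution and nefness of $K_X$; (iii) prove a \emph{uniform} bound on the $p$-Nash entropy $\mathcal{N}_{X,\omega_X,p}(\omega(t))\leq K$ for all $t\geq 0$ and some fixed $p>n$ --- equivalently a uniform $L^1\log L^p$ bound on $e^{F(t)}=V_{\omega(t)}^{-1}\omega(t)^n/\omega_X^n$; (iv) produce a fixed continuous $\gamma\geq 0$ with $\dim_{\mathcal H}\{\gamma=0\}<2n-1$ and $V_{\omega(t)}^{-1}\omega(t)^n/\omega_X^n\geq\gamma$ uniformly in $t$; (v) invoke Theorem \ref{thm:main1} to conclude the diameter, Green's function, and volume non-collapsing bounds, noting that the constants $C,c,\alpha$ produced depend only on $X,\omega_X,n,A,p,K,\gamma$, hence on $(X,g_0)$ alone. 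Step (v) is purely formal once (i)--(iv) are in place.

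The main obstacle is step (iii), the uniform entropy bound along an infinite-time flow. One must control $\int_{F\geq 0}e^F F^p\,\omega_X^n$ uniformly as $t\to\infty$; the difficulty is that the Ricci curvature is not bounded below, so one cannot use the usual maximum-principle estimates on $\log\frac{\omega(t)^n}{\omega_X^n}$ directly. The strategy will be to exploit that $\ddt{\varphi}=\mathrm{tr}\,(\text{something})$-type identities give evolution equations for $F$ whose zeroth-order terms have a favorable sign coming from the $-g$ term in (\ref{nkrflow}) (the normalization), together with a uniform $C^0$ bound on $\varphi$ (which follows from pluripotential theory for the degenerate class $c_1(K_X)$ and the fixed $\Omega$), to get an $L^\infty$ \emph{upper} bound for $F$ and an $L^1$-type \emph{lower} control sufficient for the entropy integral. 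A parallel subtlety in step (iv) is that $\gamma$ must be independent of $t$: here one takes $\gamma$ built from the holomorphic pluricanonical sections (using $\mathrm{Kod}(X)\geq 0$), or from a fixed K\"ahler current in a big sub-class, so that $\{\gamma=0\}$ is a fixed proper analytic set; the uniform lower bound $\omega(t)^n\geq \gamma\cdot\omega_X^n$ then follows from a Kolodziej-type stability estimate applied uniformly along the flow. Once these two uniform bounds are secured, Theorem \ref{thm:main1} applies verbatim and yields Theorem \ref{thm:main3}.
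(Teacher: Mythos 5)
The overall strategy in your proposal—reducing Theorem \ref{thm:main3} to Theorem \ref{thm:main1} by establishing a uniform cohomology class bound, a uniform $p$-Nash entropy bound, and a uniform pointwise lower bound on the normalized volume form given by a fixed function $\gamma$ built from a pluricanonical section—is exactly the route taken by the paper. Your steps (i), (ii) and (v) are on target, and your identification of $\gamma \sim |\sigma|_h^{2B}$ for a fixed $\sigma \in H^0(X, mK_X)$ (so that $\{\gamma=0\}$ is a fixed divisor) is also the paper's choice.

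However, there is a genuine gap in steps (iii) and (iv): the mechanism you give for producing the pointwise lower bound on $\dot\varphi$ (equivalently on the normalized volume form) does not work. You invoke a ``Kolodziej-type stability estimate applied uniformly along the flow,'' but such estimates run in the wrong direction: they pass from $L^p$ or entropy control of the density $\omega^n/\omega_X^n$ to $L^\infty$ control of the potential $\varphi$, whereas here you already have a bound on $\varphi$ (from the envelope $\mathcal V_\infty$ and Proposition \ref{linf}) and need a bound on its \emph{time-derivative} $\dot\varphi$, which is the object controlling the density. Likewise, the ``favorable sign from the $-g$ term'' does give an $L^\infty$ \emph{upper} bound on $\dot\varphi + \varphi$ by the maximum principle (this is Lemma \ref{ltbas}), but it does not by itself produce the needed lower bound. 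The missing ingredients are: (a) the uniform scalar curvature lower bound along the K\"ahler--Ricci flow (a maximum-principle fact requiring no Ricci lower bound), which combined with $\Delta_\omega(\dot\varphi+\varphi)=-R-\mathrm{tr}_\omega\chi$ gives the pointwise second-order-in-time estimate $\frac{\partial^2\varphi}{\partial t^2}+\dot\varphi\le C$ (Lemma \ref{t2est}); and (b) the elementary ODE lemma (Lemma \ref{ode}) stating that $f''+f'\le 1$ and $|f|\le A$ force $f'\ge -5A$, applied pointwise in $x$ with $A=A(x)\sim -\mathcal V_\infty(x)+C$, which converts the two-sided $C^0$ bound $\mathcal V_\infty - C\le\varphi\le C$ plus the second-order upper bound into the crucial lower bound $\dot\varphi\ge 5\mathcal V_\infty - C$ (Lemma \ref{ltvob}). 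Only then does the inequality $\mathcal V_\infty\ge\frac1m\log|\sigma|_h^2$ yield the pointwise lower bound $([\omega]^n)^{-1}\omega^n/\Omega\ge C^{-1}|\sigma|_h^{12/m}$ and the entropy bound (Corollary \ref{volkfr}, Corollary \ref{9wass}) needed to invoke Theorem \ref{thm:main1}. Without (a) and (b), your argument for the lower bound does not close.
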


\bigskip

The diameter is optimal for $X$ with positive Kodaira dimension. When $c_1(X)=0$ and hence $\kappa(X)=0$, the diameter of $(X, g(t))$ decays at the exact rate $e^{-t/2}$. The uniform diameter bound in Theorem \ref{thm:main3} is proved in  \cite{JS} in the case when $K_X$ is semi-ample, and the proof is built on works of \cite{STZ, Ba}, relying on the uniform scalar curvature bound obtained in \cite{ST1, Zh} (see also \cite{W} in the case of general type). Our proof in the present case is based rather on Theorem \ref{thm:main1}. This has many advantages, since we do not need then any assumption on the scalar curvature (for which bounds are not available in the nef case), nor on the projectiveness of $X$, nor on the abundance conjecture. 

\medskip
Next, we discuss the behavior of the flow near a singular fiber in the case of collapse.
If $K_X$ is semi-ample, the pluricanonical system of $X$ induces a unique holomorphic fibration 
$$\pi: X \rightarrow X_{can}, $$
where $X_{can}$ is the unique canonical model of $X$
\footnote{More generally, the abundance conjecture predicts that $K_X$ is nef if and only if it is semi-ample when $X$ is projective.}. The Kodaira dimension of $X$ coincides with the complex dimension of $X_{can}$. The general fibre of $\pi$ is a smooth Calabi-Yau manifold. It is proved in \cite{ST0, ST01} that the normalized K\"ahler-Ricci flow converges weakly to a twisted possibly singular K\"ahler-Einstein metric $g_\infty$ on $X_{can}$ satisfying 
$${\rm Ric}(g_\infty) = - g_\infty+g_{WP},$$
where $g_{WP}$ is the Weil-Petersson metric for the Calabi-Yau fibration $\pi: X \rightarrow X_{can}$, while the fibre metrics collapses along the normalized K\"ahler-Ricci flow. In particular, if $X_y = \pi^{-1}(Y)$ is a smooth fibre, it is shown in \cite{TWY} that  $e^t g(t)|_{X_y}$ converges to the unique Ricci-flat K\"ahler metric in $[g(0)|_{X_y}]$. The following theorem describe the asymptotic behavior near the singular fibre with at worst canonical singularities.  

\begin{theorem} \label{krfl2} Let $(X, g_0)$ be an $n$-dimensional  projective manifold with semi-ample $K_X$ and ${\rm Kod}(X)=1$. Let $g(t)$ be the solution of  the normalized K\"ahler-Ricci flow (\ref{nkrflow}).  If every fibre of $\pi: X \rightarrow X_{can}$  has at worst canonical singularities, then  there exists $C=C(X, g_0)$ such that for all $t\geq 0$ and $y\in X_{can}^\circ$, we have 
\begin{equation}\label{fbestin}
 {\rm diam}(X_y, g(t)|_{X_y}) \leq Ce^{-\frac{t}{2}}.
\end{equation}

\end{theorem}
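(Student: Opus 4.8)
The plan is to deduce Theorem \ref{krfl2} by applying the main estimate, Theorem \ref{thm:main1}, fibrewise to suitably rescaled fibre metrics. Since ${\rm Kod}(X)=1$, the canonical model $X_{can}$ is a smooth curve; fix a smooth K\"ahler metric $\omega_{can}$ on it and set $\chi_\infty=\pi^*\omega_{can}$, a smooth semipositive representative of $[K_X]$. Writing the solution of (\ref{nkrflow}) as $\omega(t)=\chi_\infty+e^{-t}(\omega_0-\chi_\infty)+\ddbar\varphi_t$ and restricting to a fibre $X_y$ with $y\in X_{can}^\circ$, one has $\chi_\infty|_{X_y}=0$ and $[K_X]|_{X_y}=0$, so $[\omega(t)|_{X_y}]=e^{-t}[\omega_0|_{X_y}]$. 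Hence the rescaled metric
$$\tilde\omega_y(t):=e^{t}\,\omega(t)|_{X_y}$$
is a K\"ahler metric on the smooth Calabi--Yau fibre $X_y$ (of dimension $n-1$) lying in the fixed class $[\omega_0|_{X_y}]$, and ${\rm diam}(X_y,\omega(t)|_{X_y})=e^{-t/2}\,{\rm diam}(X_y,\tilde\omega_y(t))$. Thus (\ref{fbestin}) is equivalent to a diameter bound for $\tilde\omega_y(t)$ that is uniform in $t\ge 0$ and $y\in X_{can}^\circ$.

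To apply Theorem \ref{thm:main1} to $(X_y,\omega_0|_{X_y})$ with some $p>n-1$, we must supply (i) a bound on $[\tilde\omega_y(t)]\cdot[\omega_0|_{X_y}]^{n-2}$, (ii) a uniform $p$-Nash entropy bound for $\tilde\omega_y(t)$, and (iii) a nonnegative continuous function $\gamma_y$ with $\tilde\omega_y(t)^{n-1}\ge \gamma_y\,V_y\,(\omega_0|_{X_y})^{n-1}$, where $V_y=[\omega_0|_{X_y}]^{n-1}$, and with $\dim_{\mathcal{H}}\{\gamma_y=0\}<2(n-1)-1=2n-3$. Item (i) is immediate since $[\tilde\omega_y(t)]\cdot[\omega_0|_{X_y}]^{n-2}=[\omega_0|_{X_y}]^{n-1}=V_y$ is a fixed positive number. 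Items (ii) and (iii) are extracted from the standard a priori estimates along the collapsing K\"ahler--Ricci flow (Song--Tian, Tian--Zhang, Tosatti--Weinkove--Yang and related works \cite{ST0,ST01,TWY}): the uniform bounds $\|\varphi_t\|_{L^\infty(X)}+\|\partial_t\varphi_t\|_{L^\infty(X)}\le C$ give a two-sided comparison $c\,\Omega\le e^{(n-1)t}\omega(t)^n\le C\,\Omega$ on $X$ for a fixed adapted volume form $\Omega$, and, combined with the generalized Schwarz-lemma estimate $\pi^*\omega_{can}\le C\,\omega(t)$ and the collapsing estimate in the base directions, these yield on each fibre a two-sided comparison $c'\,\Psi_y\le \tilde\omega_y(t)^{n-1}\le C'\,\Psi_y$ away from a proper analytic subset $Z_y\subset X_y$, where $\Psi_y$ is the (rescaled) fibre density of $\Omega$. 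The entropy bound (ii) then follows from the $L^\infty$ control of $\partial_t\varphi_t+\varphi_t$ via a Fubini-type slicing of $X$ over the curve, and (iii) holds with $\{\gamma_y=0\}\subset Z_y$.

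The crucial role of the hypothesis ``every fibre of $\pi$ has at worst canonical singularities'' is to make all of this uniform in $y$ up to the discriminant locus $\Sigma\subset X_{can}$. On compact subsets of $X_{can}^\circ$ the family $\{(X_y,\omega_0|_{X_y})\}$ varies smoothly, and everything is uniform by compactness. Near $\Sigma$, the canonical singularities hypothesis is precisely what guarantees that the adapted measure $\Omega$ has, relative to the fibres, uniformly controlled $L^1\log L^p$ behaviour (so the fibrewise entropies stay bounded as $y\to\Sigma$), and that the exceptional set $Z_y$ along which the fibre volume form degenerates has complex codimension $\ge 1$ in $X_y$ --- hence Hausdorff dimension $\le 2n-4<2n-3$ --- uniformly in $y$. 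Since the constants produced by Theorem \ref{thm:main1} (through Proposition \ref{gres!} and Proposition \ref{diaest}) depend on $\gamma_y$ only through $A$, $K$ and the measure-theoretic smallness of the sublevel sets $\{\gamma_y<\varepsilon\}$, this uniformity of the data propagates to a bound ${\rm diam}(X_y,\tilde\omega_y(t))\le C$ with $C=C(X,g_0)$, which is (\ref{fbestin}).

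The main obstacle is exactly this uniformity near $\Sigma$: translating ``canonical singularities on the fibres'' into the entropy bound and the codimension-of-degeneracy condition required by Theorem \ref{thm:main1}, uniformly as $y$ approaches the discriminant, and verifying the base-direction collapsing estimates with constants independent of $y$. This is where the paper's key improvement --- allowing the volume form to be nearly degenerate along sets of codimension strictly greater than one --- is indispensable; without it the fibrewise application of the $L^\infty$/diameter machinery could not even be set up near the singular fibres.
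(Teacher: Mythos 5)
Your reduction via the rescaling $\tilde\omega_y(t)=e^{t}\,\omega(t)|_{X_y}$ to a fixed cohomology class is essentially the same as the paper's strategy, which proves a uniform-diameter statement for the unnormalized flow (Theorem \ref{10krfl}) and then deduces Theorem \ref{krfl2} by parabolic rescaling. The genuine gap is your claim that applying Theorem \ref{thm:main1} fibrewise and then invoking compactness yields uniformity over $y\in X_{can}^\circ$. The constants in Theorem \ref{thm:main1} depend on the underlying compact K\"ahler manifold $(X,\omega_X)$ --- here the fibre $(X_y,\omega_0|_{X_y})$ --- through, among other things, the $\alpha$-invariant, the constants entering the contradiction argument of Lemma \ref{lemma 6}, the harmonic-representative bounds of Proposition \ref{cohobd}, and the specific lower-bound function $\gamma$. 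None of these is a priori bounded uniformly as $y$ approaches the discriminant $\Sigma$, where the fibre degenerates; compactness on compact subsets of $X_{can}^\circ$ is silent about this limit, and ``the fibres over $\Sigma$ have at worst canonical singularities'' controls the fibre density of an adapted measure but does not by itself control the $\alpha$-invariant of $(X_y,\omega_0|_{X_y})$ or the contradiction-argument constants uniformly across the family.

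This uniformity is exactly what Section \ref{secfam} is designed to establish: Theorem \ref{famdia} is a genuine family version of Theorem \ref{thm:main1}, and proving it requires new ingredients that your argument silently assumes --- a uniform $\alpha$-invariant over a projective family (Lemma \ref{alphinv}, relying on \cite{dGG}), uniform cut-off functions over the family away from a subvariety (Lemma \ref{famcutoff}), a uniform $L^\infty$ estimate for the fibre Monge--Amp\`ere problem (Lemma \ref{faminfty}), and a family version of the contradiction argument in which the degenerate central fibre must be handled (Lemma \ref{lemma6v2}). In addition, the two fibrewise inputs you need --- the uniform entropy bound and the strictly positive lower bound on the normalized fibre volume form --- are asserted but not actually derived; the paper's proof of Theorem \ref{10krfl} obtains the latter from a nontrivial second-order estimate ($g(t)\ge c\,g_t$, via the auxiliary test function $H_\epsilon$ built from the push-forward class $\hat\chi$, an auxiliary potential $\psi$, and a defining section $\sigma$ of the singular values), and then feeds a single uniform $\gamma$ on the total family into Theorem \ref{famdia}. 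Without Theorem \ref{famdia} or an equivalent uniform-in-family machinery, the fibrewise application of Theorem \ref{thm:main1} cannot be made uniform near $\Sigma$, and your proposal does not close.
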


\bigskip

The fibre diameter estimate (\ref{fbestin}) is intrinsic as the diameter is achieved by a minimal geodesic in the fibre. It immediately implies that the extrinsic fibre diameter estimate holds uniformly for all fibres of $\pi: X \rightarrow X_{can}$ since $X_{can}^\circ$ is an open dense subset of $X_{can}$. Theorem \ref{krfl2} can be compared with the diameter estimates in Li \cite{L, L2} for fibres of collapsing Ricci-flat K\"ahler metrics on a projective Calabi-Yau manifold. In fact, the proof of Theorem \ref{krfl2} based on Theorem \ref{thm:main1} can   serve as an alternative proof and improvement of the diameter estimates in \cite{L, L2} (c.f. Theorem \ref{10cyf}). One can further derive uniform bounds for the Green's function and volume non-collapsing on each smooth fibre with respect to the rescaled metric $e^t g(t)$ (c.f. Theorem \ref{10krfl}). 

\subsection{The case of constant scalar curvature K\"ahler metrics}

Finally, we would like to extend our results to families of cscK metrics with bounded $p$-Nash entropy for $p\leq n$. We will apply Theorem \ref{thm:main1} to cscK metrics on smooth minimal models. 

\begin{theorem}  \label{thm:maincsc} 
  Let $X$ be an $n$-dimensional smooth minimal model of general type. For any K\"ahler class $\mathcal{A}$ of $X$, there exist $\delta_0=\delta_0(\mathcal{A})>0$, $C=C(\mathcal{A}, \delta_0)>0$, $\alpha=\alpha(\mathcal{A}, \delta_0)$ and $c=c(\mathcal{A}, \delta_0)>0$ such that  for any $0<\delta<\delta_0$,  there exists a unique cscK metric $\omega_\delta$ in $K_X+\delta \mathcal{A}$ satisfying
$${\rm diam}(X, \omega_\delta) \leq C,$$
$$ \int_X |G_\delta(x, \cdot)| \omega_\delta^n+\int_X |\nabla  G_\delta(x, \cdot)| \omega_\delta^n  +   \left(- \inf_{y\in X} G_\delta(x, y) \right)  {\rm Vol}_{\omega_\delta} (X) \leq  C , 
$$
$$
\frac{{\mathrm{Vol}}_{\omega} (B_{\omega_\delta}(x, R))}{\textnormal{Vol}_{\omega_\delta} (X)}\ge c R^\alpha, $$
for any $x\in X$ and $R\in (0,1)$, where $G_\delta(x,y)$ is the Green's function of $(X, \omega_\delta)$.

In particular, if the canonical model $X_{can}$ of $X$ has only isolated singularities, then $(X, \omega_\delta)$ converges to the K\"ahler-Einstein metric space $(X_{can}, d_{KE})$ in Gromov-Hausdorff topologgy as $\delta \rightarrow 0$, where $(X_{can}, d_{KE})$ is the metric completion of the unique smooth K\"ahler-Einstein metric on the regular part of $X_{can}$ in \cite{S1}. 

\end{theorem}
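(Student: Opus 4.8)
\emph{Strategy and existence.} The plan is to exhibit, for every $\delta\in(0,\delta_0)$, the cscK metric $\omega_\delta\in K_X+\delta\mathcal A$ as an element of a single set $\mathcal W(X,\omega_X,n,A,p,K;\gamma)$ whose parameters do not depend on $\delta$, and then to quote Theorem \ref{thm:main1} (or, rather, its measure-theoretic refinement via Propositions \ref{gres!} and \ref{diaest}) for parts (a)--(c) and Theorem \ref{cor:main1} for the Gromov--Hausdorff statement. Existence and uniqueness of $\omega_\delta$ are preliminary: since $X$ is of general type, $\Aut(X)$ is finite, so the Chen--Cheng theorem reduces existence to coercivity of the Mabuchi K-energy $\mathcal M_\delta$ on $K_X+\delta\mathcal A$, and uniqueness is then the standard uniqueness of cscK metrics modulo the trivial group. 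By the base-point-free theorem $K_X$ is semi-ample; let $\pi\colon X\to X_{can}$ be the induced birational morphism, with exceptional set $E$, so that $K_X=\pi^*K_{X_{can}}$ with $K_{X_{can}}$ ample and $X_{can}$ of canonical singularities. The singular K\"ahler--Einstein current on $X_{can}$ — whose regular part is the metric of \cite{S1} — yields a coercivity estimate for $\mathcal M_0$ on $K_X$, and a perturbation argument in $\delta$ produces $\varepsilon,C>0$ and $\delta_0>0$ with $\mathcal M_\delta\ge\varepsilon J_\delta-C$ on $K_X+\delta\mathcal A$ for all $\delta<\delta_0$; hence $\omega_\delta$ exists and is unique for $0<\delta<\delta_0$. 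The cohomological condition $[\omega_\delta]\cdot[\omega_X]^{n-1}=(K_X+\delta\mathcal A)\cdot[\omega_X]^{n-1}\le A$ holds trivially.

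\emph{Uniform entropy bound.} Fix smooth reference metrics $\omega_{0,\delta}\in K_X+\delta\mathcal A$ adapted to the canonical fibration, chosen so that $\omega_{0,\delta}^n$ is comparable to the canonical measure of $(X,K_X)$ and in particular $\omega_{0,\delta}^n\ge c\,\gamma_0\,\omega_X^n$ with $\gamma_0$ a fixed nonnegative continuous function vanishing to finite order exactly on $E$; the entropy and the relevant energy functionals of $\omega_{0,\delta}$ are then uniformly bounded. Writing $\omega_\delta=\omega_{0,\delta}+\ddbar\varphi_\delta$ and decoupling the cscK equation,
\begin{equation}
\omega_\delta^n=e^{F_\delta}\omega_{0,\delta}^n,\qquad \Delta_{\omega_\delta}F_\delta=\tr_{\omega_\delta}\Ric(\omega_{0,\delta})-\underline S_\delta,\qquad \underline S_\delta=-\,n\,\frac{K_X\cdot[\omega_\delta]^{n-1}}{[\omega_\delta]^n}\in[-C,0],
\end{equation}
the minimality $\mathcal M_\delta(\omega_\delta)=\inf\mathcal M_\delta\le\mathcal M_\delta(\omega_{0,\delta})$ together with the coercivity estimate (which bounds $J_\delta(\omega_\delta)$) bounds the entropy $\int_X F_\delta^+\,\omega_\delta^n$ uniformly. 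Applying the $L^\infty$ estimate for the complex Monge--Amp\`ere equation underlying Theorem \ref{thm:main1} to the first equation gives $\|\varphi_\delta\|_{L^\infty}\le C$; feeding this together with the structure of $\Ric(\omega_{0,\delta})$ into the second equation and running Moser iteration upgrades the bound to $\mathcal N_{X,\omega_X,p}(\omega_\delta)\le K$ for some fixed $p>n$, which is where the specific cscK structure enters (the natural a priori bound being only for $p\le n$).

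\emph{Uniform lower bound for the density, and the quantitative estimates.} It remains to produce $\gamma$. Since $\omega_{0,\delta}^n\ge c\,\gamma_0\,\omega_X^n$, everything reduces to the uniform estimate $\inf_X F_\delta\ge-C$, equivalently $\omega_\delta^n\ge c\,\omega_{0,\delta}^n$. I expect this to be the main obstacle: it is a quantitative control of the solution of the fourth-order cscK system that must be uniform as $K_X+\delta\mathcal A$ approaches the boundary of the K\"ahler cone, with no Ricci lower bound to lean on. The approach is to combine the entropy bound with the $L^\infty$- and Green's-function techniques of the earlier sections — using $\|\varphi_\delta\|_{L^\infty}\le C$ and the one-sided bound $\Delta_{\omega_\delta}F_\delta\le C$ (the reference being arranged so that $\tr_{\omega_\delta}\Ric(\omega_{0,\delta})$ is controlled) — while being careful about the logical order, e.g.\ via a continuity argument in $\delta$, since those same tools furnish the conclusions of the theorem. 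Granting $\inf_X F_\delta\ge-C$, one has $(V_{\omega_\delta})^{-1}\omega_\delta^n/\omega_X^n\ge c\,\gamma_0=:\gamma$ with $\{\gamma=0\}=E$ a proper analytic subvariety, so $\dim_{\mathcal H}\{\gamma=0\}\le 2n-2<2n-1$; thus $\omega_\delta\in\mathcal W(X,\omega_X,n,A,p,K;\gamma)$ for all $\delta<\delta_0$, and Theorem \ref{thm:main1} delivers the diameter, Green's-function and volume non-collapsing bounds with $C,c,\alpha$ independent of $\delta$.

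\emph{Gromov--Hausdorff limit.} By the uniform diameter bound and Theorem \ref{cor:main1}, any sequence $\delta_j\to0$ has a subsequence with $(X,\omega_{\delta_j})\to(X_\infty,d_\infty)$ in the Gromov--Hausdorff sense. Over $X\setminus E$ the classes are uniformly K\"ahler, so the uniform estimates above together with local higher-order (Chen--Cheng) estimates give $\omega_\delta\to\pi^*\omega_{KE}$ in $C^\infty_{loc}(X\setminus E)$, where $\omega_{KE}$ is the K\"ahler--Einstein metric of \cite{S1} on $X_{can}^{reg}$. If $X_{can}$ has only isolated singularities then $E$ maps to finitely many points, and for each such point $p$ we have $[\omega_\delta]|_{\pi^{-1}(p)}=\delta\,\mathcal A|_{\pi^{-1}(p)}\to0$, so the argument behind the fibre diameter estimate of Theorem \ref{krfl2} gives $\mathrm{diam}(\pi^{-1}(p),\omega_\delta)\to0$. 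Consequently $(X_\infty,d_\infty)$ is isometric to the metric completion $(X_{can},d_{KE})$ of $(X_{can}^{reg},\omega_{KE})$; as this identification is independent of the subsequence, the full family $(X,\omega_\delta)$ converges to $(X_{can},d_{KE})$ as $\delta\to0$.
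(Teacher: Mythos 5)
The core of this theorem is the uniform estimate that for all $0<\delta<\delta_0$ the normalized volume density $(V_{\omega_\delta})^{-1}\omega_\delta^n/\omega_X^n$ is bounded below (so that $\omega_\delta$ belongs to a single $\mathcal W(X,\omega_X,n,A,p,K;\gamma)$ and Theorem~\ref{thm:main1} applies), and you correctly flag exactly this as ``the main obstacle.'' But you then leave it essentially unproved: the sketch you offer (entropy from the Mabuchi minimization, a one-sided bound $\Delta_{\omega_\delta}F_\delta\le C$, Moser iteration, ``a continuity argument in $\delta$'') does not close into an actual argument, and you say as much. The paper resolves this in one stroke by citing Liu's estimates (Lemma~\ref{lemliu}, reference \cite{Lx}), which give a uniform \emph{two-sided} bound $C^{-1}\le \omega_\delta^n/\omega_{\mathcal A}^n\le C$ and $\|\varphi_\delta\|_{L^\infty}\le C$, together with $C^\infty$ convergence of $\varphi_\delta$ to $\varphi_{KE}$ on $X^\circ$. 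Note this also contradicts your expectation that the density should degenerate along the exceptional locus $E$: since $[\omega_\delta]^n=(K_X+\delta\mathcal A)^n$ is bounded away from $0$ for small $\delta$, the normalized density is in fact uniformly bounded above and below \emph{everywhere}, so one may take $\gamma$ to be a positive constant and the entropy bound $\mathcal N_{X,\omega_X,p}(\omega_\delta)\le K$ holds trivially for every $p$ --- no Moser iteration upgrade from $p\le n$ to $p>n$ is needed. (The existence/uniqueness step, which you route through Chen--Cheng and a coercivity perturbation, the paper simply cites as Lemma~\ref{lemcsc} from \cite{JSS}; that part is fine either way.)

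For the Gromov--Hausdorff statement your route is also different and incomplete. You propose to control the limit by sending the diameters of the exceptional fibres $\pi^{-1}(p)$ to zero via ``the argument behind Theorem~\ref{krfl2},'' but that argument is built on K\"ahler--Ricci flow machinery (parabolic Schwarz lemma, scalar curvature bounds along the flow, rescaling $e^t g(t)$) and does not transfer to the elliptic cscK setting without real work; as written it is an appeal to analogy, not a proof. The paper instead argues by contradiction using only what has already been established: if a subsequential GH limit $(Y,d_Y)$ were not $(X_{can},d_{KE})$, then (using the smooth convergence $g_{\delta_j}\to g_{KE}$ on $X^\circ$ from \cite{Lx}, and the fact that $X_{can}\setminus X_{can}^\circ$ is a finite set of points under the isolated-singularity hypothesis) there would exist a point $p\in Y$ at positive distance from the image of $X_{can}^\circ$; picking $p_j\to p$, the ball $B_{g_{\delta_j}}(p_j,\epsilon)$ lies in $X\setminus U_k$ for an exhaustion $U_k$ of $X^\circ$, and the volume non-collapsing estimate from part (c) forces this ball to have volume bounded below, contradicting that ${\rm Vol}_{g_{\delta_j}}(X\setminus U_k)\to 0$. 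This is both simpler and self-contained. So: the overall architecture of your proposal (reduce to Theorem~\ref{thm:main1} by verifying membership in a fixed $\mathcal W$, then run a GH compactness argument) matches the paper, but the two load-bearing steps --- the uniform density/potential bound and the identification of the GH limit --- are left as gaps or replaced by heuristics, whereas the paper supplies a citation for the first and a concrete volume-non-collapsing contradiction for the second.
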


The existence of cscK metrics in a K\"ahler class near $[K_X]$ on a minimal model is proved in \cite{We, SW0, JSS, SD, S2}. When $X$ is a minimal model of general type, there exists a unique K\"ahler-Einstein current with bounded potentials by the work of \cite{K, EGZ, Zh0} and it is a smooth K\"ahler-Einstein metric $g_{KE}$ on $X_{can}^\circ$, the regular part of $X_{can}$. It is proved in \cite{S1} that the metric completion of $(X_{can}^\circ, g_{KE})$ is a compact metric space homeomorphic to $X_{can}$ itself.  

It is conjectured in \cite{JSS} that if $X$ is a minimal model, then the cscK metric spaces near the canonical class $[K_X]$ converge geometrically to the twisted K\"ahler-Einstein space $(X_{can}, d_{can})$. Theorem \ref{thm:maincsc} can be viewed as a partial confirmation of this conjecture. 

%%%%%%%%%%%%%%%%%%%%%%%%%%%%%%%%%%%%%%%%%%%%%%%%%

\section{Bounded sets in the K\"ahler cone} \label{seccone}

\setcounter{equation}{0}

\noindent {\em Notational convention}: if $\omega = (g_{i\bar j})$ is a K\"ahler metric and $\theta = (\theta_{i\bar j})$ is a $(1,1)$-form, we denote $\tr_\omega (\theta) = g^{i\bar j}\theta_{i\bar j}$, where $
(g^{i\bar j})$ is the inverse of $(g_{i\bar j})$. For a number $p\in (1,\infty)$, we denote by $p^*$ the conjugate exponent of $p$, i.e., $\frac 1p + \frac{1}{p^*} = 1$.

\begin{proposition} \label{cohobd}
Let $(X,\omega_X)$ be an $n$-dimensional compact K\"ahler manifold equipped with a K\"ahler metric $\omega_X$ in a K\"ahler class $\alpha$. For any $k\geq 0$, and a cohomology class $\beta \in H^{1,1}(X,\mathbb R)$, there exists a smooth representative $\theta \in \beta$ such that 
\begin{equation}\label{kbd}
\|\theta\|_{C^{k}(X, \omega_X)}\leq C, 
\end{equation}
for some constant $C=C(X, \omega_X,  k, |\beta\cdot \alpha^{n-1}|, |\beta^2\cdot 
\alpha^{n-2}| )>0$. 
\end{proposition}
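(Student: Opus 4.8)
\emph{Proof strategy.} The plan is to take $\theta$ to be the harmonic representative of $\beta$ with respect to the fixed metric $\omega_X$, to bound its $L^2$-norm purely in terms of the intersection numbers $\beta\cdot\alpha^{n-1}$ and $\beta^2\cdot\alpha^{n-2}$, and then to upgrade this to a $C^k$-bound by standard elliptic theory on the fixed background $(X,\omega_X)$.

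First, let $\theta\in\beta$ be the unique $\omega_X$-harmonic representative. Since $\beta$ is real and the Hodge Laplacian is a real operator, $\theta$ is a real $(1,1)$-form. The point where the Kähler hypothesis enters is that the adjoint Lefschetz operator $\Lambda=\Lambda_{\omega_X}$ commutes with the Hodge Laplacian, so $\tr_{\omega_X}\theta=\Lambda\theta$ is a harmonic function on the compact connected manifold $X$, hence a constant. Pairing with $\omega_X^{n-1}$ identifies this constant as $c:=n\,(\beta\cdot\alpha^{n-1})/\!\int_X\omega_X^n$, which depends only on $(X,\omega_X)$ and on $\beta\cdot\alpha^{n-1}$.

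Next I will use the pointwise identity, valid for any real $(1,1)$-form: writing $\lambda_1,\dots,\lambda_n$ for the eigenvalues of $\theta$ relative to $\omega_X$, one has $|\theta|^2_{\omega_X}=\sum_i\lambda_i^2=\big(\sum_i\lambda_i\big)^2-2\sum_{i<j}\lambda_i\lambda_j=(\tr_{\omega_X}\theta)^2-n(n-1)\,\frac{\theta^2\wedge\omega_X^{n-2}}{\omega_X^n}$. Integrating against $\omega_X^n$, using that $\tr_{\omega_X}\theta\equiv c$ and that $\int_X\theta^2\wedge\omega_X^{n-2}=\beta^2\cdot\alpha^{n-2}$ is cohomological, gives
\[
\int_X|\theta|^2_{\omega_X}\,\omega_X^n \;=\; c^2\!\int_X\omega_X^n \;-\; n(n-1)\,(\beta^2\cdot\alpha^{n-2}),
\]
so that $\|\theta\|_{L^2(X,\omega_X)}^2\le C(X,\omega_X)\big(|\beta\cdot\alpha^{n-1}|^2+|\beta^2\cdot\alpha^{n-2}|\big)$.

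Finally, the space $\mathcal H^{1,1}_{\omega_X}$ of $\omega_X$-harmonic $(1,1)$-forms consists of smooth forms and is finite-dimensional, so all norms on it are equivalent; equivalently, iterating the elliptic estimate for the equation $\Delta_{\omega_X}\theta=0$ and applying Sobolev embedding yields $\|\theta\|_{C^k(X,\omega_X)}\le C(X,\omega_X,k)\,\|\theta\|_{L^2(X,\omega_X)}$. Combined with the previous step this gives \eqref{kbd} with $C=C(X,\omega_X,k,|\beta\cdot\alpha^{n-1}|,|\beta^2\cdot\alpha^{n-2}|)$. I expect the only genuinely Kähler input — and the crux of the argument — to be the reduction of the a priori quadratic quantity $\int_X(\tr_{\omega_X}\theta)^2\omega_X^n$ to the linear datum $\beta\cdot\alpha^{n-1}$, which is exactly what the constancy of $\tr_{\omega_X}\theta$ provides; the remaining steps are soft Hodge theory and elliptic regularity on a fixed manifold.
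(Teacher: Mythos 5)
Your argument takes essentially the same route as the paper's: choose the harmonic representative, show $\tr_{\omega_X}\theta$ is constant, bound $\|\theta\|_{L^2}$ via the two intersection numbers $\beta\cdot\alpha^{n-1}$ and $\beta^2\cdot\alpha^{n-2}$ using the pointwise identity $|\theta|^2=(\tr\theta)^2-n(n-1)\,\theta^2\wedge\omega_X^{n-2}/\omega_X^n$, and then upgrade to $C^k$ by elliptic theory. The only cosmetic differences are that the paper derives constancy of the trace by tracing the Bochner--Kodaira--Lichnerowicz identity rather than invoking $[\Lambda,\Delta]=0$, and it passes from $L^2$ to $L^\infty$ via Moser iteration before applying Schauder estimates, whereas your observation that the harmonic space is finite-dimensional (so all norms on it are equivalent, with constant depending only on $(X,\omega_X,k)$) is a slightly cleaner shortcut to the same conclusion.
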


\begin{proof}  We will take $\theta$ to be the unique harmonic $(1,1)$-form in the class $\beta$, relative to the K\"ahler metric $\omega_X$, i.e. 
$$\Delta_{\bar \partial} \theta = 0.$$ 
By the standard Bochner-Kodaira-Lichnerowicz formula, we have 
\begin{equation}\label{eqn:BKL1}
0=-\Delta_{\bar \partial}\theta_{i\bar k} = \frac{1}{2}( \theta_{i\bar k, j\bar j} + \theta_{i\bar k, \bar j j}  ) + \theta_{m\bar j} R_{i\bar m j \bar k} - \frac 12 \theta_{m\bar k} R_{i\bar m} - \frac 12 \theta_{i\bar m} R_{m\bar k},
\end{equation}
where $R_{i\bar m j \bar k}$, $R_{i\bar m}$ denote the Riemann and Ricci curvatures of the fixed metric $\omega_X$, and $\theta_{i\bar k, j\bar j}$, $\theta_{i\bar k, \bar j  j}$ denote the covariant derivatives of $\theta_{i\bar k}$ with respect to the connection induced by $\omega_X$. It is well-known that equation (\ref{eqn:BKL1}) is a linear elliptic equation of the $(1,1)$-form $\theta_{i\bar k}$.

\medskip

Taking traces on both sides of (\ref{eqn:BKL1}), we obtain 
$$\Delta_{\omega_X} \left( \tr_{\omega_X} \theta \right)= 0, $$
 hence $\tr_{\omega_X}\theta$ must be a constant. Then we have
$$c_1 = \int_X \theta\wedge \omega_X^{n-1} = \frac{V}{n}\tr_{\omega_X}\theta,$$
where $V = \int_X \omega_X^n =[\omega_X]^n$.   On the other hand, we also have
\begin{equation}\label{c2}
c_2 = \int_X \theta^2 \wedge( \omega_X)^{n-2} = C(n)\int_X ( (\tr_{\omega_X}\theta)^2 - |\theta|_{\omega_X}^2  ) \omega_X^n.
\end{equation}
From this we see that 
\begin{equation}\label{theta3}
\int_X |\theta|_{\omega_X}^2  \omega_X^n
\end{equation}
is uniformly bounded depending only on $c_1$, $c_2$ and $\omega_X$. Applying Moser iteration to the equation (\ref{eqn:BKL1}) we get an $L^\infty$ bound for $\theta$. The uniform $C^{k, \alpha}$ estimates of $\theta$ then follow from the standard elliptic estimates applied to the linear elliptic equation (\ref{eqn:BKL1}). 
\end{proof}

Note that when $\beta$ is K\"ahler, $c_2$ in (\ref{c2}) is positive and so (\ref{theta3}) is uniformly bounded depending only on $c_1$ and  $\omega_X$. The following corollary is then an immediate consequence of Proposition \ref{kbd}.

\begin{corollary} \label{abd} Let $(X, \omega_X)$ be an $n$-dimensional K\"ahler manifold equipped with a K\"ahler metric $\omega_X$. Then the following hold.

\begin{enumerate}

\item For any bounded set $U$ in the K\"ahler cone of $X$, there exists $C=C(U)>0$ such that for any K\"ahler class $\beta \in U$, 
$$\beta\cdot [\omega_X]^{n-1}< C. $$

\item For any $A>0$ and a K\"ahler class $\beta$ with 
$$\beta\cdot [\omega_X]^{n-1} < A, $$ 
there exists $C=C(A)>0$ such that 
$$C [\omega_X] - \beta$$
is a K\"ahler class.

\end{enumerate}

\end{corollary}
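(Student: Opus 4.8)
The plan is to deduce both statements directly from Proposition \ref{cohobd} together with elementary properties of the K\"ahler cone. For part (1), let $U$ be a bounded subset of the K\"ahler cone. Fix a basis $\{e_1,\dots,e_r\}$ of $H^{1,1}(X,\mathbb R)$ and note that the linear functional $\beta\mapsto \beta\cdot[\omega_X]^{n-1}$ is continuous on $H^{1,1}(X,\mathbb R)$; since $U$ is bounded, its closure $\bar U$ is compact, so this functional attains a finite maximum $C$ on $\bar U$. (Alternatively, if $U$ is only assumed bounded with respect to some norm, boundedness of a linear functional on a bounded set is automatic.) This gives $\beta\cdot[\omega_X]^{n-1}< C$ for all $\beta\in U$, after enlarging $C$ slightly if one wants strict inequality, or simply using $\le C$.

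For part (2), the key point is the remark immediately preceding the corollary: when $\beta$ is a K\"ahler class, the quantity $c_2 = \int_X\theta^2\wedge\omega_X^{n-2}$ appearing in \eqref{c2} is positive (here $\theta$ is the harmonic representative of $\beta$, which is then a positive $(1,1)$-form), so the bound on $c_1=\tfrac{V}{n}\tr_{\omega_X}\theta$ coming from $\beta\cdot[\omega_X]^{n-1}<A$ alone suffices to control $\int_X|\theta|_{\omega_X}^2\,\omega_X^n$, and hence — by Moser iteration and elliptic estimates exactly as in the proof of Proposition \ref{cohobd} — to control $\|\theta\|_{C^0(X,\omega_X)}$ by a constant $C=C(A)$ depending only on $A$ and $\omega_X$. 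In particular, the eigenvalues of $\theta$ with respect to $\omega_X$ are bounded above by $C$ pointwise, so $C\omega_X-\theta$ is a positive $(1,1)$-form (strictly positive if we replace $C$ by $2C$, say), representing the class $C[\omega_X]-\beta$, which is therefore a K\"ahler class.

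I expect the main (and only real) obstacle to be bookkeeping the dependence of constants: one must check that the $C^0$ bound on $\theta$ extracted from Proposition \ref{cohobd} in the K\"ahler case genuinely depends only on $A$ and the fixed data $(X,\omega_X)$, and not on $|\beta^2\cdot\alpha^{n-2}|$, since the latter is precisely what we no longer get to assume. This is exactly the content of the remark after the proposition: positivity of $\theta$ converts the identity \eqref{c2} from a hypothesis into an a priori estimate, bounding $c_2$ from above in terms of $c_1^2$ (via $\int|\theta|^2\le\int(\tr\theta)^2 = C(n)c_1^2/V$) and thus closing the loop. Once that observation is in place, everything else is a direct invocation of the already-proved estimates, and the statement that a $(1,1)$-form with bounded eigenvalues can be subtracted from a large multiple of $\omega_X$ to leave something positive is immediate.
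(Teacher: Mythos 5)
Your proof of part (1) is correct and is the elementary argument: $\beta\mapsto\beta\cdot[\omega_X]^{n-1}$ is linear on the finite-dimensional space $H^{1,1}(X,\mathbb R)$, hence bounded on any bounded set. Your proof of part (2) follows the paper's intended route (the one-sentence remark preceding the corollary), and reaches the right conclusion, but the justification you give for the key positivity is wrong. You assert that the harmonic representative $\theta$ of a K\"ahler class is itself a positive $(1,1)$-form and deduce $c_2>0$ from this. Harmonicity is a linear PDE condition depending on $\omega_X$, while positivity is a pointwise algebraic condition; there is no reason for the harmonic representative of a K\"ahler class to be pointwise positive in general, and it usually is not. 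The correct reason $c_2=\beta^2\cdot[\omega_X]^{n-2}>0$ is purely cohomological: a product of K\"ahler classes is positive, regardless of which representative one takes. (Equivalently, one could pick an honest K\"ahler form $\tilde\omega\in\beta$ and compute $c_2=\int_X\tilde\omega^2\wedge\omega_X^{n-2}>0$; the integral is independent of the choice of representative.) Once $c_2>0$ is established by this cohomological argument, your remaining steps are fine: identity \eqref{c2} then gives $\int_X|\theta|^2_{\omega_X}\omega_X^n<\int_X(\tr_{\omega_X}\theta)^2\omega_X^n=n^2c_1^2/V$, so the $C^k$ bound of Proposition \ref{cohobd} depends only on $c_1$ (hence on $A$) and $\omega_X$; and a $(1,1)$-form $\theta$ with $\|\theta\|_{C^0}\le C$ satisfies $(C+1)\omega_X-\theta>0$, so $(C+1)[\omega_X]-\beta$ is K\"ahler.
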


\begin{corollary} \label{albd} Let $(X, \omega_X)$ be an $n$-dimensional K\"ahler manifold equipped with a K\"ahler metric $\omega_X$. For any bounded set $U$ in the K\"ahler cone of $X$, there exists a smooth closed $(1,1)$-form $\theta\in \beta$ for any $\beta \in U$ with the following uniform properties.
\begin{enumerate}

\item There exists $C=C(U)>0$ such that $ \|\theta \|_{C^3(X, \omega_X)} \leq C, $

\medskip

\item There exist $\alpha=\alpha(U)>0$ and $C=C(U, \alpha)>0$ such that for any $\varphi \in {\rm PSH}(X, \theta)$, 
$$\int_X e^{- \alpha(\varphi - \sup_X\varphi)} (\omega_X)^n \leq C, $$

\end{enumerate}

\end{corollary}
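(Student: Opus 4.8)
The plan is to prove the two assertions of Corollary \ref{albd} in order, using Corollary \ref{abd} to reduce the boundedness of $U$ to the two numerical quantities appearing in Proposition \ref{cohobd}, and then invoking a standard uniform version of the Skoda--Tian $\alpha$-invariant estimate for the second part.

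\medskip

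For part (1): by Corollary \ref{abd}(1), since $U$ is a bounded set in the K\"ahler cone there is $C_1 = C_1(U)$ with $\beta \cdot [\omega_X]^{n-1} < C_1$ for all $\beta \in U$. Moreover, by Corollary \ref{abd}(2), there is $C_2 = C_2(U)$ such that $C_2[\omega_X] - \beta$ is a K\"ahler class for every $\beta \in U$; since $(C_2[\omega_X]-\beta)^2 \cdot [\omega_X]^{n-2} \ge 0$, expanding this gives $\beta^2\cdot[\omega_X]^{n-2} \le 2C_2\, \beta\cdot[\omega_X]^{n-1} - C_2^2[\omega_X]^n + (\text{lower})$, wait---more simply, $0 \le (C_2[\omega_X]-\beta)^2\cdot[\omega_X]^{n-2}$ together with the bound on $\beta\cdot[\omega_X]^{n-1}$ and positivity of $\beta^2\cdot[\omega_X]^{n-2}$ (true since $\beta$ is K\"ahler) gives $0 \le \beta^2\cdot[\omega_X]^{n-2} \le C_2^2[\omega_X]^n$, so $|\beta^2\cdot[\omega_X]^{n-2}|$ is bounded by a constant depending only on $U$. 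Then Proposition \ref{cohobd} applied with $k=3$ and $\alpha = [\omega_X]$ produces the harmonic representative $\theta \in \beta$ with $\|\theta\|_{C^3(X,\omega_X)} \le C(U)$, which is exactly (1). We should note that the constant in Proposition \ref{cohobd} is uniform precisely because it depends only on $|\beta\cdot\alpha^{n-1}|$ and $|\beta^2\cdot\alpha^{n-2}|$, both of which we have just bounded uniformly over $U$.

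\medskip

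For part (2): by (1) we have a uniform family of forms $\theta = \theta_\beta$ with $\|\theta_\beta\|_{C^3} \le C(U)$, and by Corollary \ref{abd}(2) there is a fixed constant $C_2$ with $C_2\omega_X - \theta_\beta \ge 0$ in the sense of currents (after possibly shrinking, we may assume $C_2\omega_X - \theta_\beta$ is represented by a smooth K\"ahler form, or at least is $\ge -\epsilon\omega_X$; in any case $\theta_\beta \le C_2\omega_X$ pointwise up to a controlled error). Hence any $\varphi \in \mathrm{PSH}(X,\theta_\beta)$ satisfies $\ddbar\varphi \ge -\theta_\beta \ge -C_2\omega_X$, i.e. $\varphi \in \mathrm{PSH}(X, C_2\omega_X)$. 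The uniform $\alpha$-invariant theorem (the Tian--Hörmander--Skoda estimate, in the form that the $\alpha$-invariant of a fixed K\"ahler class is positive and that $\int_X e^{-\alpha(\varphi-\sup_X\varphi)}\omega_X^n$ is bounded uniformly over all $\varphi\in\mathrm{PSH}(X,\chi)$ for fixed $\chi$ and $\alpha$ below the $\alpha$-invariant of $[\chi]$) applied to $\chi = C_2\omega_X$ then gives $\alpha = \alpha(U) > 0$ and $C = C(U,\alpha)$ with $\int_X e^{-\alpha(\varphi-\sup_X\varphi)}\omega_X^n \le C$ for all such $\varphi$, which is (2).

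\medskip

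The main obstacle is bookkeeping rather than conceptual: one must be careful that the harmonic representative $\theta_\beta$ from Proposition \ref{cohobd} may fail to be pointwise $\le C_2\omega_X$ even though its class is dominated by $C_2[\omega_X]$, so the inclusion $\mathrm{PSH}(X,\theta_\beta) \subset \mathrm{PSH}(X, C\omega_X)$ must be justified by choosing $C$ large enough using the uniform $C^0$ (indeed $C^3$) bound on $\theta_\beta$ from part (1), not merely the cohomological comparison---this is fine since $\|\theta_\beta\|_{C^0} \le C(U)$ directly gives $\theta_\beta \le C(U)\omega_X$. The only other point requiring a citation is the uniformity of the $\alpha$-invariant estimate over the infinite-dimensional family $\mathrm{PSH}(X,C_2\omega_X)$, which is classical (Tian, and its streamlined proofs via Skoda's integrability theorem and compactness of normalized psh functions in $L^1$).
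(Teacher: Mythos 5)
Your proof takes essentially the same route as the paper: for (1), reduce to the intersection-number bounds feeding Proposition~\ref{cohobd}; for (2), pass from the $C^0$ bound on $\theta$ to the pointwise domination $\theta \le B\omega_X$, hence ${\rm PSH}(X,\theta)\subset {\rm PSH}(X,B\omega_X)$, and invoke the $\alpha$-invariant of $B\omega_X$. You also correctly flag the one subtle point -- that one needs the genuinely pointwise inequality $\theta\le B\omega_X$, supplied by the $C^0$ bound, not just the cohomological comparison -- which is exactly the step the paper relies on.

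One small remark on part (1): your derivation of the bound on $\beta^2\cdot[\omega_X]^{n-2}$ is muddled. Expanding $(C_2[\omega_X]-\beta)^2\cdot[\omega_X]^{n-2}\ge 0$ gives a \emph{lower} bound on $\beta^2\cdot[\omega_X]^{n-2}$, not the upper bound you assert. The cleanest way is to wedge the positive class $C_2[\omega_X]-\beta$ against $\beta\cdot[\omega_X]^{n-2}$ to get $\beta^2\cdot[\omega_X]^{n-2}\le C_2\,\beta\cdot[\omega_X]^{n-1}\le C_1C_2$. In fact the paper avoids this entirely: it observes right after Proposition~\ref{cohobd} that when $\beta$ is K\"ahler, $c_2>0$ automatically, so the $L^2$ bound on the harmonic representative already follows from $c_1$ alone, and no bound on $|\beta^2\cdot[\omega_X]^{n-2}|$ is needed. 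So this part of your argument, while reparable, is an unnecessary detour and as written contains a sign/direction slip.

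Otherwise the proposal is correct and follows the paper's own proof closely.
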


\begin{proof} It suffices to prove (2). By Proposition \ref{cohobd}, there exists $B=B(U)>0$ such that $ \theta < B \omega_X$. Then for any $\varphi\in {\rm PSH}(X, \theta)$, we have $\varphi \in {\rm PSH}(X, B\omega_X)$. The corollary then follows by applying the $\alpha$-invariant of $B\omega_X$.
\end{proof}

%%%%%%%%%%%%%%%%%%%%%%%%%%%%%%%%%%%%%%%%%%%%%%%%%

\section{$L^\infty$-estimates for complex Monge-Amp\`ere equations}\label{seclinfty}

\setcounter{equation}{0}

\begin{proposition} \label{linf} Let $(X, \omega_X)$ be an $n$-dimensional compact K\"ahler manifold equipped with a K\"ahler metric $\omega_X$. For any $K>0$, $p>n$, there exist $C=C(X,\omega_X, n, p, K)>0$ such that if $\theta$ a smooth closed $(1,1)$-form with $\theta \leq \omega_X$ and if $\omega =\theta + \ddbar \varphi$ is a K\"ahler form satisfying
$$\mathcal{N}_{X, \omega_X, p}(\omega) \leq K, $$
then   
$$
 \|\varphi - \sup_X \varphi - \mathcal{V}_\theta  \|_{L^\infty(X)} \leq C, 
$$
where $\mathcal{V}_\theta = \sup\{ u: u\in {\rm PSH}(X, \theta), ~ u\leq 0\} $  is the envelope of non-positive $\theta$-psh functions.

\end{proposition}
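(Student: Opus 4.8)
The plan is to adapt the comparison/auxiliary-function technique from \cite{GPT, GP} for $L^\infty$ bounds on Monge-Amp\`ere equations, which in its standard form gives $\|\varphi - \sup_X\varphi\|_{L^\infty} \le C$ when $\theta = \omega_X$ is itself K\"ahler; here we only have $\theta \le \omega_X$, so the envelope $\mathcal V_\theta$ must enter as the correct reference function. Normalize $\sup_X\varphi = 0$, so $\varphi \le 0$ and $\varphi \in {\rm PSH}(X,\theta)$, hence $\varphi \le \mathcal V_\theta \le 0$ by definition of the envelope. The goal is thus the one-sided bound $\varphi \ge \mathcal V_\theta - C$, i.e. a lower bound for $\inf_X(\varphi - \mathcal V_\theta)$; since $\varphi \le \mathcal V_\theta$ the $L^\infty$ statement follows. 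Write $\omega^n = e^F V_\omega\,\omega_X^n$ with $V_\omega = \int_X\omega^n = [\omega]\cdot\ldots$ bounded in terms of $A$ (via $\theta \le \omega_X$), and the entropy hypothesis says $\int_X e^F|F|^p\,\omega_X^n \le K$, in particular $\int_{F \ge 0} e^F F^p\,\omega_X^n$ is controlled.

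\medskip

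\noindent The key steps, in order. First, fix $s > 0$ and consider the sublevel set $\Omega_s = \{\varphi < \mathcal V_\theta - s\}$; I want to show $\Omega_s$ is empty for $s$ larger than some uniform constant. Following \cite{GPT}, introduce the auxiliary Monge-Amp\`ere equation on $\Omega_s$ (or globally) whose solution $\psi_s$ is a $\theta$-psh (or $(\theta + \text{small})$-psh) function with $(\theta + \ddbar\psi_s)^n = \tau(-\varphi + \mathcal V_\theta - s)\,\frac{A_s}{V_\omega}\,\omega^n$ type right-hand side, where $\tau$ is a suitable cutoff and $A_s = \int_{\Omega_s}\omega^n$; solvability comes from Ko\l odziej-type / \cite{EGZ} theory for big classes (here $[\theta]$ need not be K\"ahler, but $[\omega] = [\theta]$ is, so one works with $\omega_X$-psh functions using $\theta \le \omega_X$ and the envelope). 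Second, run the De Giorgi iteration on the quantity $\phi = -A_s^{-1/n}\psi_s - (-\varphi + \mathcal V_\theta)$: using the comparison of Monge-Amp\`ere measures and the partial-integration/Cauchy-Schwarz estimates of \cite{GPT, GP}, control $\int_{\Omega_s}(-\varphi + \mathcal V_\theta - s)\,\omega_X^n$ by a super-linear power of $A_s$ times a constant depending only on the entropy bound $K$, $p$, $n$ and the $\alpha$-invariant/exponential integrability furnished by Corollary \ref{albd} applied to $\omega_X$. Third, this yields a differential inequality for $A_s$ forcing $A_s = 0$ for $s \ge C(X,\omega_X,n,p,K)$, giving $\varphi \ge \mathcal V_\theta - C$.

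\medskip

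\noindent Two technical points need care. (i) The envelope $\mathcal V_\theta$ is merely bounded and $C^{1,1}$-regular (by Berman/Chu-Zhou-type results for quasi-psh envelopes when $[\theta]$ is nef or big), not smooth; so wherever the arguments of \cite{GPT} differentiate the reference potential one should instead use that $\mathcal V_\theta \in {\rm PSH}(X,\theta) \cap L^\infty$ and that $(\theta + \ddbar\mathcal V_\theta)^n$ is a bounded (actually $\le C\,\omega_X^n$) measure supported on the contact set — alternatively, replace $\mathcal V_\theta$ by $\max(\varphi, \mathcal V_\theta - s)$ style constructions which stay in the psh class and avoid regularity issues entirely. (ii) The needed a priori exponential integrability $\int_X e^{-\alpha(u - \sup u)}\omega_X^n \le C$ for $u \in {\rm PSH}(X,\theta)$ is exactly what Corollary \ref{albd}(2) provides (with $\theta < B\omega_X$, so $u \in {\rm PSH}(X,B\omega_X)$), and this is what makes all constants depend only on the stated data and not on $\theta$ within the admissible family.

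\medskip

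\noindent \textbf{Main obstacle.} The crux — and the genuinely new input compared to the case $\theta = \omega_X$ — is handling the degeneracy of $[\theta]$: the reference object is the envelope $\mathcal V_\theta$ rather than a fixed smooth potential, and one must verify that the comparison argument and the solvability of the auxiliary equation go through with $\mathcal V_\theta$ in place of $0$, uniformly over the family $\mathcal V(X,\omega_X,n,A,p,K)$. Concretely, the hard part is establishing the key integral inequality bounding $\int_{\Omega_s}(\mathcal V_\theta - \varphi - s)\,\omega_X^n$ by a constant (depending only on $n,p,K$ and the $\alpha$-invariant) times a power $>1$ of $A_s = \mathrm{Vol}_\omega(\Omega_s)$, while correctly tracking the contribution of the Monge-Amp\`ere mass of the auxiliary potential near the contact set of the envelope; everything else is the now-standard De Giorgi iteration bootstrap.
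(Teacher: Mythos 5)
Your proposal is correct and is essentially the same approach as the paper's, which is simply to observe that $\theta\le\omega_X$ gives ${\rm PSH}(X,\theta)\subset{\rm PSH}(X,\omega_X)$ (hence a uniform $\alpha$-invariant, your Corollary~\ref{albd} input) and then cite the uniform $L^\infty$ estimates for degenerate/nef classes in \cite{EGZ, DP, BEGZ, FGS, GPTW}. What you have written is in effect a faithful sketch of the proof of those cited results (the GPT auxiliary Monge--Amp\`ere equation plus De Giorgi iteration, with $\mathcal V_\theta$ playing the role of reference potential as in \cite{GPTW}), so it is more than the paper itself records but not a different route; one small inaccuracy is your mention of the parameter $A$, which does not appear in this proposition — the bound on $V_\omega=[\theta]^n$ already follows from $\theta\le\omega_X$ and log-concavity of mixed volumes.
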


\begin{proof} By the assumption on $\theta$, we have ${\rm PSH}(X, \theta)\sub {\rm PSH}(X, \omega_X)$. Therefore there exists $\alpha>0$ and $C_\alpha>0$ such that for any $\varphi \in {\rm PSH}(X, \theta)$, 
$$\int_X e^{-\alpha(\varphi - \sup_X \varphi)} \omega_X^n \leq C_\alpha.$$ Then the proposition follows from the uniform $L^\infty$-estimates from \cite{EGZ, DP, BEGZ, FGS, GPTW} which generalize Kolodziej's $L^\infty$ estimates in the case of a fixed background metric \cite{K}. 
\end{proof}

\begin{corollary} \label{linf2} Let $(X, \omega_X)$ be an $n$-dimensional compact K\"ahler manifold equipped with a K\"ahler metric $\omega_X$. For any $A, K>0$ and $p>n$, if two K\"ahler forms $\omega_1$ and $\omega_2$ belong to the same K\"ahler class and 
$$ \omega_1, \omega_2 \in \mathcal{V}(X,\omega_X, n, A, p, K),$$ 
then there exist $C=C(X,\omega_X, n, A, p, K)>0$ and $\varphi \in C^\infty(X)$ such that 
\begin{equation}
\omega_2=\omega_1+\ddbar \varphi, ~  \|\varphi - \sup_X \varphi \|_{L^\infty(X)} \leq C. 
\end{equation}

\end{corollary}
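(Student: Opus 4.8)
\textbf{Proof proposal for Corollary \ref{linf2}.}

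The plan is to reduce the statement to Proposition \ref{linf}, the key point being to produce a suitable background form $\theta$ and to verify that the hypotheses of that proposition are met. First I would use Corollary \ref{abd}(1) together with the entropy and intersection bounds packaged in the definition of $\mathcal V(X,\omega_X,n,A,p,K)$: since $[\omega_1]=[\omega_2]$ and $[\omega_i]\cdot[\omega_X]^{n-1}\le A$, the class $\beta:=[\omega_1]=[\omega_2]$ lies in a bounded region of the K\"ahler cone, so by Corollary \ref{abd}(2) (or directly by Corollary \ref{cohobd}) there exists a constant $\Lambda=\Lambda(X,\omega_X,n,A)>0$ and a smooth closed $(1,1)$-form $\theta\in\beta$ with $\theta\le\Lambda\omega_X$. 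Rescaling, $\theta':=\Lambda^{-1}\theta$ satisfies $\theta'\le\omega_X$ and $\theta'\in\Lambda^{-1}\beta$; equivalently, after replacing $\omega_X$ by $\Lambda\omega_X$ at the cost of adjusting constants, I may assume there is a fixed smooth $\theta\le\omega_X$ with $[\theta]=\beta$.

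Next, write $\omega_i=\theta+\ddbar\phi_i$ for $\phi_i\in C^\infty(X)$, which is possible by the $\partial\bar\partial$-lemma since $[\omega_i]=[\theta]$. The entropy hypothesis $\mathcal N_{X,\omega_X,p}(\omega_i)\le K$ is exactly the hypothesis needed in Proposition \ref{linf} (note the Nash entropy as defined is unchanged under the scaling of $\omega_X$ up to an additive constant absorbed into $K$, by the remark following its definition). Applying Proposition \ref{linf} to each of $\omega_1$ and $\omega_2$ with this common $\theta$ gives
\begin{equation}
\|\phi_i-\sup_X\phi_i-\mathcal V_\theta\|_{L^\infty(X)}\le C_0,\qquad i=1,2,
\end{equation}
for $C_0=C_0(X,\omega_X,n,A,p,K)$, where $\mathcal V_\theta$ is the envelope of non-positive $\theta$-psh functions. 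The crucial gain here is that the \emph{same} auxiliary function $\mathcal V_\theta$ appears in both estimates, so it cancels upon subtraction: setting $\varphi:=\phi_2-\phi_1$, we have $\omega_2=\omega_1+\ddbar\varphi$ and
\begin{equation}
\big\|\varphi-(\sup_X\phi_2-\sup_X\phi_1)\big\|_{L^\infty(X)}\le 2C_0.
\end{equation}
Since $\varphi-\sup_X\varphi$ differs from $\varphi-(\sup_X\phi_2-\sup_X\phi_1)$ by a bounded constant (bounded by $2C_0$, by evaluating the previous line at a point where $\varphi$ is near its supremum), we conclude $\|\varphi-\sup_X\varphi\|_{L^\infty(X)}\le C$ with $C=4C_0$, and $\varphi\in C^\infty(X)$ since $\phi_1,\phi_2$ are smooth.

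The only genuine subtlety — and the step I would treat most carefully — is the bookkeeping around the normalization of $\theta$ and $\omega_X$: Proposition \ref{linf} requires $\theta\le\omega_X$ for the \emph{fixed} reference metric, whereas Corollary \ref{cohobd}/\ref{abd} only yield $\theta\le\Lambda\omega_X$ with $\Lambda$ depending on $A$. One must check that replacing $\omega_X$ by $\Lambda\omega_X$ in Proposition \ref{linf} changes the constant $C$ in a controlled way, which it does because the $\alpha$-invariant bound $\int_X e^{-\alpha(\varphi-\sup_X\varphi)}\omega_X^n\le C_\alpha$ in its proof depends only on a K\"ahler upper bound for $\theta$ (hence only on $\Lambda$, hence only on $A$), and the Nash entropy scales by a bounded additive constant under this rescaling. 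Everything else is a formal consequence of applying a single $L^\infty$ estimate twice with a common background.
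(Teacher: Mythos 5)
Your argument is essentially identical to the paper's: pick a bounded representative $\theta \in [\omega_1]=[\omega_2]$ via Proposition \ref{cohobd}, apply Proposition \ref{linf} to each $\omega_i = \theta + \ddbar\phi_i$ so that the same envelope $\mathcal V_\theta$ appears in both estimates, and subtract to cancel it. The only deviation is that you spell out the rescaling needed so that $\theta\le\omega_X$ literally holds (the paper applies Proposition \ref{linf} with a $\theta$ satisfying only $\theta\le B\omega_X$ for $B=B(A)$ and absorbs the factor silently), which is a correct and slightly more careful presentation of the same step rather than a different route.
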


\begin{proof} Let $\gamma$ be the K\"ahler class of $\omega_1$ and $\omega_2$. Then $\gamma\cdot [\omega_X]^{n-1} <A$ by our assumption. By Proposition \ref{cohobd}, we can choose a smooth closed $(1,1)$-form $\theta\in \gamma$ (not necessarily positive) such that $$ \|\theta \|_{C^3(X, \omega_X)}$$ is uniformly bounded by a constant that only depends on $A$. Furthermore, We define $\varphi_i\in C^\infty(X)$ by  
$$\omega_i = \theta + \ddbar \varphi_i, ~ \sup_X \varphi_i =0, ~i=1, 2. $$
Then $\varphi_i$ satisfies the following complex Monge-Amp\`ere equation
\begin{equation}
\frac{(\theta + \ddbar \varphi_i)^n}{ [\theta]^n } = e^{F_i} (\omega_X)^n, ~ \sup \varphi_i=0.
\end{equation}
Since $\omega_i \in \mathcal{V}(X, \omega_X, n, A, p, K)$, 
$$ \|e^{F_i} \|_{L^1\log L^p(X, \omega_X)} \leq K, ~i=1, 2.$$

Let $\mathcal{V}_\theta= \sup\{ u: u\in {\rm PSH}(X, \theta), ~u\leq 0\}$.  Proposition \ref{linf} implies that there exists $C=C(X, \omega_X, n, A, p, K) >0$ such that
$$
 \| \varphi_i - \mathcal{V}_\theta  \|_{L^\infty(X)} \leq C, 
$$
and so 
$$ \| \varphi_1 - \varphi_2  \|_{L^\infty(X)} \leq 2C.$$ The corollary is proved, with $\varphi=\varphi_1-\varphi_2$. 
\end{proof}

%%%%%%%%%%%%%%%%%%%%%%%%%%%%%%%%%%%%%%%%%%%%%%%%%%%%%%%%%

\section{Estimates for the Green's functions} \label{secgreen}

\setcounter{equation}{0}

Throughout this section, we fix the $n$-dimensional K\"ahler manifold $(X, \omega_X)$ and constants $A, K>0$ and $p>n$.   

%\iffalse
The following lemma is a natural extension of Lemma 2 in \cite{GPS}.

\begin{lemma} \label{lemma key}
Suppose $\omega\in \mathcal{V}(X, \omega_X, n, A, p, K)$. Let $v\in L^1(X,\omega^n)$ be a function that satisfies $\int_X v \omega^n= 0$ and 
\begin{equation}\label{eqn:assumption 1}
v\in C^2(\overline{\Omega_{0}}),\quad \Delta_{\omega} v \ge -a \mbox{ in }\Omega_{0}
\end{equation}
for some $a> 0$ and $\Omega_s = \{v> s\}$ is the super-level set of $v$. Then there is a uniform constant $C=C(X, \omega_X, A, p, K)>0$  such that 
\begin{equation}
\label{inequality}
\sup_X v\le C(a+ \frac{1}{[\omega]^n} \int_X |v| \omega^n).
\end{equation}
\end{lemma}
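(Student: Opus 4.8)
The plan is to follow the strategy of Lemma 2 in \cite{GPS}, which establishes an $L^\infty$ bound for a function with a one-sided Laplacian bound via an auxiliary complex Monge--Amp\`ere equation, and to adapt it to the current setting where the background metric $\omega$ is not fixed but only constrained to lie in $\mathcal{V}(X,\omega_X,n,A,p,K)$. First I would reduce to the case where the relevant super-level set is nonempty and $\sup_X v > 0$, and normalize so that, after subtracting the mean (already assumed zero), we work on $\Omega_0 = \{v>0\}$. The key idea is to solve, for the parameter $s \ge 0$, the auxiliary equation
\begin{equation}
(\omega + \ddbar u_s)^n = \frac{\tau_s}{V_\omega}\,\mathbf{1}_{\Omega_s}\,\omega^n \nonumber
\end{equation}
on $X$, where $V_\omega = [\omega]^n$ and $\tau_s = V_\omega / \int_{\Omega_s}\omega^n$ is the normalizing constant, with $\sup_X u_s = 0$; such $u_s \in \mathrm{PSH}(X,\omega)\cap C^0(X)$ exists by Ko\l odziej-type theory. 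Then one tests the inequality $\Delta_\omega v \ge -a$ on $\Omega_s$ against the measure $(\omega+\ddbar u_s)^n$ and integrates by parts, using the concavity/positivity inequality $\mathrm{tr}_{\omega}(\omega+\ddbar u_s)\cdot \omega^n \ge n\,(\omega+\ddbar u_s)\wedge\omega^{n-1}$ together with $-u_s \ge 0$, to obtain a differential inequality for the function $\phi(s) = \int_{\Omega_s}(v-s)\omega^n$ (or an integral of $-u_s$ over $\Omega_s$) of the form that forces $\phi$ to vanish after $s$ exceeds a controlled multiple of $a + V_\omega^{-1}\int_X|v|\omega^n$.

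The quantitative engine is the $L^\infty$ estimate for the Monge--Amp\`ere potential $u_s$: since the right-hand side density $\tau_s \mathbf{1}_{\Omega_s}$ has $L^1\log L^p$ norm controlled by $\tau_s (\log \tau_s)^p \cdot (\text{measure of }\Omega_s)$, which via the entropy bound $\mathcal{N}_{X,\omega_X,p}(\omega)\le K$ and the Chebyshev-type estimate on $\{F\ge 0\}$ translates into a bound depending only on the allowed parameters, Proposition \ref{linf} (applied with a fixed background form in the class, using $[\omega]\cdot[\omega_X]^{n-1}\le A$ and Proposition \ref{cohobd}) yields $\|u_s\|_{L^\infty(X)} \le C(X,\omega_X,A,p,K)\cdot(1+|\log\tau_s|)^{\text{something}}$ or, more cleanly, a bound of the shape $\|u_s\|_{L^\infty}\le C$ as long as $\tau_s$ stays bounded, i.e.\ as long as $\int_{\Omega_s}\omega^n$ is bounded below. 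Combining the integration-by-parts identity with this $L^\infty$ bound on $u_s$ gives the key iteration inequality; a De Giorgi-type iteration (or the direct ODE comparison argument of \cite{GPS}) then closes the estimate and produces \eqref{inequality} with the asserted dependence of $C$.

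The main obstacle I anticipate is handling the degeneration of the normalizing constant $\tau_s$ as $s \to \sup_X v$: when $\int_{\Omega_s}\omega^n \to 0$, the density $\tau_s\mathbf 1_{\Omega_s}$ has blowing-up $L^\infty$ norm and its $L^1\log L^p$ norm is only controlled if one tracks carefully the competition between $\tau_s (\log\tau_s)^p$ and the shrinking volume $|\Omega_s|_\omega$. This is exactly where the entropy exponent $p > n$ is used, and the argument must quantify that the $L^1\log L^p$ norm of $\tau_s \mathbf 1_{\Omega_s}$ stays bounded — or grows slowly enough — so that Proposition \ref{linf} still applies uniformly; this is the step where the hypothesis $\omega \in \mathcal{V}(X,\omega_X,n,A,p,K)$ enters essentially rather than cosmetically. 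A secondary technical point is that $v$ is only assumed $C^2$ on $\overline{\Omega_0}$ with $\Delta_\omega v \ge -a$ there, so all the integrations by parts must be carried out on $\Omega_s \subset \Omega_0$ with care about boundary terms on $\partial\Omega_s$, using that $v = s$ and $\nabla v$ points inward along a.e.\ level set (Sard's theorem), which contributes a favorable sign.
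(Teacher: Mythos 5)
Your broad outline---introduce an auxiliary Monge--Amp\`ere equation whose potential controls super-level sets of $v$, then close a De Giorgi--Kolodziej iteration using the entropy hypothesis---matches the paper's strategy, but two of the specific mechanisms you describe would not close, and the one you flag as ``the main obstacle'' is in fact resolved in the paper by a device you never invoke.

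First, the right-hand side of your auxiliary equation is the normalized characteristic function $\tau_s V_\omega^{-1}\mathbf 1_{\Omega_s}\omega^n$. The paper instead takes (a smooth approximation of) the weighted density $(v-s)^+\,e^F\omega_X^n/A_{s,k}$, where $A_{s,k}\to\int_{\Omega_s}(v-s)e^F\omega_X^n$. This is not cosmetic: the weighting by $(v-s)^+$ is what makes the maximum-principle step work, because the test function $\Phi=-\varepsilon(-\psi_{s,k}+\varphi+\Lambda)^{n/(n+1)}+(v-s)$ is designed so that the $(v-s)$ term on the right-hand side of the MA equation cancels, via the AM--GM inequality $\mathrm{tr}_\omega(\omega_{\psi_{s,k}})\ge n(\omega_{\psi_{s,k}}^n/\omega^n)^{1/n}$, against the $(v-s)$ appearing linearly in $\Phi$. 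With the characteristic function on the right the exponents do not match and the comparison at the maximum of $\Phi$ does not yield $\Phi\le 0$. Relatedly, you frame the key step as ``testing $\Delta_\omega v\ge -a$ against the MA measure and integrating by parts''; the paper's step is a \emph{pointwise} maximum principle at an interior maximum of $\Phi$ in $\Omega_s$, which is why the regularity $v\in C^2(\overline{\Omega_0})$ and the smooth approximation $\eta_k$ are introduced. An integral pairing of $\Delta_\omega v$ with $(\omega+\ddbar u_s)^n$ would also raise the awkward point that $\Delta_\omega v$ is the $\omega$-Laplacian, not the one adapted to the auxiliary metric, so the integration by parts does not organize itself cleanly.

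Second, and this is the genuine gap: you identify the potential blow-up of $\tau_s$ (equivalently, the degeneration of the auxiliary density's entropy as $\int_{\Omega_s}\omega^n\to 0$) as the main difficulty, propose to rely on ``an $L^\infty$ bound on $u_s$ as long as $\tau_s$ stays bounded,'' and leave the resolution to the unquantified assertion that ``this is exactly where $p>n$ is used.'' But the iteration must run $s$ all the way to $\sup_X v$, where $\tau_s\to\infty$, so any argument premised on a uniform $\|u_s\|_{L^\infty}$ bound from Proposition~\ref{linf} will fail precisely in the regime where it is needed. The paper does \emph{not} bound $\|\psi_{s,k}\|_{L^\infty}$. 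Instead, from $\Phi\le 0$ it extracts the pointwise inequality $(v-s)A_{s,k}^{-1/(n+1)}\le C_1(-\psi_{s,k}+\varphi+\Lambda)^{n/(n+1)}$, and then integrates an exponential of both sides using the \emph{$\alpha$-invariant} estimate $\int_X e^{-\alpha\psi_{s,k}}\omega_X^n\le C_\alpha$, which holds uniformly for all $\theta$-PSH $\psi_{s,k}$ with $\sup_X\psi_{s,k}=0$, independently of the right-hand side of the auxiliary MA equation. Combined with Young's inequality and the $L^1\log L^p$ entropy bound on $e^F$, this produces the iteration inequality $r\,\phi(s+r)\le C_2\,\phi(s)^{1+\delta_0}$ with $\delta_0=(p-n)/(np)>0$, and here is where $p>n$ enters. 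The only place an $L^\infty$ bound is used is for the potential $\varphi$ of the \emph{given} metric $\omega$ (Corollary~\ref{linf2}), to make $-\psi_{s,k}+\varphi+\Lambda\ge 1$. So the obstacle you flagged is real for your route and is circumvented in the paper by switching from a sup-norm bound on $\psi_{s,k}$ to the universal exponential integrability of normalized quasi-psh functions; without that substitution your argument does not close.
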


For the convenience of the readers, we sketch the proof of Lemma \ref{lemma key}.

\begin{proof} We follow closely the arguments in \cite{GPS}.

\smallskip
First, we observe that it suffices to prove the lemma in the case $a=1$. This is because both the equation (\ref{eqn:assumption 1}) and the desired inequality (\ref{inequality}) are homogenous under a simultaneous rescaling of $a$ and $v$, $a\to 1$, $v\to {v\over a}$.

\smallskip
Next, we may assume $\| v\|_{L^1(X,\omega^n)}\le [\omega]^n$, otherwise, replace $v$ by $\hat v: =  v \cdot [\omega]^n /\| v\|_{L^1(X,\omega^n)}$ which still satisfies \eqref{eqn:assumption 1} with the same $a=1$ and $\| \hat v\|_{L^1(X,\omega^n)}= [\omega]^n$. It thus suffices to show $\sup_X v\le C$ for some $C>0$ with the dependence as stated in the lemma.   By Proposition \ref{cohobd}, we can choose a smooth closed $(1,1)$-form $\theta\in [\omega]$ with $ \|\theta \|_{C^3(X, \omega_X)}$  uniformly bounded. We let $\omega = \theta + \ddbar \varphi$ for $\varphi\in {\rm PSH}(X,\theta)$ and $\sup_X\varphi = 0$.

\bigskip

\noindent{\bf (1)}. We fix a sequence of positive smooth functions $\eta_k: {\mathbb R}\to {\mathbb R}_+$ such that $\eta_k(x)$ converges uniformly and monotonically decreasingly to the function $x\cdot \chi_{\mathbb R_+}(x)$, as $k\to \infty$. We may choose $\eta_k(x) \equiv 1/k$ for any $x\le -1/2$.  As in \cite{GPT} we make use of auxiliary Monge-Amp\`ere equations.
More precisely, for each $s\ge 0$ and large $k$, we  consider the following specific complex Monge-Amp\`ere equations
\cite{GPS}
\begin{equation}\label{eqn:aMA}
(\theta+ \ddbar \psi_{ s,k}) ^n = [\omega]^n \frac{\eta_k(v - s)}{A_{s, k}} e^F \omega_X^n,\quad \sup_X \psi_{s,k } = 0,
\end{equation}
where 
\begin{equation}\label{eqn:key 1}
A_{s, k} = \int_X {\eta_k(v - s)} e^F \omega_X^n\to  \int_{\Omega_s} (v- s) e^F \omega_X^n=: A_s \mbox{ as }k\to \infty.
\end{equation}
%We remark that the right-hand side of \eqref{eqn:aMA} is smooth and positive, and $[\hat \omega_t]$ is a K\"ahler class, so by Yau's theorem \cite{Y} this equation admits a unique smooth solution $\psi_{t,k}$. 
We have assumed that the open set $\Omega_s\neq \emptyset$ so $A_s>0$. The assumption that $\| v\|_{L^1(X,\omega^n)}/[\omega]^n\le 1$ implies that $A_s\le 1$, hence $A_{s,k}\le 2$ for large $k$.

\bigskip

\noindent{\bf (2)}. Recall that we have assumed that $a=1$. We denote $\Lambda = C + 1$ where $C>0$ is the constant in Corollary \ref{linf2}. Consider the test function 
$$\Phi: = - \varepsilon (-\psi_{s,k} + \varphi + \Lambda)^{\frac n{n+1}} + (v-s),$$ where $\varepsilon>0$ is chosen such that
\begin{equation}\label{eqn:epsilon}
\varepsilon^{n+1} = \Big(\frac{n+1}{n^2} \Big)^n (1+\varepsilon n)^n A_{s,k}.
%\varepsilon = \frac{(a+n)^{n/(n+1)} (n+1)^{n/(n+1)}   }{n^{2n/(n+1)}} A_{s,k}^{1/(n+1)}>0.
\end{equation}
It follows easily from $A_{s,k}\le 2$ and equation \eqref{eqn:epsilon} that \begin{equation}\label{eqn:new epsilon}\varepsilon\le C(n) A_{s,k}^{1/(n+1)},\end{equation} for some $C(n)>0$ depending only on $n$.
The function $\Phi$ is a  $C^2$ function on $\Omega_{0}$ and 
$%\begin{equation}\label{eqn:key 2}
-\psi_{t,k} + \varphi_t + \Lambda\ge 1.$ As shown in \cite{GPS}, it follows from the maximum principle, the choice of $\varepsilon$ in \eqref{eqn:epsilon} and the equations of $\psi_{s,k}$ and $\varphi$ 
 that $\Phi\le 0$ on $X$. 
 \iffalse
 Observe that by the definition of $\Omega_s$ it is clear that $\Phi|_{X\backslash \Omega_s} <0$, so if $\max_{\Omega_s} \Phi \le \sup_{X\backslash \Omega_s} \Phi< 0$, we are done. Otherwise, $\max_{\Omega_s} \Phi > \sup_{X\backslash \Omega_s} \Phi$ and $\Phi$ achieves its maximum at some point $x_0\in \Omega_s$. By maximum principle, $\Delta_{\omega_t} \Phi(x_0)\le 0$. Therefore, we calculate (below we denote $\omega_{t,\psi_{t, k}} = \hat\omega_t + \ddbar \psi_{t,k}$)
\bea
0 &\ge & \Delta_{\omega_t} \Phi(x_0)\nonumber\\
&\nonumber\ge & \frac{\varepsilon n}{n+1} (-\psi_{t,k} + \varphi_t  + \Lambda) ^{-\frac {1}{n+1}} ( \tr_{\omega_t} \omega_{t, \psi_{t, k}} - \tr_{\omega_t} \omega_t ) + \Delta_{\omega_t} v\\
&\ge &\nonumber  \frac{\varepsilon n^2 }{n+1} (-\psi_{t,k} + \varphi_t  + \Lambda) ^{-\frac {1}{n+1}} \Big(\frac{\omega^n _{t,\psi_{t, k}}}{\omega_t^n} \Big)^{1/n} - \frac{\varepsilon n^2}{n+1} - a\\
&\ge &\nonumber  \frac{\varepsilon n^2 }{n+1} (-\psi_{t,k} + \varphi_t  + \Lambda) ^{-\frac {1}{n+1}} \frac{(v-s)^{1/n}}{ A_{s,k} ^{1/n}} - a - \varepsilon n,
\eea
where in the third line we applied the arithmetic-geometric inequality and in the last line we use the equation \eqref{eqn:aMA}. By the choice of $\varepsilon$ in \eqref{eqn:epsilon}, it follows by a straightforward calculation that $\Phi(x_0)\le 0$, as claimed.

\fi

\bigskip

\noindent{\bf (3)}. From $\Phi\le 0$ and \eqref{eqn:new epsilon}, we have $(v-s) A_{s,k}^{-1/(n+1)} \le C_1 (-\psi_{s,k } + \varphi + \Lambda)^{n/(n+1)}$ on $X$, for some $C_1>0$ depending only on $n$. This together with the $\alpha$-invariant and H\"older-Young inequality (see \cite{GPS} for more details) implies that  for some uniform constant $C_2>0$
\begin{equation}\label{eqn:key 5} r\phi(s+r) \le C_2 \phi(s)^{1+\delta_0},\quad \text{for } \forall s\ge 0 \, \text{and } \forall r>0,\end{equation}
where we denote $\delta_0 = \frac{p-n}{np}>0$ and  $\phi(s) = \int_{\Omega_s}e^F \omega_X^n$. 

 %We apply \eqref{eqn:key 4} with $s = 0$ to get
The assumption that $\| v\|_{L^1(X,\omega^n)}\le [\omega]^n$ implies that
$\int_{\Omega_0} v e^F \omega_X^n\le 1$. % \le \frac{1}{c_t} \int_X |v| \omega_t^n\le \frac{V_0 V}{V_t}\le V, $$%C_6  \Big(\int_{\Omega_0} e^F \omega_X^n \Big)^{1+\delta_0}< C_6 V^{1+\delta_0}=:C_7,$$
Hence for any $s>0$ we have
\begin{equation}\label{eqn:key 6}
\phi(s) = \int_{\Omega_s} e^F \omega_X^n \le \frac{1}{s}\int_{\Omega_0}v e^F \omega_X^n \le \frac{1}{s}.
\end{equation}
We can pick  $s_0 =  (2C_2)^{1/\delta_0} $ to ensure that $\phi(s_0)^{\delta_0} \le 1/(2C_2)$. Given \eqref{eqn:key 5}, we can apply the De Giorgi type iteration argument of Kolodziej \cite{K} to conclude that $\phi(s) = 0$ for any $s> S_\infty$ with 
$$S_\infty = s_0 + \frac{1}{1-2^{-\delta_0}}=(2C_2)^{1/\delta_0} +\frac{1}{1-2^{-\delta_0}} .$$
This means that $\sup_X v\le S_\infty$ and the lemma is proved. 
\end{proof}

\smallskip

%\fi

The following corollary is an immediate consequence of Lemma \ref{lemma key}.

\begin{corollary} \label{lemma 3}  
Suppose $\omega\in \mathcal{V}(X, \omega_X, n, A, p, K)$. If $v\in C^2(X)$ satisfies
$$|\Delta_{\omega} v| \le 1\, \mbox{ and } \int_X v \omega^n = 0,$$
then there is a uniform constant $C=C(X, \omega_X, n, A, p, K)>0$ such that
$$\sup_X |v|\le C(1 + \frac{1}{[\omega]^n} \int_X |v| \omega^n ).$$

\end{corollary}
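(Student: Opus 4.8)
The plan is to deduce the corollary directly from Lemma \ref{lemma key}, applied in turn to $v$ and to $-v$. First I would dispose of the trivial case $v \equiv 0$, in which the asserted inequality holds vacuously. If $v$ is not identically zero, then since $\int_X v\,\omega^n = 0$ and $v$ is continuous, the superlevel set $\Omega_0 = \{v > 0\}$ is a nonempty open subset of $X$; this is the only nondegeneracy condition (the hypothesis "$\Omega_s \neq \emptyset$" at level $s=0$) that must hold in order to invoke Lemma \ref{lemma key}.

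Next, I would check the remaining hypotheses of Lemma \ref{lemma key} for the function $v$. Since $v \in C^2(X)$, certainly $v \in C^2(\overline{\Omega_0})$, and the assumption $|\Delta_\omega v| \le 1$ gives $\Delta_\omega v \ge -1$ on all of $X$, in particular on $\Omega_0$, so we may take $a = 1$. We also have $\int_X v\,\omega^n = 0$ by hypothesis. Hence Lemma \ref{lemma key} applies and yields
$$\sup_X v \le C\Big(1 + \frac{1}{[\omega]^n}\int_X |v|\,\omega^n\Big),$$
with $C = C(X,\omega_X,n,A,p,K) > 0$.

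I would then repeat the argument verbatim with $v$ replaced by $-v$: again $-v \in C^2(X)$, $\int_X (-v)\,\omega^n = 0$, and $\Delta_\omega(-v) = -\Delta_\omega v \ge -1$, so Lemma \ref{lemma key} gives $\sup_X(-v) \le C\big(1 + \tfrac{1}{[\omega]^n}\int_X |v|\,\omega^n\big)$ with the same constant (the constant in Lemma \ref{lemma key} depends only on $X,\omega_X,n,A,p,K$, which are unchanged under $v \mapsto -v$). Combining the two estimates through $\sup_X |v| = \max\{\sup_X v,\ \sup_X(-v)\}$ gives the claimed bound.

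Because the corollary is an essentially immediate consequence of the lemma, there is no genuine obstacle here; the only points deserving a moment's attention are the degenerate case $v \equiv 0$ (so that $\Omega_0$ is nonempty when the lemma is invoked) and the observation that the constant produced by Lemma \ref{lemma key} is the same for $v$ and for $-v$.
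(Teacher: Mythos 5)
Your proof is correct and follows exactly the route the paper intends: the paper states Corollary \ref{lemma 3} as an "immediate consequence" of Lemma \ref{lemma key}, and the intended argument is precisely to apply that lemma with $a=1$ to both $v$ and $-v$ and take the maximum. Your brief handling of the degenerate case $v\equiv 0$ (so that $\Omega_0\neq\emptyset$ when the lemma is invoked) is a reasonable point of care that the paper leaves implicit.
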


In order to bound  $ \frac{1}{[\omega]^n} \int_X |v| \omega^n $, we will have to impose a uniform lower bound for the normalized volume form $$([\omega]^n)^{-1}\frac{ \omega^n}{ \omega_X^n}.$$
In particular, we will consider $\omega\in \mathcal{W}(X, \omega_X, n, A, p, K, \gamma)$ for some nonnegative continuous function $\gamma$.

 \begin{lemma}\label{conn} Suppose $\gamma\geq 0$ is a continuous function on $X$ with $\{ \gamma >0 \}$ being connected. Then for any open subset $V \subset \subset \{\gamma>0\}$, there exists a connected open subset $U$ of $X$ with $$V\subset \subset U \subset\subset \{\gamma>0\}. $$
 
 %there exist open and connected exhaustion  of $\{\gamma>0\}$, i.e., $U_1\subset\subset U_2 \subset\subset ...\subset\subset  U_k \subset\subset ... \subset \{\gamma>0\}$, where $U_k$ is open and connected and $\lim_{k\rightarrow \infty} U_k = \{\gamma>0\}$.
 
 \end{lemma}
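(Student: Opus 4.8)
The plan is to exhaust $\{\gamma>0\}$ by an increasing sequence of compact sets and use that a connected open set remains connected after adjoining the right pieces. First, since $\gamma$ is continuous, $\{\gamma>0\}$ is an open subset of $X$; call it $\Omega$, and by hypothesis $\Omega$ is connected. An open connected subset of a manifold is path-connected, hence (being also locally path-connected) it is connected in the strongest sense we need. Fix the given open set $V$ with $\overline V\subset\Omega$ and $\overline V$ compact. Because $\overline V$ is compact and $V\subset\Omega$, we have $\varepsilon_0:=\min_{\overline V}\gamma>0$, so $\overline V\subset\{\gamma\ge \varepsilon_0/2\}\subset\Omega$.

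Next I would produce a \emph{connected} open neighborhood of $\overline V$ inside $\Omega$. Cover $\overline V$ by finitely many open balls $B_1,\dots,B_N$ (in coordinate charts) with $\overline{B_i}\subset\Omega$; then $W_0:=\bigcup_i B_i$ is an open set with $\overline V\subset W_0\subset\subset\Omega$, but $W_0$ need not be connected. To connect its components: for each pair of components, pick points $x,x'$ in them and, using path-connectedness of $\Omega$, a continuous path $\sigma:[0,1]\to\Omega$ joining them; the image $\sigma([0,1])$ is compact in $\Omega$, so it has a positive distance to $\partial\Omega$ (measured in any fixed background metric, e.g. $\omega_X$), and a small tubular open neighborhood $T$ of $\sigma([0,1])$ still satisfies $\overline T\subset\Omega$. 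Adjoining finitely many such tubes $T$ to $W_0$ (only finitely many components occur, since $W_0$ is covered by finitely many balls) yields an open set $U$ with $\overline V\subset W_0\subset U$, $\overline U$ compact, $\overline U\subset\Omega=\{\gamma>0\}$, and $U$ connected because every component of $W_0$ is now joined to a fixed one through a connected tube. Since $V\subset\subset W_0\subset U\subset\subset\{\gamma>0\}$, this $U$ is the desired set.

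The only mildly delicate point is arranging that each connecting tube stays inside $\{\gamma>0\}$; this is handled exactly as above, using that a path in the open set $\Omega$ has compact image and therefore positive distance to the complement of $\Omega$, so a sufficiently thin neighborhood of the path is still contained in $\Omega$. Everything else is soft point-set topology, using only that $X$ is a manifold (hence locally path-connected and locally compact) and that $\Omega$ is open and connected; no properties of $\gamma$ beyond continuity and nonnegativity are needed. I expect the main (very minor) obstacle to be simply writing the tube-construction cleanly, i.e. making precise "thin tubular neighborhood of a compact path inside an open set," which can also be phrased purely metrically as $\{x: \operatorname{dist}_{\omega_X}(x,\sigma([0,1]))<\rho\}$ for $\rho$ small.
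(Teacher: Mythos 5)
Your proof is correct, and it is close in spirit to the paper's argument, but the decomposition is genuinely a little different. You first cover $\overline V$ by finitely many coordinate balls with closures in $\Omega=\{\gamma>0\}$, obtaining a precompact open $W_0\supset\overline V$, and then you restore connectedness by adjoining finitely many thin tubular neighborhoods of paths joining the components of $W_0$. The paper instead fixes a single base point $p\in V$ and, for each $q\in\overline V$, takes a path in $\Omega$ from $p$ to $q$ together with a tubular neighborhood $U_q\subset\subset\Omega$ of that path; compactness of $\overline V$ extracts a finite subcover $U=\bigcup_{j=1}^N U_{q_j}$, which is automatically connected because each $U_{q_j}$ is path-connected and they all contain $p$. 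The paper's version is slightly more economical -- connectedness comes for free from the common base point, so there is no separate "join the components" step and no need to argue that $W_0$ has only finitely many components -- but both arguments rest on exactly the same ingredients (path-connectedness of the open connected set $\Omega$, compactness of $\overline V$, and the fact that a compact path in an open set admits a precompact tubular neighborhood still inside that open set), and yours is complete as written.
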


\begin{proof} Obviously, $\{\gamma>0\}$ is path connected since it is open and connected. We choose a fixed base point $p\in V$. For any $q\in \overline{V}$, there exists a continuous path $\mathcal{C}$ joining $p$ and $q$ in $\{\gamma >0\}$. We can find an open tubular neighborhood $U_q$ of $\mathcal{C}$ such that $U_q \subset\subset \{\gamma>0\}$. Then 
$$\overline{V} \subset \cup_{q\in \overline{V}} U_q$$ 
and we can find finitely many $q_1, q_2, ..., q_N \subset \overline{V}$ such that 
$$\overline{V} \subset U=\cup_{j=1}^N U_{q_j}.$$ 
Then $U\subset \subset \{\gamma>0\} $ is open and connected since every  $U_{q_j}$ is path connected with a common point $p$. The lemma is then proved.  
\end{proof}

\begin{lemma} \label{lemma 6} There exists $\varepsilon=\varepsilon(X, \omega_X, n, A, p, K)>0$ such that if 
\begin{enumerate}

\item $\gamma\geq 0$ is a continuous function on $X$ such that 
$$|\{\gamma=0\}|_{\omega_X} \leq \epsilon, ~ \{\gamma >0 \}~ \text{ is  connected,}$$

\item $\omega\in \mathcal{W}(X, \omega_X, n, A, p, K; \gamma)$,

\item  $v\in C^2(X)$ satisfies
$$|\Delta_{\omega} v| \le 1\, \mbox{ and } \int_X v \omega^n = 0,$$
\end{enumerate} 
then there exists $C=C(X, \omega_X, n, A, p, K, \varepsilon, \gamma)>0$ such that
$$\frac{1}{[\omega]^n} \int_X |v| \omega^n\le C.$$

\end{lemma}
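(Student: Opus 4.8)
The plan is to bound $\frac{1}{[\omega]^n}\int_X|v|\,\omega^n$ by combining a Poincar\'e-type inequality on a fixed connected neighborhood $U\subset\subset\{\gamma>0\}$ (where the volume form $\omega^n$ is comparable to $\omega_X^n$) with the weak Harnack / De Giorgi machinery already available through Corollary \ref{lemma 3}, which controls $\sup_X|v|$ in terms of the same $L^1$ quantity we are trying to estimate. The key observation is that on $\{\gamma>0\}$ the constraint $\omega\in\mathcal{W}(X,\omega_X,n,A,p,K;\gamma)$ forces $([\omega]^n)^{-1}\omega^n/\omega_X^n\ge\gamma$, so on any $U\subset\subset\{\gamma>0\}$ we get a two-sided comparison between $\omega^n$ and $[\omega]^n\omega_X^n$, with constants depending only on $\inf_U\gamma$ (and the upper bound comes from the entropy bound $K$ via a standard argument, or just from the fact that $\omega^n\le C\omega_X^n$ is false pointwise but the relevant integrals are controlled). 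Since $|\{\gamma=0\}|_{\omega_X}\le\varepsilon$, choosing $U$ appropriately via Lemma \ref{conn} we can make $\omega_X^n(X\setminus U)$ as small as we like relative to $\varepsilon$.

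First I would normalize: assume for contradiction (or by a scaling/compactness argument) that $M:=\frac{1}{[\omega]^n}\int_X|v|\,\omega^n$ is large. By Corollary \ref{lemma 3}, $\sup_X|v|\le C(1+M)$, so in particular $v$ cannot be too negative: $\inf_X v\ge -C(1+M)$. Next, I would run the iteration of Lemma \ref{lemma key} not just for $v$ but also for $-v$ (both satisfy $|\Delta_\omega v|\le 1$), which already gives $\sup_X|v|\le C(1+M)$ — but the real content must come from improving this to $\sup_X|v|\le C$. The standard trick (as in \cite{GPS}) is: on the set $U\subset\subset\{\gamma>0\}$ one has a genuine Poincar\'e inequality for $\omega_X$, hence (using $\omega^n\sim[\omega]^n\omega_X^n$ on $U$) a Poincar\'e inequality for $\omega$ restricted to $U$; integrating $|\Delta_\omega v|\le 1$ against a suitable cutoff and using that $\int_X v\,\omega^n=0$ forces the average of $v$ over $U$ (w.r.t. $\omega^n$) to be controlled — more precisely $\left|\int_U v\,\omega^n\right| = \left|\int_{X\setminus U}v\,\omega^n\right|\le \|v\|_{L^\infty}\cdot\omega^n(X\setminus U)$, and $\omega^n(X\setminus U)\le$ (entropy-controlled bound) $\cdot$ small. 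One then controls the oscillation of $v$ on $U$ by the gradient, the gradient by the Laplacian bound plus Green's function estimates on $(U,\omega_X)$, all with constants independent of $\omega$. Feeding this back, $\|v\|_{L^1(X,\omega^n)}$ gets bounded by $C+o(1)\cdot\|v\|_{L^\infty}\le C+o(1)\cdot C(1+M)$, which for $\varepsilon$ small enough absorbs the $M$ on the right and yields $M\le C$.

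The main obstacle I expect is making the ``$\omega^n(X\setminus U)$ is small'' step quantitatively honest: a priori $\omega^n$ can concentrate mass near $\{\gamma=0\}$, and the only control there is the entropy bound $\mathcal{N}_{X,\omega_X,p}(\omega)\le K$ rather than a pointwise upper bound on $\omega^n/\omega_X^n$. The resolution is that $L^1\log L^p$ integrability with $p>n>1$ gives, by a Young/Jensen argument, that $\int_E e^F\omega_X^n\le\Psi(|E|_{\omega_X})$ for a modulus $\Psi$ with $\Psi(0^+)=0$ depending only on $K$ and $p$ — equi-integrability of the normalized volume forms. Thus, choosing $U$ with $|X\setminus U|_{\omega_X}$ small (which is possible since $|\{\gamma=0\}|_{\omega_X}\le\varepsilon$ and $U$ can be taken to exhaust $\{\gamma>0\}$ up to a set of arbitrarily small $\omega_X$-measure, using Lemma \ref{conn}), we get $([\omega]^n)^{-1}\omega^n(X\setminus U)=\int_{X\setminus U}e^F\omega_X^n\le\Psi(\varepsilon')$ small. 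The other, more routine obstacle is that the Poincar\'e constant and Green's-function bounds on $(U,\omega_X)$ must be taken on a connected $U$ — which is exactly why Lemma \ref{conn} was proved just beforehand — and must be independent of $\omega$; since $\omega_X$ is fixed and $U$ can be chosen once and for all (depending on $\gamma$, $\varepsilon$), this is immediate. Putting these together with the two-sided volume comparison on $U$ and Corollary \ref{lemma 3} closes the argument; the constant $\varepsilon$ is determined by requiring $\Psi(\varepsilon')$ times the constant from Corollary \ref{lemma 3} to be less than $\tfrac12$.
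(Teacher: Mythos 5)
Your overall strategy matches the paper's: pass to a connected open set $U \subset\subset \{\gamma>0\}$ via Lemma \ref{conn}, exploit equi-integrability of $e^F$ from the entropy bound, use Corollary \ref{lemma 3} for the $L^\infty$ bound, and transfer gradient control from $\omega$ to $\omega_X$ on $U$. But two of your specific steps do not survive scrutiny as written.

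First, the gradient bound on $U$. You propose to control $\nabla v$ on $U$ via ``the Laplacian bound plus Green's function estimates on $(U,\omega_X)$,'' but the hypothesis is $|\Delta_\omega v|\le 1$, not $|\Delta_{\omega_X}v|\le 1$; the Green's function of the fixed metric $\omega_X$ on $U$ gives you nothing directly about a function whose Laplacian is controlled only in the variable metric $\omega$. The correct route, which is what the paper uses, is the Dirichlet-energy identity $\frac{1}{[\omega]^n}\int_X |\nabla v|_\omega^2\,\omega^n = -\frac{1}{[\omega]^n}\int_X v\,\Delta_\omega v\,\omega^n \le \sup_X|v|$, followed by Cauchy--Schwarz together with the two pointwise lower bounds available on $U$ (namely $e^F\ge \inf_U\gamma>0$ and $\int_X\omega_X^{n-1}\wedge\omega\le A$) to conclude that $\int_U|\nabla v|_{\omega_X}\,\omega_X^n$ is controlled. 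Your stated mechanism is a non sequitur.

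Second, the quantitative ``absorb the $M$'' step. To compare $\int_U|v-\bar v_U|\,e^F\omega_X^n$ (which appears in the $\omega$-weighted mass) with $\int_U|v-\bar v_U|\,\omega_X^n$ (which Poincar\'e produces), you must use a H\"older--Young inequality of the form $|u|e^F\le \delta e^F(1+|F|+|\log\delta|)+C\delta^{-1}|u|e^{\delta^{-1}|u|}$. With $\sup_X|v|\sim C(1+M)$ this produces a factor $e^{\,c\,\delta^{-1}M}$ on the right-hand side, which destroys any attempt to absorb terms linear in $M$. You mention the fix parenthetically — ``or by a scaling/compactness argument'' — and it is not optional: one must first normalize $\tilde v = v/M$ so that $\sup_X|\tilde v|\le C$, $|\Delta_\omega\tilde v|\le 1/M$, and $\frac{1}{[\omega]^n}\int_X|\tilde v|\,\omega^n=1$, at which point the Young estimate has uniformly bounded constants and the Dirichlet energy tends to zero. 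This is exactly the contradiction setup in the paper. The paper then differs from you in a minor way on $U$: instead of Poincar\'e it uses Rellich compactness in $W^{1,1}(U,\omega_X)$ and Weyl's lemma to identify the $L^1$-limit as a harmonic function with vanishing gradient, hence a constant on the connected set $U$; one then derives, via H\"older--Young, a lower bound $\int_U|\tilde v_j|\,\omega_X^n\gtrsim 1$ and an upper bound $|\tilde v_\infty|\ll 1$ on the limiting constant, which are incompatible. Your Poincar\'e route would also work once the normalization is in place, but as presented the argument does not close.
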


\begin{proof} The proof is by contradiction. Suppose Lemma \ref{lemma 6} fails.  Then there exist a sequence of $\omega_j \in \mathcal{W}(X, \omega_X, n, A, p, K; \gamma)$ and $v_j\in C^2(X)$ satisfying 
$$\Delta_{\omega_{j}} v_j = h_j,\quad \int_X v_j \omega_{j}^n = 0,$$
for some $|h_j|\le 1$, and as $j\to\infty$
\bea
\frac{1}{[\omega_j]^n} \int_X |v_j| \omega_{j}^n = : N_j \to \infty.
\eea
We define $F_j$ by 
$$e^{ F_j} = \left([\omega_j]^n \right)^{-1} \frac{\omega_j^n}{\omega_X^n}$$
and immediately we have
$$e^{F_j} \geq \gamma. $$

\noindent We now consider $\tilde v_j$ defined by $\tilde v_j = v_j/N_j$.  Clearly we have
$$|\Delta_{\omega_{j}} \tilde v_j| = |h_j|/ N_j\to 0,~  \frac{1}{[\omega_j]^n} \int_X |\tilde v_j| \omega_{j}^n = 1.$$

\noindent Applying Lemma \ref{lemma 3} to $\tilde v_j$, there exists a uniform $C>0$ such that for all $j\geq 1$,  
\begin{equation}\label{vjbd}
\sup_X |\tilde v_j|\le C.
\end{equation}
From the equation of $\tilde v_j$ and integration by parts, we see that
\begin{equation}\label{smallc}
 \int_X |\nabla \tilde v_j|_{\omega_{j}}^2 e^{F_{j}} \omega_X^n=\frac{1}{[\omega_j]^n}\int_X |\nabla \tilde v_j|_{\omega_{j}}^2 \omega_{j}^n  \to 0.  
 \end{equation}

Let $U_0=\{\gamma >0\}$. Suppose $|\{ \gamma=0\}|_{\omega_X} < \varepsilon$ for some sufficiently small $\varepsilon>0$ to be determined. Then for any $\varepsilon>0$, we can pick open connected subsets $U_{3\varepsilon} \subset \subset U_{2\varepsilon} \subset \subset\{\gamma>0\}$ as in Lemma \ref{conn} such that 
 $$|X\setminus U_{2\varepsilon}|_{\omega_X} < 2\varepsilon, ~ |X\setminus U_{3\varepsilon}|_{\omega_X} < 3\varepsilon.$$ 
Without loss of generality, we can assume both $U_{2\varepsilon}$ and $U_{3\varepsilon}$ have smooth boundaries. Let 
\begin{equation}
\delta_\varepsilon= \inf_{U_{2\varepsilon}} \gamma. 
\end{equation}
Then we have  $\inf_{U_{2\varepsilon}} e^{F_j} \geq \delta^{-1}$  and 
\begin{eqnarray*}
\int_{ U_{2\varepsilon}}  |\nabla \tilde v_j|_{\omega_X} \omega_X^n 
&\le& \Big(\int_{  U_{2\varepsilon}} |\nabla \tilde v_j|_{\omega_j }^2 e^{F_j} \omega_X^n \Big)^{1/2} \Big( \int_{  U_{2\varepsilon}}   e^{-F_j} \tr_{\omega_X} (\omega_j)  \omega_X^n \Big)^{1/2}\\
&\le& \delta_\varepsilon^{-1/2} \Big(\int_X |\nabla \tilde v_j|_{\omega_j }^2 e^{F_j} \omega_X^n \Big)^{1/2} \Big( \int_X   \omega_X^{n-1} \wedge \omega_j \Big)^{1/2} \\
&\le& (A\delta_\varepsilon)^{-1/2} \Big(\int_X |\nabla \tilde v_j|_{\omega_j }^2 e^{F_j} \omega_X^n \Big)^{1/2}  \to  0  
\end{eqnarray*}
by (\ref{smallc}) as $j \rightarrow \infty$. 
Therefore $\tilde v_j $ is uniformly bounded in $W^{1,1}(U_{2\varepsilon}, \omega_X)$ by the above estimate and (\ref{vjbd}).  By the Sobolev embedding theorem, after passing to a subsequence, we can assume that $\tilde v_j$ converges to $\tilde v_\infty$ in $L^1(  \overline{U_{3\varepsilon}}, \omega_X)$. Furthermore, since $\tilde v_j$ is uniformly bounded in $L^\infty(\overline{U_{3\varepsilon}})$ and converges almost everywhere to $\tilde v_\infty$ in $\overline{U_{3\varepsilon}}$, $\tilde v_\infty$ is also  bounded in $L^\infty(\overline{U_{3\varepsilon}}, \omega_X)$. 

Since $\lim_{j \rightarrow \infty} \int_{U_{2\varepsilon}} |\nabla \tilde v_j|_{\omega_X} \omega_X^n =0$, we have for any test function $f\in C_0^\infty(U_{2\varepsilon})$
\begin{eqnarray*}
\left| \int_X \tilde v_j  (\Delta f) \omega_X^n \right| 
&=&   \left| \int_X \langle \nabla \tilde v_j , \nabla f \rangle_{\omega_X} \omega_X^n \right| \\
&\leq& (\sup_X|\nabla f|_{\omega_X}) \int_{U_{2\varepsilon}}  |\nabla \tilde v_j |_{\omega_X} \omega_X^n \rightarrow  0
\end{eqnarray*}
as $j \rightarrow \infty$, where $\Delta$ is the Laplace operator with respect to $\omega_X$. Therefore by Weyl's lemma for the Laplace equation, $\tilde v_\infty$ solves the Laplace equation $\Delta \tilde v_\infty=0$ on $ U_{3\varepsilon}$   and $\tilde v_\infty \in C^\infty(U_{3\varepsilon})$.  For any smooth vector field $Y\in C_0^\infty(U_{3\varepsilon})$, we have 
 \bea\nonumber
 \int_X \langle \nabla \tilde v_\infty , Y\rangle_{\omega_X}\omega_X^n & = & - \int_X \tilde v_\infty \cdot {\mathrm{div}}_{\omega_X} Y \omega_X^n\\
 & = &- \lim_{j\to \infty} \int_X \tilde v_j \cdot {\mathrm{div}}_{\omega_X} Y \omega_X^n\nonumber\\
 & = &\lim_{j\rightarrow \infty} \int_X \langle \nabla \tilde v_j, Y\rangle_{\omega_X} \omega_X^n=0. \nonumber
 \eea
 Here the first and third lines follow from the divergence theorem, and the second equality holds since $\tilde v_j$ converge in $L^1 (\overline{U_{3\varepsilon}})$ to $\tilde v_\infty$ and the function ${\mathrm{div}}_{\omega_X} Y$ is compactly supported in $U_{3\varepsilon}$. By taking $Y = \eta \nabla\tilde v_\infty$ for any nonnegative function $\eta\in C^\infty_0(U_{2\delta})$, we see immediately that $\nabla \tilde v_\infty  \equiv 0$ on $U_{3\varepsilon}$, and hence $\tilde v_\infty$ is constant on $U_{3\varepsilon}$ since $U_{3\varepsilon}$ is connected.

 We first derive a uniform positive lower bound for  $\int_{U_{3\varepsilon}} |\tilde v_j| \omega_X^n$.  In fact, by the H\"older-Young inequality,  there exists $C>0$ such that for any $\delta>0$ and smooth functions $u$ and $F$, we have 
\bea\label{eqn:Young}|u| e^F = |\delta^{-1} u | e^{F + \log \delta}\le \delta e^{F} ( 1+ |F| + |\log \delta|) + C \delta^{-1} |u| e^{\delta^{-1} |u|}.\eea
Applying (\ref{eqn:Young}), we have 
\begin{eqnarray*}
1 &=& \frac{1}{[\omega_j]^n} \int_X |\tilde v_j | \omega_j^n = \int_X |\tilde v_j| e^{F_j}\omega_X^n \\
&\le  & \delta \int_{  X} e^{F_j} ( 1+ |F_j| + |\log \delta|) \omega_X^n + C \delta^{-1} \int_{  X} |\tilde v_j| e^{\delta^{-1} |\tilde v_j|} \omega_X^n  \\
&\le  & \delta \int_{  X} e^{F_j} ( 1+ |F_j| + |\log \delta|) \omega_X^n + C \delta^{-1} \int_{  U_{3\varepsilon}} |\tilde v_j| e^{\delta^{-1} |\tilde v_j|} \omega_X^n + Ce^{C\delta^{-1} }\int_{X\setminus U_{3\varepsilon}} \omega_X^n 
\\
&\le  & \delta \int_{ X} e^{F_j} ( 1+ |F_j| + |\log \delta|) \omega_X^n + C \delta^{-1} \int_{  U_{3\varepsilon}} |\tilde v_j| e^{\delta^{-1} |\tilde v_j|} \omega_X^n + 2 \varepsilon Ce^{C\delta^{-1} } \\
&\le   & \frac{1}{2} + C_\delta ([\omega_X]^n)^{-1}\int_{  U_{3\varepsilon}}  |\tilde v_j| \omega_X^n ,
\end{eqnarray*}
for all $j$ and some uniform constants $C=C(A, p, K)$ and $C_\delta=C_\delta(A, p, K)>0$, if we choose $\delta=\delta(A, p, K)>0$ sufficiently small such that 
$$ \delta  \int_{ X} e^{F_j} ( 1+ |F_j| + |\log \delta|) \omega_X^n< \frac{1}{4}, $$
and then choose %
\begin{equation}\label{ep1}
\varepsilon < \varepsilon_1= \frac{e^{-C\delta^{-1} }}{8C} .
\end{equation}
 Immediately we have  %
\begin{equation}\label{lowerbdv}
([\omega_X]^n)^{-1} \int_{U_{3\varepsilon}}  |\tilde v_j| \omega_X^n\ge \left( 2C_\delta\right)^{-1}
\end{equation}
 for sufficiently large $j$.

On the other hand,  we can extend $\tilde v_\infty$ to a constant function on $X$. By applying the H\"older-Young inequality again and by the fact that on $U_{2\varepsilon}$ is connected, for any $\epsilon'>0$, there exists $C_1>0$ 
and $C_2=C_2(A, p, K)>0$ 
such that for sufficiently large $j$, we have
\begin{eqnarray*}
|\tilde v_\infty|   
&=& \left| \frac {1}{[\omega_j]^n} \int_{ X} \left( \tilde v_\infty - \tilde v_j\right) \omega_j^n \right| \\
& \leq & \frac {1}{[\omega_j]^n} \int_{U_{3\varepsilon}} \left| \tilde v_\infty - \tilde v_j \right| \omega_j^n   + \frac{1}{[\omega_j]^n}\int_{X\setminus U_{3\varepsilon}}   |v_\infty-\tilde v_j|   \omega_j^n\\
&= &   \int_{U_{3\varepsilon}} \left| \tilde v_\infty - \tilde v_j\right| e^{F_j}\omega_{X}^n +\int_{X\setminus U_{3\varepsilon}}   |v_\infty-\tilde v_j|   e^{F_j}\omega_X^n \\
&\le & \epsilon' \int_{X} e^{F_j} (1+ |F_j| + |\log \epsilon' |) \omega_X^n + C_1 (\epsilon')^{-1}e^{(\epsilon')^{-1}\sup_X |\tilde v_j-\tilde v_\infty|} \int_{ U_{3\varepsilon}}  |\tilde v_j - \tilde v_\infty|  \omega_X^n \\
&&  + C_1(\epsilon')^{-1} e^{(\epsilon')^{-1}  \sup_X  \left( |v_\infty-\tilde v_j| \right) } \int_{X \setminus U_{3\varepsilon}}    |v_\infty-\tilde v_j|   \omega_X^n \\
&\le &  C_2  (\epsilon')^{1/2} + e^{C_2 ( \epsilon')^{-1}} \varepsilon \\
&<& (4C_\delta)^{-1},
\end{eqnarray*}
if we choose $\epsilon'$  and $\varepsilon$ with 
\begin{equation}\label{ep2}
\epsilon'  < (8C_2C_\delta)^{-2}  , \text{ and }\varepsilon <  \varepsilon_2=\left(8  e^{C_2 ( \epsilon')^{-1}}  C_\delta\right)^{-1} , 
\end{equation}

Since $\tilde v_j$ converges to $\tilde v_\infty$ in $L^1( \overline{U_{3\varepsilon}})$,  for sufficiently large $j$, we have 
$$ ([\omega_X]^n)^{-1}\int_{  U_{3\varepsilon}}  |\tilde v_j| \omega_X^n <  ([\omega_X]^n)^{-1} \int_{  U_{3\varepsilon}}  |\tilde v_\infty| \omega_X^n + (4C_\delta)^{-1}<  (2C_\delta)^{-1}.$$
This contradicts the lower bound (\ref{lowerbdv}).
From now on, we will fix the choice for 
$$\varepsilon=\frac{\min(\varepsilon_1, \varepsilon_2)}{2}$$ from (\ref{ep1}) and (\ref{ep2}) for the parameter $\varepsilon$ in the assumption of the lemma. 

We have now completed the proof of the lemma.
\end{proof}

Let $\omega$ be a K\"ahler metric on $X$.  We let $G(x,\cdot)$ be the Green's function of $(X,\omega)$ with base point $x$, for any $x \in X$.

\begin{lemma}\label{lemma Green} There exists $\varepsilon=\varepsilon(X, \omega_X, n, A, p, K)>0$ such that if 
\begin{enumerate}

\item $\gamma\geq 0$ is a continuous function on $X$ such that 
$$|\{\gamma=0\}|_{\omega_X} \leq \varepsilon, ~ \{\gamma >0 \}~ \text{is connected},$$

\item $\omega\in \mathcal{W}(X, \omega_X, n, A, p, K; \gamma)$,

\end{enumerate} 
then there exists   $C=C(X, \omega_X, n, A, p, K, \varepsilon, \gamma)>0$ such that for any $x\in X$
$$\int_X |G(x,\cdot)| \omega^n \le C,\, \mbox{ and } \inf_{y\in X} G(x, y) \ge - \frac{C}{[\omega]^n},$$
where $G(x, \cdot)$ is the Green's function of $(X, \omega)$. 
\end{lemma}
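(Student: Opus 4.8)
The strategy is to use the representation of the Green's function via the heat-type elliptic estimate of Lemma \ref{lemma 6}, combined with the standard trick that bounds on $\int_X |G(x,\cdot)|\omega^n$ follow from applying an a priori estimate to a suitable truncation or regularization of $G(x,\cdot)$. First I would fix $x\in X$ and recall that $G(x,\cdot)$ satisfies $\Delta_\omega G(x,\cdot) = -\delta_x + ({\rm Vol}_\omega(X))^{-1}$ with $\int_X G(x,\cdot)\,\omega^n = 0$ (after the usual normalization). The function $G(x,\cdot)$ is smooth away from $x$ and has a logarithmic-type singularity at $x$, so it is not directly in $C^2(X)$; the point is that away from the diagonal it satisfies $\Delta_\omega G(x,\cdot) = ({\rm Vol}_\omega(X))^{-1}$, which is a small positive constant of size $([\omega]^n)^{-1}$, uniformly comparable to a constant since $[\omega]^n = V_\omega$ is controlled (the volume is pinned between two positive constants depending only on $A$, using Corollary \ref{abd} and the entropy bound). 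Hence, after rescaling, $v := ([\omega]^n)\, G(x,\cdot)$ morally satisfies $\Delta_\omega v \ge -1$ off the singular point, with a one-sided bound that is all that Lemma \ref{lemma key} requires.

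The key steps, in order: (1) Establish the uniform two-sided bound $c \le [\omega]^n \le C$ with $c,C$ depending only on the stated data; this follows because $[\omega]\cdot[\omega_X]^{n-1}\le A$ together with Corollary \ref{abd}(2) gives $C[\omega_X]-[\omega]$ Kähler, so $[\omega]^n$ is bounded above, and the $p$-Nash entropy bound prevents $[\omega]^n$ from collapsing (via Jensen's inequality applied to $e^F$, since $\int_X e^F \omega_X^n = 1$ and the entropy controls the deviation of $F$ from its mean). (2) Apply Lemma \ref{lemma key} to $v = -[\omega]^n\, G(x,\cdot) + (\text{const})$, or more precisely to a mollified version $G_\varepsilon$ obtained by smoothing out the singularity, noting that $\Delta_\omega(-G(x,\cdot)) = \delta_x - ({\rm Vol}_\omega(X))^{-1} \ge -C$ as a distribution, so the hypothesis $\Delta_\omega v \ge -a$ holds on $\Omega_0 = \{v > 0\}$ with $a = O(1)$ after the rescaling; here one uses that the super-level sets of $-G(x,\cdot)$ avoid the singular point $x$ where $G$ tends to $+\infty$, so on $\Omega_0$ the function $v$ is genuinely $C^2$ and the distributional inequality is a classical one. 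This yields $\sup_X(-G(x,\cdot)) \le C(1 + ([\omega]^n)^{-1}\int_X |G(x,\cdot)|\omega^n)$. (3) Close the loop: to control $([\omega]^n)^{-1}\int_X |G(x,\cdot)|\omega^n$ apply Lemma \ref{lemma 6} with the choice $v = [\omega]^n\,G(x,\cdot)$ (which has $\int_X v\,\omega^n = 0$ and $|\Delta_\omega v|\le 1$ away from $x$, handled again by mollification), giving $([\omega]^n)^{-1}\int_X |G(x,\cdot)|\omega^n \le C$. Combining (2) and (3) gives $-\inf_{y} G(x,y) \le C/[\omega]^n$, and both displayed inequalities follow.

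The main obstacle I expect is the presence of the singularity of the Green's function at $x$ and making rigorous the passage from the distributional inequality $\Delta_\omega(-G(x,\cdot))\ge -C$ to an honest application of Lemma \ref{lemma key} and Lemma \ref{lemma 6}, which are stated for $C^2$ functions. The clean way around this is to work with the truncations $G^{(M)} := \max\{G(x,\cdot), -M\}$ — no, better: $G(x,\cdot)$ is bounded above near $x$... actually $G(x,\cdot)\to+\infty$ at $x$ for the sign convention with $-\delta_x$; so one truncates from above, $G_M := \min\{G(x,\cdot), M\}$, which is $\omega$-subharmonic plus a bounded error off a neighborhood of $x$ and globally satisfies a one-sided Laplacian bound in the viscosity/distributional sense, then one applies the argument of Lemma \ref{lemma key} directly to $G_M$ (the De Giorgi iteration there only uses super-level sets $\{v>s\}$ for $s>0$, which for $G_M$ are contained in $\{v>0\}$ and the test-function/maximum-principle argument goes through with $G_M$ in place of $v$ since the obstruction to $C^2$ is only at the single point $x$ which does not lie in any super-level set of $-G_M$), obtaining bounds uniform in $M$, and finally lets $M\to\infty$ using monotone convergence. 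The $W^{1,1}$-type bound on $\nabla G(x,\cdot)$ is not asserted in this particular lemma (it appears later), so I would not need to address it here. Once these technical points are in place, the constants track through exactly as in Lemma \ref{lemma 6}, with the $\varepsilon$ of the present lemma taken to be the $\varepsilon$ furnished by Lemma \ref{lemma 6}.
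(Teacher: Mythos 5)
Your overall scaffolding is close to the paper's, but step (3) --- the $L^1$ bound on $G(x,\cdot)$ --- has a genuine gap that truncation or mollification does not repair, and this is precisely where the paper uses an idea you did not find.

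The obstacle is the following. Lemma \ref{lemma 6} requires a \emph{two-sided} bound $|\Delta_\omega v|\le 1$ for a $C^2$ function $v$. For $v=[\omega]^n\,G(x,\cdot)$ this is of course false near $x$. Neither of your two proposed fixes restores the hypothesis. Mollifying $G$ (equivalently, replacing $\delta_x$ by a smooth bump $\rho_\varepsilon$) produces a function whose Laplacian has size $\sim\varepsilon^{-2n}$ near $x$, so $|\Delta_\omega v_\varepsilon|\le1$ fails badly and the constant in Lemma \ref{lemma 6} degenerates as $\varepsilon\to0$. Truncating from above, $G_M=\min\{G(x,\cdot),M\}$, one has $\Delta_\omega G_M=V_\omega^{-1}$ on $\{G<M\}$ and $\Delta_\omega G_M=0$ on $\{G>M\}$, but the min of a smooth function with a constant produces a \emph{negative singular measure} supported on the interface $\{G=M\}$ (the normal derivative of $G$ jumps there), so again $\Delta_\omega G_M\ge -1$ fails as a distribution. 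You correctly observe that Lemma \ref{lemma key} applied to $-[\omega]^n G_M$ avoids both problems, because the super-level sets of $-G_M$ sit inside $\{G<0\}$, away from $x$ and the truncation locus; but Lemma \ref{lemma key} only returns $\sup(-G)\le C\bigl(1+\tfrac{1}{[\omega]^n}\int_X|G|\omega^n\bigr)$, so it cannot be used to \emph{establish} the $L^1$ bound --- that would be circular.

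What the paper does instead is a dual (adjoint) argument that sidesteps the singularity entirely: pick smooth functions $h_k$ with $\sup_X|h_k|\le2$ and $\int_X h_k\,\omega^n=0$ that converge in $L^q$ to $-\chi_{\{G(x,\cdot)\le0\}}+V_\omega^{-1}\int_{\{G\le0\}}\omega^n$, and solve $\Delta_\omega v_k=h_k$, $\int_X v_k\,\omega^n=0$. Each $v_k$ is globally smooth with $|\Delta_\omega v_k|\le2$, so Lemma \ref{lemma 6} combined with Corollary \ref{lemma 3} gives $\sup_X|v_k|\le C$ uniformly in $k$. The Green representation then yields $v_k(x)=\int_X G(x,\cdot)(-h_k)\,\omega^n\to\int_{\{G\le0\}}G(x,\cdot)\,\omega^n$, and since $\int_X G(x,\cdot)\,\omega^n=0$ the two half-integrals cancel, giving $\int_X|G(x,\cdot)|\omega^n=-2\int_{\{G\le0\}}G(x,\cdot)\,\omega^n\le2C$. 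Only after that is Lemma \ref{lemma key} applied to $v=-[\omega]^n G(x,\cdot)$ with $a=1$ to obtain the lower bound on $\inf_y G(x,y)$. Without this dual step, your argument does not close.

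Two smaller points. Your step (1) claims the entropy bound gives a lower bound on $[\omega]^n$ via Jensen. This is not correct: the $p$-Nash entropy $\mathcal N_{X,\omega_X,p}(\omega)$ is invariant under $\omega\mapsto\lambda\omega$, and $\int_X e^F\omega_X^n=1$ holds identically by the normalization, so neither the entropy bound nor the membership in $\mathcal W$ rules out $[\omega]^n\to0$. Fortunately no positive lower bound on $[\omega]^n$ is used anywhere in the proof --- the conclusion is deliberately phrased with the explicit factor $1/[\omega]^n$, and all estimates from Lemma \ref{lemma key} and Lemma \ref{lemma 6} carry this factor through. Finally, the $W^{1,1}$ bound on $\nabla G$ is indeed not part of this lemma, as you say; it is taken up in Lemmas \ref{lemma 65}--\ref{lemma gradient final}.
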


\begin{proof}

We now fix $\omega\in \mathcal{W}(X, \omega_X, n, A, p, K; \gamma)$ satisfying the assumption and $x\in X$. Take a sequence of smooth functions $h_k$ which are uniformly bounded and  converge  in $L^q(X, \omega)$ for some fixed sufficiently large $q>0$, to $-\chi_{\{G(x,\cdot) \le 0\}} + \frac{1}{[\omega]^n}\int_{\{G(x,\cdot) \le 0\}} \omega^n$, where  we denote $\chi_E$ to be the characteristic function of a Borel set $E$. We can also choose $h_k$ to satisfy 
$$\sup_X |h_k|\le 2,\quad\mbox{and }\quad \frac{1}{[\omega]^n} \int_X h_k \omega^n = 0.$$
Immediately, there exists a unique smooth solution  solving  the linear equation
$$\Delta_{\omega} v_k = h_k, \quad \frac{1}{[\omega]^n} \int_X v_k \omega^n = 0.$$
By Lemma \ref{lemma 6}, there exists $C>0$ independent of $k$ such that  
$$\sup_X |v_k|\le C .$$
 Applying the Green's formula, we have by the dominated convergence theorem
$$v_k(x) = \int_X G(x,y) (-h_k(y)) \omega^n(y) \to \int_{\{G(x,\cdot)\le 0\}} G(x,\cdot) \omega^n$$
as $k\rightarrow \infty$.  Combining this with the fact that 
$$\int_{\{G(x,\cdot)\ge  0\}} G(x,\cdot) \omega^n = -  \int_{\{G(x,\cdot)\le 0\}} G (x,\cdot) \omega^n.$$
we easily find that $\int_X |G(x,\cdot)|\omega^n\le C$.

\smallskip

For the lower bound of the Green's function, we apply Lemma \ref{lemma key} to the function $v := - [\omega]^n \cdot G(x,\cdot)$ and $a = 1$. It then follows that
$$- [\omega]^n \cdot \inf_X G (x,\cdot)\le C ( [\omega]^n + \int_X |G(x,\cdot)| \omega ^n  )\le C.$$
This completes the proof of the lemma. 
\end{proof}

We observe that Lemma \ref{lemma Green} implies a lower bound of the first nonzero eigenvalue of the Laplacian operator $\Delta_{\omega}$. To see this, suppose $\lambda_1>0$ is such an eigenvalue and $f\in C^\infty(X)$ is an associated eigenfunction normalized by $\int_X f^2 \omega^n = [\omega]^n$. Then  $\Delta_{\omega} f = - \lambda_1 f$. If we let $x_0\in X$ be a maximum point of $|f|$, by the Green's formula we have
$$0\neq f(x_0) = \frac{1}{[\omega]^n}\int_X f \omega^n -  \int_X G(x_0, \cdot) \Delta_{\omega} f \omega_t^n = \lambda_1 \int_X G(x_0, \cdot) f \omega^n.$$
Hence
$$|f(x_0)|\le \lambda_1 |f(x_0)|\int_X |G(x_0,\cdot)| \omega^n\le C |f(x_0)| \lambda_1,$$
by Lemma \ref{lemma Green}.
This immediately gives the uniform positive lower bound of $\lambda_1$.

\medskip

For convenience of notation, we write
\begin{equation}\label{eqn:G t}{\mathcal G} (x,\cdot) = G (x,\cdot) - \inf_{x, y\in X} G(x, y) +1 >0.\end{equation}
It is clear that $\int_X {\mathcal G} (x,\cdot)\omega^n \le C$.

\begin{lemma}\label{lemma 65}

There exist $\varepsilon=\varepsilon(X, \omega_X, n, A, p, K)>0$ and $\varepsilon'=\varepsilon'(n, p)>0$ such that if 
\begin{enumerate}

\item $\gamma\geq 0$ is a continuous function on $X$ such that 
$$|\{\gamma=0\}|_{\omega_X} \leq \varepsilon, ~ \{\gamma >0 \}~ \text{is  connected},$$

\item $\omega\in \mathcal{W}(X, \omega_X, n, A, p, K; \gamma)$,

\end{enumerate} 
then there exists $C=C(A, p, K, \varepsilon, \gamma, \varepsilon')>0$ such that for any $x\in X$, we have
$$\int_X {\mathcal G}(x,\cdot)^{1+\varepsilon'} \omega^n \le C ([\omega]^n)^{-\varepsilon'}.$$

\end{lemma}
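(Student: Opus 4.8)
The plan is to derive the higher integrability of $\mathcal{G}(x,\cdot)$ from the $L^1$ bound in Lemma \ref{lemma Green} by a Moser–De Giorgi type iteration argument, in the spirit of the Green's function estimates in \cite{GPS}, but adapted to the degenerate weight $\gamma$ via Lemma \ref{lemma 6}. First I would fix $x\in X$ and write $\mathcal{G} = \mathcal{G}(x,\cdot)$, which is a positive function with $\Delta_\omega \mathcal{G} = -\delta_x + ([\omega]^n)^{-1}$ in the distributional sense and $\int_X \mathcal{G}\,\omega^n \le C$ by Lemma \ref{lemma Green} and the normalization \eqref{eqn:G t}. For a parameter $q>1$ to be chosen, test the equation against $\mathcal{G}^{q-1}$ (away from $x$, using a cutoff near $x$ and the fact that the negative contribution from $-\delta_x$ has a favorable sign), to obtain
$$
c\,(q-1)\int_X \mathcal{G}^{q-2}|\nabla \mathcal{G}|_\omega^2 \,\omega^n \le \frac{1}{[\omega]^n}\int_X \mathcal{G}^{q-1}\,\omega^n + (\text{boundary/cutoff terms near }x),
$$
the point being that the singularity of $\mathcal{G}$ at $x$ is of the Green's function type and integrable in the relevant powers once $q$ is close enough to $1$. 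Rewriting the left side as $c\,\frac{q-1}{q^2}\int_X |\nabla(\mathcal{G}^{q/2})|_\omega^2\,\omega^n$, this is a Caccioppoli-type inequality.

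Next I would invoke the Sobolev inequality on $(X,\omega)$. Here the key input is that we have a \emph{uniform} (independent of $\omega\in\mathcal{W}$) lower bound for the first nonzero eigenvalue $\lambda_1(\omega)$ of $\Delta_\omega$, established in the discussion following Lemma \ref{lemma Green}; combined with the volume noncollapsing one would want a genuine Sobolev inequality, but in fact a cleaner route is to iterate using Lemma \ref{lemma 3} / Lemma \ref{lemma 6} directly on suitable truncations. Concretely, set $v = \mathcal{G}^\beta - (\text{its mean})$ for a small power $\beta = \beta(n,p)>0$; then $v$ satisfies a differential inequality $\Delta_\omega v \ge -a$ with $a$ controlled (using $\mathcal{G}^{\beta-2}|\nabla\mathcal{G}|^2 \ge 0$ and the sign of $-\delta_x$), and $\int_X v\,\omega^n = 0$. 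Apply Lemma \ref{lemma key} to get $\sup_X v \le C(a + ([\omega]^n)^{-1}\int_X|v|\,\omega^n)$; the $L^1$ term is controlled by $\int_X \mathcal{G}^\beta\,\omega^n$, which by Hölder and the $L^1$ bound $\int_X\mathcal{G}\,\omega^n\le C$ is bounded by $C([\omega]^n)^{1-\beta}$. Feeding this back, one upgrades the $L^1$ bound on $\mathcal{G}$ to an $L^{1+\varepsilon'}$ bound with $\varepsilon'=\varepsilon'(n,p)$ determined by how much room the $\delta_0 = (p-n)/(np)$ exponent in Lemma \ref{lemma key}'s proof allows; tracking constants, $\int_X\mathcal{G}^{1+\varepsilon'}\omega^n \le C([\omega]^n)^{-\varepsilon'}$, which is the claim after accounting for the normalization of $\mathcal{G}$ by volume.

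I expect the main obstacle to be handling the singularity of $\mathcal{G}$ at the base point $x$ rigorously in the integration-by-parts / truncation step: one must verify that $\int_X |\nabla \mathcal{G}|_\omega^2 \mathcal{G}^{q-2}\omega^n$ and $\int_X \mathcal{G}^{q-1}\omega^n$ are finite for $q-1$ small, uniformly in $\omega$, which requires a local estimate on $\mathcal{G}$ near $x$ that does not degenerate as $\omega$ varies in $\mathcal{W}$ — and a priori the Sobolev/Poincaré constants of $\omega$ are exactly what we are in the process of controlling. The way around this is to do the whole argument through Lemma \ref{lemma key} and Lemma \ref{lemma 6} (which already build in the $\gamma$-degeneracy and are uniform over $\mathcal{W}$), rather than through a classical Sobolev inequality; truncating $\mathcal{G}$ at level $L$ and letting $L\to\infty$ via monotone convergence then makes each step legitimate. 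A secondary technical point is bookkeeping the dependence of $\varepsilon'$ only on $n$ and $p$ (not on $A,K,\gamma$), which follows because $\varepsilon'$ enters only through the exponent $\delta_0=(p-n)/(np)$ in the iteration, exactly as in Lemma \ref{lemma key}.
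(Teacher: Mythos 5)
Your proposal does not reach the heart of the paper's argument and contains a sign error that makes the proposed application of Lemma \ref{lemma key} fail.

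You correctly observe that a Caccioppoli/Sobolev iteration is circular here (the Sobolev inequality of Lemma \ref{sobol} is itself a corollary of the Green's function estimates being proved). Your proposed workaround is to set $v=\mathcal{G}^\beta$ for some small $\beta>0$ and apply Lemma \ref{lemma key}. This cannot work for two reasons. First, for $0<\beta<1$ one has
\[
\Delta_\omega(\mathcal{G}^\beta)=\beta\,\mathcal{G}^{\beta-1}\Delta_\omega\mathcal{G}+\beta(\beta-1)\,\mathcal{G}^{\beta-2}|\nabla\mathcal{G}|_\omega^2,
\]
and $\beta(\beta-1)<0$, so the gradient term drives the Laplacian \emph{down}, not up; the claim that ``$\mathcal{G}^{\beta-2}|\nabla\mathcal{G}|^2\ge 0$'' helps is a sign error — it is $\mathcal{G}^{-\beta}$, not $\mathcal{G}^{\beta}$, that enjoys a one-sided Laplacian bound $\Delta_\omega(\mathcal{G}^{-\beta})\ge-\beta([\omega]^n)^{-1}$, and that is exactly what the paper exploits in Lemma \ref{lemma gradient} for the \emph{gradient} estimate, not the higher integrability. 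Second, even if the Laplacian bound held, $\mathcal{G}^\beta$ tends to $+\infty$ at $x$, so $\sup_X\mathcal{G}^\beta=\infty$ and the conclusion of Lemma \ref{lemma key}, which bounds $\sup_X v$, would be vacuous (contrast with Lemma \ref{lemma Green}, where Lemma \ref{lemma key} is applied to $-[\omega]^n G(x,\cdot)$, whose supremum is attained away from $x$ and is finite). Truncating $\mathcal{G}$ at level $L$ only worsens matters, since $\min(\mathcal{G},L)^\beta$ has a concave kink that contributes a negative singular measure to the Laplacian.

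The ingredient you are missing is the \emph{auxiliary Monge--Amp\`ere equation}. The paper first solves $\Delta_\omega u_k=-H_k^{\varepsilon'}+\text{mean}$, so that $u_k(x)=\int_X\mathcal{G}(x,\cdot)(H_k^{\varepsilon'}-\text{mean})\,\omega^n$; an upper bound on $u_k(x)$ would give the desired $L^{1+\varepsilon'}$ bound. Since $-H_k^{\varepsilon'}$ is unbounded, one cannot bound $u_k$ directly; instead one solves the Monge--Amp\`ere equation
$([\omega]^n)^{-1}(\chi+\ddbar\psi_k)^n=\frac{(\hat H_k)^{n\varepsilon'}+1}{B_k}\,\omega^n$,
verifies that the entropy of its right-hand side is bounded (this is where $\varepsilon'=\varepsilon'(n,p)$ must be chosen small enough), and deduces $\|\psi_k-\varphi\|_{L^\infty}\le C$ from Corollary \ref{linf2}. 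Then the combined test function $v_k=(\psi_k-\varphi)-\text{mean}+\varepsilon''u_k$ satisfies $\Delta_\omega v_k\ge-n$, because the term $n(\omega_{\psi_k}^n/\omega^n)^{1/n}$ coming from the Monge--Amp\`ere equation dominates $\varepsilon'' H_k^{\varepsilon'}$ for the right choice of $\varepsilon''$. Feeding $v_k$ into the Green's representation formula and using the $L^1$ control of $\mathcal{G}$ then gives the bound on $u_k(x)$ and, after $k\to\infty$, the claimed estimate. This auxiliary-equation mechanism is the crux, and no variant of the classical De Giorgi/Moser scheme applied directly to powers of $\mathcal{G}$ can replace it in this setting.
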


\begin{proof}

 We fix $x\in X$ and a small constant $\varepsilon'>0$ to be determined.
Fix a large $k\gg1$ and consider a smooth positive function $H_k$ which is a smoothing of $ \min\{{\mathcal G}(x,\cdot), k\}$. Without loss of generality, we can assume that $H_k$  converges  increasingly to ${\mathcal G}(x,\cdot)$ as $k\to \infty$. 
In particular,  there exists $C=C(A, p, K, \varepsilon, \gamma)>0$ such that  for any $k$
$$0<\int_X H_k e^{F}\omega_X^n\le \frac{1}{[\omega]^n} \int_{X}{\mathcal G}(x,\cdot) \omega^n \le \frac{C}{[\omega]^n},$$
where $F= ([\omega]^n)^{-1} \frac{\omega^n}{\omega_X^n}.$

We now consider the following linear equation on $X$  
\begin{equation}\label{eqn:new key 1}
\left\{\begin{array}{ll}
&\Delta_{\omega} u_k = - H_k^{\varepsilon'} + \frac{1}{[\omega]^n} \int_X H_k^{\varepsilon'} \omega^n,\\
&\frac{1}{[\omega]^n} \int_X u_k\omega^n = 0.\end{array}\right.
\end{equation}
Equation (\ref{eqn:new key 1}) admits a unique smooth solution since the smooth function on the right-hand side of the first equation has integral $0$. We cannot apply the maximum principle to $u_k$ directly since the term $- H_k^{\varepsilon'}$ on the right-hand side of \ref{eqn:new key 1} is unbounded. 

We will let $\chi\in [\omega]$ be a smooth closed $(1,1)$-form such that $ \|\chi \|_{C^3(X, \omega_3)}$ is uniformly bounded some constant that only depends on $A$. We let 
$$\omega=\chi + \ddbar \varphi, ~\sup_X \varphi =0, $$ and let $$\hat H_k: = [\omega]^n \cdot H_k.$$
%([\omega]^n)([\omega]^n)
Then we  consider the following auxiliary complex Monge-Amp\`ere equation which admits a smooth solution by \cite{Y}
%%%%%%%%%%%%%%%%%%%%%%%%%%%%
\begin{equation}\label{eqn:aMA new}
\frac{1}{[\omega]^n} (\chi  + \ddbar \psi_k)^n= \frac{(\hat H_k)^{n\varepsilon'} + 1}{  \int_X \left((\hat H_k)^{n\varepsilon'} + 1 \right) \omega^n} \omega^n =  \frac{ (\hat H_k)^{n\varepsilon'} + 1}{B_k} e^{F} \omega_X^n,
\end{equation}
with 
$$\sup_X \psi_k = 0,~ B_k =  \int_X ( (\hat H_k)^{n\varepsilon'} + 1) e^{F} \omega_X^n. $$ 
%We stress that this auxiliary Monge-Amp\`ere equation plays a very different role from the auxiliary Monge-Amp\`ere equation introduced in the proof of Lemma 2.
By the H\"older inequality, there exists $C=C(A, p, K, \varepsilon', \gamma)>0$ for sufficiently small $0<\varepsilon'<n^{-1}$ such that for all $k$, we have
\begin{equation}\label{eqn:Bk}
C^{-1}\le \int_X \gamma \omega_X^n \le B_k\le \int_X e^F\omega_X^n + \Big( \int_X e^F \omega_X^n \Big)^{1-n\varepsilon'} \Big( \int_X \hat H_k e^{F} \omega_X^n \Big)^{n\varepsilon'}\le C.
\end{equation}
 For fixed $p'\in (n, p)$, the $p'$-th entropy of  the function $((\hat H_k)^{n\varepsilon'} + 1 )e^{F}/B_k$ on the right-hand side of (\ref{eqn:aMA new}) satisfies
\begin{eqnarray}\label{eqn:newest 1}
&& \frac{1}{B_k}\int_X ( (\hat H_k)^{n\varepsilon'} + 1  )  \Big| -\log B_k + F + \log ( 1+ (\hat H_k) ^{n\varepsilon'})   \Big|^{p'} e^{F} \omega_X^n\\
 &\le & \frac{|\log B_k|^{p'}}{B_k} \int_X ( (\hat H_k)^{n\varepsilon'} + 1  ) e^{F} \omega_X^n + \frac{1}{B_k} \int_X ( (\hat H_k)^{n\varepsilon'} + 1  ) \left( \log( (\hat H_k)^{n\varepsilon'} + 1  )\right)^{p'} e^{F} \omega_X^n \nonumber \\
\nonumber & & \quad  + \frac{1}{B_k} \int_X ( (\hat H_k)^{n\varepsilon'} + 1  ) |F|^{p'} e^{F} \omega_X^n. \nonumber 
\end{eqnarray}

The first integral on the right hand side in (\ref{eqn:newest 1}) is bounded due to the estimate of the constant $B_k$ in \ref{eqn:Bk}, the H\"older inequality and the uniform $L^1$-bound of %
$$\int_X \hat H_k e^{F}\omega_X^n \leq \int_X \mathcal{G}(x, \cdot)\omega^n$$
by Lemma \ref{lemma Green}. 

The second integral on the right hand side in (\ref{eqn:newest 1}) is also uniformly bounded by a similar argument since  
$$ \frac{1}{B_k} \int_X (\hat H_k^{n\varepsilon'} + 1  ) [\log(\hat H_k^{n\varepsilon'} + 1  )]^{p'} e^{F} \omega_X^n\le C\int_X \hat H_k^{\varepsilon'} (\hat H_k^{n\varepsilon'} + 1) e^{F}\omega_X^n \le C ,$$
by the H\"older inequality and the calculus inequality $(\log (1+x))^{p'}\le C x^{\varepsilon'/n}$ for any $x>0$. We have also chosen $\varepsilon'>0$ small so that $(n+1)\varepsilon'<1$. 

To deal with the last integral in (\ref{eqn:newest 1}), we observe that by Young's inequality
$$( (\hat H_k)^{n\varepsilon'} + 1  ) |F|^{p'}\le \frac{|F|^p}{p/p'} + \frac{(\hat H_k^{n\varepsilon'} + 1  )^{(p/p')^*}}{(p/p')^*},$$
where $(p/p')^*>1$ is the conjugate exponent of $p/p'>1$. Hence the last term in (\ref{eqn:newest 1}) satisfies
\begin{eqnarray*}
&&\frac{1}{B_k} \int_X ( (\hat H_k)^{n\varepsilon'} + 1  ) |F|^{p'} e^{F} \omega_X^n \\
&\le& C\int_X |F|^pe^{F} \omega_X^n + C \int_X ( (\hat H_k)^{n \varepsilon' (p/p')^*} + 1  )e^{F} \omega_X^n\\
&\le& C,
\end{eqnarray*}
if we choose $\varepsilon'$ small so that $n \varepsilon' (p/p')^*<1$. 

From now on we fix a small $\varepsilon'>0$ that meets the requirements above and so the $p'$-th entropy of  the function on the right-hand side of (\ref{eqn:aMA new}) is uniformly bounded. We apply Corollary \ref{linf2} to conclude that 
\begin{equation}\label{c0bds3}
\sup_X |\psi_k - \varphi|\le C,
\end{equation}
for some uniform constant $C=C(A, p, K, \varepsilon, \gamma, \varepsilon')>0$.

 We now consider the function
\begin{equation}\label{eqn:def v}
v_k: = (\psi_k - \varphi) - \frac{1}{ [\omega]^n} \int_X (\psi_k - \varphi) \omega^n + \varepsilon'' u_k ,
\end{equation}
where $\varepsilon''>0$ is a suitable constant to be chosen later. It follows from the definition that $\frac{1}{[\omega]^n}\int_X v_k \omega^n = 0$ and $v_k$ is a smooth function. 

Let $\omega_{\psi_k} = \chi + \ddbar \psi_k$. We then calculate the Laplacian of $v$ in (\ref{eqn:def v}) and there exists $C>0$ such that 
\bea
\Delta_{\omega} v_k \nonumber &= & \tr_{\omega} (\omega_{\psi_k}) - n + \varepsilon''  \Delta_{\omega} u_k\\
\nonumber &\ge & n \Big( \frac{\omega_{\psi_k}^n}{\omega^n} \Big)^{1/n} - n - \varepsilon'' H_k^{\varepsilon'} + \frac{\varepsilon''}{[\omega]^n} \int_X H_k^{\varepsilon'} \omega^n\\
\nonumber &= & n B_k^{-1/n} ( \hat H_k^{n\varepsilon'} + 1 )^{1/n} - n - \varepsilon'' H_k^{\varepsilon'} + \frac{\varepsilon''}{([\omega]^n)} \int_X H_k^{\varepsilon'} \omega^n\\
\nonumber &\ge & n C^{-1}  ([\omega]^n)^{\varepsilon'} H_k^{\varepsilon'} - n - \varepsilon'' H_k^{\varepsilon'} \ge -n,
\eea
if we choose $\varepsilon'' = n C^{-1} ([\omega]^n)^{\varepsilon'}$. We apply the Green's formula to the function $v_k$ at $x$
\bea
v_k(x) & = & \frac{1}{[\omega]^n} \int_X v_k \omega ^n + \int_X G (x,\cdot) (-\Delta_{\omega} v_k) \omega^n =  \int_X {\mathcal G}(x,\cdot) (-\Delta_{\omega} v_k) \omega^n \nonumber\\
&\le & n \int_{X} {\mathcal G}(x,\cdot) \omega^n\le C,\nonumber
\eea
where the last inequality follows from the uniform $L^1(X,\omega^n)$-bound of ${\mathcal G}(x,\cdot)$. It then follows from (\ref{c0bds3})  that 
$$u_k(x)\le C ([\omega]^n)^{-\varepsilon'}$$%
 for a uniform constant $C>0$. 
 We now apply the Green's formula to $u_k$ at $x\in X$
\bea\nonumber
u_k(x) & = &\frac{1}{[\omega]^n} \int_X u_k\omega^n + \int_X {\mathcal G}(x,\cdot) (-\Delta_{\omega} u_k) \omega^n\\
& = & \nonumber \int_X {\mathcal G}(x,\cdot) \Big( (H_k)^{\varepsilon'} - \frac{1}{[\omega]^n} \int_X (H_k)^{\varepsilon'} \omega^n   \Big) \omega^n.
\eea
It then follows that 
\begin{eqnarray*}
\int_X {\mathcal G}(x,\cdot) (H_k)^{\varepsilon'} \omega^n &\le& u_k(x) + C \frac{1}{[\omega]^n} \int_X (H_k)^{\varepsilon'} \omega^n \\
&\le& C ([\omega]^n)^{-\varepsilon'} + C \Big(\frac{1}{ [\omega]^n} \int_X H_k \omega^n \Big)^{\varepsilon'} \\
&\le& 2C ([\omega]^n)^{-\varepsilon'}  ,
\end{eqnarray*}
for some uniform constant $C>0$.
Letting $k\to\infty$ and applying the monotone convergence theorem, we can conclude that
$$\int_X {\mathcal G}(x,\cdot)^{1+\varepsilon'} \omega^n \le C ([\omega]^n)^{-\varepsilon'},$$
for some uniform constant $C>0$. The proof of the lemma is now complete.
\end{proof}

We observe the following elementary estimate which follows easily from the Green's formula.
\begin{lemma}\label{lemma gradient} Under the same assumptions of Lemma \ref{lemma 65}, 
for any $\beta>0$ we have
\begin{equation}\label{eqn:grad 1}
\sup_{x\in X} \int_X \frac{ |\nabla_y {\mathcal G}(x,y)|_{\omega(y)}^2   }{{\mathcal G}(x,y)^{1+\beta}} \omega^n(y) \le 
\frac{([\omega]^n)^\beta}{\beta}.
\end{equation} 
\end{lemma}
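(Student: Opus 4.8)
The plan is to derive \eqref{eqn:grad 1} by testing the defining equation of the Green's function against a suitable power of ${\mathcal G}(x,\cdot)$ itself and integrating by parts. Fix $x\in X$ and abbreviate ${\mathcal G}={\mathcal G}(x,\cdot)$, which is smooth and strictly positive on $X\setminus\{x\}$ with $\int_X {\mathcal G}\,\omega^n\le C$. Recall that $\Delta_\omega {\mathcal G} = -\delta_x + ([\omega]^n)^{-1}$ in the distributional sense, since ${\mathcal G}$ differs from $G(x,\cdot)$ by an additive constant. For $\beta>0$ consider the function ${\mathcal G}^{-\beta}$, which is bounded (by $1$, using ${\mathcal G}>1$ will not hold in general, so more carefully ${\mathcal G}\ge 1$ by construction in \eqref{eqn:G t}) and smooth away from $x$; in particular ${\mathcal G}^{-\beta}\le 1$ everywhere. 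The key identity is obtained by pairing $-\Delta_\omega {\mathcal G}$ with ${\mathcal G}^{-\beta}$.

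First I would carry out the integration by parts carefully near the singularity. Away from a small geodesic ball $B_\epsilon(x)$, Green's formula gives
\begin{equation}
\int_{X\setminus B_\epsilon(x)} {\mathcal G}^{-\beta}(-\Delta_\omega {\mathcal G})\,\omega^n
= \int_{X\setminus B_\epsilon(x)} \nabla {\mathcal G}^{-\beta}\cdot \nabla {\mathcal G}\,\omega^n + (\text{boundary term on }\partial B_\epsilon(x)).
\end{equation}
Since $\nabla {\mathcal G}^{-\beta} = -\beta\,{\mathcal G}^{-\beta-1}\nabla {\mathcal G}$, the interior term equals $-\beta\int_{X\setminus B_\epsilon(x)} {\mathcal G}^{-1-\beta}|\nabla {\mathcal G}|^2_\omega\,\omega^n$, which is exactly (up to sign and the constant $\beta$) the quantity we want to bound. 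On the left-hand side, $-\Delta_\omega {\mathcal G} = -([\omega]^n)^{-1}$ away from $x$, so the left side is $-([\omega]^n)^{-1}\int_{X\setminus B_\epsilon(x)} {\mathcal G}^{-\beta}\,\omega^n$, which in absolute value is at most $([\omega]^n)^{-1}\cdot{\rm Vol}_\omega(X)=1$ using ${\mathcal G}^{-\beta}\le 1$ and $[\omega]^n={\rm Vol}_\omega(X)$. The remaining task is to check that the boundary term over $\partial B_\epsilon(x)$ has a sign (or tends to zero) as $\epsilon\to 0$: near $x$, ${\mathcal G}$ behaves like the Green's function singularity, so ${\mathcal G}\to +\infty$, ${\mathcal G}^{-\beta}\to 0$, and the flux $\int_{\partial B_\epsilon}{\mathcal G}^{-\beta}\partial_\nu {\mathcal G}$ is controlled since $\partial_\nu{\mathcal G}$ blows up only like the normal derivative of the fundamental solution while ${\mathcal G}^{-\beta}$ decays — a short estimate shows this contribution vanishes in the limit (and in fact has the right sign so it can simply be dropped).

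Rearranging, $\beta\int_X {\mathcal G}^{-1-\beta}|\nabla {\mathcal G}|^2_\omega\,\omega^n \le ([\omega]^n)^{-1}\int_X {\mathcal G}^{-\beta}\,\omega^n \le ([\omega]^n)^{-1}{\rm Vol}_\omega(X) = 1$, which gives the claimed bound with the stated constant $([\omega]^n)^\beta/\beta$ — actually one wants the bound $([\omega]^n)^\beta/\beta$ in \eqref{eqn:grad 1}, which matches after noting ${\mathcal G}\ge 1$ so ${\mathcal G}^{-1-\beta}\ge {\mathcal G}^{-1-\beta}$ trivially; more honestly, since ${\mathcal G}\ge 1$ we have ${\mathcal G}^{-\beta}\le 1$ and the factor $([\omega]^n)^\beta$ in the statement should emerge from a rescaled version of the test function or from not normalizing ${\mathcal G}$ to be $\ge 1$ but only $\ge 0$; I would track the constants so that the final inequality reads exactly as \eqref{eqn:grad 1}. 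The main obstacle I anticipate is the rigorous justification of the integration by parts across the point singularity of the Green's function — specifically showing the boundary integral over $\partial B_\epsilon(x)$ is negligible in the limit — though this is standard given the known local asymptotics $G(x,y)\sim c_n\, d(x,y)^{2-2n}$ (for $2n>2$) and the fact that $\beta>0$ makes ${\mathcal G}^{-\beta}$ vanish at $x$ fast enough to kill the $O(\epsilon^{1-2n})$ growth of $\partial_\nu{\mathcal G}$ against the $O(\epsilon^{2n-1})$ surface measure.
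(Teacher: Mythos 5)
Your approach is essentially the one in the paper, just organized in the dual order. You test the distributional equation $\Delta_\omega{\mathcal G}=-\delta_x+([\omega]^n)^{-1}$ against the test function ${\mathcal G}^{-\beta}$ and integrate by parts on the annulus $X\setminus B_\epsilon(x)$; the paper instead applies the Green representation formula to the function $u={\mathcal G}^{-\beta}$ at the base point $x$ (so $u(x)=0$) and integrates by parts the other way, invoking the same asymptotics near $y=x$ that you verify via the $\partial B_\epsilon$ flux computation. Either way one arrives at the identity
\begin{equation}
\beta\int_X \frac{|\nabla {\mathcal G}|^2_\omega}{{\mathcal G}^{1+\beta}}\,\omega^n \;=\; \frac{1}{[\omega]^n}\int_X {\mathcal G}^{-\beta}\,\omega^n,
\end{equation}
and the lemma follows from the pointwise upper bound on ${\mathcal G}^{-\beta}$. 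Your handling of the singularity (${\mathcal G}^{-\beta}\sim d^{\beta(2n-2)}$ killing the $d^{1-2n}$ growth of $\partial_\nu{\mathcal G}$ against the $d^{2n-1}$ surface measure) is correct and in fact spells out what the paper only gestures at.

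One remark on the constant, since you flag it yourself. With the definition ${\mathcal G}=G-\inf G+1$ from \eqref{eqn:G t} one has ${\mathcal G}\ge 1$, hence ${\mathcal G}^{-\beta}\le 1$, and the identity above gives $\beta\int\frac{|\nabla{\mathcal G}|^2}{{\mathcal G}^{1+\beta}}\omega^n\le 1$, i.e.\ a bound of $1/\beta$, as you derive — not $([\omega]^n)^\beta/\beta$. The paper's own proof asserts $0\le{\mathcal G}^{-\beta}\le([\omega]^n)^\beta$ in \eqref{eqn:u bound}, which under \eqref{eqn:G t} only holds when $[\omega]^n\ge 1$; the factor $([\omega]^n)^\beta$ is, however, load-bearing downstream in Lemma \ref{lemma gradient final}, where the exponents are tuned so that $([\omega]^n)^{\beta s/2}([\omega]^n)^{-\varepsilon'(2-s)/2}=([\omega]^n)^{1-s}$. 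This points to a normalization that the paper has left implicit (e.g.\ taking the additive constant in ${\mathcal G}$ to be $([\omega]^n)^{-1}$ rather than $1$), and your instinct that the discrepancy is a normalization artifact rather than a flaw in the method is correct. So: same approach, correct reasoning, and a legitimate observation about a constant that the paper itself does not track carefully.
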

\begin{proof} Fix $\beta>0$ and a point $x\in X$. 
The function $u(y): = {\mathcal G}(x,y)^{-\beta}$ is a continuous function with $u(x) = 0$ and $u\in C^\infty(X\backslash \{x\})$. By the definition of ${\mathcal G}$ in (\ref{eqn:G t}), for any $y\in X$, we have 
\begin{equation}\label{eqn:u bound}0\le u(y) \le ([\omega]^n)^{\beta}.\end{equation}
Applying the Green's formula, we have
\bea
0= u(x) \nonumber &= & \frac{1}{[\omega]^n} \int_X u \omega^n + \int_X {\mathcal G}(x, \cdot) (-\Delta_{\omega} u) \omega^n\\
\nonumber &= & \frac{1}{[\omega]^n} \int_X u \omega^n -\beta \int_X \frac{|\nabla {\mathcal G}(x,\cdot)|^2_{\omega}}{ {\mathcal G}(x,\cdot)^{1+\beta}} \omega^n.
\eea
In the last inequality,  we apply the integration by parts using the asymptotic behavior of ${\mathcal G}(x,y)$ near $y=x$. The lemma then follows easily from \ref{eqn:u bound}.
\end{proof}

Finally we are ready to derive the uniform $L^1(X,\omega^n)$ bound for the gradient of $G(x,\cdot)$. 

\begin{lemma}\label{lemma gradient final} Under the same assumptions of Lemma \ref{lemma 65}, for any $s\in [1, \frac{2+2\varepsilon'}{2+\varepsilon'})$
there is a uniform constant $C=C(s)>0$ such that for any $x\in X$, we have
 \begin{equation}
\int_X |\nabla G(x,\cdot)|^s_{\omega} \omega^n \le \frac{C}{([\omega]^n)^{s-1}}.
\end{equation}
\end{lemma}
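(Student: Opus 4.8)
The plan is to interpolate between the $L^1$ bound on $\mathcal{G}(x,\cdot)$ itself and the weighted $L^2$ gradient bound of Lemma \ref{lemma gradient}. Writing $|\nabla G(x,\cdot)|^s_\omega = |\nabla \mathcal{G}(x,\cdot)|^s_\omega$ (since $G$ and $\mathcal{G}$ differ by a constant), I would apply H\"older's inequality with the weight $\mathcal{G}(x,\cdot)^{1+\beta}$ inserted and removed:
\begin{equation}
\int_X |\nabla \mathcal{G}(x,\cdot)|^s_\omega \,\omega^n = \int_X \frac{|\nabla \mathcal{G}(x,\cdot)|^s_\omega}{\mathcal{G}(x,\cdot)^{s(1+\beta)/2}} \cdot \mathcal{G}(x,\cdot)^{s(1+\beta)/2} \,\omega^n \le \Big( \int_X \frac{|\nabla \mathcal{G}(x,\cdot)|^2_\omega}{\mathcal{G}(x,\cdot)^{1+\beta}} \omega^n \Big)^{s/2} \Big( \int_X \mathcal{G}(x,\cdot)^{\frac{s(1+\beta)}{2-s}} \omega^n \Big)^{\frac{2-s}{2}},\nonumber
\end{equation}
where I choose the exponent $2/s$ and its conjugate $2/(2-s)$. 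By Lemma \ref{lemma gradient} the first factor is bounded by $([\omega]^n)^{\beta s/2}/\beta^{s/2}$. For the second factor, I need the exponent $\frac{s(1+\beta)}{2-s}$ to be at most $1+\varepsilon'$ so that Lemma \ref{lemma 65} applies; this forces $\beta$ to be small, and solving the constraint gives exactly the stated range $s\in[1,\frac{2+2\varepsilon'}{2+\varepsilon'})$ once $\beta\to 0^+$. Concretely, set $\beta$ so that $\frac{s(1+\beta)}{2-s} = 1+\varepsilon'$, i.e. $1+\beta = \frac{(1+\varepsilon')(2-s)}{s}$; the condition $\beta>0$ is equivalent to $s < \frac{2+2\varepsilon'}{2+\varepsilon'}$.

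With this choice, Lemma \ref{lemma 65} gives $\int_X \mathcal{G}(x,\cdot)^{1+\varepsilon'}\omega^n \le C([\omega]^n)^{-\varepsilon'}$, so the second factor is bounded by $\big(C([\omega]^n)^{-\varepsilon'}\big)^{(2-s)/2}$. Multiplying the two estimates, the power of $[\omega]^n$ is $\frac{\beta s}{2} - \frac{\varepsilon'(2-s)}{2}$. Using $1+\beta = \frac{(1+\varepsilon')(2-s)}{s}$ one computes $\beta s = (1+\varepsilon')(2-s) - s = 2 + 2\varepsilon' - 2s - \varepsilon' s$, hence $\frac{\beta s - \varepsilon'(2-s)}{2} = \frac{2+2\varepsilon'-2s-\varepsilon' s - 2\varepsilon' + \varepsilon' s}{2} = \frac{2-2s}{2} = 1-s = -(s-1)$, which is precisely the claimed exponent. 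The constant $C$ depends on $n,p$ through $\varepsilon'$, on $\beta$ (hence on $s$), and on the uniform data $A,p,K,\varepsilon,\gamma$ via Lemmas \ref{lemma gradient} and \ref{lemma 65}, so $C = C(s)$ as stated.

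One technical point to be careful about is the integration by parts underlying Lemma \ref{lemma gradient}: $\mathcal{G}(x,\cdot)$ is only continuous and vanishes appropriately at $x$, but is smooth away from $x$, so the H\"older step above is applied on $X\setminus\{x\}$ and the singularity at $x$ is harmless because $\mathcal{G}>0$ is bounded below near nowhere and the integrands are genuinely integrable — indeed the left side is finite since $\nabla G$ has the standard $|y-x|^{1-2n}$ blow-up, integrable against $\omega^n$ for $s<\frac{2n}{2n-1}$, which is compatible with the stated range. The main (minor) obstacle is just bookkeeping: verifying that the admissible range of $\beta$ matches the stated range of $s$ and that the powers of $[\omega]^n$ cancel to give exactly $1-s$; there is no serious analytic difficulty beyond what Lemmas \ref{lemma 65} and \ref{lemma gradient} already provide.
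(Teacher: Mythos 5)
Your argument is correct and is essentially identical to the paper's proof: the paper also inserts and removes the weight $\mathcal{G}^{s(1+\beta)/2}$, applies H\"older with exponents $2/s$ and $2/(2-s)$, chooses $\beta$ by $1+\beta = (1+\varepsilon')\frac{2-s}{s}$ so the second factor becomes $\int_X \mathcal{G}^{1+\varepsilon'}\omega^n$, and then invokes Lemma \ref{lemma gradient} and Lemma \ref{lemma 65} for the two factors, with the same cancellation giving the exponent $1-s$. Your extra remark on the compatibility of the range of $s$ with the local $|y-x|^{1-2n}$ blow-up of $\nabla G$ is a reasonable sanity check but not needed, since the H\"older inequality already shows the left side is dominated by a finite right side.
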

\begin{proof} It suffices to prove the same estimate for ${\mathcal G}(x,\cdot)$. For fixed $x\in X$, we regard ${\mathcal G}(y) : = {\mathcal G}(x,y)$ as a function of $y$. By fixing $s\in [1, \frac{2+2\varepsilon'}{2+\varepsilon'})$ and applying H\"older inequality,  we have
\bea\label{gres}
\int_X |\nabla G(x,\cdot)|^s_{\omega} \omega^n &\le & \Big(\int_X \frac{ | \nabla {\mathcal G}|^2_{\omega}}{{\mathcal G}^{1+\beta}} \omega^n \Big)^{s/2}  \Big( \int_X {\mathcal G}^{1+\varepsilon'} \omega^n\Big)^{(2-s)/2}\\
&\le &\nonumber C ([\omega]^n)^{\beta s/2}  ([\omega]^n)^{-\varepsilon' (2-s)/2} \\
&=& C ([\omega]^n)^{1-s}, \nonumber
\eea
where $\beta>0$ is chosen by $1+\beta = (1+\varepsilon')\frac{2-s}{s}$.  We apply the estimates in Lemmas \ref{lemma 65} and \ref{lemma gradient} for the second inequality in (\ref{gres}). 
\end{proof}

Combining the estimates above, we have established the following main result of this section.

\begin{proposition}  \label{gres!}  For any $A, K>0$ and $p>n$, there exist $\varepsilon=\varepsilon(X, \omega_X, n, A, p, K)>0$ and $\varepsilon'=\varepsilon'(n, p)>0$ such that if 
\begin{enumerate}

\item $\gamma\geq 0$ is a continuous function on $X$ such that 
$$|\{\gamma=0\}|_{\omega_X} \leq \varepsilon, ~ \{\gamma >0 \}~\text{ is connected},$$

\item $\omega\in \mathcal{W}(X, \omega_X, n, A, p, K; \gamma)$,

\item $s\in [1, \frac{2+2\varepsilon'}{2+\varepsilon'})$, 

\end{enumerate} 
 there exist $C_1=C(X, \omega_X, n, A, p, K, \gamma, \varepsilon)>0$, $C_2=C_2(X, \omega_X, n, A, p, K, \gamma, \varepsilon, \varepsilon')>0$ and $C_3=C_3(X, \omega_X, n, A, p, K, \gamma, \varepsilon, \varepsilon', s)>0$ such that   
\begin{eqnarray*}
&& \inf_{y\in X} G(x, y) \geq  - C_1 \left( [\omega]^n\right)^{-1},\\
&& \int_X |G(x, \cdot)|^{1+\varepsilon'} \omega^n \leq  C_2,  \\
&& \int_X |\nabla  G(x, \cdot)|^{1+s} \omega^n \leq C_3,
\end{eqnarray*}
for any $x\in X$.

\end{proposition}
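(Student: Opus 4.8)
The plan is to assemble Proposition \ref{gres!} by simply collecting the three bounds already proved in the lemmas of this section. The proposition is the packaged conclusion, so the proof is essentially a matter of checking that the hypotheses of the relevant lemmas are met by the single choice of $\varepsilon$ and $\varepsilon'$ fixed earlier.

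First I would record that the constant $\varepsilon = \varepsilon(X, \omega_X, n, A, p, K)>0$ is taken to be the minimum of the thresholds appearing in Lemma \ref{lemma 6}, Lemma \ref{lemma Green}, and Lemma \ref{lemma 65}, so that under hypotheses (1) and (2) all three lemmas apply simultaneously to any $\omega \in \mathcal{W}(X, \omega_X, n, A, p, K; \gamma)$. Similarly, $\varepsilon' = \varepsilon'(n,p)>0$ is fixed as in Lemma \ref{lemma 65} (meeting the smallness requirements $(n+1)\varepsilon'<1$ and $n\varepsilon'(p/p')^*<1$ for a chosen $p'\in(n,p)$). With these choices in place, the first bound $\inf_{y\in X} G(x,y) \geq -C_1([\omega]^n)^{-1}$ is exactly the lower bound for the Green's function established in Lemma \ref{lemma Green}. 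The second bound $\int_X |G(x,\cdot)|^{1+\varepsilon'}\omega^n \leq C_2$ follows from Lemma \ref{lemma 65}: indeed $|G(x,\cdot)| \leq \mathcal{G}(x,\cdot) + |\inf_{x,y} G(x,y)| $ pointwise, and combining $\int_X \mathcal{G}(x,\cdot)^{1+\varepsilon'}\omega^n \leq C([\omega]^n)^{-\varepsilon'}$ with the lower Green's bound (which controls $|\inf G| \cdot [\omega]^n$) and the trivial bound $[\omega]^n \leq A^{\cdots}$ — more precisely, using $[\omega]\cdot[\omega_X]^{n-1}\leq A$ together with Corollary \ref{abd}(2) to bound $[\omega]^n$ from above by a constant depending only on $A$ and $X,\omega_X$ — one absorbs the negative power of $[\omega]^n$ into the constant. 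The third bound $\int_X |\nabla G(x,\cdot)|^{1+s}\omega^n \leq C_3$ is Lemma \ref{lemma gradient final} (with its $s$ renamed to $1+s$, i.e. requiring $1+s \in [1, \frac{2+2\varepsilon'}{2+\varepsilon'})$, equivalently $s$ in the stated range); again $([\omega]^n)^{-s}$ is bounded by a constant via the cohomological upper bound on $[\omega]^n$.

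The only genuinely substantive points are bookkeeping ones: verifying that a single $\varepsilon$ works for all the lemmas (immediate, by taking a minimum), and converting the estimates stated with negative powers of the volume $[\omega]^n$ into uniform constants. This last conversion uses that $[\omega]^n$ is bounded above by a constant depending only on $(X,\omega_X,n,A)$ — which follows because $[\omega]\cdot[\omega_X]^{n-1}\leq A$ forces $[\omega] \leq C(A)[\omega_X]$ as cohomology classes by Corollary \ref{abd}(2), whence $[\omega]^n \leq C(A)^n [\omega_X]^n$. I do not expect any real obstacle here; the proposition is a corollary of the preceding development, and the proof is a short paragraph citing Lemmas \ref{lemma Green}, \ref{lemma 65}, and \ref{lemma gradient final} and invoking Corollary \ref{abd} to clear the volume factors. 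The one item worth stating carefully is that all constants retain the claimed dependence (no hidden dependence on $\omega$ itself), which is manifest since each invoked lemma already has the stated dependence.
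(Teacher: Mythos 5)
Your overall reading is right: the paper's own ``proof'' of Proposition~\ref{gres!} is the one-line remark ``Combining the estimates above,'' so the proposition is intended as a packaging of Lemma~\ref{lemma Green}, Lemma~\ref{lemma 65}, and Lemma~\ref{lemma gradient final}, and the first inequality is lifted verbatim from Lemma~\ref{lemma Green}.  However, the way you attempt to reconcile the volume factors in the second and third inequalities contains a genuine logical error.  You propose to absorb the factor $([\omega]^n)^{-\varepsilon'}$ from Lemma~\ref{lemma 65} (and $([\omega]^n)^{-(s-1)}$ from Lemma~\ref{lemma gradient final}) into the constant by invoking Corollary~\ref{abd}(2) to bound $[\omega]^n$ \emph{from above}.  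That is backwards: to control a negative power of $[\omega]^n$ one needs a \emph{lower} bound on $[\omega]^n$, and the hypotheses of $\mathcal{W}(X,\omega_X,n,A,p,K;\gamma)$ give no such bound (indeed the whole point of the paper's later applications, e.g.\ the K\"ahler-Ricci flow with $\kappa<n$, is to handle $[\omega]^n\to 0$).  An upper bound on $[\omega]^n$ simply does not preclude $([\omega]^n)^{-\varepsilon'}$ from blowing up.

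Relatedly, your claim that requiring $1+s\in[1,\tfrac{2+2\varepsilon'}{2+\varepsilon'})$ is ``equivalently $s$ in the stated range'' is false: it would put $s\in[0,\tfrac{\varepsilon'}{2+\varepsilon'})$, not $s\in[1,\tfrac{2+2\varepsilon'}{2+\varepsilon'})$ as the proposition requires.  The resolution is not to invent a mechanism for erasing the volume factors, but to observe that the proposition as printed should carry them.  Indeed, when the paper actually uses Proposition~\ref{gres!} (in Lemma~\ref{7sol} and Lemma~\ref{sobol}) it writes $\int_X|G(x,\cdot)|^{1+\varepsilon'}\omega^n\le C([\omega]^n)^{-\varepsilon'}$ and $\int_X|\nabla G(x,\cdot)|^s\omega^n\le C([\omega]^n)^{-(s-1)}$ for $s\in[1,\tfrac{2+2\varepsilon'}{2+\varepsilon'})$, exactly matching Lemmas~\ref{lemma 65} and~\ref{lemma gradient final} without any absorption.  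Your proof should therefore simply fix $\varepsilon$ as the minimum of the thresholds in the three lemmas, observe that $|G|\le\mathcal G + |\inf G|$ with $|\inf G|\le C_1([\omega]^n)^{-1}$, apply Lemmas~\ref{lemma 65} and~\ref{lemma gradient final} directly, and record the $([\omega]^n)$ powers rather than claiming they cancel.
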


%%%%%%%%%%%%%%%%%%%%%%%%%%%%%%%%%%%%%%%%%%%%%%%%%

\section{Diameter and volume estimates}   \label{secdiam}

\setcounter{equation}{0}

In this section, we will establish the following diameter and volume estimates by applying Proposition \ref{gres!}.

\begin{proposition} \label{diaest}   For any $A, K>0$ and $p>n$, there exist $\varepsilon=\varepsilon(X, \omega_X, n, A, p, K)>0$ and $\varepsilon'=\varepsilon'(n, p)>0$ such that if 
\begin{enumerate}

\item $\gamma\geq 0$ is a continuous function on $X$ such that 
$$|\{\gamma=0\}|_{\omega_X} \leq \varepsilon, ~ \{\gamma >0 \}~\text{ is connected},$$

\item $\omega\in \mathcal{W}(X, \omega_X, n, A, p, K; \gamma)$,
 
\end{enumerate} 
 there exist $\alpha=\alpha(n, p)$, $C=C(X, \omega_X, n, A, p, K, \gamma, \varepsilon)>0$ and $c=c(X, \omega_X, n, A, p, K, \gamma, \varepsilon, \alpha)>0$ such that
\begin{eqnarray}\label{diavol}%
&&{\rm diam}(X, \omega) \leq C, \\
&& \frac{{\mathrm{Vol}}_{\omega} (B_{\omega}(x, R))}{[\omega]^n}\ge c R^\alpha,
\end{eqnarray}
for any $x\in X$ and  $R\in (0,1]$.

\end{proposition}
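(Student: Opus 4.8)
\medskip

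The plan is to derive both estimates from the Green's function bounds of Proposition~\ref{gres!} alone, with no curvature hypothesis entering anywhere; I would establish the volume non-collapsing estimate first and then read off the diameter bound from it by an elementary packing argument. Write $V=[\omega]^n$ and work with the positive shifted Green's function $\mathcal{G}(x,\cdot)=G(x,\cdot)-\inf_{x,y\in X}G(x,y)+1\ge 1$, which satisfies $\Delta_\omega\mathcal{G}(x,\cdot)=-\delta_x+V^{-1}$ and, by Proposition~\ref{gres!} (concretely Lemma~\ref{lemma gradient final}, using $\nabla\mathcal{G}(x,\cdot)=\nabla G(x,\cdot)$),
\begin{equation}\label{plan:grad}
\int_X|\nabla\mathcal{G}(x,\cdot)|_\omega^{q}\,\omega^n\le C\,V^{1-q}\qquad\text{for some fixed }q=q(n,p)>1,
\end{equation}
uniformly over $\omega\in\mathcal{W}(X,\omega_X,n,A,p,K;\gamma)$ and over $x\in X$ (here $C$ carries the full dependence on $X,\omega_X,n,A,p,K,\gamma,\varepsilon$).

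\textbf{Step 1 (volume non-collapsing).} Fix $x$ and set $f(R)=\int_{B_\omega(x,R)}|\nabla\mathcal{G}(x,\cdot)|_\omega\,\omega^n$, an absolutely continuous function of $R>0$ with $f(0^+)=0$ and, by the coarea formula applied to $d_\omega(x,\cdot)$, with $f'(R)=\int_{\{d_\omega(x,\cdot)=R\}}|\nabla\mathcal{G}(x,\cdot)|_\omega\,d\mathcal{H}^{2n-1}$ for a.e.\ $R$. Integrating $\Delta_\omega\mathcal{G}(x,\cdot)=-\delta_x+V^{-1}$ over $B_\omega(x,R)$ and using the divergence theorem together with $|\partial_\nu\mathcal{G}|\le|\nabla\mathcal{G}|$ yields the flux lower bound
\begin{equation}\label{plan:flux}
f'(R)\ \ge\ 1-\frac{\mathrm{Vol}_\omega(B_\omega(x,R))}{V}\qquad\text{for a.e.\ }R>0.
\end{equation}
Hence, if $\mathrm{Vol}_\omega(B_\omega(x,R_0))\le V/2$, then $f'(R)\ge\tfrac12$ for all $R\le R_0$, so $f(R_0)\ge R_0/2$; on the other hand, H\"older's inequality and \eqref{plan:grad} give $f(R_0)\le C\,V^{(1-q)/q}\,\mathrm{Vol}_\omega(B_\omega(x,R_0))^{1-1/q}$. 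Combining the two estimates yields $\mathrm{Vol}_\omega(B_\omega(x,R_0))/V\ge c\,R_0^{\,q/(q-1)}$, and if instead $\mathrm{Vol}_\omega(B_\omega(x,R_0))>V/2$ the same inequality holds trivially for $R_0\le 1$ (after shrinking $c$). This is the desired volume estimate with $\alpha=q/(q-1)=\alpha(n,p)$.

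\textbf{Step 2 (diameter).} Let $\{x_1,\dots,x_N\}\subset X$ be a maximal $1$-separated set. The balls $B_\omega(x_i,1/2)$ are pairwise disjoint, so Step~1 gives $N\cdot c\,2^{-\alpha}V\le\sum_i\mathrm{Vol}_\omega(B_\omega(x_i,1/2))\le V$, hence $N\le N_0:=2^\alpha/c$. By maximality $X=\bigcup_iB_\omega(x_i,1)$, and since $X$ is connected and each $B_\omega(x_i,1)$ is connected, the incidence graph on $\{1,\dots,N\}$ (with $i\sim j$ iff $B_\omega(x_i,1)\cap B_\omega(x_j,1)\neq\emptyset$) is connected; thus any two of the $x_i$ are joined by a chain of at most $N_0$ overlapping unit balls, so $d_\omega(x_i,x_j)\le 2N_0$ and therefore $\mathrm{diam}(X,\omega)\le 2N_0+2=:C$.

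The one genuinely delicate point is the flux estimate \eqref{plan:flux}: the metric spheres $\{d_\omega(x,\cdot)=R\}$ need not be smooth hypersurfaces and $\mathcal{G}(x,\cdot)$ is singular at $x$, so \eqref{plan:flux} should be justified by approximating $d_\omega(x,\cdot)$ by smooth functions with gradient arbitrarily close to $1$ and excising a shrinking ball around $x$ before passing to the limit; this is routine but is where the care lies. Everything else is a direct application of Proposition~\ref{gres!} and elementary integral geometry — in particular no Harnack inequality, Bishop--Gromov comparison, or lower Ricci bound is used anywhere in the argument.
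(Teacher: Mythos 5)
Your proof is correct, but it follows a genuinely different route from the paper's. The paper proves the volume non-collapsing estimate by applying the Green's representation formula to the truncated distance function $d(y)\eta(y)$ (with $\eta$ a cut-off supported in $B_\omega(x,R)$), first at a point outside $\overline{B_\omega(x,R)}$ to control $\frac{1}{[\omega]^n}\int_X d\eta\,\omega^n$ and then at a point on $\partial B_\omega(x,R/2)$, extracting the $R^\alpha$ bound from H\"older and the $L^s$ gradient bound of Proposition~\ref{gres!}; the diameter bound is obtained by a parallel but simpler application of the Green's formula directly to $d_\omega(x_0,\cdot)$ at the two endpoints of a diameter-realizing geodesic. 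You instead establish the volume estimate by a flux argument — integrating $\Delta_\omega\mathcal{G}=-\delta_x+V^{-1}$ over $B_\omega(x,R)$ and combining the resulting differential inequality for $f(R)=\int_{B_\omega(x,R)}|\nabla\mathcal{G}|\,\omega^n$ with the H\"older/coarea upper bound on $f$ — and then \emph{derive} the diameter bound from volume non-collapsing by a maximal $1$-separated set and the connectedness of $X$. Both routes rest on the same key input (Proposition~\ref{gres!}, specifically the $L^s$ bound on $|\nabla G|$ with $s>1$ depending only on $n,p$). What your packing step buys is a clean reduction of the diameter bound to the volume bound, so the Green's function needs to be invoked only once; what it costs, and you correctly flag this, is the regularity issue in justifying the divergence identity over the possibly non-smooth level sets $\{d_\omega(x,\cdot)=R\}$ after excising the singularity of $\mathcal{G}$ at $x$. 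The paper avoids this entirely: the Green's formula is applied to a Lipschitz test function, which requires only the standard approximation of $d$ (and $d\eta$) by smooth functions and never integrates over metric spheres. Both arguments are of comparable length; neither uses any curvature comparison.
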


\begin{proof}

We first prove the diameter bound. Since $(X, \omega)$ is compact and complete,  there exist a pair of points $x_0, y_0\in X$ such that $d_{\omega}(x_0, y_0) = {\mathrm{diam}}(X,\omega)$. We define the $1$-Lipschitz function $d(\cdot)$ on $X$ by
$$d(y) = d_{\omega}(x_0, y) .$$
 Apply the Green's formula to $d$ at a point $x\in X$. We obtain
\begin{equation}\label{eqn:d t}
d(x) = \frac{1}{[\omega]^n} \int_X d(y) \omega^n(y) + \int_{X} \langle{ \nabla_y  G}(x, y), \nabla d(y)\rangle_{\omega(y)} \omega^n(y). 
\end{equation}
By letting  $x = x_0$, we have $d(x_0) = 0$ and
\begin{eqnarray*}
\frac{1}{[\omega]^n} \int_X d(y) \omega^n(y) &=& - \int_{X} \langle{ \nabla_y  G}(x_0, y), \nabla d(y)\rangle_{\omega(y)} \omega^n(y)\\
&\le& \int_X |\nabla_y G(x_0,y)|_{\omega(y)}\omega^n(y).
\end{eqnarray*}
Finally we establish the uniform diameter by applying \ref{eqn:d t} to $x=z_0$ with
\begin{eqnarray*}
 {\mathrm{diam}}(X,\omega) 
& = & d(y_0)\\
& = & \frac{1}{[\omega]^n} \int_X d(y) \omega^n(y) + \int_{X} \langle{ \nabla_y  G}(y_0, y), \nabla d(y)\rangle_{\omega(y)} \omega^n(y)\\
&\le & \int_X |\nabla_y G(x_0,y)|_{\omega(y)} \omega^n(y)+ \int_X |\nabla_y G (y_0,y)|_{\omega(y)} \omega^n(y) \\
&\le& C, 
\end{eqnarray*}
for some uniform constant $C>0$ by Proposition \ref{gres!}. 

\medskip

We now turn to the non-collapsing estimate for metric balls in $(X,\omega)$. Fix a point $x\in X$ and a number $R \in (0,1]$. Let $B(x,R)\subset X$ be the geodesic ball in $(X,\omega)$ with center $x$ and radius $R>0$. We choose a smooth cut-off function $\eta$ with support in $B(x, R)$ satisfying 
$$\eta\equiv 1, ~{\rm on}~B\Big(x,\frac{R}{2} \Big),  ~\sup_X|\nabla \eta|_{\omega}\le \frac{4}{R}.$$
 We let $d(y) = d_{\omega}(x, y)$ be the geodesic distance from $x$ to $y\in X$. Applying the Green's formula to the Lipschitz function $d\cdot\eta$, we have for any $z\in X$
\begin{equation}\label{eqn:non 1}
d(z) \eta(z) = \frac{1}{[\omega]^n} \int_X d(y) \eta(y) \omega^n(y) + \int_{X} \langle\nabla_y G(z,y), \eta(y) \nabla d(y) + d(y) \nabla \eta(y)    \rangle_{\omega(y)} \omega^n(y).
\end{equation}
Take $s = \frac{2+1.5\varepsilon'}{2+\varepsilon'}>1$ for  $\varepsilon'$  from the assumption in Proposition \ref{gres!}. We apply (\ref{eqn:non 1}) to a point $\tilde z\in X\backslash {\overline{B(x,R)}}$. Then $d(\tilde z) \eta(\tilde z) = 0$ and by Lemma \ref{lemma gradient final}, we have
\begin{eqnarray*}
&& \frac{1}{[\omega]^n} \int_X d(y) \eta(y) \omega^n(y) \\
&\le & \int_{X} |\nabla_y G(\tilde z,y)|_{\omega(y)}  \left( \eta(y) + d(y) |\nabla \eta(y)|_{_{\omega(y)}} \right) \omega^n(y)\\
&\le & 5\nonumber \Big( \int_{X} |\nabla_y G(\tilde z,y)|^s_{\omega(y)}\omega^n(y)\Big)^{1/s} \cdot \Big({\mathrm{Vol}}_{\omega}(B(x, R))\Big)^{1/s^*}\\
&\le &\label{eqn:non 2} C ([\omega]^n)^{- \frac{s-1}{s}}\Big({\mathrm{Vol}}_{\omega}(B(x, R))\Big)^{1/s^*}\\
&=&C \Big(\frac{{\mathrm{Vol}}_{\omega}(B(x, R))}{[\omega]^n}\Big)^{1/s^*},
\end{eqnarray*}
where $s^* = \frac{s}{s-1}$ is the conjugate exponent of $s$. Next we apply (\ref{eqn:non 1}) to a point $\hat z \in \partial B(x, R/2)$ where $d(\hat z) \eta(\hat z) = R/2$. Applying the above estimate along with the same argument, we have
\begin{eqnarray*}
\frac {R}{2} &\le &  \frac{1}{[\omega]^n} \int_X d(y) \eta(y) \omega^n(y) +  \int_{X} |\nabla_y G(\hat z,y)|_{\omega(y)} \left( \eta(y) + d(y) |\nabla \eta(y)|_{_{\omega(y)}}\right) \omega^n(y)\\
&\le &\nonumber C \left(\frac{{\mathrm{Vol}}_{\omega}(B(x, R))}{[\omega]^n}\right)^{1/s^*},
\end{eqnarray*}
for some uniform constant $C>0$. This immediately gives a lower bound of the volume of $B(x, R)$,
$$\frac{{\mathrm{Vol}}_{\omega}(B(x, R))}{ [\omega]^n} \ge c R^{\alpha},$$
for some uniform constants $\alpha = s^*(n, p)>0$ and $c=c(A, p, K, \gamma, \varepsilon, \varepsilon', \alpha)>0$. 
\end{proof}

\noindent{\bf Remark.} We briefly explain an application of the noncollapsing estimate (c) of Theorem \ref{thm:main1} to the pre-compactness in Gromov-Hausdorff (GH) topology. Let $(X, \omega_{j})$ be a sequence of K\"ahler metrics satisfying the assumptions in Theorem \ref{thm:main1}. By Gromov's precompactness theorem, it suffices to verify the following: 

\smallskip

{\em for any $\epsilon>0$, there exists an $N(\epsilon)>0$ which is independent of $j$ such that there exists an $\epsilon$-dense set $\{x_j^a\}_{a=1}^{M_j}$ in the metric space $(X,\omega_{j})$ with $M_j\le N(\epsilon)$.}

\medskip

In fact, suppose $\{x_j^a\}_{a=1}^{M_j}$ is an $\epsilon$-dense set in the metric space $(X,\omega_{j})$, by which we mean a maximal collection of points where any two of them have distance at least $\epsilon$. By definition, the geodesic balls $\{  B_{j}(x_j^a, \epsilon/2)  \}_a$ are pairwise disjoint, hence by (c) of Theorem \ref{thm:main1}, 
$$c (\epsilon/2)^{\alpha} ([\omega]^n) M_j\le \sum_{a=1}^{M_j} {\mathrm{Vol}}_{\omega_{j}} ( B_{\omega_{j}} (x_j^a, \epsilon/2)  ) \le ([\omega]^n) = {\mathrm{Vol}}(X,\omega_t^n),$$
so $M_j\le c^{-1} (\epsilon/2)^{-\alpha}=:N(\epsilon)$. 

This shows that up to a subsequence the metric spaces $(X,\omega_{j})$ converge in GH topology to a {\em compact} metric space $(Z,d_Z)$.

\medskip

Now we can complete the proof of Theorem \ref{thm:main1}.

\noindent {\it Proof of Theorem \ref{thm:main1}.} It suffices to show that if $S$ is a closed subset of $X$ with $\dim_{\mathcal{M}} S < 2n-1$, then $X\setminus S$ is connected. Since the Cech cohomological dimension is always no greater than the topological dimension, which is no greater than Minkowski dimension, we have
$$\check H^{2n-1}(S) =\check H^{2n}(S)= 0$$
and so by Poincare-Alexander-Lefschetz duality $H_1(X, X\setminus S) = \check H^{2n-1}(S) = 0$ and $H_0(X, X\setminus S) = \check H^{2n}(S) = 0$ (c.f. Theorem 8.3, Chapter VI, in \cite{Br}). The exact sequence for reduced coholomology gives
$$0=H_1(X, X\setminus S) \rightarrow \tilde H_0(X\setminus S) \rightarrow \tilde H_0(X)\rightarrow  H_0(X,X\backslash S) = 0. $$
Therefore $\tilde H_0(X\setminus S) = \tilde H_0(X) = {\mathbb Z}$ and so $X\setminus S$ is connected.  Then Theorem \ref{thm:main1} is a direct consequence of  Proposition \ref{gres!} and Proposition \ref{diaest}. \qed

%%%%%%%%%%%%%%%%%%%%%%%%%%%%%%%%%%%%%%%%%%%%%

\section{A uniform Sobolev inequality}

In this section, we will prove a special Sobolev-type inequality for K\"ahler metrics satisfying the assumption in Proposition \ref{gres!}. The main feature of this inequality is the uniformity of the constants.

We first improve Lemma \ref{lemma 6} in the following lemma.

\begin{lemma} \label{7sol} For any $A, K>0$ and $p>n$, there exist $\varepsilon=\varepsilon(X, \omega_X, n, A, p, K)>0$ and $\varepsilon'=\varepsilon'(n, p)>0$ such that if 
\begin{enumerate}

\item $\gamma\geq 0$ is a continuous function on $X$ such that 
$$|\{\gamma=0\}|_{\omega_X} \leq \varepsilon, ~ \{\gamma >0 \}~ \text{is connected},$$

\item $\omega\in \mathcal{W}(X, \omega_X, n, A, p, K; \gamma)$,

\end{enumerate} 
then there exists $C=C(A, p, K, \varepsilon, \gamma, \varepsilon')>0$ such that for any $v\in C^\infty(X)$ satisfying
$$\frac{1}{[\omega]^n}  \int_X |\Delta v |^{(1+\varepsilon')^*} \omega^n \leq 1, ~ \int_X v \omega^n =0, $$
we have
$$\sup_X |v| \leq C. $$

\end{lemma}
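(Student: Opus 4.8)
The plan is to run the same contradiction/compactness scheme as in the proof of Lemma \ref{lemma 6}, but now using the improved Green's function estimates of Proposition \ref{gres!} rather than just Lemma \ref{lemma 3} to absorb the Laplacian term. Suppose the statement fails: there exist $\omega_j\in \mathcal{W}(X,\omega_X,n,A,p,K;\gamma)$ and $v_j\in C^\infty(X)$ with $\frac{1}{[\omega_j]^n}\int_X|\Delta_{\omega_j} v_j|^{(1+\varepsilon')^*}\omega_j^n\le 1$, $\int_X v_j\omega_j^n=0$, and $\sup_X|v_j|\to\infty$. Normalize by setting $\tilde v_j=v_j/\sup_X|v_j|$, so $\sup_X|\tilde v_j|=1$ and $\frac{1}{[\omega_j]^n}\int_X|\Delta_{\omega_j}\tilde v_j|^{(1+\varepsilon')^*}\omega_j^n\to 0$. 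The point of allowing $L^{(1+\varepsilon')^*}$ control on the Laplacian rather than $L^\infty$ is precisely that the Green's representation
$$\tilde v_j(x)-\frac{1}{[\omega_j]^n}\int_X \tilde v_j\,\omega_j^n=\int_X \mathcal{G}_j(x,\cdot)\big(-\Delta_{\omega_j}\tilde v_j\big)\omega_j^n$$
can now be estimated by H\"older with the uniform $L^{1+\varepsilon'}(\omega_j^n)$ bound on $\mathcal{G}_j(x,\cdot)$ from Proposition \ref{gres!} (together with $\mathcal{G}_j>0$ and $\inf G\ge -C([\omega_j]^n)^{-1}$), since $(1+\varepsilon')$ and $(1+\varepsilon')^*$ are conjugate. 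This yields a \emph{new} uniform $L^\infty$ bound $\sup_X|\tilde v_j|\le C$ with $C$ independent of the contradiction hypothesis — but of course we already normalized $\sup_X|\tilde v_j|=1$, so instead the estimate must be read as: the oscillation control comes out with the right scaling to feed the rest of the argument.

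Next I would reproduce, verbatim in structure, the $W^{1,1}$-compactness step from Lemma \ref{lemma 6}. From the equation and integration by parts, $\int_X|\nabla\tilde v_j|_{\omega_j}^2 e^{F_j}\omega_X^n=\frac{1}{[\omega_j]^n}\int_X \langle\nabla\tilde v_j,\nabla(\text{potential})\rangle$-type identity — more precisely $\frac{1}{[\omega_j]^n}\int_X|\nabla\tilde v_j|_{\omega_j}^2\omega_j^n=-\frac{1}{[\omega_j]^n}\int_X \tilde v_j\,\Delta_{\omega_j}\tilde v_j\,\omega_j^n$, which by H\"older and the new $L^\infty$ bound on $\tilde v_j$ together with $\frac{1}{[\omega_j]^n}\int_X|\Delta_{\omega_j}\tilde v_j|^{(1+\varepsilon')^*}\omega_j^n\to 0$ (and $(1+\varepsilon')^*\ge 1$) tends to $0$. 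On the connected open set $U_{2\varepsilon}\subset\subset\{\gamma>0\}$ supplied by Lemma \ref{conn}, with $\delta_\varepsilon=\inf_{U_{2\varepsilon}}\gamma>0$, Cauchy--Schwarz gives
$$\int_{U_{2\varepsilon}}|\nabla\tilde v_j|_{\omega_X}\omega_X^n\le (A\delta_\varepsilon)^{-1/2}\Big(\int_X|\nabla\tilde v_j|_{\omega_j}^2 e^{F_j}\omega_X^n\Big)^{1/2}\to 0,$$
exactly as in Lemma \ref{lemma 6}. So $\{\tilde v_j\}$ is bounded in $W^{1,1}(U_{2\varepsilon},\omega_X)$; pass to a subsequence converging in $L^1(\overline{U_{3\varepsilon}})$ and a.e.\ to some $\tilde v_\infty\in L^\infty\cap W^{1,1}$ with $\nabla\tilde v_\infty\equiv 0$ on the connected set $U_{3\varepsilon}$ (Weyl's lemma plus the vanishing gradient argument), hence $\tilde v_\infty$ is constant there; extend it by that constant to all of $X$.

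The final contradiction is obtained by the same two-sided estimate as in Lemma \ref{lemma 6}: on one hand, a uniform \emph{positive} lower bound $([\omega_X]^n)^{-1}\int_{U_{3\varepsilon}}|\tilde v_j|\omega_X^n\ge c_0>0$ coming from the H\"older--Young inequality \eqref{eqn:Young}, the normalization "$\sup_X|\tilde v_j|=1$ but the $L^1(\omega_j^n)$ mass is not small" — here I would need to replace the line "$\frac{1}{[\omega_j]^n}\int_X|\tilde v_j|\omega_j^n=1$" of Lemma \ref{lemma 6} by the corresponding fact in the present normalization, namely that $\sup_X|\tilde v_j|=1$ forces $\frac{1}{[\omega_j]^n}\int_X|\tilde v_j|\omega_j^n$ to stay bounded below, which uses the new $L^\infty$-to-$L^1$ comparison built from Proposition \ref{gres!}; on the other hand, since $\tilde v_j\to\tilde v_\infty$ in $L^1(\overline{U_{3\varepsilon}})$ and $\tilde v_\infty$ is a constant whose value is forced to be small by testing against $\omega_j^n$ on all of $X$ and splitting $X=U_{3\varepsilon}\cup(X\setminus U_{3\varepsilon})$ with $|X\setminus U_{3\varepsilon}|_{\omega_X}<3\varepsilon$ small — again via \eqref{eqn:Young} — we get $([\omega_X]^n)^{-1}\int_{U_{3\varepsilon}}|\tilde v_j|\omega_X^n<c_0$ for large $j$, a contradiction. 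One then fixes $\varepsilon$ as the minimum of the finitely many smallness thresholds produced.

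\textbf{Main obstacle.} The delicate point, and the only genuinely new ingredient beyond Lemma \ref{lemma 6}, is establishing the sharp $L^\infty$ bound for $v$ directly from an $L^{(1+\varepsilon')^*}$ bound on $\Delta_\omega v$: this requires the full strength of the $L^{1+\varepsilon'}(\omega^n)$ integrability of the (normalized) Green's function $\mathcal{G}(x,\cdot)$ from Proposition \ref{gres!}, and one must be careful that the constants depend only on $A,p,K,\gamma,\varepsilon,\varepsilon'$ and not on $\omega$ or on $v$. Matching this improved estimate with the correct scaling so that the contradiction argument still closes — in particular verifying that $\frac{1}{[\omega_j]^n}\int_X|\tilde v_j|\omega_j^n$ is bounded away from zero under the new normalization — is where the argument of Lemma \ref{lemma 6} needs the most adaptation; the $W^{1,1}$-compactness and the Weyl's-lemma steps carry over essentially unchanged.
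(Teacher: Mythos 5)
The key idea you identify in your ``main obstacle'' paragraph --- the Green's representation combined with H\"older and the $L^{1+\varepsilon'}$ bound on $G(x,\cdot)$ from Proposition~\ref{gres!} --- \emph{is} the whole proof. The paper's argument is exactly this and nothing more: with $p=1+\varepsilon'$ and $q=p^*$,
\begin{equation*}
|v(x)| = \Big|\int_X G(x,y)\,\Delta v(y)\,\omega^n\Big| \le \Big(\int_X |G(x,\cdot)|^p\,\omega^n\Big)^{1/p}\Big(\int_X |\Delta v|^q\,\omega^n\Big)^{1/q} \le C\,([\omega]^n)^{-\varepsilon'/p}\cdot([\omega]^n)^{1/q} = C,
\end{equation*}
because $-\varepsilon'/p + 1/q = 1 - (1+\varepsilon')/p = 0$, so the volume factors coming from Lemma~\ref{lemma 65} (or Proposition~\ref{gres!}) and from the hypothesis cancel exactly. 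That is a complete, unconditional proof, with constant depending only on the admissible data.

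Your contradiction/compactness scaffolding is therefore superfluous, and in fact you notice this yourself mid-argument: after normalizing $\tilde v_j = v_j/\sup_X|v_j|$, you observe that the Green's-function estimate already delivers a uniform $L^\infty$ bound ``independent of the contradiction hypothesis'' --- which, applied to $v_j$ rather than $\tilde v_j$, is precisely the contradiction, and the argument should stop there. Instead you continue into the $W^{1,1}$-compactness and Weyl's-lemma machinery of Lemma~\ref{lemma 6}, which is designed for the harder situation of Lemma~\ref{lemma 6} where one only has an $L^\infty$ bound on $\Delta v$ and \emph{no} improved Green's function integrability; it is not needed once Proposition~\ref{gres!} is in hand. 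Also be careful with your normalization: $\sup_X|\tilde v_j|=1$ does not by itself force $\frac{1}{[\omega_j]^n}\int_X|\tilde v_j|\,\omega_j^n$ to be bounded below (and that bound is not needed here). I recommend deleting the contradiction framework entirely and promoting your ``main obstacle'' paragraph, made precise as above, to the proof.
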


\begin{proof} Let $p = 1+ \varepsilon'$ and $q=p^*$. Then applying Proposition \ref{gres!}, there exists $C=C(A, p, K, \varepsilon, \gamma, \varepsilon')>0$ such that 
\begin{eqnarray*}
v(x)  &=&  - \int_X G(x, y) \Delta v(y) \omega^n(y) \\
&\leq& \left( \int_X |G(x, \cdot)|^p \omega^n \right)^{1/p} \left( \int_X |\Delta v|^q \omega^n \right)^{1/q}\\
&\leq& C \left( [\omega]^n \right)^{-\varepsilon'/p} \left( \int_X |\Delta v|^q \omega^n \right)^{1/q}\\
&\leq& C \left( [\omega]^n \right)^{-\varepsilon'/p+1/q}\\
&\leq & C,
\end{eqnarray*}
since $-\frac{\varepsilon'}{p} + \frac{1}{q} = 1- \frac{\varepsilon'+1}{p}=0$.
\end{proof}

We can now apply Lemma \ref{7sol} to derive a Sobolev-type inequality with large exponents.

\begin{lemma} \label{sobol} For any $A, K>0$ and $p>n$, there exist $\varepsilon=\varepsilon(X, \omega_X, n, A, p, K)>0$ and $\varepsilon'=\varepsilon'(n, p)>0$ such that if 
\begin{enumerate}

\item $\gamma\geq 0$ is a continuous function on $X$ such that 
$$|\{\gamma=0\}|_{\omega_X} \leq \varepsilon, ~ \{\gamma >0 \}~ \text{is connected},$$

\item $\omega\in \mathcal{W}(X, \omega_X, n, A, p, K; \gamma)$,

\item $s\in (1, \frac{2+2\varepsilon'}{2+\varepsilon'})$, 

\end{enumerate} 
then there exists $C=C(A, p, K, \varepsilon, \gamma, s)>0$ such that for any $u\in C^\infty(X)$ satisfying $\int_X u \omega^n=0$, 
$$  \| u  \|_{L^\infty(X)} \leq C \left( \frac{1}{[\omega]^n} \int_X |\nabla u|^{\frac{s}{s-1}} \omega^n \right)^{\frac{s-1}{s}}. $$

\end{lemma}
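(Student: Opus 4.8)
The plan is to obtain this directly from the uniform gradient estimate for the Green's function in Lemma \ref{lemma gradient final} (equivalently, the last bound of Proposition \ref{gres!}), via the Green representation formula and H\"older's inequality — in exactly the same spirit in which Lemma \ref{7sol} was obtained, but using the $L^s$ control of $\nabla_y G(x,\cdot)$ in place of the $L^{1+\varepsilon'}$ control of $G(x,\cdot)$. Since all the uniform quantitative input has already been assembled in Section \ref{secgreen}, the argument will be short; the only point needing a little care is the integration by parts against the singularity of $G$, which is handled precisely as in the proof of Lemma \ref{lemma gradient}.

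Concretely, I would fix $x\in X$. Because $\int_X u\,\omega^n = 0$, the Green representation formula gives
$$u(x) = -\int_X G(x,y)\,\Delta_\omega u(y)\,\omega^n(y).$$
Next I would integrate by parts to move the derivative onto $u$: the vector field $\nabla_y G(x,\cdot)$ behaves like $d_\omega(x,\cdot)^{1-2n}$ near $y=x$, hence lies in $L^1(X,\omega^n)$ (indeed in $L^s$ for the $s$ in our range, by Lemma \ref{lemma gradient final}), while $u$ is smooth, so the boundary contribution at the puncture vanishes and
$$u(x) = \int_X \big\langle \nabla_y G(x,y),\,\nabla u(y)\big\rangle_{\omega(y)}\,\omega^n(y),$$
which is the same integration by parts already performed in Lemma \ref{lemma gradient}. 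Then I would apply H\"older's inequality on $(X,\omega^n)$ with exponents $s$ and $s^\ast := \frac{s}{s-1}$ — admissible since $s\in(1,\frac{2+2\varepsilon'}{2+\varepsilon'})$ is in the range of Lemma \ref{lemma gradient final} — to get
$$|u(x)| \le \Big(\int_X |\nabla_y G(x,y)|_\omega^{\,s}\,\omega^n\Big)^{1/s}\Big(\int_X |\nabla u|_\omega^{\,s^\ast}\,\omega^n\Big)^{1/s^\ast}.$$

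By Lemma \ref{lemma gradient final} the first factor is at most $C([\omega]^n)^{(1-s)/s} = C([\omega]^n)^{-1/s^\ast}$ for a constant $C$ depending only on $X,\omega_X,n,A,p,K,\gamma,\varepsilon$ and $s$. Inserting this, and using $([\omega]^n)^{-1/s^\ast}\big(\int_X|\nabla u|^{s^\ast}\omega^n\big)^{1/s^\ast} = \big(\tfrac{1}{[\omega]^n}\int_X|\nabla u|^{s^\ast}\omega^n\big)^{1/s^\ast}$ together with $s^\ast = \tfrac{s}{s-1}$ and $\tfrac{1}{s^\ast} = \tfrac{s-1}{s}$, one arrives at
$$|u(x)| \le C\Big(\frac{1}{[\omega]^n}\int_X |\nabla u|_\omega^{\frac{s}{s-1}}\,\omega^n\Big)^{\frac{s-1}{s}};$$
since $x\in X$ was arbitrary, taking the supremum over $x$ finishes the proof, with $\varepsilon=\varepsilon(X,\omega_X,n,A,p,K)$ and $\varepsilon'=\varepsilon'(n,p)$ the constants supplied by Proposition \ref{gres!}. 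There is no substantive obstacle here: the hard work — a uniform $L^s$ bound for $\nabla_y G(x,\cdot)$ valid for every $\omega\in\mathcal W(X,\omega_X,n,A,p,K;\gamma)$ — was already carried out in Section \ref{secgreen}. The only thing to watch is the bookkeeping of exponents: the Sobolev exponent $s^\ast=\tfrac{s}{s-1}$ must be the conjugate of an $s$ for which Lemma \ref{lemma gradient final} applies, i.e. $s^\ast>\tfrac{2+2\varepsilon'}{\varepsilon'}$, which can be made arbitrarily large by taking $s$ close to $1$ — hence "large exponents."
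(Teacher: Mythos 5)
Your argument is exactly the paper's own proof: apply the Green representation formula, integrate by parts onto $\nabla u$ (justified by the integrability of $\nabla_y G(x,\cdot)$, as in Lemma \ref{lemma gradient}), use H\"older with exponents $s$ and $s^\ast=\tfrac{s}{s-1}$, and invoke the $L^s$ gradient bound for the Green's function from Lemma \ref{lemma gradient final}/Proposition \ref{gres!}. The exponent bookkeeping $([\omega]^n)^{-(s-1)/s}=([\omega]^n)^{-1/s^\ast}$ is correct, so no gaps.
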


\begin{proof} By the Green's formula and integration by parts, we have
\begin{eqnarray*}
\left| u(x) \right| &=& \left|\int_{X \setminus \{x\}} \langle \nabla G(x, \cdot), \nabla u(\cdot)\rangle \omega^n \right|\\
&\leq& \left( \int_X |\nabla G(x, \cdot)|^s \omega^n\right)^{\frac{1}{s}} \left( \int_X |\nabla u|^{\frac{s}{s-1}} \omega^n \right)^{\frac{s-1}{s}}\\
&\leq& C( [\omega]^n )^{-\frac{s-1}{s}}\left( \int_X |\nabla u|^{s/(s-1)} \omega^n \right)^{\frac{s-1}{s}} \\
&=& C \left( \frac{1}{[\omega]^n}\int_X |\nabla u|^{s/(s-1)} \omega^n \right)^{\frac{s-1}{s}},
\end{eqnarray*}
for some uniform constant $C=C(A, p, K, \varepsilon, \gamma, s)>0$, after applying Proposition \ref{gres!}. 
\end{proof}
We remark that Proposition \ref{diaest} can also be proved by directly applying Lemma \ref{sobol}.

%%%%%%%%%%%%%%%%%%%%%%%%%%%%%%%%%%%%%%%%%%%%%

\section{Finite time solutions of the K\"ahler-Ricci flow} \label{seckrff}

\setcounter{equation}{0}

We will prove Theorem \ref{thm:main2} in this section by applying Theorem \ref{thm:main1}. The key is to  bound the $p$-Nash entropy from above and the volume form from below along the K\"ahler-Ricci flow. 

We consider the unnormalized K\"ahler-Ricci flow (\ref{krflow}) on a K\"ahler manifold $X$ with an initial K\"ahler metric $g_0$. Suppose the flow develops finite time singularity. Without loss of generality by rescaling, we can assume the singular time is given by 
$$T=\sup\{ t>0~|~[\omega_0] + t [K_X] >0\} = 1. $$
By choosing a smooth closed $(1,1)$-form $\chi \in K_X$, the K\"ahler-Ricci flow (\ref{krflow}) is equivalent to the following parabolic complex Monge-Amp\`ere flow. 
 \begin{equation}\label{maflow}
\left\{
\begin{array}{l}
{ \displaystyle \ddt{\varphi} = \log \frac{\left( \omega_0 + t  \chi+ \ddbar\varphi \right)^n}{\Omega},}\\
\\
\varphi|_{t=0} =0,
\end{array} \right.
\end{equation}
where $\Omega$ is a smooth volume form on $X$ satisfying 
$$\ddbar \log \Omega = \chi  \in [K_X].$$ We let $\omega_t= \omega_0+ t \chi $ and $\omega=\omega(t)= \omega_t + \ddbar \varphi.$

\begin{lemma} There exists $C>0$ such that $$\varphi\leq C, ~\ddt{\varphi}  \leq C$$ on $X\times [0, 1)$.

\end{lemma}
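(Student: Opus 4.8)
The plan is to obtain the two upper bounds by the maximum principle, using the structure of the parabolic Monge-Ampère flow \eqref{maflow} together with the fact that $T=1$ is the first time the class $[\omega_0]+t[K_X]$ ceases to be Kähler. For the bound $\dot\varphi\le C$, I would differentiate \eqref{maflow} in $t$. Writing $\omega=\omega_t+\ddbar\varphi$ with $\omega_t=\omega_0+t\chi$, and noting $\partial_t\omega_t=\chi$, one computes
$$\ddt{}\Big(\ddt{\varphi}\Big)=\Delta_\omega\Big(\ddt{\varphi}\Big)+\tr_\omega(\chi).$$
This is not yet of a form to which the maximum principle applies directly because $\tr_\omega(\chi)$ has no sign. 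The standard remedy is to consider instead the quantity $Q=(1-t)\dot\varphi+\varphi+nt$ (or a similar affine-in-$t$ combination), whose evolution is
$$\ddt{Q}=\Delta_\omega Q+\tr_\omega\big((1-t)\chi+\omega_t\big)-n=\Delta_\omega Q+\tr_\omega(\omega_0+(2t-1+1-t)\chi)-n,$$
and one chooses the coefficients so that the zeroth-order trace term becomes $\tr_\omega(\omega_{t'})$ for some $t'<1$ at which $\omega_{t'}$ is still a Kähler form bounded above by a fixed metric; then $\tr_\omega(\omega_{t'})\le C\,\tr_\omega(\omega)=Cn$ is not quite right either — rather one uses $\tr_\omega(\omega_{t'})\ge 0$ combined with the $-n$ coming from $\tr_\omega(\omega)=n$ to kill the bad sign. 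The upshot is that $Q$ satisfies a differential inequality $\partial_t Q\le \Delta_\omega Q+C$ on $[0,1)$, so $\max_X Q$ grows at most linearly and is therefore bounded on the finite interval $[0,1)$; unwinding gives $\dot\varphi\le C/(1-t)$, which is too weak, so in fact one needs the sharper choice making the coefficient of $\dot\varphi$ stay bounded below — I would use $Q=\dot\varphi-\varphi$ or follow the now-classical computation of Tian–Zhang, for which $\partial_t(\dot\varphi-\varphi)=\Delta_\omega(\dot\varphi-\varphi)+\tr_\omega(\chi-\omega_t)-n+\dot\varphi$ and then absorb.

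For the bound $\varphi\le C$: once $\dot\varphi\le C$ is known on $[0,1)$ (even with a bound that degenerates like $(1-t)^{-1}$ is enough here after integrating, but a uniform bound is cleaner), integrating $\partial_t\varphi$ in $t$ from $0$ gives $\varphi(x,t)=\int_0^t\dot\varphi(x,s)\,ds\le Ct\le C$ since $\varphi|_{t=0}=0$ and $t<T=1$. Alternatively, and more robustly, $\varphi\le C$ follows directly from a maximum principle argument on $\varphi$ itself: at a spatial maximum of $\varphi$ at time $t$ one has $\ddbar\varphi\le 0$, hence $\omega\le\omega_t\le\omega_1$ (a fixed nef, hence bounded-above, form), so $\dot\varphi=\log(\omega^n/\Omega)\le\log(\omega_1^n/\Omega)\le C$ at that point, giving $\partial_t(\max_X\varphi)\le C$ and thus $\max_X\varphi\le Ct\le C$ on $[0,1)$.

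The main obstacle is the absence of a uniform \emph{lower} bound for $\dot\varphi$ and for $\varphi$ — these genuinely blow down as $t\to T$ when the class degenerates, which is exactly why the lemma only asserts upper bounds, and why the subsequent entropy/volume-lower-bound estimates (needed to invoke Theorem \ref{thm:main1}) require the more delicate arguments of the later sections rather than naive maximum-principle bounds. Within the proof of this lemma itself, the only real care needed is choosing the correct affine combination of $\varphi$, $\dot\varphi$, and $t$ so that the zeroth-order term in the evolution inequality has a favorable sign on all of $[0,1)$; I expect this to be a short computation of Tian–Zhang type, and I would cite \cite{Ts, TZ, ST} for the precise form if a reference suffices. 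The one point to double-check is that $\chi\in c_1(K_X)$ can be chosen so that $\omega_0+t\chi$ is uniformly controlled above by $\omega_0+\chi$ for all $t\in[0,1]$, which is immediate since $\omega_0+t\chi=(1-t)\omega_0+t(\omega_0+\chi)$ is a convex combination of two fixed forms.
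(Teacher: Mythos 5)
Your proposal has the right \emph{strategy} (maximum principle with an affine-in-$t$ combination of $\varphi$, $\dot\varphi$, $t$) but a genuine gap on the central computation, which you explicitly defer to a reference. The paper's choice is
$$u = t\,\ddt{\varphi} - \varphi - nt,$$
for which a direct calculation gives $(\partial_t - \Delta_\omega)\,u = t\,\tr_\omega(\chi) - \tr_\omega(\omega_t) = -\tr_\omega(\omega_0) \le 0$, hence $u\le u(\cdot,0)=0$ on $X\times[0,1)$, and so $t\,\dot\varphi \le \varphi + nt \le C$. The bound $\dot\varphi\le C$ then follows for $t\in[1/2,1)$ by dividing by $t\ge 1/2$, and for $t\in[0,1/2]$ by smoothness of the flow. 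Your candidate $Q=(1-t)\dot\varphi+\varphi+nt$ fails for two reasons: the zeroth-order term becomes $\tr_\omega(\omega_1)$ with $\omega_1=\omega_0+\chi$, which has no sign (the limiting class is only nef, so $\omega_1$ need not be semipositive); and more importantly, even if it worked, dividing by the coefficient $(1-t)\to 0$ at the end destroys the bound near $T=1$. The point you miss is that placing the factor $t$ (not $1-t$) in front of $\dot\varphi$, paired with $-\varphi$, makes the $\tr_\omega(\chi)$ contributions cancel exactly against $\tr_\omega(\omega_t)$, leaving only $-\tr_\omega(\omega_0)\le 0$; and the coefficient $t$ is bounded \emph{away} from zero near the singular time, which is exactly where the estimate is needed.

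A smaller slip: you remark that a bound $\dot\varphi\le C/(1-t)$ would be ``enough after integrating,'' but $\int_0^t (1-s)^{-1}\,ds=-\log(1-t)$ diverges as $t\to 1$, so such a bound does not give $\varphi\le C$ by integration. Your alternative direct maximum-principle argument for $\varphi\le C$ is, however, correct and is what the paper intends: at a space-time maximum of $\varphi-At$ at $(x_0,t_0)$ with $t_0>0$ one has $\ddbar\varphi\le 0$; since $\omega=\omega_t+\ddbar\varphi>0$ along the flow, it follows that $\omega_t>0$ at $x_0$ and $\omega^n\le\omega_t^n\le C\Omega$ (using that $\omega_t=\omega_0+t\chi$ is bounded above by a fixed form on $[0,1]$), contradicting $\dot\varphi\ge A$ there once $A$ is large. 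This is precisely the parenthetical ``follows directly from the maximum principle'' in the paper.
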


\begin{proof}  The upper bound for $\varphi$ follows directly from the maximum principle. Let $$u = t \ddt{\varphi} -\varphi-nt.$$ Then $u$ satisfies 
$$\left(\ddt{}- \Delta \right) u = - tr_{\omega}(\omega_0) \leq 0.$$
By the maximum principle, 
$$\sup_{X\times [0, 1)} u \leq \sup_X u(\cdot, 0) =0$$
 and so $t\ddt{\varphi}$ is also uniformly bounded from above. The lemma immediately follows by considering $t\in [1/2, 1)$ since $\ddt{\varphi}$ is uniformly bounded for $t\in [0, 1/2]$.
\end{proof}

We can now view the Monge-Amp\`ere flow as a family of complex Monge-Amp\`ere equations
$$(\omega_t + \ddbar\varphi)^n = e^{\ddt{\varphi}} \Omega$$
for $t\in [0, 1)$. 
If $[\omega_0]+[K_X]$ is big, 
$$\lim_{t\rightarrow 1} [\omega_t]^n = \left( [\omega_0]+ [K_X] \right)^n>0.$$

\begin{lemma} \label{8psi} There exists $\psi\in {\rm PSH}(X, \chi)$ such that $\omega_0+\chi+\ddbar \psi$ is a K\"ahler current on $X$. Furthermore, $\psi$ has analytic singularities and is smooth outside the locus of its singularities.

\end{lemma}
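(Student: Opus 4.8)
The claim is that the class $[\omega_0]+[K_X]$, being big, contains a K\"ahler current with analytic singularities; this is essentially a restatement of Demailly's regularization theorem together with the definition of bigness, applied to the nef and big class $\alpha_1 = [\omega_0]+[K_X]$. The plan is as follows. First, recall that $\alpha_1$ is nef (it is the limit as $t\to 1$ of the K\"ahler classes $[\omega_t]=[\omega_0]+t[K_X]$) and big (by hypothesis $(\alpha_1)^n>0$). By the characterization of big classes on compact K\"ahler manifolds (Demailly, Boucksom), a nef and big class contains a K\"ahler current, i.e.\ a closed positive $(1,1)$-current $T = \omega_0+\chi+\ddbar\psi \ge \epsilon\,\omega_X$ for some $\epsilon>0$ and some quasi-psh $\psi$. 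Here $\chi$ is the chosen smooth representative of $[K_X]$, so that $\omega_0+\chi$ is a smooth (not necessarily positive) representative of $\alpha_1$, and $\psi\in{\rm PSH}(X,\omega_0+\chi)$; absorbing $\omega_0$ (a fixed smooth form) is harmless, so one may phrase it with $\psi\in{\rm PSH}(X,\chi)$ after adjusting by a bounded smooth potential, or simply keep the statement as an identity of currents.

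Second, to upgrade to analytic singularities, apply Demailly's regularization theorem: any closed positive $(1,1)$-current $T$ in a big class can be approximated by a current $T' = \omega_0+\chi+\ddbar\psi'$ in the same class with $\psi'$ having analytic singularities (i.e.\ locally $\psi' = \frac{c}{2}\log\sum_j|f_j|^2 + (\text{smooth})$ for holomorphic $f_j$ and $c>0$) and with $T'\ge \epsilon'\,\omega_X$ for some possibly smaller $\epsilon'>0$. The loss of positivity in the Demailly regularization is controlled by the Lelong numbers, and since we only need \emph{some} K\"ahler current (not one with arbitrarily small Lelong numbers), a single application suffices. The statement that $\psi'$ is smooth outside the singular locus is immediate from the definition of analytic singularities: on the complement of the common zero set of the $f_j$, the potential is smooth and $T'$ is a smooth K\"ahler form there.

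Finally, rename $\psi'$ as $\psi$; then $\omega_0+\chi+\ddbar\psi$ is a K\"ahler current with analytic singularities, smooth away from the analytic subvariety $Z=\{f_j=0\}$, as claimed. I expect the only genuine subtlety to be bookkeeping the distinction between $\chi$ and $\omega_0+\chi$ and the fact that $\alpha_1$ is the \emph{limiting} class rather than a K\"ahler class --- but this is handled simply by noting $\alpha_1$ is nef and big, which is exactly the hypothesis. The invocation of Demailly's theorem (and of the equivalence bigness $\Leftrightarrow$ existence of a K\"ahler current on compact K\"ahler, due to Demailly--P\u{a}un) is the one external input; everything else is formal. No hard estimate is required here, since the quantitative strength of the current (size of $\epsilon$, Lelong numbers) plays no role in the later arguments --- only its existence and its smoothness off a proper analytic set, which will feed into the choice of the function $\gamma$ when Theorem \ref{thm:main1} is applied.
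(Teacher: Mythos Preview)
Your proposal is correct and matches the paper's approach exactly: the paper's proof is a one-line citation to the Demailly--P\u{a}un regularization theorem (Theorem 3.2 in \cite{DPn}), and you have spelled out precisely the content of that citation---bigness of the nef class $[\omega_0]+[K_X]$ yields a K\"ahler current, and Demailly regularization upgrades it to one with analytic singularities. Your remark about the bookkeeping between ${\rm PSH}(X,\chi)$ and ${\rm PSH}(X,\omega_0+\chi)$ is apt; in the subsequent lemmas only the inequality $\omega_0+\chi+\ddbar\psi\ge\epsilon\omega_0$ is used, so the precise PSH class is immaterial.
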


\begin{proof} The lemma is a consequence of \cite{DPn} (The regularization theorem 3.2).  
\end{proof}

We can assume that $\omega_0+ \chi + \ddbar \psi > \epsilon \omega_0$ for some $\epsilon>0$. 

\begin{lemma} \label{8c0} There exists $C>0$ such that  on $X\times [0, 1)$, we have 
$$\varphi \geq \psi - C .$$

\end{lemma}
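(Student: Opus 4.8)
The plan is to construct an auxiliary quantity that compares the flow solution $\varphi$ with the fixed current potential $\psi$, and to apply the maximum principle. Since $\psi$ has analytic singularities, it is $-\infty$ along the singular locus $E$, so $\varphi - \psi \to +\infty$ near $E$, and any interior minimum of $\varphi - \psi$ (or a suitable modification) must occur on $X \setminus E$, where everything is smooth. The key structural fact we exploit is that $\omega_0 + \chi + \ddbar\psi \geq \epsilon\omega_0$ as a current, hence $\omega_t + \ddbar\psi = \omega_0 + t\chi + \ddbar\psi \geq \omega_0 + \chi + \ddbar\psi + (t-1)\chi$. Because $t \in [0,1)$, the term $(t-1)\chi$ can be absorbed: writing $\chi \leq C_0\omega_0$ for some $C_0>0$, we get $\omega_t + \ddbar\psi \geq (\epsilon - (1-t)C_0)\omega_0$, which is not uniformly positive near $t=1$. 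To fix this, I would instead work with $t$ in a fixed interval $[1/2, 1)$ (the bound on $[0,1/2]$ being trivial by compactness and smoothness), and use that $\omega_t + \ddbar\psi \geq \omega_0 + t\chi + \ddbar\psi$; more efficiently, one uses that for $t$ close to $1$, $\omega_t + \ddbar\psi = t(\omega_0 + \chi + \ddbar\psi) + (1-t)\omega_0 \geq t\epsilon\omega_0 + (1-t)\omega_0 \geq c_1\omega_0$ uniformly, after rewriting $\omega_t = t(\omega_0+\chi) + (1-t)\omega_0$. This uniform positivity of $\omega_t + \ddbar\psi$ is the crux.

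The main step is then the following. Consider $\theta := \varphi - \psi$ (extended to be $+\infty$ on $E$) and the quantity $H := \dot\varphi + $ (a suitable multiple of $\theta$), or more directly apply the maximum principle to $\theta$ itself combined with the evolution equation. Since $\partial_t \varphi = \log\frac{(\omega_t + \ddbar\varphi)^n}{\Omega}$, at an interior spatial minimum $x_0$ of $\theta(\cdot, t)$ (which lies in $X \setminus E$), we have $\ddbar\varphi(x_0) \geq \ddbar\psi(x_0)$ as forms, so $\omega_t + \ddbar\varphi \geq \omega_t + \ddbar\psi \geq c_1\omega_0$ at $x_0$, giving $\partial_t\varphi(x_0) \geq \log\frac{(c_1\omega_0)^n}{\Omega} \geq -C_2$ for a uniform $C_2$. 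A standard parabolic maximum principle argument (e.g. applied to $\theta + C_2 t$, or to $e^{-t}(\theta + C_2)$ depending on the exact form) then shows $\inf_X \theta(\cdot, t)$ cannot decrease faster than linearly, and since $\theta(\cdot, 0) = -\psi$ is bounded below (after normalizing $\sup_X\psi$), we conclude $\varphi - \psi \geq -C$ on $X \times [1/2, 1)$, hence on all of $X \times [0,1)$.

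The hard part, and the reason the statement is nontrivial, is the presence of the singular locus of $\psi$: one must justify that the maximum principle can be applied even though $\theta$ is only a priori defined and smooth on $X \setminus E$ and blows up to $+\infty$ along $E$. This is handled cleanly because the blow-up is in the favorable direction — the infimum of $\theta$ over $X$ is attained in the smooth locus — but one should phrase this carefully, for instance by first working on the sublevel sets $\{\psi > -M\}$ and letting $M \to \infty$, or by invoking that $\varphi$ is continuous (indeed smooth) and bounded on $X$ while $\psi$ is upper semicontinuous and bounded above, so $\theta$ is lower semicontinuous with values in $(-\infty, +\infty]$ and attains its minimum. A secondary technical point is the uniform control of $\log\frac{(c_1\omega_0)^n}{\Omega}$, which is immediate since $\Omega$ is a fixed smooth positive volume form on the compact manifold $X$. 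With these points in place the argument is routine.
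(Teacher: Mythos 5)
Your argument is essentially the same as the paper's: apply the maximum principle to $u=\varphi-\psi$, note that any minimum of $u$ must occur where $\psi$ is smooth (since $u\to+\infty$ along the singular locus), and at such a minimum use the positivity of $\omega_t+\ddbar\psi$ to bound $\partial_t\varphi$ below. That is correct, and it is how the paper proceeds, including the restriction to $t$ close to $1$ with the bound on a fixed compact initial time interval handled separately.

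Two small inaccuracies to flag, neither of which invalidates the argument because you also write the correct derivation. First, the identity you offer as the ``more efficient'' route,
\[
\omega_t+\ddbar\psi \;=\; t\bigl(\omega_0+\chi+\ddbar\psi\bigr)+(1-t)\omega_0,
\]
is false: the right-hand side equals $\omega_0+t\chi+t\,\ddbar\psi$, whereas the left-hand side is $\omega_0+t\chi+\ddbar\psi$; you have dropped a term $(1-t)\ddbar\psi$, which has no sign. The correct computation is the one you give first, namely $\omega_t+\ddbar\psi=(\omega_0+\chi+\ddbar\psi)-(1-t)\chi\ge\bigl(\epsilon-(1-t)C_0\bigr)\omega_0$. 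Second, you assert that this lower bound ``is not uniformly positive near $t=1$,'' which is backwards: as $t\to1$ the coefficient $\epsilon-(1-t)C_0\to\epsilon>0$, so the bound is uniformly positive precisely near $t=1$ — that is exactly what the argument uses. The potential failure is at small $t$, but that regime is covered by compactness. Relatedly, the specific cutoff $t\in[1/2,1)$ that you propose may not suffice (one needs $\epsilon>(1-t)C_0$ there, i.e.\ $\epsilon>C_0/2$, which is not guaranteed); the paper instead defines a $t'<1$ depending on $\epsilon$ and $\chi$ so that $\epsilon\omega_0>2(1-s)\chi$ for $s\in(t',1)$, and works on $[\max(1/2,t'),1)$. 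With these corrections your proof matches the paper's.
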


\begin{proof} Let $u= \varphi-\psi$. Then $u$ is bounded below and tends to $\infty$ near the singular locus of $\psi$ for each $t\in [0, 1)$. $u$ satisfies the evolution equation
$$\ddt{u} = \log \frac{ \left( \omega_0+ \chi + \ddbar\psi - (1-t)\chi+ \ddbar u\right)^n}{\Omega}. $$
Let %
$$t' = \inf\{ 0<t<1~|~ \epsilon\omega_0 > 2(1-s)\chi, ~\rm for~ all~s\in (t, 1)  \}.$$
Obviously, $t'<1$. 
Suppose $\inf_{X\times[\max(1/2, t'), t_0)} u = u(z_0, t_0)$. Then $\psi$ is smooth at $z_0$. By applying the maximum principle, we have at $(z_0, t_0)$
\begin{eqnarray*}
\ddt{u} &\geq&  \log \frac{ \left( \omega_0+ \chi + \ddbar\psi - (1-t_0)\chi\right)^n}{\Omega}\\
&\geq& \log \frac{ \left( \epsilon \omega_0 - (1-t_0)\chi\right)^n}{\Omega}\\
&\geq& \log \frac{\omega_0^n}{\Omega} - C,
\end{eqnarray*}
for some uniform $C>0$. Therefore $u$ is uniformly bounded below for $t\in [0, 1)$.  principle. The lemma then immediately follows. 
\end{proof}

\begin{lemma} There exist $A, C>0$ such that 
$$\ddt{\varphi} \geq A\psi  - C. $$

\end{lemma}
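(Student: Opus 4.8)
The plan is to apply the parabolic minimum principle to $w:=\dot\varphi-\psi$, where $\dot\varphi=\ddt{\varphi}$ and $\psi$ is the $\chi$-psh function furnished by Lemma \ref{8psi}. Let $E\subset X$ be the singular locus of $\psi$, a proper analytic subset; then $\psi$, and hence $w$, is smooth on $(X\setminus E)\times[0,1)$, while the flow metric $\omega=\omega_t+\ddbar\varphi$ is smooth on all of $X\times[0,1)$. Differentiating the flow (\ref{maflow}) in $t$ gives $(\partial_t-\Delta_\omega)\dot\varphi=\tr_\omega\chi$, by the same variational computation ($\partial_t\log\det=\tr_\omega\partial_t$) already used above for $t\dot\varphi-\varphi-nt$. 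Since $\psi$ does not depend on $t$, we obtain on $X\setminus E$
$$(\partial_t-\Delta_\omega)w=\tr_\omega\chi+\tr_\omega(\ddbar\psi)=\tr_\omega(\chi+\ddbar\psi)\ \ge\ 0,$$
the inequality holding precisely because $\psi\in{\rm PSH}(X,\chi)$ forces $\chi+\ddbar\psi\ge 0$ as a current, hence as a smooth semipositive $(1,1)$-form away from $E$. Thus $w$ is a supersolution of the heat operator of $\omega$ on $(X\setminus E)\times[0,1)$; this is exactly where the positivity built into Lemma \ref{8psi} is used.

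Next I would run the minimum principle, taking care at $E$. Fix $t_1\in(0,1)$. On the compact slab $X\times[0,t_1]$ the flow is smooth, so $\dot\varphi$ is bounded there; as $\psi$ is bounded above on $X$ and $\psi\to-\infty$ along $E$, the function $w$ tends to $+\infty$ uniformly near $E\times[0,t_1]$ and so attains its minimum over $X\times[0,t_1]$ at a point of $(X\setminus E)\times[0,t_1]$. For $\delta>0$ the function $w+\delta t$ satisfies $(\partial_t-\Delta_\omega)(w+\delta t)\ge\delta>0$ on $(X\setminus E)\times(0,t_1]$ and still blows up along $E$; if its minimum over $X\times[0,t_1]$ were attained at a time $s_0>0$ (necessarily at a point off $E$), then $\Delta_\omega(w+\delta t)\ge0$ and $\partial_t(w+\delta t)\le0$ there would contradict the strict inequality. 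Hence that minimum is attained at $t=0$, so $w+\delta t\ge\min_X w(\cdot,0)$ on $X\times[0,t_1]$; letting $\delta\to0$ gives $w\ge\min_X w(\cdot,0)$. Finally $w(\cdot,0)=\log(\omega_0^n/\Omega)-\psi\ge\inf_X\log(\omega_0^n/\Omega)-\sup_X\psi=:-C$, with $C$ independent of $t_1$, so $\dot\varphi-\psi\ge-C$ on $X\times[0,1)$. This is the asserted estimate with $A=1$ (and then any $A\ge1$ follows once $\sup_X\psi$ is normalized to $0$).

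The only genuinely delicate point is the treatment of $E$: one must know that $w\to+\infty$ there so that the otherwise entirely standard parabolic minimum principle can be applied to a function that is only smooth off an analytic set, and one must keep the resulting constant independent of $t_1$. Both are immediate here — the first from smoothness of the flow on each finite slab $X\times[0,t_1]$ together with the logarithmic poles of $\psi$, the second because $w(\cdot,0)$ does not involve $t_1$. No curvature assumption or additional estimate is required; the whole input is the $\chi$-plurisubharmonicity and analytic singularities of $\psi$ provided by Lemma \ref{8psi}.
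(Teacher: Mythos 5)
Your supersolution computation is where the argument breaks down, and the culprit is your reading of Lemma \ref{8psi}. You take the phrase ``$\psi\in {\rm PSH}(X,\chi)$'' literally and deduce $\chi+\ddbar\psi\ge 0$, whence $(\partial_t-\Delta_\omega)w=\tr_\omega(\chi+\ddbar\psi)\ge 0$. But this cannot be what is meant: if $\chi+\ddbar\psi\ge 0$ held, then $\omega_0+\chi+\ddbar\psi\ge\omega_0$ would automatically be a K\"ahler form and the second clause of Lemma \ref{8psi}, together with the appeal to Demailly--P\u{a}un regularization, would be vacuous. What the regularization theorem actually provides for the big and nef class $[\omega_0+\chi]$ is $\psi\in{\rm PSH}(X,\omega_0+\chi)$ with analytic singularities such that $\omega_0+\chi+\ddbar\psi$ is a K\"ahler current; this is exactly the positivity the paper subsequently records (``we can assume $\omega_0+\chi+\ddbar\psi>\epsilon\omega_0$'') and uses in the proofs of Lemma \ref{8c0} and of the present lemma. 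Note that $[\chi]=[K_X]$ need not be pseudoeffective at all in the finite-time setting --- take $X$ Fano, e.g.\ ${\rm Bl}_p\mathbb{P}^2$, where the flow develops a finite-time singularity with a big limit class while $-K_X$ is ample --- so in general there is no $\chi$-psh function of any kind. With the correct hypothesis you only get
$$(\partial_t-\Delta_\omega)w=\tr_\omega(\chi+\ddbar\psi)=\tr_\omega(\omega_0+\chi+\ddbar\psi)-\tr_\omega\omega_0\ \ge\ (\epsilon-1)\,\tr_\omega\omega_0,$$
which is negative and unbounded below as $t\to 1$, so $w$ is not a supersolution and the parabolic minimum principle gives nothing.

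The paper's actual proof compensates for this by testing on $u=\ddt{\varphi}+2A(\varphi-\psi)$ instead of $\ddt{\varphi}-\psi$. The extra $2A\varphi$ contributes $-2A\Delta_\omega\varphi=2A(\tr_\omega\omega_t-n)$ to the heat operator, and the large positive $\tr_\omega\omega_t$ term absorbs the bad part of $\tr_\omega\ddbar\psi$: writing $\omega_1=\omega_0+\chi$ one has $2A\omega_t+2A\ddbar\psi+\chi=2A(\omega_1+\ddbar\psi)+(1-2A(1-t))\chi$, and for $t$ close to $1$ and $A\gg\epsilon^{-1}$ this dominates a fixed multiple of $\epsilon\omega_0$. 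The argument is then closed via the arithmetic--geometric inequality $\tr_\omega\omega_0\ge n(\omega_0^n/\omega^n)^{1/n}$, which through the flow equation $\omega^n=e^{\ddt{\varphi}}\Omega$ produces the term $ce^{-\ddt{\varphi}/n}$ that forces a lower bound on $\ddt{\varphi}$ at the spatial minimum of $u$, after which Lemma \ref{8c0} controls the $2A(\varphi-\psi)$ part. Your handling of the analytic set $E$ (blow-up of $w$ there, $\delta t$-perturbation, minimum at $t=0$ on each finite slab) is fine and is the same mechanism the paper relies on implicitly; what is missing is the additional $\varphi$-term in the barrier, which is not a cosmetic choice but is the device that converts the available positivity of $\omega_0+\chi+\ddbar\psi$ into something usable, since the positivity of $\chi+\ddbar\psi$ alone is simply not there.
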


\begin{proof} Let $u= \ddt{\varphi} + 2A(\varphi - \psi)$ for some fixed $A> 2\epsilon^{-1}>0$. Then the evolution for $u$ is given by
\begin{eqnarray*}
\left( \ddt{} - \Delta\right) u &=& 2A\ddt{\varphi} +tr_{\omega}(2A\omega_t +2A\ddbar\psi +\chi) -2nA\\
&=& 2A\ddt{\varphi} + tr_{\omega}(2A(\omega_1+ \ddbar\psi) + (1-2A(1-t))\chi ) - 2nA \\
&\geq& 2A\ddt{\varphi} + tr_{\omega} \left(2A\epsilon \omega_0 + (1-2A(1-t))\chi \right) -2nA \\
&\geq& 2A\ddt{\varphi} +2A\epsilon tr_{\omega} ( \omega_0) -2nA \\
&\geq& 2A\ddt{\varphi} +A\epsilon  \left( \frac{\omega_0^n}{\omega^n}\right)^{1/n}-2nA \\
&\geq& 2A\ddt{\varphi} + e^{- n^{-1} \ddt{\varphi} } -2nA.
\end{eqnarray*}
by choosing $A>>\epsilon^{-1}$ for  $t> 1-A^{-1}$.  
Let $p$ be the minimum point  of $u$ at $t> 1- A^{-1}$. Then $p$ does not lie in the singular locus of $\psi$ and by applying the maximum principle,we have  
$$\ddt{\varphi}(p) \geq - C$$ 
for some uniform constant $C>0$. Hence $u(p)$ is uniformly bounded below by applying Lemma \ref{8c0}. The lemma then immediately follows. 
\end{proof}

\begin{corollary}\label{8wass} For any $p>n$, there exists $C>0$ such that for all $t \in[0, 1)$
$$\mathcal{N}_{X, \omega_0, p}(\omega(t)) \leq C.$$
Furthermore, for any $p>n$, there exist $A, B, K>0$  such that for all $t\geq 0$, 
$$\omega(t)\in \mathcal{W}\left(X, \omega_0, n, A, p, K; e^{B\psi -B}\right), $$
where $\psi$ is defined in Lemma \ref{8psi}. 

\end{corollary}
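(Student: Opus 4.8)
The plan is to read everything off the a priori estimates already established for the Monge-Amp\`ere flow \eqref{maflow}, together with the elementary observation that the total volume $V_t=[\omega_t]^n=\int_X\omega(t)^n$ is pinched between two positive constants on $[0,1]$. First I would record this: $t\mapsto[\omega_0+t\chi]^n$ is a polynomial with value $[\omega_0]^n>0$ at $t=0$ and, by the bigness hypothesis, limit $([\omega_0]+[K_X])^n>0$ at $t=1$, so $c_1\le V_t\le c_2$ for positive constants; likewise the fixed smooth positive function $\Omega/\omega_0^n$ satisfies $c_1'\le\Omega/\omega_0^n\le c_2'$, and $[\omega(t)]\cdot[\omega_0]^{n-1}=[\omega_0]^n+t\,[\chi]\cdot[\omega_0]^{n-1}$ is bounded on $[0,1]$ by a purely cohomological constant $A$. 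These are routine.

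For the entropy bound I would use $\omega(t)^n=e^{\ddt{\varphi}}\Omega$ to write $F_t:=\log\bigl(V_t^{-1}\omega(t)^n/\omega_0^n\bigr)=\ddt{\varphi}-\log V_t+\log(\Omega/\omega_0^n)$, so that $|F_t|\le|\ddt{\varphi}|+C$. The key point is that only the \emph{upper} bound $\ddt{\varphi}\le C$ (the first lemma of this section) is needed: the scalar function $s\mapsto e^s(|s|+C)^p$ is continuous and tends to $0$ as $s\to-\infty$, hence is bounded on every half-line $(-\infty,C']$; therefore $e^{F_t}|F_t|^p$ is bounded pointwise on $X\times[0,1)$ by a constant depending only on $n,p$ and the a priori constants, and integrating against $\omega_0^n$ gives $\mathcal N_{X,\omega_0,p}(\omega(t))\le C[\omega_0]^n=:K$ uniformly in $t$. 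Combined with the cohomological bound this already yields $\omega(t)\in\mathcal V(X,\omega_0,n,A,p,K)$.

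To upgrade this to membership in $\mathcal W$ I would invoke the one-sided lower bound $\ddt{\varphi}\ge a_0\psi-C$ (the last lemma of this section, with constants $a_0,C>0$), which gives $V_t^{-1}\omega(t)^n/\omega_0^n=V_t^{-1}e^{\ddt{\varphi}}\Omega/\omega_0^n\ge c_2^{-1}c_1'\,e^{a_0\psi-C}$. After normalizing $\sup_X\psi=0$ so that $\psi\le0$, one has $e^{a_0\psi}\ge e^{B\psi}$ whenever $B\ge a_0$; choosing $B=\max\{a_0,\ C-\log(c_2^{-1}c_1')\}$ makes the right-hand side at least $e^{B\psi-B}$, so $\omega(t)\in\mathcal W(X,\omega_0,n,A,p,K;e^{B\psi-B})$ for all $t$ in the lifespan $[0,1)$ of the flow. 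I would also note, for the eventual application, that $\{e^{B\psi-B}=0\}$ is exactly the singular locus of $\psi$, a proper analytic subvariety of $X$ by Lemma \ref{8psi}, hence of Hausdorff dimension at most $2n-2$, so that Theorem \ref{thm:main1} will apply.

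I do not expect a serious obstacle: the corollary is an assembly of estimates proved just above. If one must name the crux, it is the asymmetry between the two halves — the entropy bound costs only an \emph{upper} bound on $\ddt{\varphi}$ (through the boundedness from above of $s\mapsto e^s|s|^p$), whereas the volume lower bound forces one to accept the degeneration $e^{B\psi}\to0$ along the analytic set $\{\psi=-\infty\}$. It is precisely this codimension at least $2$ degeneracy that the generalized $L^\infty$ and Green's-function estimates of the earlier sections were built to tolerate, so the two real inputs here are the bigness hypothesis (keeping $V_t$ away from $0$) and the K\"ahler-current regularization of Lemma \ref{8psi}.
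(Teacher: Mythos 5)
Your proof is correct and follows essentially the same route as the paper, which simply assembles the two-sided pointwise bound $C_2^{-1}e^{C_1\psi}\,\omega_0^n\le \omega(t)^n\le C_2\,\omega_0^n$ from the preceding lemmas (together with the elementary fact that $V_t$ is pinched between positive constants on $[0,1]$) and then observes the corollary follows. Your remark that the $p$-Nash entropy bound costs only the \emph{upper} bound on $\partial_t\varphi$, via boundedness of $s\mapsto e^s|s|^p$ on half-lines $(-\infty,C']$, is exactly the point the paper leaves implicit in "immediately follows," and your manipulation of the constant $B$ to absorb the normalization into the form $e^{B\psi-B}$ is the intended reading of the statement.
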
 

\begin{proof} By combining the previous lemmas, there exist $C_1, C_2>0$ such that on $X\times [0, 1)$, we have
$$C_2^{-1} e^{C_1\psi} (\omega_0)^n \leq \omega^n \leq C_2 (\omega_0)^n.$$
The corollary immediately follows. 

\end{proof}

\noindent{\it Proof of Theorem \ref{thm:main2}.}  The assumption of Theorem \ref{thm:main1} is satisfied due to Corollary \ref{8wass}. Theorem \ref{thm:main2} immediately follows. \qed

%%%%%%%%%%%%%%%%%%%%%%%%%%%%%%%%%%%%%%%%%%%%%

\section{Long time solutions of the K\"ahler-Ricci flow } \label{seckrfl}

\setcounter{equation}{0}

We will prove Theorem \ref{thm:main3} in this section as an application of Theorem \ref{thm:main1}. As in the previous section, we will  bound the $p$-Nash entropy from above and the volume form from below along the K\"ahler-Ricci flow.

 Let $X$ be a K\"ahler manifold with nef $K_X$ and nonnegative Kodaira dimension. For any smooth closed $(1,1)$-form $\chi\in K_X$, we can find a smooth volume form $\Omega$ such that
$$\chi = \ddbar \log \Omega, ~\int_X \Omega = 1. $$
We let $\omega_t =(1-e^{-t}) \chi + e^{-t} \omega_0$. The numerical dimension of $K_X$ is defined by
$$\kappa=\kappa(X)= \max\{ k\geq 0: [K_X]^k\neq 0~{\rm in}~ H^{k,k}(X, \mathbb{R}) \}.$$
We will assume ${\rm{Kod}}(X)\geq 0$, the numerical dimension $\kappa(X) \geq {\rm{Kod}}(X)\geq 0$. The normalized K\"ahler-Ricci flow (\ref{nkrflow}) is equivalent to the following parabolic complex Monge-Amp\`ere equation
 \begin{equation}\label{nmaflow}
\left\{
\begin{array}{l}
{ \displaystyle \ddt{\varphi} = \log \frac{e^{(n-\kappa) t} \left(\omega_t+ \ddbar\varphi \right)^n}{\Omega} -\varphi,}\\
\\
\varphi|_{t=0} =0.
\end{array} \right.
\end{equation}
 We let $\omega=\omega(\cdot, t)= \omega_t + \ddbar\varphi$  solving the Monge-Amp\`ere flow (\ref{nmaflow}).

 We first derive the volume growth for $(X, g(t))$.
\begin{lemma} \label{algvol} There exists $C>0$ such that for all $t\geq0$, we have
$$C^{-1}  e^{-(n-\kappa)t} \leq [\omega_t]^n \leq C e^{-(n-\kappa)t}.$$ 

\end{lemma}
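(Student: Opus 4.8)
The plan is to compute the cohomology class $[\omega_t]^n$ purely in terms of the numerical dimension $\kappa = \kappa(X)$, using the defining relation $\omega_t = (1-e^{-t})\chi + e^{-t}\omega_0$ with $\chi \in K_X$ and the fact that $[\chi]^k = 0$ in $H^{k,k}(X,\mathbb{R})$ precisely for $k > \kappa$. First I would expand the $n$-th power by the binomial/multinomial formula:
$$[\omega_t]^n = \sum_{k=0}^{n} \binom{n}{k} (1-e^{-t})^k e^{-(n-k)t} \, [\chi]^k \cdot [\omega_0]^{n-k}.$$
Since $[\chi]^k = 0$ for $k > \kappa$, only the terms with $0 \le k \le \kappa$ survive, so the sum runs up to $\kappa$. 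The dominant term as $t \to \infty$ is $k = \kappa$, which contributes $\binom{n}{\kappa}(1-e^{-t})^\kappa e^{-(n-\kappa)t}[\chi]^\kappa\cdot[\omega_0]^{n-\kappa}$, and every other surviving term ($k < \kappa$) carries a strictly larger power of $e^{-t}$, hence is lower order.

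Next I would verify the two-sided bound. Factoring out $e^{-(n-\kappa)t}$, we have
$$[\omega_t]^n = e^{-(n-\kappa)t} \sum_{k=0}^{\kappa} \binom{n}{k}(1-e^{-t})^k e^{-(\kappa-k)t} \, [\chi]^k\cdot[\omega_0]^{n-k}.$$
The key nontrivial point is that the bracketed sum stays bounded above and below by positive constants uniformly for $t \ge 0$. The upper bound is immediate since each factor $(1-e^{-t})^k e^{-(\kappa-k)t}$ lies in $[0,1]$ and the intersection numbers are fixed. For the lower bound, note that for $t$ large the $k=\kappa$ term tends to $\binom{n}{\kappa}[\chi]^\kappa\cdot[\omega_0]^{n-\kappa}$, which is strictly positive: this is exactly the statement that the numerical dimension equals $\kappa$ together with $[\omega_0]$ being Kähler (a nef class to the power $\kappa$ paired with a Kähler class to the complementary power is positive when $[\chi]^\kappa \neq 0$). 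For small $t$, $[\omega_t]^n \to [\omega_0]^n > 0$ and $e^{-(n-\kappa)t}$ is bounded; a compactness/continuity argument on $[0,\infty]$ (using the limit just computed at $t = \infty$) then gives a uniform positive lower bound for the bracketed expression.

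The main obstacle — though it is more conceptual than technical — is justifying that $\binom{n}{\kappa}[\chi]^\kappa \cdot [\omega_0]^{n-\kappa} > 0$ rather than merely $\ge 0$. This requires knowing that $[K_X]^\kappa$ is not only nonzero in $H^{\kappa,\kappa}(X,\mathbb{R})$ but in fact pairs positively against the Kähler class $[\omega_0]^{n-\kappa}$; this follows from the fact that $K_X$ is nef (so $[\chi]^\kappa$ is a limit of products of Kähler classes, hence a nonzero element of the closure of the cone of such products) and that pairing a nonzero such class with powers of a Kähler class is strictly positive by the standard properties of nef classes. Once this positivity is in hand, the estimate $C^{-1} e^{-(n-\kappa)t} \le [\omega_t]^n \le C e^{-(n-\kappa)t}$ follows directly, and since $[\omega_t]^n = \mathrm{Vol}_{g(t)}(X)$ along the flow this is the desired conclusion.
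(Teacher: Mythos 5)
Your proof is correct and follows essentially the same route as the paper: expand $[\omega_t]^n$ by the binomial theorem, drop all terms with $k>\kappa$ using $[K_X]^k=0$, identify the leading coefficient $\binom{n}{\kappa}(K_X)^\kappa\cdot[\omega_0]^{n-\kappa}>0$, and close the lower bound via continuity/compactness on $[0,\infty]$. The paper merely changes variables to $\alpha=[\omega_0]-K_X$ so that $[\omega_t]=K_X+e^{-t}\alpha$, making the exponentials come out as $e^{(l-\kappa)t}$ without the $(1-e^{-t})^k$ factors, but this is cosmetic and the key positivity fact you isolate is the same one the paper invokes (note that $(K_X)^\kappa\cdot\alpha^{n-\kappa}=(K_X)^\kappa\cdot[\omega_0]^{n-\kappa}$ once the vanishing of higher powers of $K_X$ is used).
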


\begin{proof} Let $\alpha = [\omega_0]-K_X$. Then by the definition of $\kappa$, we have
$$(K_X)^\kappa \cdot \alpha^{n-\kappa} >0$$
and
\begin{eqnarray*}
e^{(n-\kappa)t} [\omega_t]^n &=& \sum_{l=0}^n C_n^l e^{(l-\kappa )t} (K_X)^l \cdot \alpha^{n-l} \\
&=&\sum_{l=0}^\kappa C_n^l e^{(l-\kappa )t} (K_X)^l \cdot \alpha^{n-l} \\
&=&C_n^\kappa (K_X)^\kappa \cdot \alpha^{n-\kappa} + O(e^{-t}).
\end{eqnarray*}
This proves the lemma. 
\end{proof}

\begin{lemma} \label{ltbas}There exists $C>0$ such that for all $t\in [0, \infty)$, 
$$-C \leq \sup_{X} \varphi(\cdot, t) \leq C, ~ \sup_X \left( \ddt{\varphi}+\varphi\right)(\cdot, t)  \leq C. $$

\end{lemma}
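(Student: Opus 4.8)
The plan is to combine the maximum principle for the parabolic Monge-Amp\`ere flow \eqref{nmaflow} with the volume asymptotics of Lemma \ref{algvol}, exactly as in the standard analysis of the normalized K\"ahler-Ricci flow on minimal models. Set $\dot\varphi = \ddt{\varphi}$.

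First I would treat the upper bound for $\varphi$. Since $\omega_t \to \chi$ and $[\omega_t]^n \le C e^{-(n-\kappa)t}$ by Lemma \ref{algvol}, at an interior maximum point of $\varphi(\cdot,t)$ in space we have $\ddbar\varphi \le 0$, hence $(\omega_t + \ddbar\varphi)^n \le \omega_t^n \le C' \Omega$ (using that $\omega_t$ is uniformly comparable to a fixed metric times a bounded factor, or more directly that $\int_X(\omega_t+\ddbar\varphi)^n = [\omega_t]^n$ is exponentially small). From the flow equation this gives $\ddt{\varphi} \le \log\big(e^{(n-\kappa)t} C'\Omega/\Omega\big) - \varphi + C'' = (n-\kappa)t + C'' - \varphi$ at such a point; a barrier/ODE comparison of the form $\frac{d}{dt}(\sup_X\varphi) \le (n-\kappa)t + C'' - \sup_X\varphi$ is not quite what we want because of the growing term, so instead I would work with $\tilde\varphi = \varphi$ directly and use that $\int_X e^{\dot\varphi+\varphi}\Omega = e^{(n-\kappa)t}[\omega_t]^n \le C$ is bounded; combined with a Jensen-type / maximum-principle argument this yields $\sup_X\varphi \le C$. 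For the lower bound $\sup_X\varphi \ge -C$ one runs the symmetric argument at a spatial minimum, where $\ddbar\varphi \ge 0$ forces $(\omega_t+\ddbar\varphi)^n \ge \omega_t^n \ge c\, e^{-(n-\kappa)t}\Omega$ away from the cohomological degeneration — here one must be a little careful because $\omega_t$ itself degenerates in the collapsing directions, so the cleaner route is: apply the maximum principle to $u := \dot\varphi + \varphi$, whose evolution (computed by differentiating \eqref{nmaflow} in $t$) is $\big(\ddt{} - \Delta_\omega\big)u = \tr_\omega(\chi - \omega_t) - \dot\varphi = \tr_\omega\big((1-e^{-t})\chi + e^{-t}\chi - \omega_t\big)\cdots$, giving $\big(\ddt{}-\Delta_\omega\big)u = \tr_\omega(\ddbar\log\Omega\text{-type terms}) - u + \varphi + \cdots$; the upshot is an inequality $\big(\ddt{}-\Delta_\omega\big)u \le C - u$ (or $\ge -C-u$), from which $\sup_X u \le C$ by the maximum principle, which is precisely the second claimed bound, and then $\sup_X\varphi \ge -C$ follows by integrating $\dot\varphi = u - \varphi$ against the known upper bound for $\varphi$.

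More precisely, for the estimate on $\dot\varphi+\varphi$ I would compute: differentiating the flow, $\ddt{}(\dot\varphi) = \tr_\omega(\dot\omega_t + \ddbar\dot\varphi) - \dot\varphi$ where $\dot\omega_t = e^{-t}(\chi - \omega_0)$, so that $\big(\ddt{} - \Delta_\omega\big)\dot\varphi = \tr_\omega\big(e^{-t}(\chi-\omega_0)\big) - \dot\varphi$. Adding $\big(\ddt{}-\Delta_\omega\big)\varphi = \dot\varphi - \tr_\omega(\omega - \omega_t) = \dot\varphi - n + \tr_\omega(\omega_t)$, we get
\begin{equation}
\Big(\ddt{} - \Delta_\omega\Big)(\dot\varphi + \varphi) = \tr_\omega\big(e^{-t}(\chi - \omega_0) + \omega_t\big) - n = \tr_\omega(\chi) - n,
\end{equation}
using $\omega_t + e^{-t}(\chi-\omega_0) = \chi$. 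Since $\chi$ is a fixed smooth form we have $\tr_\omega(\chi) \le C\,\tr_\omega(\omega_X)$, which is not bounded a priori; so to close this I would instead estimate $\dot\varphi+\varphi$ from above by comparing with $\varphi$ itself: at a spatial maximum of $\dot\varphi+\varphi$ the quantity $\tr_\omega(\chi)$ need not be controlled, hence the correct auxiliary function is rather $H := \dot\varphi + \varphi - A\varphi$ for suitable $A$, or one exploits that $\int_X e^{\dot\varphi + \varphi}\Omega$ is bounded together with the already-established $\sup\varphi \le C$ and a lower barrier $\varphi \ge \psi_0 - C$ coming from a K\"ahler current in the (big, once $\kappa < n$) or semi-ample class $K_X$. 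The honest statement is that the upper bound on $\dot\varphi + \varphi$ follows from the maximum principle applied to $u = \dot\varphi + \varphi$ after adding a multiple of $t$ to absorb the $\tr_\omega(\chi)$ term via $\tr_\omega(\chi) \le \varepsilon^{-1}$ on the locus where $\omega \ge \varepsilon\omega_X$ — which is where the maximum is attained once one knows a $C^0$ lower bound for $\varphi$ away from the degeneration locus.

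The main obstacle I anticipate is precisely this last point: $\omega_t$ degenerates cohomologically as $t\to\infty$ (its total volume decays like $e^{-(n-\kappa)t}$), so the naive maximum principle comparisons produce terms $\tr_\omega(\chi)$ or $\tr_\omega(\omega_0)$ that are not obviously bounded, and one must feed in a uniform lower bound $\varphi \ge \psi - C$ where $\psi$ is the potential of a K\"ahler current in $[K_X]$ (available because $\mathrm{Kod}(X)\ge 0$ and $K_X$ nef, via a Demailly-type regularization as in Lemma \ref{8psi} of the finite-time case) in order to localize the maximum away from the singular locus and there apply the positivity $\omega \ge c(\text{compact subset})\,\omega_X$. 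Once that lower barrier is in hand, the upper bounds for $\varphi$ and for $\dot\varphi + \varphi$ are routine consequences of the parabolic maximum principle combined with Lemma \ref{algvol}.
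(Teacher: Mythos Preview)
Your overall strategy (maximum principle, volume asymptotics from Lemma \ref{algvol}, and the integral identity $\int_X e^{\dot\varphi+\varphi}\Omega = e^{(n-\kappa)t}[\omega_t]^n$) is the right framework, but the execution has a genuine gap at exactly the point you flag.

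For the bound $\sup_X\varphi\le C$, the paper does not use a spatial maximum at all. It applies Jensen's inequality to $\int_X \varphi\,\Omega$ (not to $\int_X e^{\dot\varphi+\varphi}\Omega$), obtaining the ODE inequality $\frac{d}{dt}\int_X\varphi\,\Omega \le C - \int_X\varphi\,\Omega$, and then passes from $\int_X\varphi\,\Omega\le C$ to $\sup_X\varphi\le C$ by the mean-value inequality for $\omega_X$-psh functions. This is clean and avoids the growing $(n-\kappa)t$ term you encountered.

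For $\sup_X(\dot\varphi+\varphi)\le C$, your computation $(\partial_t - \Delta_\omega)(\dot\varphi+\varphi)=\tr_\omega(\chi)-\kappa$ is correct, and you are right that $\tr_\omega(\chi)$ is uncontrolled. The paper's resolution is \emph{not} to add $-A\varphi$ or to invoke a K\"ahler-current barrier. Instead it works with the auxiliary function $u=\dot\varphi - e^{-t}\varphi$. The evolution of $u$ produces a term $-e^{-t}\tr_\omega(\omega_0)$ with the \emph{good} sign, together with a term $e^{-t}\log(\Omega/\omega^n)$; the combination $\tr_\omega(\omega_0) - \log(\omega_0^n/\omega^n)$ is bounded below by AM--GM, yielding $\partial_t u \le \Delta_\omega u - u + C$ and hence $u\le C$. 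Since $\varphi\le C$ is already known, $\dot\varphi+\varphi = u + (1+e^{-t})\varphi\le C$ follows.

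Your proposed workaround via a lower barrier $\varphi\ge\psi-C$ coming from a K\"ahler current in $[K_X]$ is circular here: in the paper that barrier (Lemma \ref{ltc0}, using the envelope $\mathcal V_\infty$) is established \emph{after} Lemma \ref{ltbas} and in fact uses the upper bound on $\dot\varphi+\varphi$ as input. So the argument must close without it, and the choice $u=\dot\varphi - e^{-t}\varphi$ is what makes that possible.

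Finally, the lower bound $\sup_X\varphi\ge -C$ in the paper comes directly from $\int_X e^{\dot\varphi+\varphi}\Omega\ge c>0$ combined with the now-established upper bounds for $u=\dot\varphi-e^{-t}\varphi$ and $\sup_X\varphi$, via $e^{(1+e^{-t})\sup_X\varphi}\ge c_1 e^{\sup_X u + (1+e^{-t})\sup_X\varphi}\ge c_1\int_X e^{\dot\varphi+\varphi}\Omega\ge c_2$.
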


\begin{proof}  By Jensen inequality and Lemma \ref{algvol}, 

\begin{eqnarray*}
\ddt{}  \left( \int_X \varphi(\cdot, t)  \Omega \right) &=& \int_X \left(\log \frac{ e^{(n-\kappa)t}\omega^n}{\Omega}\right) \Omega - \int_X \varphi  (\cdot, t) \Omega\\
&\leq&   \log \left(  \int_X e^{(n-\kappa)t}\omega^n  \right)  -\int_X \varphi  (\cdot, t)\Omega\\
&\leq& \log \left( e^{(n-\kappa)t} [\omega_t]^n\right) -\int_X \varphi  (\cdot, t)\Omega\\
&\leq& C - \int_X \varphi (\cdot, t) \Omega.
\end{eqnarray*}
Hence $\int_X \varphi (\cdot, t) \Omega$ is uniformly bounded above. Since $\varphi \in {\rm PSH}(X, \omega_t) \subset {\rm PSH}(X, A \omega_0)$ for some fixed sufficiently large $A>0$, by the mean value theorem for plurisubharmonic functions, there exists $C>0$ such that for any $t\in [0, \infty)$ and $x\in X$, 
$$\varphi (x, t) \leq \int_X\varphi(\cdot, t) \Omega + C. $$
This proves the uniform upper bound for $\varphi$.

Now we let $u= \ddt{\varphi} - e^{-t} \varphi$. Then the evolution for $u$ is given by
\begin{eqnarray*}
\ddt{u}  &=& \Delta u - u  - e^{-t} \ddt{\varphi} - e^{-t} tr_\omega( \omega_t +\omega_0- \chi )+ n-\kappa + ne^{-t} \\
&=&\Delta u - u  + e^{-t} \log\frac{\Omega}{\omega^n}  - e^{-t} tr_\omega( \omega_0+e^{-t}(\omega_0 - \chi )) \\
&& + e^{-t} \ddt{\varphi} + n-\kappa + e^{-t}(n- (n-\kappa)t).
\end{eqnarray*}
Then there exist $C_1, C_2>0$ such that for all $t\geq 0$, 
\begin{eqnarray*}
\ddt{u}  & \leq & \Delta u - u  - \frac{1}{2} e^{-t} \left( tr_\omega( \omega_0 ) - \log \frac{\omega_0^n}{\omega^n} \right)  + C_1\\
&\leq & \Delta u - u- C_2.
\end{eqnarray*}
 The uniform upper bound for $u$ immediately follows from the maximum principle. Therefore. 
 $$\ddt{\varphi} + \varphi = u + (1+ e^{-t}) \varphi$$ is uniformly bounded above.
 
We now prove the lower bound for $\sup_X \varphi(\cdot, t)$.  By the upper bound of $\ddt{\varphi} - e^{-t} \varphi$ and by Lemma \ref{algvol}, there exist $c_1, c_2>0$ such that for all $t\geq 0$, we have
\begin{eqnarray*}
e^{(1+e^{-t})\sup_X \varphi(\cdot, t) } & \geq & c_1e^{ (1+e^{-t}) \sup_X \varphi  + \sup_X \left(\ddt{\varphi}  - e^{-t} \varphi \right) }  \\
&\geq & \int_X e^{\ddt{\varphi} + \varphi} \Omega\\
&=& \int_X e^{(n-\kappa)t}  \omega^n \\
&\geq& c_2.
\end{eqnarray*}
This completes the proof of the theorem.  \end{proof}

Let 
\begin{equation}
\mathcal{V}_t(x)= \sup\{ \phi(x) ~|~\omega_t + \ddbar \phi \geq 0, ~\phi\leq 0 \}
 \end{equation}
be the extremal function associated to $\omega_t$ for any $t\in [0, \infty)$. We let $V_\infty$ be the extremal function associated to $\chi$.  
Since we assume $\kappa>0$, there exists a holomorphic section $\sigma \in |mK_X|$ for some sufficiently large $m$. Let $h$ be the hermitian metric on $mK_X$ with ${\rm Ric}(h)=m\chi$ and $\sup_X |\sigma|_h^2 = 1. $ Then 
\begin{equation}\label{ltvinf}
\mathcal{V}_\infty \geq \frac{1}{m} \log |\sigma|^2_h
\end{equation}
because $$\chi+ \frac{1}{m}\ddbar \log |\sigma|^2_h = [\sigma] \geq 0.$$

The following lemma is obvious by definition. 

\begin{lemma} For any $t_1\leq t_2$, 
$$\mathcal{V}_\infty \leq \mathcal{V}_{t_2} \leq \mathcal{V}_{t_1}. $$

\end{lemma}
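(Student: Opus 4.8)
The plan is to prove the two chains of inequalities separately, in each case by converting an admissible competitor for one envelope into an admissible competitor for the other. The only facts used are that the constraint ``$\theta+\ddbar\phi\ge 0$ and $\phi\le 0$'' is preserved under multiplying $\phi$ by a constant in $[0,1]$ and under forming convex combinations, together with $\omega_0>0$.

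\emph{Step 1: $\mathcal V_\infty\le\mathcal V_t$ for all $t\ge 0$.} I would start from $\omega_t=(1-e^{-t})\chi+e^{-t}\omega_0$. For any $\phi\in{\rm PSH}(X,\chi)$ with $\phi\le 0$, the function $(1-e^{-t})\phi$ is non-positive and satisfies
$$\omega_t+(1-e^{-t})\ddbar\phi=(1-e^{-t})(\chi+\ddbar\phi)+e^{-t}\omega_0\ge 0,$$
so it is an admissible competitor, whence $\mathcal V_t\ge(1-e^{-t})\phi$. Taking the supremum over such $\phi$ and using $0\le 1-e^{-t}\le 1$ together with $\mathcal V_\infty\le 0$ gives $\mathcal V_t\ge(1-e^{-t})\mathcal V_\infty\ge\mathcal V_\infty$. (The competitor set is nonempty by (\ref{ltvinf}), so $\mathcal V_\infty$ is a genuine non-positive function.) Taking $t=t_2$ yields the first inequality of the lemma.

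\emph{Step 2: $\mathcal V_{t_2}\le\mathcal V_{t_1}$ for $t_1\le t_2$.} Assume $t_1<t_2$ (the case $t_1=t_2$ is trivial). The algebraic point is that the earlier form is a convex combination of the later form and $\omega_0$:
$$\omega_{t_1}=\mu\,\omega_{t_2}+(1-\mu)\,\omega_0,\qquad \mu:=\frac{1-e^{-t_1}}{1-e^{-t_2}}\in[0,1],$$
where $\mu\le 1$ holds precisely because $t_1\le t_2$. Then for any $\phi\in{\rm PSH}(X,\omega_{t_2})$ with $\phi\le 0$,
$$\omega_{t_1}+\mu\,\ddbar\phi=\mu\,(\omega_{t_2}+\ddbar\phi)+(1-\mu)\,\omega_0\ge 0,$$
so $\mu\phi\le 0$ is admissible for $\mathcal V_{t_1}$, giving $\mathcal V_{t_1}\ge\mu\phi$. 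Taking the supremum over $\phi$ and using $\mathcal V_{t_2}\le 0$ gives $\mathcal V_{t_1}\ge\mu\,\mathcal V_{t_2}\ge\mathcal V_{t_2}$.

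I do not expect a genuine obstacle: as the text notes, the lemma is essentially immediate from the definition. The only points needing a bit of care are orienting the convex combination correctly (expressing $\omega_{t_1}$, not $\omega_{t_2}$, in terms of the other two forms, which is where $t_1\le t_2$ enters) and tracking signs — every competitor is non-positive while the scaling factors lie in $[0,1]$, so scaling a non-positive competitor by such a factor only increases it, which is exactly what makes the spurious factors $(1-e^{-t})$ and $\mu$ harmless.
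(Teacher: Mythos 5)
Your proof is correct. The paper gives no argument at all (it only remarks that the lemma is ``obvious by definition''), so there is no official proof to compare against, but your write-up correctly supplies the detail that actually makes the claim true. Two points worth highlighting. First, the naive argument one might hope for --- that the competitor set for $\mathcal V_{t_2}$ is contained in the one for $\mathcal V_{t_1}$ --- would require $\omega_{t_1}\geq\omega_{t_2}$, i.e.\ $(e^{-t_1}-e^{-t_2})(\omega_0-\chi)\geq 0$, which holds only if $\omega_0\geq\chi$; the paper nowhere assumes this, so a direct competitor inclusion does not work and some rescaling is genuinely needed. Your convex-combination identity $\omega_{t_1}=\mu\,\omega_{t_2}+(1-\mu)\,\omega_0$ with $\mu=\frac{1-e^{-t_1}}{1-e^{-t_2}}\in[0,1]$ (and the analogous decomposition $\omega_t=(1-e^{-t})\chi+e^{-t}\omega_0$ for Step~1) is exactly the right fix: the strictly positive $\omega_0$ absorbs the loss of positivity, and the factor $\mu\phi$ (resp.\ $(1-e^{-t})\phi$) remains an admissible nonpositive competitor. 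Second, the final inequalities $\mu\mathcal V_{t_2}\geq\mathcal V_{t_2}$ and $(1-e^{-t})\mathcal V_\infty\geq\mathcal V_\infty$ correctly exploit that the envelopes are nonpositive while the scaling factors lie in $[0,1]$, and the passage from $\mathcal V_{t_1}\geq\mu\phi$ to $\mathcal V_{t_1}\geq\mu\mathcal V_{t_2}$ is valid because $\mu\geq 0$ commutes with the supremum. Nonemptiness of the competitor sets is, as you note, ensured by (\ref{ltvinf}). This is a clean and complete argument for a statement the authors left unproved.
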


\begin{lemma} \label{ltc0} There exists $C>0$ such that on $X\times [0, \infty)$, we have
\begin{equation}
\varphi(\cdot, t)   \geq \mathcal{V}_t - C\geq  \mathcal{V}_\infty - C. 
\end{equation}

\end{lemma}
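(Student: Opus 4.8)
The plan is to derive the estimate from the uniform $L^\infty$ bound of Proposition \ref{linf}, applied to the complex Monge-Amp\`ere equation satisfied by $\varphi(\cdot,t)$ with the (degenerating) background form $\omega_t$. The essential feature of Proposition \ref{linf} is that its constant does not depend on the K\"ahler class, which is exactly what permits the envelope $\mathcal V_t$ to appear on the right-hand side of a \emph{uniform} estimate; the second inequality $\mathcal V_t\ge \mathcal V_\infty$ is then immediate from monotonicity.

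\noindent\emph{Step 1: a one-sided bound on the density.} First I would rewrite the flow (\ref{nmaflow}) as the family of Monge-Amp\`ere equations
$$(\omega_t+\ddbar\varphi)^n \;=\; e^{-(n-\kappa)t}\, e^{\ddt{\varphi}+\varphi}\,\Omega,\qquad t\in[0,\infty).$$
By Lemma \ref{ltbas} one has $\ddt{\varphi}+\varphi\le C$, hence $(\omega_t+\ddbar\varphi)^n\le C\,e^{-(n-\kappa)t}\,\Omega$, while Lemma \ref{algvol} gives $[\omega_t]^n\ge C^{-1}e^{-(n-\kappa)t}$. Consequently the normalized volume form, measured against any fixed smooth volume form $dV$ (in particular $dV=(C_1\omega_0)^n$, with $C_1$ as in Step 2), satisfies
$$e^{F_t}\;:=\;([\omega_t]^n)^{-1}\,\frac{(\omega_t+\ddbar\varphi)^n}{dV}\;\le\; C$$
uniformly in $t$, with $\int_X e^{F_t}\,dV$ bounded. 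Since $t\mapsto t|\log t|^p$ is bounded on $(0,1]$, an upper bound on the density automatically yields a bound on the $p$-Nash entropy (cf.\ the inequality $\mathcal N_{X,\omega_X,p}(\omega)\le \int_{\{F\ge 0\}}e^FF^p\,\omega_X^n+C$ recorded in Section \ref{secintro}). Thus, fixing any $p>n$, we obtain a uniform bound $\mathcal N_{X,\,C_1\omega_0,\,p}(\omega(t))\le K$ with $K=K(X,g_0,p)$. I emphasize that only the one-sided bound $\ddt{\varphi}+\varphi\le C$ enters here: no lower bound on the density is needed — and such a lower bound is in fact one of the downstream consequences of this very lemma — so the argument is not circular.

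\noindent\emph{Step 2: applying Proposition \ref{linf}.} Because $\chi$ is a fixed smooth $(1,1)$-form on the compact manifold $X$, there is a constant $C_0$ with $\chi\le C_0\omega_0$, hence $\omega_t=e^{-t}\omega_0+(1-e^{-t})\chi\le C_1\omega_0$ for every $t$, with $C_1=\max(1,C_0)$. Applying Proposition \ref{linf} with background metric $C_1\omega_0$ and $\theta=\omega_t$ (so that $\mathcal V_\theta=\mathcal V_t$, independently of the background), Step 1 furnishes a uniform constant $C$ such that
$$\left\|\,\varphi(\cdot,t)-\sup_X\varphi(\cdot,t)-\mathcal V_t\,\right\|_{L^\infty(X)}\;\le\; C .$$
Together with the lower bound $\sup_X\varphi(\cdot,t)\ge -C$ from Lemma \ref{ltbas}, this gives $\varphi(\cdot,t)\ge \mathcal V_t-C$ on $X\times[0,\infty)$. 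Finally, $\mathcal V_t\ge\mathcal V_\infty$ is precisely the monotonicity lemma proved immediately above (take $t_1=t$ and let $t_2\to\infty$), so $\varphi(\cdot,t)\ge \mathcal V_t-C\ge \mathcal V_\infty-C$, as required.

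\noindent\emph{Main difficulty.} Once Proposition \ref{linf} is available the proof is short, and the only genuinely delicate step is the observation in Step 1: a mere upper bound on $\ddt{\varphi}+\varphi$ already produces a uniform entropy bound, so the $L^\infty$-machinery may be invoked \emph{before} any lower bound on the volume form is in hand. As an alternative one could attempt a direct parabolic maximum-principle argument in the spirit of the finite-time Lemma \ref{8c0} — working with a smoothing of $\mathcal V_t$ and the evolution of $\varphi-\mathcal V_t$ — but the $t$-dependence and the merely $C^{1,1}$ regularity of the envelope make that route less transparent, so the reduction to the pluripotential $L^\infty$ estimate above seems preferable.
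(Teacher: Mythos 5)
Your proof is correct and follows essentially the same route as the paper: use Lemma \ref{ltbas} (the upper bound on $\partial_t\varphi+\varphi$) together with Lemma \ref{algvol} to obtain the uniform upper bound $\omega^n/[\omega]^n\le C\Omega$, then invoke Proposition \ref{linf} and the lower bound on $\sup_X\varphi$, with the second inequality coming from the monotonicity of $\mathcal V_t$. The paper's proof is a two-line sketch; your Step 1 (explaining why a one-sided density bound already controls the $p$-Nash entropy, and why this is not circular) and Step 2 (the explicit choice $\omega_X=C_1\omega_0$ so that $\omega_t\le\omega_X$) simply make explicit what the paper leaves to the reader.
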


\begin{proof} By Lemma \ref{algvol} and Lemma \ref{ltbas}, There exists $C>0$ such that on $X\times [0, \infty)$, the normalized volume measure is uniformly bounded above by a smooth volume form
$$\frac{\omega^n}{[\omega]^n} \leq C \Omega. $$
The lemma then follows from Proposition \ref{linf} and the uniform lower bound of $\sup_X \varphi(\cdot, t)$.
\end{proof}

\begin{lemma} \label{t2est} There exists $C>0$ such that on $X\times [0, \infty)$, we have
$$\frac{\partial^2 \varphi }{\partial t^2}  + \ddt{\varphi} \leq C. $$
\end{lemma}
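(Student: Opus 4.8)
The plan is to derive a parabolic differential inequality for $w := \frac{\partial^2\varphi}{\partial t^2} + \ddt{\varphi}$ and then conclude by the maximum principle. First I would differentiate the Monge-Amp\`ere flow \eqref{nmaflow} once in $t$: since $\ddt{\varphi} = \log\frac{e^{(n-\kappa)t}\omega^n}{\Omega} - \varphi$, and $\partial_t\log\frac{\omega^n}{\Omega} = \tr_\omega(\partial_t\omega)$ with $\partial_t\omega = \dot\omega_t + \ddbar(\ddt{\varphi})$ and $\dot\omega_t := \frac{\partial\omega_t}{\partial t} = e^{-t}(\chi - \omega_0)$, one gets the identity
$$ w = (n-\kappa) + \tr_\omega\!\Big(\frac{\partial\omega}{\partial t}\Big) = (n-\kappa) + \tr_\omega(\dot\omega_t) + \Delta_\omega\big(\ddt{\varphi}\big), $$
so in particular $\Delta_\omega\big(\ddt{\varphi}\big) = w - (n-\kappa) - \tr_\omega(\dot\omega_t)$. (Since \eqref{nmaflow} is the normalized flow $\frac{\partial\omega}{\partial t} = -\Ric(\omega) - \omega$, one also has $w = -\kappa - R(\omega)$, $R(\omega)$ the scalar curvature, so the statement is simply the classical lower scalar curvature bound along the K\"ahler-Ricci flow, recast in potential-theoretic terms.)

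Next I would differentiate $w = (n-\kappa) + \tr_\omega(\partial_t\omega)$ a second time in $t$, using the standard formula for the $t$-derivative of the inverse metric. The second $t$-derivative of $\omega$ equals $\frac{\partial^2\omega_t}{\partial t^2} + \ddbar\big(\frac{\partial^2\varphi}{\partial t^2}\big)$, and $\tr_\omega$ of its $\ddbar$-part, namely $\Delta_\omega\big(\frac{\partial^2\varphi}{\partial t^2}\big)$, cancels against the corresponding term in $\Delta_\omega w$. One is thereby left with
$$ (\partial_t - \Delta_\omega)\, w = -\Big|\frac{\partial\omega}{\partial t}\Big|_\omega^2 + \tr_\omega\!\Big(\dot\omega_t + \frac{\partial^2\omega_t}{\partial t^2}\Big) - \big(w - (n-\kappa)\big), $$
after substituting the formula for $\Delta_\omega(\ddt{\varphi})$ above. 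The decisive point is that $\omega_t = \chi + e^{-t}(\omega_0 - \chi)$, so $\frac{\partial^2\omega_t}{\partial t^2} = -\dot\omega_t$ and the middle term vanishes identically; this is precisely what disposes of the a priori uncontrolled quantity $\tr_\omega(\chi)$ (recall that $\chi\in K_X$ is only closed, not positive). Hence
$$ (\partial_t - \Delta_\omega)\big(w - (n-\kappa)\big) = -\Big|\frac{\partial\omega}{\partial t}\Big|_\omega^2 - \big(w - (n-\kappa)\big) \le -\big(w - (n-\kappa)\big). $$

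Finally I would run the maximum principle. Because $\varphi(\cdot,0)=0$ is smooth and $\omega_0$ is a genuine K\"ahler metric, parabolic regularity gives $\varphi\in C^\infty(X\times[0,\infty))$, so $w$ is continuous — in particular bounded on the compact slab $X\times[0,1]$, with $w(\cdot,0) = -\kappa - R(\omega_0)$; no singular background potential enters this estimate, so the maximum principle applies directly. Setting $h := e^t\big(w - (n-\kappa)\big)$, the last displayed inequality becomes $(\partial_t - \Delta_\omega)h \le 0$, so the parabolic maximum principle on $X\times[0,T]$, for every $T>0$, gives $\sup_{X\times[0,T]}h = \sup_X h(\cdot,0)$. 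Therefore $w - (n-\kappa) \le e^{-t}\sup_X\big(w(\cdot,0)-(n-\kappa)\big)$, which yields $w\le C$ with $C$ depending only on $X$ and $g_0$ (through the fixed auxiliary data $\chi,\Omega$), as claimed. The computation is routine; the only steps requiring care are the bookkeeping in the second time-differentiation of $\tr_\omega(\partial_t\omega)$ and the cancellation $\frac{\partial^2\omega_t}{\partial t^2} = -\dot\omega_t$, and there is no genuine analytic obstacle.
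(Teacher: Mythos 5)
Your proof is correct, and it is conceptually close to the paper's but organized differently. The paper's proof is shorter: with $u=\ddt{\varphi}+\varphi$ it computes the two exact identities $\Delta_\omega u = -R - \tr_\omega(\chi)$ and $\ddt{u} = \Delta_\omega u + \tr_\omega(\chi) - \kappa$ (the $\tr_\omega(\chi)$ cancels, thanks to the algebraic identity $\dot\omega_t + \omega_t = \chi$ implicit in its computation), deduces $\ddt{u} = -R - \kappa$, and then simply cites the standard uniform lower bound on the scalar curvature $R$ along the normalized K\"ahler--Ricci flow. You notice the same identity $w = -R - \kappa$, but rather than cite the scalar curvature bound you rederive it in potential form: you differentiate once more in $t$ to produce a closed parabolic inequality $(\partial_t - \Delta_\omega)(w-(n-\kappa)) \le -(w-(n-\kappa))$, where the uncontrolled $\tr_\omega(\chi)$ term is killed by the different algebraic cancellation $\ddot\omega_t = -\dot\omega_t$ rather than by $\dot\omega_t + \omega_t = \chi$, and then conclude by the maximum principle with an $e^t$-weight. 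What you gain is self-containedness (no appeal to the classical $R\geq -C$ lemma, which would itself be proved by exactly your heat-equation argument applied to $R$); what you lose is brevity, since the evolution of $R$ is well-known. One small remark: your inequality $(\partial_t-\Delta)w = -|\partial_t\omega|_\omega^2 - (w-(n-\kappa))$ discards $|\partial_t\omega|^2\ge 0$ rather than the sharper $|\Ric|^2\ge R^2/n$; this is enough for an upper bound on $w$, as you correctly observe, but would not give the sharp asymptotic $R\ge -n$.
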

\begin{proof}

Let $u = \ddt{\varphi} + \varphi$. Let ${\rm R}$ be the scalar curvature of $\omega$. Since the scalar curvature of the K\"ahler-Ricci flow is uniformly bounded below, there exists $C>0$ such that on $X\times [0, \infty)$, we have
$$\Delta u = \Delta \left( \log \frac{\omega^n}{\Omega} \right) = - {\rm R} -  tr_\omega(\chi) \leq C - tr_\omega(\chi).$$
On the other hand, 
\begin{eqnarray*}
\ddt{u}  &=&   \frac{\partial^2 \varphi }{\partial t^2}  + \ddt{\varphi} \\
&\leq& \Delta u + tr_\omega(\chi) - \kappa\\
&\leq &C -\kappa. 
\end{eqnarray*}
This proves the lemma.
\end{proof}

 \begin{lemma} \label{ode} Suppose $f=f(x)$ is a smooth function for $x\geq 0$ and it satisfies the differential inequality 
 $$f'' + f' \leq 1, ~|f| \leq A$$ 
 for all $x\geq 0$ and
 %
 %$$f'(0)\geq - A$$ 
 %
 for some fixed $A> 1$. Then for any $x\geq 0$, we have, $$f' \geq - 5A. $$
 
 \end{lemma}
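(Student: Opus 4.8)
The plan is to reduce the second-order differential inequality to a first-order one for $g:=f'$ and to exploit the resulting monotonicity. Multiplying $f''+f'\le 1$ by $e^x>0$ gives $(e^x g)'=e^x(g'+g)\le e^x$, hence $\big(e^x(g-1)\big)'\le 0$, so the function $x\mapsto e^x(f'(x)-1)$ is non-increasing on $[0,\infty)$. This is the only structural input; everything else is elementary.

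Next I would argue by contradiction. Suppose $f'(x_0)<-5A$ for some $x_0\ge 0$. By the monotonicity just established, for every $x\ge x_0$ we have $e^x(f'(x)-1)\le e^{x_0}(f'(x_0)-1)<-(5A+1)e^{x_0}$, and therefore
$$f'(x)<1-(5A+1)e^{x_0-x},\qquad x\ge x_0.$$
Integrating this from $x_0$ to $x_0+L$, for a length $L>0$ still to be chosen, yields
$$f(x_0+L)-f(x_0)<L-(5A+1)\big(1-e^{-L}\big).$$

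Now I would choose $L$ to make the right-hand side as negative as possible; the optimal choice $e^{-L}=(5A+1)^{-1}$, i.e.\ $L=\log(5A+1)>0$, makes $(5A+1)(1-e^{-L})=5A$, so that $f(x_0+L)-f(x_0)<\log(5A+1)-5A$. To conclude I invoke the elementary inequality $\log(5A+1)<3A$ for $A>1$, which holds because $3A-\log(5A+1)$ is positive at $A=1$ and has positive derivative $3-5/(5A+1)$. This gives $f(x_0+L)-f(x_0)<-2A$, contradicting $|f|\le A$. Hence $f'(x)\ge -5A$ for all $x\ge 0$.

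There is no serious obstacle here: the argument is a textbook integrating-factor plus maximum-principle-type estimate. The only point requiring a little care is the bookkeeping in selecting $L=\log(5A+1)$ and verifying the numerical inequality $\log(5A+1)\le 3A=5A-2A$ for all $A>1$, which is exactly what forces the (non-sharp) constant $5$ in the statement.
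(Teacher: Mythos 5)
Your proof is correct, and it takes a genuinely different (arguably cleaner) route than the paper's. Both arguments begin with the same structural input, namely that $f''+f'-1\le 0$, but they package it differently and then diverge. The paper's proof observes that $(f'+f-x)'\le 0$, applies the intermediate value theorem to produce a point $x_0+a$ with $a\in(0,1)$ where $f'(x_0+a)=-2A$ (using $|f|\le A$ to rule out $f'<-2A$ persisting on a unit interval), and then evaluates the non-increasing quantity $f'+f-x$ at $x_0$ and $x_0+a$ to reach the contradiction $-2A<-3A+1$. Your proof instead takes the integrating factor form $\big(e^x(f'-1)\big)'\le 0$, which has the advantage of yielding a pointwise upper bound $f'(x)<1-(5A+1)e^{x_0-x}$ for all $x\ge x_0$; you then integrate this bound over an interval of the optimal length $L=\log(5A+1)$ and reduce everything to the elementary scalar inequality $\log(5A+1)<3A$ for $A>1$, which you verify correctly. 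The paper's approach buys a shorter one-line contradiction once the auxiliary point is located; yours buys the absence of the IVT step and a transparent optimization that explains where the constant $5$ comes from. Both are elementary and correct; neither has a gap.
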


\begin{proof}  Suppose the lemma fails. Then there exists $x_0\geq 0$, such that 
$$f'(x_0) < - 5 A. $$ Since $|f|\leq A$, there must exist an $a\in (0,1)$ such that
$$f'(x_0+a) = - 2 A. $$ Otherwise, $f(x) <  f(x_0) - 2A \leq - A$ for $x\in (x_0, x_0+1)$, contradicting the assumption.

By our assumption, $(f'+f - x)' \leq 0$,  and so 
$$          f'(x_0+a) + f(x_0+a) - (x_0+a) \leq f'(x_0) + f(x_0) - x_0.$$
This implies that
$$-2A =f'(x_0+a) \leq f'(x_0) + f(x_0)-f(x_0+a) +a \leq -5A + 2A +1= -3A +1.$$
This leads to contradiction as $A > 1$.  \end{proof}

\begin{lemma} \label{ltvob}There exists $C>0$ such that on $X\times [0, \infty)$, we have 
$$\ddt{\varphi} \geq  5 \mathcal{V}_\infty - C. $$

\end{lemma}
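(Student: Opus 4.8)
The plan is to reduce the estimate to the elementary one–variable inequality of Lemma \ref{ode}, applied pointwise on $X$. Fix $p\in X$. By \eqref{ltvinf} the set $\{\mathcal{V}_\infty=-\infty\}$ is pluripolar, and there the asserted inequality is vacuous, so we may assume $\mathcal{V}_\infty(p)$ is finite. Let $C_0>0$ be the constant in Lemma \ref{t2est}, and consider
\[
f(t):=\frac{1}{C_0}\bigl(\varphi(p,t)-\mathcal{V}_\infty(p)\bigr),\qquad t\in[0,\infty),
\]
which is a smooth function of $t$, since $\varphi$ is smooth on $X\times[0,\infty)$ and $\mathcal{V}_\infty(p)$ is a constant.

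First I would check the differential inequality: Lemma \ref{t2est} gives $\frac{\partial^2\varphi}{\partial t^2}+\frac{\partial\varphi}{\partial t}\le C_0$ on $X\times[0,\infty)$, hence $f''(t)+f'(t)\le 1$ for all $t\ge 0$. Next I would establish a uniform two–sided bound $|f|\le A_p$ with $A_p>1$. The lower bound $f(t)\ge -C/C_0$ is immediate from Lemma \ref{ltc0} (which gives $\varphi(p,t)\ge\mathcal{V}_\infty(p)-C$). For the upper bound, Lemma \ref{ltbas} gives $\varphi(p,t)\le\sup_X\varphi(\cdot,t)\le C$, and since every competitor in the definition of the extremal function is nonpositive we have $\mathcal{V}_\infty\le 0$; together these yield $f(t)\le C_0^{-1}\bigl(C-\mathcal{V}_\infty(p)\bigr)$. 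Thus one may take $A_p:=C_0^{-1}\bigl(C_1-\mathcal{V}_\infty(p)\bigr)$ with $C_1:=C+C_0$, and then $A_p>1$ because $C_1>C_0$ and $\mathcal{V}_\infty(p)\le 0$.

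Finally, Lemma \ref{ode} yields $f'(t)\ge -5A_p$ for all $t\ge 0$, i.e.
\[
\frac{\partial\varphi}{\partial t}(p,t)=C_0 f'(t)\ge -5\bigl(C_1-\mathcal{V}_\infty(p)\bigr)=5\,\mathcal{V}_\infty(p)-5C_1,
\]
and since $p\in X$ was arbitrary this is the claim with $C=5C_1$. I do not expect a genuine obstacle: the lemma is an assembly of Lemmas \ref{ltbas}, \ref{ltc0}, \ref{t2est} and \ref{ode}, and the factor $5$ is precisely the one produced by Lemma \ref{ode}. The only points requiring a word of care are the reduction to $p$ with $\mathcal{V}_\infty(p)>-\infty$ and the verification that $A_p>1$ (which holds uniformly because $\mathcal{V}_\infty\le 0$ and the constants from Lemmas \ref{ltbas}, \ref{ltc0} are positive), so that Lemma \ref{ode} applies with $A=A_p$ for each such $p$.
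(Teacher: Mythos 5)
Your proof is correct and follows essentially the same path as the paper's: combine Lemma \ref{t2est} (the ODE inequality), Lemma \ref{ltbas} (upper bound on $\varphi$) and Lemma \ref{ltc0} (lower bound $\varphi\geq\mathcal{V}_\infty-C$), then apply Lemma \ref{ode} pointwise in $x\in X$ to the function $t\mapsto\varphi(x,t)$. Your write-up is in fact more careful than the paper's terse version — you make the normalization by $C_0$ explicit so the hypotheses of Lemma \ref{ode} literally hold, handle the pluripolar set $\{\mathcal{V}_\infty=-\infty\}$, and verify $A_p>1$ — but these are expository refinements rather than a different route.
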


\begin{proof}  By Lemma \ref{ode} and Lemma \ref{ltc0}, there exists $C>0$ such that for any $x\in X$ and $t\geq 0$,we have 
$$\frac{\partial^2 \varphi}{\partial t^2} + \ddt{\varphi} \leq C, ~ - \mathcal{V}_\infty  - C \leq \varphi \leq C$$
The lemma is then proved by directly applying Lemma \ref{ode}. 
\end{proof}

We then immediately have the following corollary by combining Lemma \ref{ltbas}, Lemma \ref{ltvob} and (\ref{ltvinf}).

\begin{corollary} \label{volkfr} There exists $C=C(X, g_0)>0$ such that on $X\times [0, \infty)$, we have
$$ C^{-1} \exp \left(\frac{6}{m} \log |\sigma|^2_h  \right)      \leq    \frac{ 1}{[\omega]^n } \frac{\omega^n}{\Omega}  \leq C .$$

\end{corollary}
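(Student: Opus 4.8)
The plan is to reduce the corollary to bounds on the quantity $\ddt{\varphi}+\varphi$ already established in this section. First I would rewrite the normalized Monge-Amp\`ere flow (\ref{nmaflow}): adding $\varphi$ to both sides gives $\ddt{\varphi}+\varphi=\log\big(e^{(n-\kappa)t}\omega^n/\Omega\big)$, hence $\frac{\omega^n}{\Omega}=e^{-(n-\kappa)t}\,e^{\ddt{\varphi}+\varphi}$. Since $[\omega]=[\omega_t]$, dividing by $[\omega]^n=[\omega_t]^n$ and invoking Lemma \ref{algvol}, which says $[\omega_t]^n$ is comparable to $e^{-(n-\kappa)t}$ up to a uniform constant, yields
$$C^{-1}e^{\ddt{\varphi}+\varphi}\le \frac{1}{[\omega]^n}\frac{\omega^n}{\Omega}\le C\,e^{\ddt{\varphi}+\varphi}$$
on $X\times[0,\infty)$ for a uniform $C=C(X,g_0)>0$. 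It therefore suffices to bound $\ddt{\varphi}+\varphi$ above and below.

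The upper bound is immediate from Lemma \ref{ltbas}, which gives $\sup_X(\ddt{\varphi}+\varphi)(\cdot,t)\le C$ for all $t\ge0$; exponentiating produces $\frac{1}{[\omega]^n}\frac{\omega^n}{\Omega}\le C$. For the lower bound I would combine Lemma \ref{ltvob}, Lemma \ref{ltc0} and (\ref{ltvinf}): Lemma \ref{ltvob} gives $\ddt{\varphi}\ge 5\mathcal{V}_\infty-C$, Lemma \ref{ltc0} gives $\varphi\ge\mathcal{V}_\infty-C$, whence $\ddt{\varphi}+\varphi\ge 6\mathcal{V}_\infty-C$; finally $\mathcal{V}_\infty\ge\frac1m\log|\sigma|_h^2$ by (\ref{ltvinf}). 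Hence $\ddt{\varphi}+\varphi\ge\frac6m\log|\sigma|_h^2-C$, and exponentiating gives $\frac{1}{[\omega]^n}\frac{\omega^n}{\Omega}\ge C^{-1}\exp\big(\frac6m\log|\sigma|_h^2\big)$, which is the claim. Here the exponent $6$ is $5+1$: the $5$ from the ODE estimate of Lemma \ref{ode} used in Lemma \ref{ltvob}, and the $1$ from the coefficient of $\varphi$ in $\ddt{\varphi}+\varphi$.

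There is essentially no serious obstacle here: the corollary is a bookkeeping consequence of the preceding lemmas, and the only point needing care is the uniformity in $t$ of all the constants, which holds since Lemmas \ref{algvol}, \ref{ltbas}, \ref{ltc0} and \ref{ltvob} are stated with constants depending only on $(X,g_0)$. I would also emphasize that this two-sided bound is exactly the lower bound on the normalized volume form that, together with the (separately established) entropy and cohomology bounds, places $\omega(t)$ in a family $\mathcal{W}(X,\omega_0,n,A,p,K;\gamma)$ with $\gamma=C^{-1}\exp(\frac6m\log|\sigma|_h^2)$, whose zero set is the proper analytic subvariety $\{\sigma=0\}$ and hence has Hausdorff dimension $\le 2n-2<2n-1$; that is what makes Theorem \ref{thm:main1} applicable in the proof of Theorem \ref{thm:main3}.
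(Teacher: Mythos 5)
Your proof is correct and follows essentially the same route as the paper's: you rewrite the Monge--Amp\`ere flow to express $\frac{\omega^n}{\Omega}$ as $e^{-(n-\kappa)t}e^{\ddt{\varphi}+\varphi}$, use Lemma \ref{algvol} to replace $e^{-(n-\kappa)t}$ by $[\omega]^n$ up to uniform constants, and then apply Lemma \ref{ltbas} for the upper bound and Lemmas \ref{ltc0}, \ref{ltvob} together with \eqref{ltvinf} for the lower bound (giving $\ddt{\varphi}+\varphi \geq 6\mathcal{V}_\infty - C \geq \frac{6}{m}\log|\sigma|^2_h - C$), exactly as the paper does. Your explanatory remarks on the provenance of the factor $6$ and the downstream role of this estimate are accurate and do not change the argument.
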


\begin{proof} By Lemma \ref{algvol}, there exists $C>0$ such that $$C^{-1} {\rm exp} \left( \ddt{\varphi} + \varphi \right) \leq \frac{1}{[\omega]^n} \frac{\omega^n}{\Omega} \leq C {\rm exp} \left( \ddt{\varphi} + \varphi \right). $$ The corollary is then a direct consequence of Lemma \ref{ltvob}, Lemma \ref{ltc0} and (\ref{ltvinf}). 
\end{proof}

Corollary \ref{volkfr} implies the bound for $p$-Nash entropy and a pointwise lower bound  for $\omega(t)$.

\begin{corollary}\label{9wass} For any $p>n$, there exists $C>0$ such that for all $t \geq 0 $
$$\mathcal{N}_{X, \omega_0, p}(\omega(t)) \leq C.$$
Furthermore, for any $p>n$, there exist $A, B, K>0$  such that for all $t\geq 0$, 
$$\omega(t)\in \mathcal{W}(X, \omega_0, n, A, p, K; B^{-1} |\sigma|_h^{2B}), $$
where $\psi$ is defined in Lemma \ref{8psi}. 

\end{corollary}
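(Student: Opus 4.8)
The plan is to read everything off the two-sided bound on the normalized volume form established in Corollary \ref{volkfr}. Write $\Omega = e^{\rho}\omega_0^n$ with $\rho \in C^\infty(X)$; since $X$ is compact, $\rho$ is bounded. Setting $e^F := ([\omega]^n)^{-1}\,\omega^n/\omega_0^n$ as in the definition of the $p$-Nash entropy, Corollary \ref{volkfr} together with the bound on $\rho$ produces constants $C_0, C_1 > 0$ independent of $t$ with
$$
C_0^{-1}\,|\sigma|_h^{12/m} \;\le\; e^F \;\le\; C_1 \qquad \text{on } X\times[0,\infty).
$$

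For the entropy bound I would use only the upper inequality. Pointwise, $e^F|F|^p = \phi(e^F)$ where $\phi(t) = t\,|\log t|^p$; the function $\phi$ is continuous on $(0,C_1]$ and extends continuously to $[0,C_1]$ by $\phi(0)=0$, hence is bounded there by some $M = M(C_1,p)$. Therefore
$$
\mathcal{N}_{X,\omega_0,p}(\omega(t)) = \int_X e^F |F|^p\,\omega_0^n \;\le\; M\,[\omega_0]^n =: K,
$$
a constant depending only on $X$, $g_0$ and $p$, and in particular independent of $t$. This gives the first assertion.

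For the second assertion I would verify the three conditions defining $\mathcal{W}(X,\omega_0,n,A,p,K;B^{-1}|\sigma|_h^{2B})$. The cohomological bound is immediate from $[\omega_t] = (1-e^{-t})[K_X] + e^{-t}[\omega_0]$, which yields $[\omega_t]\cdot[\omega_0]^{n-1} \le A := \max\{[K_X]\cdot[\omega_0]^{n-1},\,[\omega_0]^n\}$ for all $t\ge 0$ (a positive constant, the first intersection number being nonnegative since $K_X$ is nef). The bound $K$ on the entropy was just obtained. Finally, for the pointwise lower bound on $([\omega]^n)^{-1}\omega^n/\omega_0^n$ I would invoke the lower inequality above: since $\sup_X |\sigma|_h^2 = 1$, for $B \ge 6/m$ one has $|\sigma|_h^{2B} \le |\sigma|_h^{12/m}$ everywhere, so choosing $B := \max\{6/m,\,C_0\}$ gives
$$
B^{-1}|\sigma|_h^{2B} \;\le\; C_0^{-1}|\sigma|_h^{12/m} \;\le\; e^F,
$$
which is exactly the required lower bound. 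Hence $\omega(t) \in \mathcal{W}(X,\omega_0,n,A,p,K;B^{-1}|\sigma|_h^{2B})$ for every $t\ge 0$, as claimed.

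With Corollary \ref{volkfr} in hand the argument is essentially bookkeeping, so I do not anticipate a real obstacle. The single point worth isolating is the elementary observation that $t\mapsto t|\log t|^p$ is bounded on every interval $(0,C]$: this is what lets the $L^\infty$ upper bound on the volume form alone control the $p$-Nash entropy, with no integrability computation near the divisor $\{\sigma=0\}$ required. The only remaining care is to take the exponent $B$ large enough that the smaller power $|\sigma|_h^{2B}$ (smaller precisely because $|\sigma|_h^2 \le 1$) falls below the lower bound supplied by Corollary \ref{volkfr}.
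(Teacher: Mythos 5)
Your argument is correct and is exactly the intended fill-in: the paper states Corollary~\ref{9wass} as an immediate consequence of Corollary~\ref{volkfr} without writing out the details, and you have supplied the two routine checks — the pointwise bound $e^F|F|^p \le M(C_1,p)$ coming from boundedness of $t\mapsto t|\log t|^p$ on $(0,C_1]$ (which is the right way to see that a one-sided $L^\infty$ bound on the volume ratio controls the $p$-Nash entropy), and the observation that since $\sup_X|\sigma|_h^2=1$ one may always enlarge the exponent $B$ to make $B^{-1}|\sigma|_h^{2B}$ fall below the lower bound $C_0^{-1}|\sigma|_h^{12/m}$ from Corollary~\ref{volkfr}. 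The cohomology bound from the affine expression for $[\omega_t]$ is likewise the standard computation. Nothing is missing.
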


\noindent{\it Proof of Theorem \ref{thm:main3}.}  The assumption of Theorem \ref{thm:main1} is satisfied due to Corollary \ref{9wass}. Theorem \ref{thm:main3} immediately follows. \qed

%

%%%%%%%%%%%%%%%%%%%%%%%%%%%%%%%%%%%%%%%%%%%%

\section{Family of projective manifolds} \label{secfam}

\setcounter{equation}{0}

In this section, we will extend Theorem \ref{thm:main1} to a projective family with not only varying K\"ahler classes but also complex structures. Such extensions will allow us to obtain fibre diameter estimates for degeneration of canonical K\"ahler metrics on special fibrations. 

Let 
\begin{equation}\label{famdef}
\pi: \mathcal{X} \subset \mathbb{CP}^N \times \mathbb{D}  \rightarrow \mathbb{D}
\end{equation}
 be a projective  family over a unit disk $\mathbb{D}$ with $\mathcal{X}_t=\pi^{-1}(t)$ being a smooth $n$-dimensional projective manifold for each $t\in \mathbb{D}^*$.  We further assume that $\pi$ is proper and flat,  and the central fibre $\mathcal{X}_0=\pi^{-1}(0)$ is reduced and irreducible. 
Let $\theta_t$ be the restriction of the Fubini-Study metric $\theta$ of $\mathbb{CP}^N$ to $\mathcal{X}_t$.

\begin{theorem} \label{famdia} Let $\pi: \mathcal{X}\subset \mathbb{CP}^N\times \mathbb{D} \rightarrow \mathbb{D}$ be a projective family defined as above in (\ref{famdef}). Let $\gamma$ be a nonnegative continuous function on $\mathbb{CP}^N\times \mathbb{D}$ such that  $\{ \gamma=0\}$ is a  proper subvariety of $\mathbb{CP}^N\times \mathbb{D}$ and $\{ \gamma=0\}$ does not contain $\mathcal{X}_t$ for any $t\in \mathbb{D}$.  Then for any $A>0$, $p>n$ and $K>0$,  there exists $C=C(A, p, K, \gamma)>0$ such that for any $t\in \frac{1}{2} \mathbb{D}^*$ and any K\"ahler metric $\omega$ on $\mathcal{X}_t$, if 
\begin{equation}
\mathcal{N}_{\mathcal{X}_t, \theta_t, p}(\omega) \leq  K, ~ ~ [\omega] \leq  A [\theta_t], ~ ~
\frac{1}{{\rm Vol}_\omega(\mathcal{X}_t)} \frac{\omega^n}{ \theta_t^n}  \geq \gamma, 
\end{equation}
then

$${\rm diam}({\mathcal{X}_t}, \omega) \leq C, $$
and
$$ \int_{\mathcal{X}_t} \big( |G(x, \cdot)| + |\nabla  G(x, \cdot)|\big) \omega^n + \left( - \inf_{y\in {\mathcal{X}_t}} G(x, y) \right)  {\rm Vol}_\omega(\mathcal{X}_t)\leq  C,  $$
for any $x\in \mathcal{X}_t$, where $G$ is the Green's function of $(\mathcal{X}_t, \omega)$.
Moreover, there exist a uniform constant $c=c(A, p, K, \gamma)>0$ and $\alpha = \alpha(n, p)>0$ such that for any $x\in \mathcal{X}_t$ and $R\in (0,1]$,
$$
\frac{{\mathrm{Vol}}_{\omega} (B_{\omega}(x, R))}{{\rm Vol}_{\omega}(\mathcal{X}_t)}\ge c R^\alpha.
$$

\end{theorem}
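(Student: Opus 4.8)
The plan is to run the estimates of Sections~\ref{seclinfty}--\ref{secdiam} fibrewise on $(\mathcal{X}_t,\omega)$, $t\in\frac12\mathbb{D}^*$, and to verify that every constant can be chosen independent of $t$. Tracing the proof of Theorem~\ref{thm:main1}, the ambient data $(X,\omega_X)$ enters only through four items: \emph{(i)} the intersection numbers $[\omega]\cdot[\omega_X]^{n-1}$ and $[\omega]^2\cdot[\omega_X]^{n-2}$ controlling Proposition~\ref{cohobd}; \emph{(ii)} a positive lower bound for the $\alpha$-invariant of a fixed large multiple of $\omega_X$, used in Corollary~\ref{albd}, Proposition~\ref{linf} and throughout Section~\ref{secgreen}; \emph{(iii)} the uniform Kolodziej-type $L^\infty$ estimate of Proposition~\ref{linf} and Corollary~\ref{linf2}; and \emph{(iv)} the measure $|\{\gamma=0\}|_{\omega_X}$ together with connectedness of $\{\gamma>0\}$. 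Once each of these is made uniform in $t$, Propositions~\ref{gres!} and~\ref{diaest} apply on every fibre with constants depending only on $N,n,A,p,K,\gamma$, and the theorem follows.

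Items \emph{(i)} and \emph{(iv)} are straightforward. Since $A[\theta_t]-[\omega]$ is nef — and, after replacing $A$ by $A+1$, K\"ahler — we get $0\le[\omega]\cdot[\theta_t]^{n-1}\le A[\theta_t]^n$ and $0\le[\omega]^2\cdot[\theta_t]^{n-2}\le A^2[\theta_t]^n$, while $[\theta_t]^n=\deg\mathcal{X}_t$ is constant in $t$ by flatness of $\pi$; so these quantities are uniformly bounded. The $C^3$-bounded background representative furnished by Proposition~\ref{cohobd} in the fixed case is replaced by $\chi_t:=(A+1)\theta_t-\eta_t\in[\omega]$, for $\eta_t\in(A+1)[\theta_t]-[\omega]$ K\"ahler: inspecting Section~\ref{secgreen} one sees the only property of that representative actually used there is a pointwise bound of the form $\chi\le C\omega_X$, which here is $\chi_t\le(A+1)\theta_t$. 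For \emph{(iv)}: $\{\gamma=0\}\cap\mathcal{X}_t$ is a proper analytic subset of the smooth connected projective manifold $\mathcal{X}_t$ (no fibre lies in $\{\gamma=0\}$), hence has real codimension $\ge2$, so $|\{\gamma=0\}\cap\mathcal{X}_t|_{\theta_t}=0$ and $\{\gamma>0\}\cap\mathcal{X}_t=\mathcal{X}_t\setminus\{\gamma=0\}$ is connected, uniformly over $t$.

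The crux — and the step I expect to be the main obstacle — is making \emph{(ii)}--\emph{(iii)} uniform as $t\to0$, when the complex structure of $\mathcal{X}_t$ and its embedding in $\mathbb{CP}^N$ degenerate and no fixed curvature bound is available. The key input is a uniform $\alpha$-invariant for the bounded family $\{\mathcal{X}_t\}_{t\in\frac12\overline{\mathbb{D}}}\subset\mathbb{CP}^N\times\frac12\overline{\mathbb{D}}$: there exist $\alpha,C_\alpha>0$ depending only on $N,n,A$ with
$$\int_{\mathcal{X}_t}e^{-\alpha(\varphi-\sup_{\mathcal{X}_t}\varphi)}\theta_t^n\le C_\alpha,\qquad t\in\tfrac12\mathbb{D}^*,\ \ \varphi\in{\rm PSH}(\mathcal{X}_t,(A+1)\theta_t).$$
This is proved by a normal-families argument: if it failed, suitably normalized $\varphi_j\in{\rm PSH}(\mathcal{X}_{t_j},(A+1)\theta_{t_j})$ with $t_j\to t_\infty\in\frac12\overline{\mathbb{D}}$ would, after passing to a limit cycle $Z$ of the $\mathcal{X}_{t_j}$ (of fixed degree, hence in a compact family) and a weak limit of the $\varphi_j$, produce a quasi-psh current on $Z$ whose singularities contradict Skoda's integrability theorem on $Z$ — yet $Z$ is reduced and irreducible, being $\mathcal{X}_{t_\infty}$ if $t_\infty\ne0$ and the reduced irreducible central fibre $\mathcal{X}_0$ if $t_\infty=0$ — a contradiction (alternatively, this is a by-now-standard uniform pluripotential estimate for bounded projective families). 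Since $\chi_t\le(A+1)\theta_t$ this yields a uniform $\alpha$-invariant for ${\rm PSH}(\mathcal{X}_t,\chi_t)$, whence a uniform form of Proposition~\ref{linf} and Corollary~\ref{linf2}; the remaining compactness arguments of Lemmas~\ref{lemma 6}--\ref{lemma gradient final} are then carried out for a varying family, extracting simultaneously a geometric limit of the $\mathcal{X}_{t_j}$, so that all of their constants become uniform. The auxiliary complex Monge--Amp\`ere solutions used along the way exist on each smooth fibre by Yau's theorem. Feeding the uniform Green's function bounds from Proposition~\ref{gres!} into the Green's-formula arguments of Proposition~\ref{diaest} then gives the diameter bound, the Green's function bounds, and the volume non-collapsing estimate $\mathrm{Vol}_\omega(B_\omega(x,R))/\mathrm{Vol}_\omega(\mathcal{X}_t)\ge cR^\alpha$ with $\alpha=\alpha(n,p)$ and $c,C$ depending only on $N,n,A,p,K,\gamma$.
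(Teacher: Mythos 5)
Your overall route matches the paper's: verify that the constants of Sections~\ref{seccone}--\ref{secdiam} can be made uniform over the fibres $\mathcal{X}_t$, with the uniform $\alpha$-invariant for the bounded projective family as the key new input (the paper simply cites \cite{dGG}, whereas you sketch a normal-families/Skoda argument, which is fine in spirit). Your treatment of the intersection numbers and of $\{\gamma=0\}\cap\mathcal{X}_t$ for $t\neq 0$ is also correct.

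But there is a genuine gap in the last step, which is precisely the hard part. You write that ``the remaining compactness arguments of Lemmas~\ref{lemma 6}--\ref{lemma gradient final} are then carried out for a varying family, extracting simultaneously a geometric limit of the $\mathcal{X}_{t_j}$,'' but this is not automatic and is where the real work lies. In the contradiction argument of Lemma~\ref{lemma 6}, the sequence $t_j$ will in general converge to $0$, and the limit object must be constructed on the \emph{singular} central fibre $\mathcal{X}_0$. Two things are then needed that do not follow from your setup. First, the relevant bad set is not $\{\gamma=0\}\cap\mathcal{X}_0$ but $\mathcal{S}=\{\gamma=0\}\cup(\mathrm{Sing}(\mathcal{X}_0)\times\{0\})$; one must cut off away from the singular locus of $\mathcal{X}_0$ as well, and the connectedness needed is that of $\mathcal{X}_0\setminus\mathcal{S}$ (which the paper gets from irreducibility of $\mathcal{X}_0$), not merely of $\mathcal{X}_t\setminus\{\gamma=0\}$ for $t\neq 0$. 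Second, to pass $\rho_j\tilde v_j$ to a constant limit on $\mathcal{X}_0\setminus\mathcal{S}$ one needs cut-off functions $\rho_j$ defined on the ambient $\mathbb{CP}^N\times\mathbb{D}$ whose restrictions to \emph{every} fibre $\mathcal{X}_t$ have uniformly small Dirichlet energy, $\int_{\mathcal{X}_t}|\nabla\rho_j|^2_{\theta_t}\,\theta_t^n\to 0$. This is a nontrivial pluripotential construction (the paper's Lemma~\ref{famcutoff}: blow up $\mathcal{S}$ to a divisor, use $\max(\log|\sigma|^2_h,\log\epsilon)$ and the quasi-psh-ness to control the fibrewise energy by $(-\log\epsilon)^{-1}$ times a degree). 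Without such cut-offs, ``extracting a geometric limit'' does not give you a harmonic limit function on $\mathcal{X}_0\setminus\mathcal{S}$ that you can show is constant and then derive a contradiction from. In short, the structure is right, but the proposal omits the analogue of Lemma~\ref{famcutoff} and the corresponding modifications in Lemma~\ref{lemma6v2}, which are exactly the new content of the family version.
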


The proof of Theorem \ref{famdia} is almost identical to the proof of Theorem \ref{thm:main1} except for a few technical differences as presented below.

\begin{lemma}\label{famcutoff}  Let $\pi: \mathcal{X}\subset \mathbb{CP}^N\times \mathbb{D} \rightarrow \mathbb{D}$ be a projective family defined as above in (\ref{famdef})  and let $S$ be a subvariety of $\mathbb{CP}^N\times \mathbb{D}$. If  $S$ does not contain $\mathcal{X}_t$ for any $t\in \mathbb{D}$ and  $\textnormal{Sing}(\mathcal{X}_0)\times\{0\}  \subset S$, then for any $\epsilon>0$ and $\mathcal{K} \subset\subset \mathcal{X}\setminus S$, there exists $\rho_\epsilon \in C^\infty( \left( \mathbb{CP}^N \times \mathbb{D} \right)  \setminus S)$ such that 
\begin{enumerate}
\item $0\leq \rho_\epsilon\leq 1$,

\item $\textnormal{Supp} \rho_\epsilon \subset\subset \left( \mathbb{CP}^N \times \mathbb{D} \right) \setminus S$, 

\item $\rho_\epsilon =1$ on $\mathcal{K}$, 

\item $\int_{\mathcal{X}_t} |\nabla \rho_\epsilon|^2 \theta_t^n < \epsilon$, for any $t\in \frac{1}{2}\mathbb{D}$.

\end{enumerate}

\end{lemma}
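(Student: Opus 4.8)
The plan is to build $\rho_\epsilon$ in the separated form $\rho_\epsilon(z,t)=\psi_\epsilon(G(z,t))\,\beta(t)$, where $\beta$ is a fixed cut-off in the disk variable, $G\ge 0$ is a fixed function on $\mathbb{CP}^N\times\mathbb D$ that is positive off $S$ and zero on $S$, and $\psi_\epsilon:[0,\infty)\to[0,1]$ is a logarithmic cut-off. For $G$: since $S$ is a subvariety of $\mathbb{CP}^N\times\mathbb D$ containing no fibre, by standard coherence (relative Serre vanishing over the Stein base $\mathbb D$ together with Cartan's Theorem~A) the ideal sheaf of $S$, twisted by $\mathcal O_{\mathbb{CP}^N}(k)\boxtimes\mathcal O_{\mathbb D}$, is globally generated over $\mathbb{CP}^N\times\overline{s'\mathbb D}$ for $k\gg1$ and any $s'<1$. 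Choosing generators $P_1,\dots,P_m$ and a smooth metric $h$ on $\mathcal O(k)\boxtimes\mathcal O$ whose curvature form is $k\,p_1^*\theta$ (with $p_1:\mathbb{CP}^N\times\mathbb D\to\mathbb{CP}^N$ the projection and $\theta$ the Fubini--Study form), set $G=\sum_j|P_j|_h^2$. Then $G$ is smooth, $\{G=0\}=S$, and by Poincar\'e--Lelong
\[
\ddbar\log G\ \ge\ -\,k\,p_1^*\theta\qquad\text{on }\mathbb{CP}^N\times\overline{s'\mathbb D},
\]
the crucial structural point being that $p_1^*\theta$ restricts to $\theta_t$ on every fibre $\mathcal X_t$. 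Now fix $r\in(\tfrac12,1)$ with $\pi(\overline{\mathcal K})\cup\overline{\tfrac12\mathbb D}\subset r\mathbb D$, then $r<s<s'<1$, and fix $\beta\in C_c^\infty(\mathbb D)$ with $0\le\beta\le1$, $\beta\equiv1$ on $\overline{r\mathbb D}$, $\textnormal{Supp}\,\beta\subset s\mathbb D$. Such a $\rho_\epsilon$ is a priori defined only on $(\mathbb{CP}^N\times s'\mathbb D)\setminus S$, but since $\textnormal{Supp}\,\beta\subset\subset s'\mathbb D$ it extends by zero to a function on $(\mathbb{CP}^N\times\mathbb D)\setminus S$.

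For the profile, fix a smooth nondecreasing $\chi:\mathbb R\to[0,1]$ with $\chi\equiv0$ on $(-\infty,0]$ and $\chi\equiv1$ on $[1,\infty)$, and for small $a>0$ put $\psi_a(x)=\chi\!\left(2(\log x-\log a)/|\log a|\right)$, so $\psi_a\equiv0$ on $[0,a]$, $\psi_a\equiv1$ on $[\sqrt a,\infty)$, and $|\psi_a'(x)|\le 2\|\chi'\|_\infty/(x|\log a|)$ with support in $(a,\sqrt a)$. Set $b_0:=\inf_{\mathcal K}G>0$ (positive since $\overline{\mathcal K}$ is compact and disjoint from $S$) and take $\rho_\epsilon=\psi_{a_\epsilon}(G)\beta$ with $a_\epsilon<b_0^2$ to be chosen. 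Properties (1), (3) are then immediate (on $\mathcal K$ one has $G\ge b_0>\sqrt{a_\epsilon}$ and $\beta\equiv1$), and (2) holds because $\textnormal{Supp}\,\rho_\epsilon\subset\{G\ge a_\epsilon\}\cap(\mathbb{CP}^N\times\overline{s\mathbb D})$, which is a compact set disjoint from $S=\{G=0\}$. For (4): fix $t\in\tfrac12\mathbb D$; since $\beta\equiv1$ near $t$, the fibrewise gradient of $\rho_\epsilon$ on $\mathcal X_t$ is $\psi_{a_\epsilon}'(G)\nabla_{\mathcal X_t}G$, supported on $\{a_\epsilon<G<\sqrt{a_\epsilon}\}$, whence
\[
\int_{\mathcal X_t}|\nabla\rho_\epsilon|_{\theta_t}^2\,\theta_t^n\ \le\ \frac{4\|\chi'\|_\infty^2}{|\log a_\epsilon|^2}\int_{\{a_\epsilon<G<\sqrt{a_\epsilon}\}\cap\mathcal X_t}\big|\nabla_{\mathcal X_t}\log G\big|_{\theta_t}^2\,\theta_t^n .
\]
So everything reduces to a uniform energy estimate: there is $C_0$ independent of $t$ with $\int_{\{a<G<b\}\cap\mathcal X_t}|\nabla_{\mathcal X_t}\log G|_{\theta_t}^2\,\theta_t^n\le C_0(\log b-\log a)$ for all $0<a<b\le1$. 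Granting this, the displayed quantity is $\le 2C_0\|\chi'\|_\infty^2/|\log a_\epsilon|$, which is $<\epsilon$ once $a_\epsilon$ is small enough, and $a_\epsilon$ then depends only on $\epsilon$, $C_0$, $\|\chi'\|_\infty$, $b_0$ — in particular not on $t$.

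The energy estimate is the heart of the matter, and here is how I would obtain it. Set $g=\log(G|_{\mathcal X_t})$, a quasi-plurisubharmonic function on $\mathcal X_t$, smooth off the proper subvariety $S\cap\mathcal X_t$ (proper since $S\not\supseteq\mathcal X_t$ and $\mathcal X_t$ is irreducible) and tending to $-\infty$ along it. Restricting the curvature inequality to $\mathcal X_t$ and using $p_1^*\theta|_{\mathcal X_t}=\theta_t$ gives $\Delta_{\theta_t}g\ge -kn$, so $\mu_t:=(\Delta_{\theta_t}g+kn)\,\theta_t^n$ is a nonnegative measure of total mass $kn[\theta_t]^n=kn\deg(\mathcal X_t)$ (using $\int_{\mathcal X_t}\Delta_{\theta_t}g\,\theta_t^n=0$, valid for $g\in L^1$ quasi-psh). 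Given $a<b$, the truncation $w=\min\{\max\{g,\log a\},\log b\}$ is bounded and Lipschitz, constant near $S\cap\mathcal X_t$, with $\nabla w=\mathbf 1_{\{a<G<b\}}\nabla g$; computing $\Delta_{\theta_t}w$ as a measure ($\mathbf 1_{\{a<G<b\}}\Delta_{\theta_t}g$ plus the nonnegative hypersurface measures on $\{G=a\}$ and $\{G=b\}$ produced by the $\max$ and $\min$, the one on $\{G=a\}$ of mass $\le kn\deg(\mathcal X_t)$ by the divergence theorem), integrating $|\nabla w|^2$ by parts against it, and using $0\le\log b-w\le\log b-\log a$, one gets $\int_{\mathcal X_t}|\nabla w|_{\theta_t}^2\,\theta_t^n\le 2kn\deg(\mathcal X_t)(\log b-\log a)$. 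Since $\deg(\mathcal X_t)$ is constant in the flat family, this is the claim with $C_0=2kn\deg$. The case $t=0$ is handled identically over $\mathcal X_0\setminus\textnormal{Sing}(\mathcal X_0)$, noting that $\rho_\epsilon$ vanishes near $\textnormal{Sing}(\mathcal X_0)$ because $\textnormal{Sing}(\mathcal X_0)\times\{0\}\subset S$, and $\deg(\mathcal X_0)=\deg(\mathcal X_t)$ by flatness.

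The main obstacle, and what dictates the construction, is the uniformity in $t$ of the energy estimate. This forces two things: $G$ must be assembled from sections of $\mathcal O(k)\boxtimes\mathcal O$, so that the quasi-plurisubharmonicity bound for $\log G$ descends to the fibres with the $t$-free constant $k\theta_t$; and the mass of $\mu_t$ must be $t$-free, which is exactly where flatness of $\pi$ — constancy of the fibre degree — is used. The remaining ingredients (the choice of $\psi_\epsilon$, the support and boundary bookkeeping, the integration by parts for Lipschitz functions) are routine.
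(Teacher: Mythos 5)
Your proposal is correct and rests on the same core idea as the paper's proof: build $\rho_\epsilon$ as a logarithmic cutoff of an auxiliary function $G$ vanishing exactly on $S$, exploit that $\log G$ is quasi-plurisubharmonic with a uniform negativity bound $\ddbar\log G\ge -k p_1^*\theta$, and obtain the $t$-uniform energy decay $\int|\nabla\rho_\epsilon|^2\theta_t^n=O(1/|\log a_\epsilon|)$ from a cohomological mass bound. The technical execution, however, is genuinely different in three places, and worth recording. First, the paper blows up $\mathbb{CP}^N\times\mathbb{D}$ so that $S$ becomes a divisor and then takes $\eta_\epsilon=\max(\log|\sigma|^2_h,\log\epsilon)$; you avoid the blow-up altogether by invoking relative Serre vanishing over the Stein base plus Cartan's Theorem A to globally generate $\mathcal I_S\otimes(\mathcal O(k)\boxtimes\mathcal O)$ over $\mathbb{CP}^N\times\overline{s'\mathbb D}$, setting $G=\sum_j|P_j|_h^2$. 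This is a clean and entirely legitimate alternative, and it buys you the especially convenient inequality $\ddbar\log G\ge -k\,p_1^*\theta$, whose restriction to every fibre is exactly $-k\theta_t$, with a constant visibly independent of $t$. Second, you include an explicit cutoff $\beta(t)$ in the base direction, which the paper's written argument actually omits; without it the support of $F(\eta_\epsilon/\log\epsilon)$ is closed but not relatively compact in $\mathbb{CP}^N\times\mathbb D$, so your bookkeeping here is the more careful one. Third, for the uniform energy estimate the paper integrates $\partial\rho\wedge\bar\partial\rho\wedge\theta_t^{n-1}$ by parts and bounds the result by an intersection number of the auxiliary Kähler class $[\tau]$ on the proper transform $\mathcal X_t'$, whereas you use a Laplacian-trace/mass argument: the truncation $w=\min\{\max\{\log G,\log a\},\log b\}$ and the bound $\Delta_{\theta_t}\log G\ge -kn$ give $\int|\nabla w|^2\theta_t^n\le 2kn\deg(\mathcal X_t)(\log b-\log a)$, with $\deg(\mathcal X_t)$ constant by flatness. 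These are two phrasings of the same cohomological input. One small imprecision in your write-up: the hypersurface measure produced by the $\min$ at $\{G=b\}$ is nonpositive, not nonnegative (the $\min$ creates a concave kink); fortunately this does not matter since you integrate against the weight $\log b-w$, which vanishes identically on $\{G=b\}$, so the term drops out regardless of sign. With that cosmetic fix the argument is complete.
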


\begin{proof}

We can assume $S$ is a union of smooth divisors after replacing $\mathbb{CP}^N \times \mathbb{D}$ by its blow-ups $\mathcal{Y}$.  Let $\sigma$ be the defining section for $S$ and $h$ be a smooth hermitian metric on the line bundle associated to $S$. Without loss of generality, we can assume that $|\sigma |_h^2 \leq 1$. We can always pick a K\"ahler metric $\tau$ on $\mathcal{Y}$ such that $\tau > \ric(h) = -i\partial \bar \partial \log h$ and $\tau_{\mathbb{CP}^N}\geq \theta$ after slightly shrinking $\mathbb{D}$. For simplicity, we identify $\theta$ with its pullback from $\mathbb{CP}^N \times \mathbb{D}$. 

Let $F$ be the standard smooth cut-off function on $[0, \infty)$ with $F=1$ on $[0, 1/2]$ and $F=0$ on $[1, \infty)$. 
We then let 
$$\eta_\epsilon = \max ( \log |\sigma|^2_h, \log \epsilon) . $$
For $\epsilon<1$, we have $\log \epsilon \leq \eta_\epsilon \leq 0$. 
Then obviously, $\eta_\epsilon \in {\rm PSH}(\mathcal{Y}, \tau) \cap C^0(\mathcal{Y})$.
Now we let 
$$\rho_\epsilon = F\left(\frac{\eta_\epsilon}{\log \epsilon} \right).$$
Then  $\rho_\epsilon =1 $ on $\mathcal{K}$ if $\epsilon$ is sufficiently small. Let $\mathcal{X}'_t$ be the proper transform of $\mathcal{X}_t$ by the blow-up and $\rho_{\epsilon, t}= \rho_{\epsilon}|_{\mathcal{X}_t}$, $\eta_{\epsilon, t}= \eta_{\epsilon}|_{\mathcal{X}_t}$, $\tau_t= \tau|_{\mathcal{X}_t}$. For simplicity, we identify $\rho_\epsilon$ and $\theta_t$ with their pullbacks from $\mathcal{X}_t$ to $\mathcal{X}_t'$. Straightforward calculations give
\begin{eqnarray*}
&&\int_{\mathcal{X}'_t} \sqrt{-1} \partial \rho_{\epsilon, t} \wedge \dbar \rho_{\epsilon, t} \wedge \theta_t^{n-1}  \\
&=&(\log \epsilon)^{-2}  \int_{\mathcal{X}'_t}  (F')^2 \sqrt{-1}\partial \eta_{\epsilon, t} \wedge \dbar \eta_{\epsilon, t} \wedge \theta_t^{n-1}\\
&\leq& C (\log \epsilon)^{-2} \int_{\mathcal{X}'_t}  (-\eta_{\epsilon, t}) \ddbar \eta_{\epsilon, t} \wedge \theta_t^{n-1} \\
&=& C(\log \epsilon)^{-2} \int_{\mathcal{X}'_t} (-\eta_{\epsilon, t})(\tau_t+ \ddbar \eta_{\epsilon, t}) \wedge \theta_t^{n-1}
+ C(\log \epsilon)^{-2} \int_{\mathcal{X}'_t}  \eta_{\epsilon, t} ~ \tau_t \wedge \theta_t^{n-1}\\
&\leq& C(-\log \epsilon)^{-1} \int_{\mathcal{X}'_t}  (\tau_t+ \ddbar \eta_{\epsilon, t}) \wedge \theta_t^{n-1} \\
&\leq& C(-\log \epsilon)^{-1} [\tau]^n\cdot \mathcal{X}'_t \\
&\leq& C(-\log \epsilon)^{-1}  \rightarrow 0
\end{eqnarray*}
as $\epsilon \rightarrow 0$, where the constant $C$ is independent of $t\in \frac{1}{2} \mathbb{D}$.
Therefore we obtain  $\rho_\epsilon \in C^0({\mathcal{X}'_t} )$ satisfying the conditions in the lemma. The lemma is then proved by smoothing $\rho_\epsilon$ on $\left( {\rm Supp}~ \rho_\epsilon \right) \setminus \mathcal{K}$.  
\end{proof}

\begin{lemma} \label{alphinv}

Let $\pi: \mathcal{X}\subset \mathbb{CP}^N\times \mathbb{D} \rightarrow \mathbb{D}$ be a projective family defined as above.   Then there exist $\alpha>0$ and $C>0$ such that for any $t\in \frac{1}{2} \mathbb{D}^*$ and $\varphi \in {\rm PSH}(\mathcal{X}_t, \theta_t)$, we have
\begin{equation}
\int_{\mathcal{X}_t} e^{- \alpha \left(\varphi - \sup_{\mathcal{X}_t} \varphi \right)} \theta_t^n \leq C. 
\end{equation}

\end{lemma}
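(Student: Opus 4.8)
The plan is to reduce the claim to a single, classical statement: the uniform boundedness of $\alpha$-invariants (in the sense of Tian/Hörmander) over a compact family of polarized manifolds. First I would work upstairs on a suitable compactification. After replacing $\mathbb{CP}^N\times\mathbb{D}$ by a resolution if needed, the total space $\mathcal{X}$ is a compact (or relatively compact over $\frac12\overline{\mathbb{D}}$) complex space, and the restriction $\theta_t$ of the Fubini--Study metric is induced by a fixed very ample line bundle $\mathcal{O}(1)$ on $\mathbb{CP}^N$. For $t\in\frac12\mathbb{D}^*$ the fibre $\mathcal{X}_t$ is smooth and $\theta_t$ is a genuine K\"ahler metric whose cohomology class is the hyperplane class; crucially, $[\theta_t]^n=\deg\mathcal{X}_t$ is a fixed integer independent of $t$ (the family is flat), and $\theta_t\le\theta$ as forms on all of $\mathbb{CP}^N$, so every $\theta_t$-psh function on $\mathcal{X}_t$ extends by $\theta$-psh behaviour — more precisely, $\mathrm{PSH}(\mathcal{X}_t,\theta_t)\subset\mathrm{PSH}(\mathcal{X}_t,\theta)$.

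The key step is a compactness/contradiction argument. Suppose the conclusion fails: there are $t_j\in\frac12\mathbb{D}^*$, exponents $\alpha_j\downarrow 0$, and $\varphi_j\in\mathrm{PSH}(\mathcal{X}_{t_j},\theta_{t_j})$ with $\sup_{\mathcal{X}_{t_j}}\varphi_j=0$ but $\int_{\mathcal{X}_{t_j}}e^{-\alpha_j\varphi_j}\theta_{t_j}^n\to\infty$. Passing to a subsequence, $t_j\to t_\infty\in\frac12\overline{\mathbb{D}}$. By the uniform estimates of Proposition \ref{cohobd} applied to the Fubini--Study class (or simply by compactness of the space of normalized $\theta$-psh functions on $\mathbb{CP}^N$ in $L^1_{loc}$), after a further subsequence $\varphi_j$ — viewed as $\theta$-psh functions on a fixed neighbourhood in $\mathbb{CP}^N$ — converge in $L^1_{loc}$ to some $\theta$-psh limit $\varphi_\infty$, and the masses $(\theta+\ddbar\varphi_j)\wedge(\text{currents of integration }[\mathcal{X}_{t_j}])$ converge weakly to $(\theta+\ddbar\varphi_\infty)\wedge[\mathcal{X}_{t_\infty}]$ by continuity of intersection numbers in flat families together with the Bedford--Taylor / Guedj--Zeriahi convergence theory for bounded classes. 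When $t_\infty\ne 0$ this is entirely standard; the case $t_\infty=0$ requires the hypothesis that $\mathcal{X}_0$ is reduced and irreducible, so that $[\mathcal{X}_{t_j}]\to[\mathcal{X}_0]$ as a single integral current with no multiplicity jump, and that $S$ (hence any potential singular locus) does not contain $\mathcal{X}_0$. Then the uniform boundedness of $\alpha$-invariants along the family — which follows, as in the proofs of Corollary \ref{albd} and Proposition \ref{linf}, from the Skoda--Hörmander $L^2$ extension theorem applied to the fixed ambient positive line bundle, giving an $\alpha>0$ that works on the central fibre and by openness on all nearby fibres — contradicts $\int e^{-\alpha_j\varphi_j}\theta_{t_j}^n\to\infty$ once $\alpha_j<\alpha$.

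A cleaner route, which I would actually present, avoids the full convergence machinery: cover $\frac12\overline{\mathbb{D}}$ by finitely many small disks; on each, use semicontinuity of log canonical thresholds / $\alpha$-invariants in families (Demailly--Koll\'ar, or the effective version via Ohsawa--Takegoshi with uniform constants) to produce a single $\alpha_0>0$ valid for all fibres $\mathcal{X}_t$ over that disk; take the minimum over the finite cover. The uniform constant $C$ is then obtained from the uniform $\alpha_0$-integrability estimate together with the fixed volume bound $\int_{\mathcal{X}_t}\theta_t^n=\deg\mathcal{X}_t$ and the a priori bound on oscillation coming from the $\alpha_0$-invariant itself (Green's-function/sub-mean-value type estimates as in Proposition \ref{cohobd}). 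The main obstacle is the behaviour at the central fibre $t=0$: one must ensure that the $\alpha$-invariant does not degenerate as $\mathcal{X}_t$ degenerates to $\mathcal{X}_0$, which is exactly where the reducedness and irreducibility of $\mathcal{X}_0$, and the fact that $\mathcal{X}_0$ is not contained in the bad locus, enter — these guarantee that the singularities of the limiting psh functions remain integrable with a uniform exponent, so that the effective semicontinuity theorems apply uniformly across $t=0$.
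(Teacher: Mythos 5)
The paper disposes of this lemma with a single citation: it is Theorem~3.4 of Di Nezza--Guedj--Guenancia~\cite{dGG}, which establishes a uniform Skoda-type integrability estimate for families of polarized varieties exactly in this setting. Your proposal instead tries to reprove that theorem from scratch by a compactness/semicontinuity argument.

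The two genuine gaps in your sketch are precisely the points that make \cite{dGG} a nontrivial paper. First, the pivotal line ``$\varphi_j$ --- viewed as $\theta$-psh functions on a fixed neighbourhood in $\mathbb{CP}^N$ --- converge in $L^1_{loc}$'' quietly invokes an extension of $\theta_t$-psh functions on the fibre $\mathcal{X}_t$ to $\theta$-psh functions on a fixed ambient open set. Such extensions exist (Coman--Guedj--Zeriahi), but they are not automatic, and even granting them, $L^1_{loc}$ convergence of the extensions on the ambient space does \emph{not} give control of the restrictions to the varying slices $\mathcal{X}_{t_j}$: a limit $\tilde\varphi_\infty$ can a priori be identically $-\infty$ on $\mathcal{X}_{t_\infty}$, or the slice integrals $\int_{\mathcal{X}_{t_j}} e^{-\alpha\varphi_j}\theta_{t_j}^n$ can jump. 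Second, you explicitly flag the central fibre as ``the main obstacle'' and then assert that the reducedness and irreducibility of $\mathcal{X}_0$, together with ``effective semicontinuity theorems,'' resolve it --- but you do not say which semicontinuity statement (Demailly--Koll\'ar controls a fixed ambient; you need a relative version over the base) or how the hypotheses on $\mathcal{X}_0$ feed into it. These are not cosmetic points; they are the content of the result being cited. The side remark $\mathrm{PSH}(\mathcal{X}_t,\theta_t)\subset\mathrm{PSH}(\mathcal{X}_t,\theta)$ is a tautology (since $\theta_t=\theta|_{\mathcal{X}_t}$) and does not perform the needed extension, and the appeal to Proposition~\ref{cohobd} is not applicable here, as that proposition concerns bounds on harmonic representatives of cohomology classes rather than integrability of psh potentials.

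In short, your plan identifies the right tools (Skoda/Ohsawa--Takegoshi with uniform constants, Demailly--Koll\'ar-type semicontinuity, control at the degenerate fibre) but leaves the hard steps as assertions. The honest route is either to cite \cite{dGG} Theorem~3.4 as the paper does, or to actually carry out the relative Ohsawa--Takegoshi / relative Skoda argument over $\frac12\overline{\mathbb{D}}$ with the flatness and irreducibility of $\mathcal{X}_0$ used explicitly to propagate a uniform exponent across $t=0$.
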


\begin{proof} The lemma is an immediate consequence of the results in \cite{dGG} (c.f. Theorem 3.4 in \cite{dGG}).
\end{proof}

\begin{lemma} \label{faminfty}

Let $\pi: \mathcal{X}\subset \mathbb{CP}^N\times \mathbb{D} \rightarrow \mathbb{D}$ be a projective family defined as above.  For any $A>0$, $p>n$, $K>0$, there exists $C=C( A, p, K)>0$ such that for any $t\in \frac{1}{2} \mathbb{D}^*$,  if $\eta$ is a smooth closed $(1,1)$-form on $\mathcal{X}_t$ with $\eta\leq A \theta_t$ and if $\omega=\eta + \ddbar \varphi $ is a K\"ahler form on $\mathcal{X}_t$ satisfying
\begin{equation}
\mathcal{N}_{\mathcal{X}_t, \theta_t,p}(\omega)  \leq  K,
\end{equation}
then
$$ \|\varphi-\sup_X \varphi-  \mathcal{V}_\eta  \|_{L^\infty(\mathcal{X}_t)} \leq C, $$ 
where  $\mathcal{V}_\eta= \sup\{ u\in {\rm PSH}(\mathcal{X}_t, \eta): ~u\leq 0\}$ is the envelope of non-positive $\eta$-PSH functions.

\end{lemma}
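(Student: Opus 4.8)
The plan is to follow the proof of Proposition \ref{linf}, substituting for each fixed-manifold ingredient its uniform-in-$t$ family counterpart established above. The pointwise bound $\eta \leq A\theta_t$ gives $\mathrm{PSH}(\mathcal{X}_t, \eta) \subseteq \mathrm{PSH}(\mathcal{X}_t, A\theta_t)$, and by Lemma \ref{alphinv} there are $\alpha_0 > 0$ and $C_0 > 0$, both independent of $t \in \frac{1}{2}\mathbb{D}^*$, with $\int_{\mathcal{X}_t} e^{-\alpha_0(u - \sup_{\mathcal{X}_t} u)}\theta_t^n \leq C_0$ for every $u \in \mathrm{PSH}(\mathcal{X}_t, \theta_t)$; replacing $u$ by $u/A$ gives the same estimate for $\mathrm{PSH}(\mathcal{X}_t, A\theta_t)$, and hence for $\mathrm{PSH}(\mathcal{X}_t, \eta)$, with $\alpha = \alpha_0/A$. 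So a uniform $\alpha$-invariant bound holds along the family.

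Next, $\mathcal{V}_\eta$ is well-defined, non-positive, and the ``easy'' half of the estimate comes for free: since $\omega = \eta + \ddbar\varphi > 0$ with $\varphi \in C^\infty(\mathcal{X}_t)$, the normalized potential $\varphi - \sup_{\mathcal{X}_t}\varphi$ is a non-positive $\eta$-psh function, so $\varphi - \sup_{\mathcal{X}_t}\varphi \leq \mathcal{V}_\eta \leq 0$, whence $\varphi - \sup_{\mathcal{X}_t}\varphi - \mathcal{V}_\eta \leq 0$ automatically. It remains to prove the uniform lower bound $\varphi - \sup_{\mathcal{X}_t}\varphi - \mathcal{V}_\eta \geq -C$. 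Writing $\omega^n = V_\omega\, e^F \theta_t^n$ with $V_\omega = [\omega]^n = [\eta]^n$ and $\|e^F\|_{L^1\log L^p(\mathcal{X}_t, \theta_t)} = \mathcal{N}_{\mathcal{X}_t, \theta_t, p}(\omega) \leq K$, this is exactly the uniform $L^\infty$-estimate for the complex Monge-Amp\`ere equation, which we invoke in the PDE form of \cite{GPT} (equivalently \cite{EGZ, DP, BEGZ, FGS, GPTW}). Retracing the Kolodziej/De Giorgi iteration as recalled in the proof of Lemma \ref{lemma key}, the only data entering the bound are $p > n$, the entropy bound $K$, the cohomological quantity $[\eta]\cdot[\theta_t]^{n-1} \leq A[\theta_t]^n$ with $[\theta_t]^n = \deg\mathcal{X}_t$ constant in $t$ by flatness of $\pi$, and the $\alpha$-invariant of $(\mathcal{X}_t, \eta)$, which is uniform by the previous paragraph; hence the constant $C = C(A, p, K)$ is uniform and the lemma follows.

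The main obstacle is precisely this uniformity as $t \to 0$, where the smooth fibres $\mathcal{X}_t$ degenerate toward the possibly singular central fibre $\mathcal{X}_0$ and the geometric inputs of the $L^\infty$-argument --- the total mass, and, most delicately, the $\alpha$-invariant --- could a priori blow up. The structural point that saves the argument is that every $\mathcal{X}_t$ is embedded in the fixed ambient $(\mathbb{CP}^N, \theta)$ with $\theta_t = \theta|_{\mathcal{X}_t}$, so the mass is the constant degree and the analytic estimates are controlled by ambient data, while the delicate uniformity of the $\alpha$-invariant is supplied by \cite{dGG} through Lemma \ref{alphinv}. The one point to check carefully is that the cited Monge-Amp\`ere $L^\infty$-estimate can be stated with its constant depending only on $n$, $p$, $K$, $[\theta_t]^n$, and $\alpha$, with no further curvature, Sobolev, or injectivity-radius input from $\mathcal{X}_t$; this is the case for the proof of \cite{GPT}, which is the reason that PDE argument, rather than a purely pluripotential-theoretic one, is the natural tool here.
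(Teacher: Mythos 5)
Your proposal follows essentially the same route as the paper's own proof: reduce to the uniform $\alpha$-invariant bound from Lemma \ref{alphinv} (after the observation $\mathrm{PSH}(\mathcal{X}_t,\eta)\subset\mathrm{PSH}(\mathcal{X}_t,A\theta_t)$), then invoke the $L^\infty$ theory for the complex Monge--Amp\`ere equation with $L^1\log L^p$ right-hand side as in Proposition \ref{linf}. The additional details you supply --- the explicit rescaling $u\mapsto u/A$ to transfer the $\alpha$-invariant, the trivial one-sided inequality $\varphi-\sup\varphi\le\mathcal{V}_\eta$, and the remark that $[\theta_t]^n$ is constant in $t$ by flatness --- are correct and merely make explicit what the paper leaves implicit.
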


\begin{proof} By Lemma \ref{alphinv}, the $\alpha$-invariant for ${\rm PSH}(\mathcal{X}_t, A\theta_t)$ is uniformly bounded for all $t\in \frac{1}{2}\mathbb{D}^*$. Since $[\omega]\leq A [\theta_t]$, we can choose a smooth closed $(1,1)$-form $\eta\in [\omega]$ with $\eta \leq A \theta_t$ and  then ${\rm PSH}(\mathcal{X}_t, \eta) \subset {\rm PSH}(\mathcal{X}_t, A \theta_t)$. We can consider the following Monge-Amp\`ere equation
$$(\eta+\ddbar \varphi)^n = \omega^n, ~ \sup_X \varphi=0.$$
The right hand side has uniformly bounded $p$-Nash entropy with respect to $\theta_t$ for some $p>n$. The lemma follows the work of \cite{EGZ, DP, BEGZ, FGS, GPTW} as generalizations of Kolodziej's work \cite{K} (c.f. Proposition \ref{linf}). 
\end{proof}

\begin{lemma} \label{lemma key 2}

Let $\pi: \mathcal{X}\subset \mathbb{CP}^N\times \mathbb{D} \rightarrow \mathbb{D}$ be a projective family defined as above.  Then for any $A>0$, $p>n$, $K>0$, there is a uniform constant $C=C( A, p, K)>0$ such that for  any $t\in \frac{1}{2} \mathbb{D}^*$, any K\"ahler metric $\omega$ on $\mathcal{X}_t$ satisfying
\begin{equation}
\mathcal{N}_{\mathcal{X}_t, \theta_t,p}(\omega)  \leq  K, ~ ~ [\omega] \leq  A [\theta_t], 
\end{equation}
and 
$v\in C^2(\mathcal{X}_t)$ satisfying 
$$|\Delta_{\omega} v| \le 1\, \mbox{ and } \int_{\mathcal{X}_t} v \omega^n = 0,$$
we have
$$\sup_{\mathcal{X}_t}  |v|\le C\left(1 + \frac{1}{[\omega]^n} \int_{\mathcal{X}_t} |v| \omega^n \right).$$

\end{lemma}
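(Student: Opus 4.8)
The plan is to transplant the proof of Lemma~\ref{lemma key} (and of its Corollary~\ref{lemma 3}) verbatim to each fibre $\mathcal{X}_t$, $t\in\frac12\mathbb{D}^*$, keeping track that every constant produced is independent of $t$. Since $|\Delta_\omega v|\le 1$ gives $\Delta_\omega(\pm v)\ge -1$ on all of $\mathcal{X}_t$ and $\int_{\mathcal{X}_t}(\pm v)\,\omega^n=0$, it suffices to establish the fibrewise analogue of Lemma~\ref{lemma key}: there is $C=C(A,p,K)>0$ such that for $\omega$ Kähler on $\mathcal{X}_t$ with $\mathcal{N}_{\mathcal{X}_t,\theta_t,p}(\omega)\le K$ and $[\omega]\le A[\theta_t]$, and $w\in C^2(\mathcal{X}_t)$ with $\Delta_\omega w\ge -a$ and $\int_{\mathcal{X}_t}w\,\omega^n=0$, one has $\sup_{\mathcal{X}_t}w\le C\big(a+\frac1{[\omega]^n}\int_{\mathcal{X}_t}|w|\,\omega^n\big)$; then apply this to $w=v$ and $w=-v$. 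As in Lemma~\ref{lemma key} one rescales to $a=1$ and normalises $\|w\|_{L^1(\mathcal{X}_t,\omega^n)}\le[\omega]^n$.

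First I would replace the appeal to Proposition~\ref{cohobd} by the observation used in the proof of Lemma~\ref{faminfty}: since $[\omega]\le A[\theta_t]$, pick a smooth closed $(1,1)$-form $\eta\in[\omega]$ with $\eta\le A\theta_t$, write $\omega=\eta+\ddbar\varphi$ with $\sup_{\mathcal{X}_t}\varphi=0$, and set $e^F=([\omega]^n)^{-1}\omega^n/\theta_t^n$. On the smooth compact Kähler manifold $\mathcal{X}_t$ (here $t\neq 0$) one solves, by \cite{Y}, the auxiliary Monge--Ampère equations
\begin{equation}\nonumber
(\eta+\ddbar\psi_{s,k})^n=[\omega]^n\,\frac{\eta_k(w-s)}{A_{s,k}}\,e^F\theta_t^n,\qquad \sup_{\mathcal{X}_t}\psi_{s,k}=0,\quad A_{s,k}=\int_{\mathcal{X}_t}\eta_k(w-s)\,e^F\theta_t^n,
\end{equation}
and runs the same test function $\Phi=-\varepsilon(-\psi_{s,k}+\varphi+\Lambda)^{n/(n+1)}+(w-s)$ with $\varepsilon$ chosen as in \eqref{eqn:epsilon}. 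Here $\Lambda=C+1$ with $C=C(A,p,K)$ the constant of Lemma~\ref{faminfty} bounding $\|\varphi-\mathcal{V}_\eta\|_{L^\infty(\mathcal{X}_t)}$; since $\psi_{s,k}\le 0$ is a competitor in the envelope, $\psi_{s,k}\le\mathcal{V}_\eta$, hence $-\psi_{s,k}+\varphi+\Lambda\ge -\mathcal{V}_\eta+(\mathcal{V}_\eta-C)+\Lambda=1$, and the maximum-principle computation of Lemma~\ref{lemma key} gives $\Phi\le 0$ on $\mathcal{X}_t$.

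The remaining steps are unchanged. From $\Phi\le 0$ and the choice of $\varepsilon$ one gets $(w-s)A_{s,k}^{-1/(n+1)}\le C_1(-\psi_{s,k}+\varphi+\Lambda)^{n/(n+1)}$; exponentiating and integrating against $e^F\theta_t^n$, and using the Hölder--Young inequality together with the \emph{uniform} lower bound on the $\alpha$-invariant of $A\theta_t$ from Lemma~\ref{alphinv} (applied to $\psi_{s,k}\in{\rm PSH}(\mathcal{X}_t,\eta)\subset{\rm PSH}(\mathcal{X}_t,A\theta_t)$), one arrives at $r\,\phi(s+r)\le C_2\,\phi(s)^{1+\delta_0}$ with $\delta_0=\frac{p-n}{np}$, $\phi(s)=\int_{\{w>s\}}e^F\theta_t^n$ and $C_2=C_2(A,p,K)$. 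Since $\|w\|_{L^1(\mathcal{X}_t,\omega^n)}\le[\omega]^n$ gives $\phi(s)\le 1/s$, the De Giorgi iteration of Kolodziej \cite{K} yields $\sup_{\mathcal{X}_t}w\le (2C_2)^{1/\delta_0}+\frac1{1-2^{-\delta_0}}$, a bound depending only on $A,p,K$. Applying this to $v$ and to $-v$ proves the lemma.

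I expect the only genuine point to verify — the analytic iteration being word for word as in Lemma~\ref{lemma key} — is the uniformity in $t$ of the three inputs: the representative $\eta\le A\theta_t$ in the class $[\omega]$, the envelope/$L^\infty$ estimate of Lemma~\ref{faminfty}, and the lower bound on the $\alpha$-invariant of $A\theta_t$ from Lemma~\ref{alphinv} (which rests on \cite{dGG}). All three are already uniform by those preceding lemmas, so the constants $\varepsilon$, $\Lambda$, $C_1$, $C_2$ occurring along the way depend only on $A$, $p$, $K$, which is exactly the claimed dependence.
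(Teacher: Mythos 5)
Your proof is correct and follows exactly the route the paper intends: the paper's own proof of this lemma is a one-line citation back to Lemma~\ref{lemma key} and Corollary~\ref{lemma 3}, with Lemma~\ref{faminfty} and Lemma~\ref{alphinv} supplying the $t$-uniform $L^\infty$ estimate and $\alpha$-invariant bound, and you have simply spelled out that substitution carefully. There is no difference in method.
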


\begin{proof} The lemma can be proved by the same argument for Lemma \ref{lemma key} and Corollary \ref{lemma 3} with the estimate established in Lemma \ref{faminfty}. 
\end{proof}

\begin{lemma} \label{lemma6v2} Let $\pi: \mathcal{X}\subset \mathbb{CP}^N\times \mathbb{D} \rightarrow \mathbb{D}$ be a projective family defined as above. Let $\gamma$ be a nonnegative smooth function on $\mathbb{CP}^N \times \mathbb{D}$ such that  $\{ \gamma=0\}$ is a  subvariety of $\mathbb{CP}^N \times \mathbb{D}$ and $\{ \gamma=0\}$ does not contain $\mathcal{X}_t$ for any $t\in \mathbb{D}$.  Then for any $A>0$, $p>n$, $K>0$, there exists $C=C( A, p, K,\gamma)>0$ such that for any $t\in \frac{1}{2} \mathbb{D}^*$, any K\"ahler metric $\omega$ on $\mathcal{X}_t$ satisfying
\begin{equation}
\mathcal{N}_{\mathcal{X}_t, \theta_t,p}(\omega)  \leq  K, ~ ~ [\omega] \leq  A [\theta_t], ~ ~
\frac{\omega^n}{ \theta_t^n}  \geq \gamma, 
\end{equation}
and  $v\in C^2(\mathcal{X}_t)$ satisfying
$$|\Delta_{\omega} v| \le 1\, \mbox{ and } \int_{\mathcal{X}_t} v \omega^n = 0,$$
we have 
$$\frac{1}{[\omega]^n} \int_{\mathcal{X}_t} |v| \omega^n\le C.$$

\end{lemma}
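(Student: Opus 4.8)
The plan is to run the contradiction argument of Lemma \ref{lemma 6}, the one genuinely new feature being that there is no fixed background manifold: as $t_j\to 0$ the fibres $\mathcal{X}_{t_j}$ may degenerate and $\mathcal{X}_0$ may be singular, so one cannot restrict to a fixed connected open set nor speak of a limit function on a fixed space. The remedy is to enlarge $\{\gamma=0\}$ past $\mathrm{Sing}(\mathcal{X}_0)$ and to transplant the single–fibre argument using the weak convergence of the volume measures in a flat family, together with the uniform cut‑offs of Lemma \ref{famcutoff}.

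First I would enlarge $S:=\{\gamma=0\}$ to $S':=S\cup(\mathrm{Sing}(\mathcal{X}_0)\times\{0\})$; since $\mathcal{X}_0$ is reduced and irreducible this is still a subvariety of $\mathbb{CP}^N\times\mathbb{D}$ containing no fibre, and one may pick a nonnegative continuous $\gamma'\le\gamma$ with $\{\gamma'=0\}=S'$. As $\omega^n/\theta_t^n\ge\gamma\ge\gamma'$ still holds, it suffices to prove the lemma for $\gamma'$. Assume it fails: there are $t_j\in\tfrac12\mathbb{D}^*$, K\"ahler $\omega_j$ on $\mathcal{X}_{t_j}$ satisfying the three bounds, and $v_j\in C^2(\mathcal{X}_{t_j})$ with $|\Delta_{\omega_j}v_j|\le 1$, $\int v_j\omega_j^n=0$, such that $N_j:=([\omega_j]^n)^{-1}\int_{\mathcal{X}_{t_j}}|v_j|\omega_j^n\to\infty$. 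Set $\tilde v_j:=v_j/N_j$, so $|\Delta_{\omega_j}\tilde v_j|\le N_j^{-1}\to 0$ and $([\omega_j]^n)^{-1}\int|\tilde v_j|\omega_j^n=1$. By Lemma \ref{lemma key 2}, $\sup_{\mathcal{X}_{t_j}}|\tilde v_j|\le C$; writing $e^{F_j}=([\omega_j]^n)^{-1}\omega_j^n/\theta_{t_j}^n\ge\gamma'$ and using $[\omega_j]^n\le A^n[\theta_{t_j}]^n=A^nD$ (the degree $D=[\theta_t]^n$ being constant in $t$ by flatness), integration by parts on the closed manifold $\mathcal{X}_{t_j}$ gives
$$\int_{\mathcal{X}_{t_j}}|\nabla\tilde v_j|^2_{\omega_j}e^{F_j}\theta_{t_j}^n=\frac{1}{[\omega_j]^n}\int_{\mathcal{X}_{t_j}}|\nabla\tilde v_j|^2_{\omega_j}\omega_j^n=-\frac{1}{[\omega_j]^n}\int_{\mathcal{X}_{t_j}}\tilde v_j\,\Delta_{\omega_j}\tilde v_j\,\omega_j^n\longrightarrow 0 .$$
Passing to a subsequence, $t_j\to t_\infty\in\overline{\tfrac12\mathbb{D}}$, and since $\mathcal{X}_{t_\infty}$ is reduced the cycles $[\mathcal{X}_{t_j}]$ converge to $[\mathcal{X}_{t_\infty}]$ with multiplicity one, so $\theta_{t_j}^n\rightharpoonup\theta_{t_\infty}^n$ weakly as measures of total mass $D$. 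Fix a compact exhaustion $\mathcal{K}_1\subset\mathcal{K}_2\subset\cdots$ of $\mathcal{X}\setminus S'$; because $S'\supset\mathrm{Sing}(\mathcal{X}_0)\times\{0\}$, each $\mathcal{K}_m\cap\mathcal{X}_{t_\infty}$ lies in the smooth locus $\mathcal{X}_{t_\infty}^{\mathrm{reg}}\setminus S'$, where $\pi$ is a submersion; hence it is covered by finitely many product charts which for $j$ large also contain $\mathcal{K}_m\cap\mathcal{X}_{t_j}$. On $\mathcal{K}_m$ one has $e^{F_j}\ge\delta_m:=\inf_{\mathcal{K}_m}\gamma'>0$, so Cauchy--Schwarz gives
$$\int_{\mathcal{K}_m\cap\mathcal{X}_{t_j}}|\nabla\tilde v_j|_{\theta_{t_j}}\theta_{t_j}^n\le\Big(\int_{\mathcal{X}_{t_j}}|\nabla\tilde v_j|^2_{\omega_j}e^{F_j}\theta_{t_j}^n\Big)^{1/2}\big(\delta_m^{-1}\,n\,[\omega_j]\cdot[\theta_{t_j}]^{n-1}\big)^{1/2}\le C_m\Big(\int_{\mathcal{X}_{t_j}}|\nabla\tilde v_j|^2_{\omega_j}e^{F_j}\theta_{t_j}^n\Big)^{1/2}\to 0$$
(one may also localize with the cut-off $\rho_\epsilon$ of Lemma \ref{famcutoff}). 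Together with $\sup|\tilde v_j|\le C$ this bounds $\tilde v_j$ uniformly in $W^{1,1}$ on each chart, so by Rellich and a diagonal argument, after a further subsequence, $\tilde v_j\to\tilde v_\infty$ in $L^1_{\mathrm{loc}}$ over $\mathcal{X}_{t_\infty}^{\mathrm{reg}}\setminus S'$ with $\nabla\tilde v_\infty=0$ there; connectedness of $\mathcal{X}_{t_\infty}^{\mathrm{reg}}\setminus S'$ then forces $\tilde v_\infty\equiv c_\infty$ constant.

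Finally I would reproduce the two-sided estimate at the end of the proof of Lemma \ref{lemma 6}, with $\theta_{t_j}^n$ in place of $\omega_X^n$ and $\mathcal{K}_m\cap\mathcal{X}_{t_j}$ in place of $U_{3\varepsilon}$. For the lower bound, split $1=\int_{\mathcal{X}_{t_j}}|\tilde v_j|e^{F_j}\theta_{t_j}^n$ via \eqref{eqn:Young}: the term $\delta\int e^{F_j}(1+|F_j|+|\log\delta|)\theta_{t_j}^n$ is $<\tfrac14$ once $\delta=\delta(A,p,K)$ is small (using the $p$-Nash entropy bound, which controls the $L^1\log L^1$ norm of $e^{F_j}$); the part on $\mathcal{X}_{t_j}\setminus\mathcal{K}_m$ is $\le C\delta^{-1}e^{C\delta^{-1}}|\mathcal{X}_{t_j}\setminus\mathcal{K}_m|_{\theta_{t_j}}<\tfrac14$ once $\mathcal{K}_m$ is large (so that $|\mathcal{X}_{t_\infty}\setminus\mathcal{K}_m|_{\theta_{t_\infty}}$ is small, $\mathcal{K}_m\cap\mathcal{X}_{t_\infty}$ exhausting the full-measure set $\mathcal{X}_{t_\infty}^{\mathrm{reg}}\setminus S'$) and then $j$ is large (by weak-$*$ lower semicontinuity on open sets), leaving $\int_{\mathcal{K}_m\cap\mathcal{X}_{t_j}}|\tilde v_j|\theta_{t_j}^n\ge c_0>0$ for $j$ large, with $c_0=c_0(A,p,K)$. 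For the upper bound, from $\int_{\mathcal{X}_{t_j}}\tilde v_j e^{F_j}\theta_{t_j}^n=0$ and $\int e^{F_j}\theta_{t_j}^n=1$ we get $|c_\infty|=\big|\int_{\mathcal{X}_{t_j}}(\tilde v_j-c_\infty)e^{F_j}\theta_{t_j}^n\big|$, and applying \eqref{eqn:Young} once on $\mathcal{K}_m$ (where $\tilde v_j\to c_\infty$ in $L^1$, killing the resulting term as $j\to\infty$) and once on the small-measure complement shows $|c_\infty|$ is smaller than any prescribed positive number, in particular $|c_\infty|<c_0/D$. Since $\int_{\mathcal{K}_m\cap\mathcal{X}_{t_j}}|\tilde v_j|\theta_{t_j}^n\to|c_\infty|\,|\mathcal{K}_m\cap\mathcal{X}_{t_\infty}|_{\theta_{t_\infty}}\le|c_\infty|D<c_0$, this contradicts the lower bound for $j$ large, proving the lemma. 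The main obstacle, and the only substantive departure from Lemma \ref{lemma 6}, is precisely the absence of a fixed background manifold; it is overcome by enlarging $\{\gamma=0\}$ beyond $\mathrm{Sing}(\mathcal{X}_0)$, by the weak convergence $\theta_{t_j}^n\rightharpoonup\theta_{t_\infty}^n$ of a flat family with reduced central fibre, and by the uniform cut-offs of Lemma \ref{famcutoff}, the delicate point being the order of quantifiers ($\delta$, then the auxiliary small parameters, then the compact set $\mathcal{K}_m$, then $j\to\infty$).
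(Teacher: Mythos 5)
Your proof is correct and follows essentially the same route as the paper's: a contradiction argument transplanting Lemma \ref{lemma 6} to the family setting, with the key steps being to enlarge $\{\gamma=0\}$ to absorb $\mathrm{Sing}(\mathcal{X}_0)\times\{0\}$, to use the uniform cut-offs of Lemma \ref{famcutoff} (or equivalently a compact exhaustion of the complement) to control the gradient on varying fibres, to extract an $L^1_{\mathrm{loc}}$ limit via local product charts near the smooth locus of the limit fibre, to conclude the limit is a constant $c_\infty$ by connectedness of that locus, and finally to squeeze $c_\infty$ between a positive lower bound and an arbitrarily small upper bound via the H\"older--Young inequality \eqref{eqn:Young}. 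The only cosmetic differences are that you work directly with the sequence $\tilde v_j$ on a compact exhaustion while the paper forms $u_j=\rho_j\tilde v_j$ and chooses the cut-off index to depend on $j$, and that you make the weak convergence $\theta_{t_j}^n\rightharpoonup\theta_{t_\infty}^n$ explicit where the paper handles the same point by building the measure-smallness of $\mathcal{X}_t\setminus U_k$ into the construction of the cut-offs, uniformly in $t$.
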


\begin{proof}. We will follow the same proof of Lemma \ref{lemma 6} by the argument of contradiction. Suppose Lemma \ref{lemma6v2} fails. Then there exist a sequence of $t_j \in \frac{1}{2}\mathbb{D}$, K\"ahler metric $\omega_j$ of $\mathcal{X}_{t_j}$ and $v_j\in C^2(\mathcal{X}_{t_j})$ satisfying 
$$\mathcal{N}_{\mathcal{X}_{t_j}, \theta_{t_j},p}(\omega_j)  \leq  K, ~ ~ [\omega_j] \leq  A [\theta_{t_j}], ~ ~
\frac{\omega_j^n}{ \theta_{t_j}^n}  \geq \gamma, $$
and 
$$\Delta_{\omega_j} v_j = h_j,\quad \int_X v_j \omega_j^n = 0,$$
for some $|h_j|\le 1$, and as $j\to\infty$
\bea
\frac{1}{V_j} \int_X |v_j| \omega_j^n = : N_j \to \infty,
\eea
where $V_j=[\omega_j]^n = \int_{\mathcal{X}_{t_j}} \omega_j^n$.
We consider $\tilde v_j$ defined by $\tilde v_j = v_j/N_j$, which clearly satisfies
$$|\Delta_{\omega_j} \tilde v_j| = |h_j|/ N_j\to 0,\, \mbox{ and }\frac{1}{V_j} \int_X |\tilde v_j| \omega_j^n = 1.$$

We define $F_j$ on $\mathcal{X}_{t_j}$ by 
$$\omega_j^n = [\omega_j]^n e^{F_j} \theta_{t_j}^n. $$ 
Applying Lemma \ref{lemma key 2} to $\tilde v_j$, we get $\sup_X |\tilde v_j|\le C$ for some uniform $C>0$. From the equation of $\tilde v_j$ and integration by parts, we see that
$$ \frac{1}{V_j} \int_X |\nabla \tilde v_j|_{\omega_j}^2 e^{F_{j}} \theta_{t_j} ^n=\frac{1}{V_j}\int_{\mathcal{X}_{t_j}}  |\nabla \tilde v_j|_{\omega_{t_j}}^2 \omega_j^n  \to 0.   $$

Let $\cS=\{ \gamma =0\} \cup \left( {\rm Sing} \mathcal{X}_0 \times \{0\} \right)$. For each $k>0$, we pick a sequence of  a pair open subsets $U_k \subset\subset V_k \subset\subset \left( \mathcal{X}\times \mathbb{D} \right) \setminus \cS$  and cut-off functions $\rho_k$  in Lemma \ref{famcutoff} satisfying the following conditions. 

\begin{enumerate}

\item $V_k \subset V_{k+1}$, $U_k \subset U_{k+1}$,

\medskip

\item $\lim_{k \rightarrow \infty} U_k = \left( \mathcal{X}\times\mathbb{D} \right) \setminus \cS$ and ${\rm Vol}_{\theta_t}( \mathcal{X}_t \setminus U_k) < \frac{1}{k} $ for each $t\in \frac{1}{2}\mathbb{D}$, 

\medskip

\item $\rho_k = 1$ on $U_k$ and $\rho_k = 0$ on $\left( \mathcal{X} \times \mathbb{D} \right) \setminus V_k$, 

\medskip

\item $\int_{\mathcal{X}_t}  |\nabla \rho_k|_{\theta_t} ^2 \theta_t ^n < \frac{1}{k^2}, $ for each $t\in \mathbb{D}$.

\end{enumerate}

Let $${\mathcal{C}}_k = \sup_{t\in \frac{1}{2}\mathbb{D}} \sup_{\mathcal{X}_t\cap V_k} e^{-\gamma}.$$
Then for any $j$ and $t\in\frac{1}{2}\mathbb{D}$,  we have  $$ \inf_{V_k\cap \mathcal{X}_t } e^{F_j} \geq \left( {\mathcal{C}}_k \right)^{-1}.$$
After passing to a subsequence and relabelling $j$, we can assume that  
$$\int_{\mathcal{X}_{t_j}} |\nabla \tilde v_j|^2_{\omega_j } \omega_{j}^n \leq  j^{-2}  {\mathcal{C}}_j^{-1}. $$
We now define 
$$u_j = \rho_j \tilde v_j .$$
%
%and  let $$F_k= F_{t_{j_k}}, ~\omega_k = \omega_{t_{j_k}} ,~V_k=V_{t_{j_k}}. $$
%
Then there exists $C>0$ such that for all $j$, we have 
\begin{eqnarray*}
&&\int_{\mathcal{X}_{t_j}} |\nabla u_j|_{\theta_{t_j}} \theta_{t_j} ^n \\
&\le& \int_{\mathcal{X}_{t_j}}  |\tilde v_j| |\nabla \rho_j|_{\theta_{t_j}} \theta_{t_j}^n + \int_X \rho_kj|\nabla \tilde v_j|_{\theta_{t_j}} \theta_{t_j}^n\\
&\le& Cj^{-1}+ \Big(\int_{\mathcal{X}_{t_j}}  |\nabla \tilde v_j|_{\omega_j }^2 e^{F_j} \theta_{t_j}^n \Big)^{1/2} \Big( \int_{\mathcal{X}_{t_j}}  \rho_j^2 e^{-F_j}  \tr_{\theta_{t_j}} (\omega_j)  \theta_{t_j} ^n \Big)^{1/2}\\
&\le& Cj^{-1} + j^{-1} \Big( \int_{\mathcal{X}_{t_j}}  \omega_j \wedge \theta_{t_j}^{n-1} \Big)^{1/2}\\
&\le& 2C j^{-1}.
\end{eqnarray*}
Without loss of generality, we can assume that $t_j \rightarrow 0$. For any $\mathcal{K}\subset \subset \left( \mathcal{X}\times \mathbb{D} \right) \setminus \cS$, we can pick a sufficiently small $0\in \Delta \in \mathbb{D}$ such that $\pi^{-1}(\Delta) \cap K \subset \cup_i \left( W_i \times \Delta \right)$, with holomorphic local coordinates $(z_1, ..., z_n, t)$ with $t\in \Delta$. Since $u_j$ is uniformly bounded in $W^{1,1}(\mathcal{X}_{t_j}, \theta_{t_j})$, by Sobolev embedding theorem, after passing to a subsequence, we can assume that $u_j$ converges to $u_\infty$ in $L^1_{\rm loc}(\mathcal{X}_0 \setminus \cS, \theta_0)$. Furthermore, $u_\infty$ and $u_j$ are uniformly bounded in $L^\infty(\mathcal{X}_0)$ and $L^\infty(\mathcal{X}_{t_j})$.

Since $\lim_{j \rightarrow \infty} \int_{\mathcal{X}_{t_j}} |\nabla u_j|_{\theta_{t_j}} \theta_{t_j}^n =0$,  for any test function $f\in C^\infty(\mathcal{X}_0\setminus \cS)$ with compact support in $\mathcal{X}_0\setminus \cS$, we can extend $f$ to $C^\infty \left( \left( \mathcal{X} \times \mathbb{D} \right) \setminus \cS \right)$ with compact support in $\left( \mathcal{X}\times \mathbb{D} \right) \setminus \cS$ and 
\begin{eqnarray*}
\left| \int_{\mathcal{X}_{t_j}} u_j (\Delta_{\theta_{t_j}}  f) \theta_{t_j}^n \right| &=&  \int_{\mathcal{X}_{t_j}}  | \nabla u_j|_{\theta_{t_j}} | \nabla f |_{\theta_{t_j}} \theta_{t_j}^n\\
&\leq& \left( \sup_{t\in \mathbb{D}, \mathcal{X}_{t_j}}|\nabla f|_{\theta_t} \right) \int_{\mathcal{X}_{t_j}} |\nabla u_j|_{\theta_{t_j}}\theta_{t_j}^n \\
&\rightarrow & 0
\end{eqnarray*}
as $j \rightarrow \infty$. Therefore 
$$\int_{\mathcal{X}_0\cap \left( W_i \times \Delta\right)} u_\infty \left( \Delta_{\theta_0} f  \right) \theta_0^n = 0$$
and by Weyl's lemma for the Laplace equation, $u_\infty$ solves the Laplace equation $\Delta u_\infty=0$ on $\mathcal{X}_0 \setminus \cS$   and $u_\infty \in C^\infty(\mathcal{X}_0\setminus \cS)$.  
This immediately implies that 
$$\int_{\mathcal{X}_0} \left( \rho_{j, 0} \right)^2 u_\infty \left( \Delta_{\theta_0} u_\infty \right) \theta_0^n =0$$
for any $j$, where $\rho_{j, 0}$ is the restriction of $\rho_j$ to $\mathcal{X}_0$.
  Then
$$\int_{\mathcal{X}_0}( \rho_{j, 0})^2 |\nabla u_\infty|^2 _{\theta_0} \theta_0^n \leq 8 \sup_{\mathcal{X}_0} |u_\infty|^2 \left( \int_{\mathcal{X}_0} |\nabla \rho_{j, 0}|^2_{\theta_0} \theta_0^n \right) \rightarrow 0$$
as $j\rightarrow \infty$ by the choice of $\rho_j$. Therefore $u_\infty$ is a constant on $\mathcal{X}_0$.

Following the same argument in the proof of Lemma \ref{lemma 6}, we can show that $u_\infty$ cannot be $0$ by the assumption 
$$\frac{1}{V_j} \int_{\mathcal{X}_{t_j}} |u_j| \omega_j^n=1.$$   
On the other hand, $\mathcal{X}_0\setminus \cS$ is connected because $\mathcal{X}_0$ is irreducible and $\mathcal{X}_0\cap \cS$ is a subvariety of $\mathcal{X}$. By the same argument in the proof of Lemma \ref{lemma 6}, we can also show that $u_\infty$ must vanish everywhere in $\mathcal{X}_0\setminus \cS$. This leads to contradiction and we have completed  the proof for the lemma. 
\end{proof}

With Lemma \ref{lemma6v2}, we can complete the proof of Theorem \ref{famdia} by the same argument for Theorem \ref{thm:main1}.

%%%%%%%%%%%%%%%%%%%%%%%%%%%%%%%%%%%%%%%%%%%%%%%

\section{Calabi-Yau fibrations}

In this section, we will apply Theorem \ref{famdia} to collapsing K\"ahler metrics on a fibration of Calabi-Yau manifolds.  Our first result is to extend the results of \cite{L2} for collapsing Ricci-flat K\"ahler metrics on a fibred Calabi-Yau manifold.

\begin{theorem} \label{10cyf} Let $(X, \omega_X)$ be an $n$-dimensional projective Calabi-Yau manifold and $\pi: X \rightarrow Y$ be a holomorphic fibration over a Rieman surface $Y$. Suppose each fibre $X_y=\pi^{-1}(y)$ is normal and has at worst canonical singularities. Let $\omega_Y$ be a fixed K\"ahler metric on $Y$ and let $\omega_\epsilon $ be the unique Ricci-flat K\"ahler metric  in $\in [ \omega_X + \epsilon^{-1} \omega_Y]$ for each $\epsilon \in (0, 1)$. Then there exist $\alpha>0$ and $C>0$ such that for any $\epsilon>0$, any smooth fibre $X_y$ of $\pi$ and any $x\in X_y$, 
$${\rm diam}(X_y, \omega_\epsilon|_{X_y} ) \leq C, $$
$$ \int_{X_y} \big( |G_{y, \epsilon} (x, \cdot)| +|\nabla  G_{y, \epsilon} (x, \cdot)|  \big) \left( \omega_\epsilon|_{X_y}\right)^n + \left(- \inf_{z \in X_y } G_{y, \epsilon}(x, z) \right)  {\rm Vol}_{\omega_\epsilon|_{X_y}} (X_y)\leq  C,  $$
$$\frac{{\mathrm{Vol}}_{\omega_\epsilon|_{X_y}} (B_{\omega_\epsilon|_{X_y}}(x, R))}{\textnormal{Vol}_{\omega_\epsilon|_{X_y}}(X)}\ge C^{-1} R^\alpha, $$
where $G_{y, \epsilon}$ is the Green's function for $(X_y, \omega_\epsilon|_{X_y})$ and $B_{\omega_\epsilon|_{X_y}}(x, R))$ is the geodesic ball in $(X_y, \omega_\epsilon|_{X_y})$. 

\end{theorem}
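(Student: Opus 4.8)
The plan is to deduce Theorem~\ref{10cyf} from the family diameter estimate Theorem~\ref{famdia}. I would present the fibres of $\pi$, together with a neighbourhood in $X$, as a finite union of local projective families over disks in $Y$, and then check for each such family that the collapsing Ricci-flat metric $\omega_\epsilon$ restricts to the smooth fibres with a uniform volume lower bound and uniformly bounded $p$-Nash entropy, so that Theorem~\ref{famdia} applies with constants independent of $\epsilon$ and of the fibre.

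First I would set up the families. Since $X$ is projective and $\pi$ is a surjective holomorphic map, $Y$ is a compact Riemann surface; let $\Sigma\subset Y$ be the finite set over which the fibre is singular. Cover $Y$ by finitely many coordinate disks $\mathbb D_1,\dots,\mathbb D_m$, whose half-disks already cover $Y$, arranged so that each $\mathbb D_j$ meeting $\Sigma$ is centred at the corresponding point, all other $\mathbb D_j$ having only smooth fibres, and so that every $y$ with $X_y$ smooth equals $\mathcal X_t$ for some $t\in\tfrac12\mathbb D_j^*$ in at least one of the families $\pi^{-1}(\mathbb D_j)\to\mathbb D_j$. Each such family is proper and flat with reduced, irreducible central fibre---smooth, or, by hypothesis, normal with at worst canonical singularities (hence with singular locus of complex codimension $\ge 2$)---so it is exactly of the type to which Theorem~\ref{famdia} applies. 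Embedding $\pi^{-1}(\mathbb D_j)\hookrightarrow\mathbb{CP}^{N_j}\times\mathbb D_j$ by a relatively very ample line bundle (e.g.\ from the given projective embedding of $X$), with Fubini-Study form $\theta$ and fibrewise restrictions $\theta_t$, one has $[\omega_X|_{X_y}]\le A[\theta_t]$ for a uniform $A$ (indeed $[\omega_X]\le A[\theta]$ already on $X$); and since $\pi^*\omega_Y$ vanishes identically on every fibre, $[\omega_\epsilon|_{X_y}]=[\omega_X|_{X_y}]$ for all $\epsilon$, so the cohomological hypothesis of Theorem~\ref{famdia} holds uniformly.

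The substantive step is the control of the fibre volume form. Since $\omega_\epsilon$ is Ricci-flat, $\omega_\epsilon^n=c_\epsilon\Omega_X$ for a fixed smooth Calabi-Yau volume form $\Omega_X$ on $X$, with $c_\epsilon=[\omega_X+\epsilon^{-1}\omega_Y]^n/\!\int_X\Omega_X=\big([\omega_X]^n+n\epsilon^{-1}[\omega_Y]\!\cdot\![\omega_X]^{n-1}\big)/\!\int_X\Omega_X$ because $[\omega_Y]^2=0$ on the curve $Y$; thus $c_\epsilon\asymp\epsilon^{-1}$. Taking the Schur complement of $\omega_\epsilon$ along $\pi$ yields on each smooth fibre the pointwise identity
\[
(\omega_\epsilon|_{X_y})^{n-1}=\frac{c_\epsilon}{\sigma_\epsilon}\,\Omega_{X/Y}\big|_{X_y},
\]
where $\Omega_{X/Y}$ is the relative Calabi-Yau volume form (a smooth density on $X$, nonvanishing along smooth fibres) and $\sigma_\epsilon=\det g_\epsilon/\det g_\epsilon^{\mathrm{fib}}$ is the transverse size of $\omega_\epsilon$. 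Hence the normalised fibre volume form is $V_y^{-1}(\omega_\epsilon|_{X_y})^{n-1}=e^{F_y}\theta_t^{n-1}$ with $e^{F_y}=(V_y\sigma_\epsilon)^{-1}c_\epsilon\,(\Omega_{X/Y}/\theta_t^{n-1})$ and $V_y=\int_{X_y}(\omega_X|_{X_y})^{n-1}$ a fixed positive number. The crux is the uniform two-sided estimate $\sigma_\epsilon\asymp\epsilon^{-1}$---equivalently a pointwise upper bound $\mathrm{tr}_{\omega_\epsilon}(\pi^*\omega_Y)\le C\epsilon$ and a fibrewise lower bound $\omega_\epsilon|_{X_y}\ge c\,\omega_X|_{X_y}$, with the constants deteriorating only near the singular fibres---which I would draw from the known estimates for collapsing Ricci-flat K\"ahler metrics on Calabi-Yau fibrations; note that on average these hold automatically, since $\int_X\mathrm{tr}_{\omega_\epsilon}(\pi^*\omega_Y)\,\omega_\epsilon^n=n[\omega_Y]\!\cdot\![\omega_X]^{n-1}$ is a fixed number while $\int_X\omega_\epsilon^n\asymp\epsilon^{-1}$, and the pointwise versions are Yau $C^2$-type estimates in the adiabatic regime. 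Granting this, $c_\epsilon/\sigma_\epsilon\asymp 1$, so $e^{F_y}$ is uniformly comparable to $\Omega_{X/Y}/\theta_t^{n-1}$; this ratio is bounded below by a positive constant (orbifold-wise near the canonical singularities of the limiting singular fibres), which gives the volume lower bound of Theorem~\ref{famdia} with $\gamma$ a positive constant, and the $p$-Nash entropy $\int_{X_y}|F_y|^p e^{F_y}\theta_t^{n-1}$ stays uniformly bounded for a suitable exponent $p>n-1$---the only delicate region being the shrinking neck of a fibre degenerating towards a singular one, where $e^{F_y}$ is large but carries a small share of the (normalised) volume and the elementary bound $x|\log x|^p\le C$ absorbs it.

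Applying Theorem~\ref{famdia} to each of the finitely many families $\pi^{-1}(\mathbb D_j)\subset\mathbb{CP}^{N_j}\times\mathbb D_j$ then gives, uniformly for $t\in\tfrac12\mathbb D_j^*$, the diameter bound, the $L^1$ bounds on $G_{y,\epsilon}$ and $\nabla G_{y,\epsilon}$, the lower bound for $\inf G_{y,\epsilon}$, and the volume non-collapsing estimate for $(X_y,\omega_\epsilon|_{X_y})$; taking the largest of the finitely many constants, and recalling that every smooth fibre lies in one of these families, yields Theorem~\ref{10cyf}. I expect the main obstacle to be exactly the uniform transverse-and-fibre estimate for $\omega_\epsilon$ valid up to the singular fibres, i.e.\ keeping $\sigma_\epsilon\asymp\epsilon^{-1}$ and the fibre metric comparison under control as $y\to\Sigma$, so that the degeneracy of the normalised fibre volume form (and the singular locus of the central fibre) remains confined to complex subvarieties of codimension greater than one---this being precisely the degeneration regime that Theorem~\ref{famdia} was engineered to accommodate.
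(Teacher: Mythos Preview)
Your overall strategy---localize to projective families over disks and apply Theorem~\ref{famdia} after controlling the fibrewise volume form of $\omega_\epsilon$---is exactly the paper's. The two Schwarz-type inputs you invoke (the trace bound $\mathrm{tr}_{\omega_\epsilon}(\pi^*\omega_Y)\le C\epsilon$ and the metric lower bound $\omega_\epsilon\ge c\,\omega_X$) are precisely the estimates the paper imports from \cite{L2}; the latter gives directly $(\omega_\epsilon|_{X_y})^{n-1}\ge c^{n-1}\theta_y^{n-1}$, so $\gamma$ can indeed be taken to be a positive constant.

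There is, however, a genuine gap in your Nash entropy step. The heuristic ``$e^{F_y}$ is large only on a small set, and $x|\log x|^p\le C$ absorbs it'' is wrong: $x(\log x)^p$ is unbounded as $x\to\infty$, so a smallness-of-support statement alone cannot control $\int_{X_y} e^{F_y}|F_y|^p\,\theta_t^{n-1}$. What the paper actually uses is the uniform $L^p$ bound
\[
\int_{X_y}\Big|\frac{\eta_y\wedge\overline{\eta_y}}{\theta_y^{n-1}}\Big|^p\theta_y^{n-1}\le C
\]
for some fixed $p>1$, established in \cite{L2} under the hypothesis that the fibres have at worst canonical singularities. Combined with the trace bound (which gives $(\omega_\epsilon|_{X_y})^{n-1}\le C\,\eta_y\wedge\overline{\eta_y}$), this yields a uniform $L^p$ upper bound on the normalised fibre density $e^{F_y}$, and an $L^p$ bound ($p>1$) on a probability density controls the $q$-Nash entropy for every $q$. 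Your alternative route---asserting that $\Omega_{X/Y}/\theta_t^{n-1}$ is bounded below by a positive constant ``orbifold-wise''---is neither needed for the lower bound (the metric inequality $\omega_\epsilon\ge c\,\omega_X$ does that job directly) nor obviously true uniformly as $y$ approaches the discriminant; the paper avoids this issue entirely.
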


\begin{proof} %We can embed $X$ into $\mathbb{CP}^N$ and let $\theta$ be the Fubini-Study metric on $\mathbb{CP}^N$. 
Let $\eta$ be a non-where vanishing holomorphic volume form on $X$ and let $\eta_y$ be the relative holomorphic volume form defined by $\eta= \eta_y \wedge dy$ locally on $\mathbb{D} \subset Y$.  Then $\omega_\epsilon$ satisfies
$$\omega_\epsilon^n = A_\epsilon \eta\wedge \overline{\eta},$$
where $A_\epsilon = \frac{[\omega_\epsilon]^n}{\int_X \eta\wedge \overline{\eta}}= O\left(\epsilon^{-1}\right)$.

Let $\theta_y= \omega_X|_{X_y}$ be the restriction of $\omega_X$ to $X_y$. It is proved in \cite{L2} (c.f. Proposition 2.1 and Proposition 2.3 in \cite{L2}) that there exist $p>1$ and $C>0$ such that for any $y\in Y$ and $\epsilon\in (0,1)$, we have 
\begin{equation}\label{lpfam}
\int_{X_y} \left| \frac{\eta_y \wedge \overline{\eta_y}}{ (\theta_y)^{n-1}} \right|^p (\theta_y)^{n-1} \leq C, 
\end{equation}
\begin{equation}\label{scwfam}
 \sup_X  {\rm tr}_{\omega_\epsilon}(\epsilon^{-1} \omega_Y) \leq C. 
 \end{equation}

(\ref{scwfam}) implies that on each $X_y$, we have on $X_y$ 
$$ \frac{\omega_\epsilon^{n-1} \wedge \omega_Y}{\omega_X^{n-1}\wedge \omega_Y} =\frac{\omega_\epsilon^{n-1} \wedge \omega_Y}{\omega_\epsilon^n} \frac{\omega_\epsilon^n}{\omega_X ^{n-1}\wedge \omega_Y} \leq C \frac{\eta\wedge \overline{\eta}}{\omega_X^{n-1}\wedge\omega_Y}\leq C \frac{\eta_y \wedge \overline{\eta_y}}{\theta_y^{n-1}}.$$
Then (\ref{lpfam}) immediately gives the uniform upper bound for the $q$-Nash entropy $\mathcal{N}_{X_y, \theta_y, q}(  \omega_\epsilon|_{X_y})$ for any $\epsilon \in (0,1)$ and fixed $q>0$. 

By Theorem 2.5 of \cite{L2} as a refined Schwarz lemma, there  exists $C>0$ such that 

$$\omega_\epsilon \geq C^{-1} \omega_X ,$$
and so $$\left. \frac{\omega_\epsilon^{n-1}}{(\theta_y)^{n-1}} \right|_{X_y}$$ is uniformly bounded away from $0$. 

Therefore we can directly apply Theorem \ref{famdia} to complete the proof of Theorem \ref{10cyf}. 
\end{proof}
 
In the setting of Theorem \ref{10cyf}, it is proved in \cite{L2} that the extrinsic diameter of a smooth fibre $X_y$ is uniformly bounded, i.e., any two points on a smooth fibre $X_y$ can be joined by a path in $X$ with uniformly bounded arc length with respect to $\omega_\epsilon$ for all $\epsilon>0$. The stronger intrinsic diameter bound for $X_y$ is achieved in Theorem \ref{10cyf}.

We will also apply Theorem \ref{famdia} to the long time collapsing solutions of the K\"ahler-Ricci flow. Let $X$ be an $n$-dimensional projective manifold of nef $K_X$. Suppose $K_X$ is semi-ample and the Kodaira dimension of $X$ is one. The pluricanonical map 
$$\pi: X \rightarrow X_{can}$$ is a holomorphic fibration over the canonical model $X_{can}$. The general fibre of $\pi$ is a smooth Calabi-Yau manifold of dimension $n-1$. We consider the K\"ahler-Ricci flow
\begin{equation}\label{unfl11}
\ddt{g}= - {\rm Ric} (g), ~ g(0)=g_0
\end{equation}
for a given K\"ahler metric $g_0$. Then (\ref{unfl11}) admits a smooth solution $g(t)$ for all $t\geq 0$. We would like to investigate the geometric behavior of $g(t)$ near a singular fibre with mild singularities. 

 \begin{theorem} \label{10krfl} Let $X$ be an $n$-dimensional  projective manifold with semi-ample $K_X$ and ${\rm Kod}(X)\leq 1$. Let $g(t)$ be the solution of the K\"ahler-Ricci flow (\ref{unfl11}). Suppose each fibre of $\pi: X \rightarrow X_{can}$ has at worst canonical singularities.  Then there exists $C>0$ such that for any $t>0$, any smooth fibre $X_y$ of $\pi$ and any $x\in X_y$, 
$${\rm diam}(X_y, g(t)|_{X_y} ) \leq C,$$
$$ \int_{X_y} \big( |G_{t, y} (x, \cdot)| + |\nabla  G_{t, y} (x, \cdot)|\big) dV_{g(t)|_{X_y}} +  \left(-  \inf_{z \in X_y } G_{t, y}(x, z) \right)  {\rm Vol}_{g(t)|_{X_y}} (X_y) \leq  C, $$
$$\frac{{\mathrm{Vol}}_{g(t)|_{X_y}} (B_{g(t)|_{X_y}}(x, R))}{\textnormal{Vol}_{g(t)|_{X_y}}(X)}\ge C^{-1} R^\alpha, $$
where $G_{t, y}$ is the Green's function for $(X_y, g(t) |_{X_y})$ and $B_{g(t) |_{X_y}}(x, R))$ is the geodesic ball in $(X_y, \omega_\epsilon|_{X_y})$. 
. 

\end{theorem}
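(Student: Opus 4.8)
The plan is to deduce Theorem \ref{10krfl} from the family estimate of Theorem \ref{famdia}, exactly as Theorem \ref{10cyf} was, with the long-time estimates of the collapsing K\"ahler--Ricci flow playing the role that the estimates of \cite{L2} for collapsing Ricci-flat metrics played there. We may assume ${\rm Kod}(X)=1$ (the case ${\rm Kod}(X)=0$ is immediate from Theorem \ref{thm:main1} applied to the Ricci-flat limit of the flow), so that $X_{can}$ is a smooth projective curve, $\pi:X\to X_{can}$ is a fibration whose smooth fibres $X_y$ are $(n-1)$-dimensional Calabi--Yau manifolds, and $\theta_y:=\omega_X|_{X_y}$ is a fixed reference metric on each. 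Writing (\ref{unfl11}) as a parabolic Monge--Amp\`ere equation: since $K_X$ is semi-ample, $c_1(K_X)=\pi^*[\omega_{can}]$ for a K\"ahler form $\omega_{can}$ on $X_{can}$, so with $\chi=\pi^*\omega_{can}$ and a smooth volume form $\Omega$ with $\ric(\Omega)=-\chi$ we have $\omega(t)=\omega_0+t\chi+\ddbar\varphi$, $\dot\varphi=\log(\omega(t)^n/\Omega)$. The key elementary point is that $[\omega(t)]=[\omega_0]+t\,\pi^*[\omega_{can}]$, so $[\omega(t)|_{X_y}]=[\omega_0|_{X_y}]$ is independent of $t$ and $\le A[\theta_y]$ for a fixed $A=A(X,g_0,\omega_X)$; this is the cohomological hypothesis of Theorem \ref{famdia} (with the fibre dimension $n-1$ in place of ``$n$'').

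Next I would collect the analytic inputs, each a flow counterpart of an estimate used for Theorem \ref{10cyf}, available from the long-time theory of the collapsing K\"ahler--Ricci flow (\cite{ST0, ST01, TWY}) together with the arguments of Sections \ref{seckrff}--\ref{seckrfl} transposed to the present unnormalized flow. First, the maximum-principle bounds $\varphi\le C$, $\dot\varphi\le C$ together with the parabolic Schwarz lemma $\tr_{g(t)}(\pi^*\omega_{can})\le C(1+t)^{-1}$ control the fibrewise volume density and yield a uniform $L^p$ bound (some $p>1$) for $(g(t)|_{X_y})^{n-1}/\theta_y^{n-1}$, hence a uniform bound on $\mathcal N_{X_y,\theta_y,p}(g(t)|_{X_y})$ for every $p>n-1$; this is the analogue of (\ref{lpfam}). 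Second, transposing Lemma \ref{ltvob} and Corollary \ref{volkfr} gives $\omega(t)^n\ge c\,(1+t)\,|\sigma|_h^{2B}\,\Omega$ for a pluricanonical section $\sigma\in H^0(X,mK_X)$, which one may choose so that $\{\sigma=0\}$ misses the discriminant of $\pi$; dividing out the horizontal factor $\sim 1+t$ then yields a lower bound $(g(t)|_{X_y})^{n-1}\ge\gamma\,\theta_y^{n-1}$, uniformly in $t$ and $y$, for a continuous $\gamma\ge 0$ on $X$ whose zero set is a proper analytic subset of $X$ containing no fibre (enlarged, if necessary, to contain the finitely many singular loci of the singular fibres).

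Finally I would cover $X_{can}$ by finitely many holomorphic disks $\mathbb D_i$ with $X_{can}=\bigcup\tfrac12\mathbb D_i$, and regard each $\pi:\pi^{-1}(\mathbb D_i)\subset\mathbb{CP}^N\times\mathbb D_i\to\mathbb D_i$ as a projective family ($X$ being projective). Over a disk all of whose fibres are smooth, the hypotheses of Theorem \ref{famdia} hold with $\gamma$ even a positive constant, since by \cite{TWY} the fibre metrics $g(t)|_{X_y}$ converge, uniformly on compacta of $X_{can}^\circ$, to the Ricci-flat metric in $[g_0|_{X_y}]$; over a disk centred at a discriminant point, the central fibre is normal, hence reduced and irreducible (the fibres of the Iitaka fibration being connected and canonical singularities being normal), and $\pi$ is flat and proper, so Theorem \ref{famdia} applies on $\tfrac12\mathbb D_i^*$ with the $\gamma$ of the previous paragraph. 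Since $X_{can}^\circ$ is covered by the $\tfrac12\mathbb D_i$ (respectively the $\tfrac12\mathbb D_i^*$), and the bounds are a priori for $t$ in the fixed interval $[0,1]$, this gives the diameter, Green's function, and volume non-collapsing bounds of Theorem \ref{10krfl}, uniformly in $t$ and $y\in X_{can}^\circ$.

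I expect the main obstacle to be the lower bound $(g(t)|_{X_y})^{n-1}\ge\gamma\,\theta_y^{n-1}$: extracting a bound that is uniform in $t$ and survives up to the singular fibres. On compact subsets of $X_{can}^\circ$ this is soft, by the convergence theorem of \cite{TWY}; but near a singular fibre the collapse is not uniform, and one must combine the flow's lower volume estimate with a relative-volume and Schwarz-type comparison in the spirit of \cite{L, L2}, adapted to the flow. This is the only place where the hypothesis that every fibre has at worst canonical singularities is used.
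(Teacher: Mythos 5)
Your high-level strategy is the same as the paper's: reduce the flow to a Monge--Amp\`ere flow with $\chi=\pi^*\omega_{can}$, observe that $[\omega(t)|_{X_y}]=[\omega_0|_{X_y}]$ is a fixed bounded class, get a uniform fibrewise Nash entropy bound from the parabolic Schwarz lemma together with an $L^p$ bound of type (\ref{lpfam}), and feed a fibrewise volume lower bound $(g(t)|_{X_y})^{n-1}\ge\gamma\,\theta_y^{n-1}$ into the family Theorem~\ref{famdia}. All of that is the right skeleton.

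The gap is in your derivation of the fibrewise lower bound, which is the heart of the matter. Your sketch --- take the global lower volume bound $\omega(t)^n\ge c(1+t)|\sigma|_h^{2B}\Omega$ and ``divide out the horizontal factor $\sim 1+t$'' --- does not go through. Write, at a point of a smooth fibre, $\omega(t)^n=\det(g_{v\bar v})\cdot\sigma_t\cdot dz\wedge d\bar z\wedge dw\wedge d\bar w$, where $\sigma_t=(g^{w\bar w})^{-1}$ is the Schur complement. The parabolic Schwarz lemma $\tr_{g(t)}(t\chi)\le C$ says $g^{w\bar w}\cdot t\chi_{w\bar w}\le C$, i.e.\ $\sigma_t\ge t\chi_{w\bar w}/C$: it gives a \emph{lower} bound on the horizontal Schur complement, hence an \emph{upper} bound on $\det(g_{v\bar v})$, not a lower bound. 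To lower-bound $\det(g_{v\bar v})$ one needs an a priori \emph{upper} bound $\sigma_t\lesssim 1+t$, and that is not provided by the Schwarz lemma; obtaining it directly would be circular without some new estimate. You do flag this step as the main obstacle and correctly point to the refined Schwarz lemma of \cite{L2} as the kind of result needed, but you supply no argument for it, and the ``dividing'' remark above it misleadingly suggests the bound is soft.

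What the paper actually does here is a genuine new second-order parabolic estimate, occupying roughly half the proof: a maximum principle applied to
$$
H_\epsilon \;=\; \log\Big(|\sigma|_h^{2\epsilon}\,\tr_{g(t)}(t\,g_0)\Big)\;-\;2A\,\psi\;-\;2A\Big(\varphi-\tfrac{1}{V_{g_{0,y}}(X_y)}\int_{X_y}\varphi\,dV_{g_{0,y}}\Big),
$$
where $\hat\chi$ is the Weil--Petersson-type push-forward of $\omega_0^n$ to $X_{can}$, $f=\hat\chi/\chi\in L^q$ for some $q>1$, and $\psi\in L^\infty(X_{can})$ solves $\Delta_\chi\psi=f-\bar f$; the fibre average of $\varphi$ enters to absorb the $\ddbar\int_{X_y}\varphi\,\omega_0^{n-1}$ term and is controlled a priori via the uniform $C^0$ bound (\ref{fbkrfc0}) on $\varphi|_{X_y}-\sup_{X_y}\varphi|_{X_y}$, which in turn comes from the Nash entropy bound and normality of the fibre. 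The conclusion is the pointwise metric bound $g(t)\ge c\,g_t$, hence $g(t)|_{X_y}\ge c\,\omega_0|_{X_y}$, which is the desired $\gamma$. None of this machinery (the form $\hat\chi$, the potential $\psi$, the role of the fibre average and of the $C^0$ bound) appears in your proposal, so as written the proof has a real hole at precisely the place you suspected. One smaller point: the canonical-singularities hypothesis is used not only for the lower bound but also in establishing the $L^p$ estimate (\ref{lpfam}) that drives the entropy bound, so it is not confined to the one step you indicate.
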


 \begin{proof}  When the Kodaira dimension is $0$, $c_1(X)=0$ and the flow (\ref{unfl11} converges smoothly to a Ricci-flat K\"ahler metric in the initial K\"ahler class. Therefore it suffices to prove the theorem in the case of Kodaira dimension equal to one. 
 
 Let $X_{can}^\circ$ be the regular values of $\pi: X\rightarrow X_{can}$ and $X_{can}\setminus X_{can}^\circ$ is a finite set of points since $X_{can}$ is a smooth Riemann surface.     We follow the same argument in the proof of Theorem \ref{10cyf}. The estimate (\ref{lpfam}) still holds due to the fibration structure of $\pi: X \rightarrow X_{can}$. Since $X_{can}$ is smooth, we can choose a smooth K\"ahler metric $\hat g$ on $X_{can}$ such that $\pi^*\hat g\in [K_X]$. Let $\chi$, $\omega_0$  and $\omega(t)$ be the smooth closed $(1,1)$-forms corresponding to $\pi^*\hat g$,  $g_0$ and $g(t)$.  We let $\omega_t = \omega_0+ t \chi$ and (\ref{unfl11}) can be reduced to a complex Monge-Amp\`ere flow
 $$\ddt{\varphi} = \log \frac{(\omega_t+\ddbar\varphi)^n}{\Omega}, ~\varphi(0)=0,$$
 where $\Omega$ is the volume form on $X$ with $\ddbar\log \Omega= \chi. $

 Since $K_X$ is semi-ample, the Schwarz lemma analogous to (\ref{scwfam}) also holds from the general estimate in \cite{ST0, ST01}, i.e. there exists $C>0$ such that for all $t\geq 0$, 
 $${\rm tr}_{g(t)}( t \chi ) \leq C.$$
 By the same argument in the proof of Theorem \ref{10cyf}, for any fixed  $q>0$, there exists $C=C(g_0, q)>0$ such that for all $t\geq 0$ and $y\in X_{can}$, we have 
 $$\mathcal{N}_{X_y, g_{0,y}, q}\left(g(t)|_{X_y}\right) \leq C, $$
 where $g_{0, y}=g_0|_{X_y}$. 
 Since $X_y$ is normal, the uniform Nash entropy bound (for $q>n$) above implies that there exists $C>0$ such that for all $t\geq 0$ and $y\in X_{can}$, 
 \begin{equation}\label{fbkrfc0}
   \|\varphi(t)|_{X_y} - \sup_{X_y} \varphi(t)|_{X_y}   \| \leq C.
  \end{equation}
 If we let $\hat\varphi(t) = \frac{1}{V_{g_{0,y}}(X_y)} \int_{X_y} \varphi dV_{g_{0,y}}$ be the average of $\varphi$ along the fibre, then $(\varphi(t) - \hat\varphi(t))$ is uniformly bounded due to (\ref{fbkrfc0}).

 Let $$H= \log {\rm tr}_{ g(t)}( t g_0) - 2A  \Big(\varphi - \frac{1}{V_{g_{0,y}}(X_y)} \int_{X_y} \varphi dV_{g_{0, y}} \Big)$$
 for some $A\geq 1$ to be determined.
Since $V_{g_{0,y}}(X_y)$ is independent of $y\in X_{can}$ and $ \|\ddt{\varphi} \|_{L^\infty(X)}$ is uniformly bounded for all $t\geq 0$ by \cite{ST1}, we can apply the similar second order calculations in \cite{ST0} to the evolution of $H$ by
\begin{eqnarray*}
\Box_t H &\leq& C {\rm tr}_{g(t)} (g_0) - 2A {\rm tr}_{g(t)}( g_t) - \frac{2A}{V_{g_{0,y}}(X_y)} {\rm tr}_{g(t)}\Big( \ddbar \int_{X_y} \varphi \omega_0^{n-1} \Big) + CA\\
&\leq& C {\rm tr}_{g(t)} (g_0) - 2A {\rm tr}_{g(t)}( g_t) - \frac{2A}{V_{g_{0,y}}(X_y)} {\rm tr}_{g(t)}\Big(   \int_{X_y}  (\omega -\omega_t) \wedge\omega_0^{n-1} \Big) + CA\\
&\leq& C {\rm tr}_{g(t)} (g_0) - 2A {\rm tr}_{g(t)}( g_t) - \frac{2A}{V_{g_{0,y}}(X_y)} {\rm tr}_{g(t)}\Big(   \int_{X_y}  \omega_0^n \Big) + CA,
\end{eqnarray*}
for some uniform constant $C>0$ independent of $A$.
We define a closed $(1,1)$ form as the push-forward of $\omega_0^n$ to $X_{can}$ by 
$$\hat \chi =\frac{\int_{X_y} \omega_0^n}{{\rm Vol}_{g_{0,y}}(X_y)}.$$
 It is proved in \cite{ST0, ST01} that $$f= \frac{\hat \chi}{\chi} $$ is smooth away from the singular fibres and $f$ is $L^q(X_{can}, \chi)$ integrable for some $q>1$.
There exists a unique solution $\psi \in L^\infty(X_{can})$ to the following linear equation 
$$ \Delta_\chi \psi = f - \bar f, ~ \bar f = \frac{\int_{X_{can}} f \chi}{\int_{X_{can}} \chi}= \frac{\int_{X_{can}} \hat\chi}{\int_{X_{can}} \chi},$$
since $f$ is $L^q$-integrable for some $q>1$. Furthermore, both $f$ and $\psi$ are smooth on $X_{can}^\circ$,  and
$$ \ddbar \psi =\hat\chi - \bar f \chi.$$ 

 Let $\sigma$ be a defining section of points corresponding to $X_{can}\setminus X_{can}^\circ$ and let $h$ be a smooth hermitian metric on the line bundle associated to $[\sigma]$. Then we let 
 $$H_\epsilon =  \log |\sigma|_h^{2\epsilon} {\rm tr}_{g(t)}(t g_0) - 2A \psi - 2A \Big(\varphi - \frac{1}{V_{g_{0,y}}(X_y)} \int_{X_y} \varphi dV_{g_{0,y} |_{X_y}} \Big),$$ 
 for any sufficiently small $\epsilon>0$. 
Then there exists $C>0$ such that for all $t\geq 0$ and $\epsilon \in (0,1)$,  
\begin{eqnarray*}
\Box_t H_\epsilon %&\leq& -A {\rm tr}_{g(t)}(g_0) + 2A \bar f {\rm tr}_g(\chi) + CAe^t + \epsilon {\rm tr}_g ({\rm Ric}(h)) \\
&\leq&   -A {\rm tr}_{g(t)}(g_t) + 2CA
\end{eqnarray*}
for a fixed sufficiently large $A\geq 0$. Applying the maximum principle, $H_\epsilon \leq C$ for a uniform constant $C>0$ independent of $\epsilon\in (0,1)$. Letting $\epsilon \rightarrow 0$, ${\rm tr}_{g(t)} ( g_t)$ is uniformly bounded above, or equivalently, there exists $c>0$ such that for all $t\geq 0$,
$$g\geq c  g_t. $$
Therefore we have derived a uniform positive lower bound of 
$$\frac{( \omega(t) )^{n-1}}{\omega_0^{n-1}}$$ on each fibre $X_y$. 

Combining the above estimates, we can apply Theorem \ref{thm:main1} to complete the proof.
 \end{proof}
 
 Theorem \ref{krfl2} immediately follows from Theorem \ref{10krfl} by parabolic scaling. 
 
%%%%%%%%%%%%%%%%%%%%%%%%%%%%%%%%%%%%%%%%%%%%

\section{Constant scalar curvature K\"ahler metrics with bounded Nash entropy} \label{seccsck}
\setcounter{equation}{0}

In this section, we will obtain geometric estimates for cscK metrics near the canonical class of a smooth minimal model of general type. 

Let $X$ be an $n$-dimensional K\"ahler manifold with big and nef $K_X$, i.e. a minimal model of general type. It is well-known in birational geometry that $K_X$ is semi-ample and the pluricanonical system of $X$ induces a unique surjective birational morphism $\pi: X \rightarrow X_{can}$ from $X$ to its unique canonical model $X_{can}$. In particular, $\pi$ is isomorphic between $X_{can}^\circ$ and $X^\circ$, where $X_{can}^\circ$ is the set of regular values of $\pi$ and $X^\circ=\pi^{-1}(X_{can}^\circ)$.  
We choose $\Omega$ to be a smooth volume form on $X$ on $X$ such that
$\chi= \ddbar \log \Omega \in [K_X]$ is a semi-positive $(1,1)$-form. It is proved in \cite{EGZ} that there exists a unique $\varphi_{KE} \in {\rm PSH}(X, \chi)\cap L^\infty(X) $ solving the complex Monge-Amp\`ere equation
$$(\chi+\ddbar \varphi_{KE})^n = e^{\varphi_{KE}}\Omega, ~\sup_X \varphi_{KE}=0 $$
on $X$.  If we let $\omega_{KE}= \chi + \ddbar \varphi_{KE}$, then $\omega_{KE}$ descends to $X_{can}$ as a K\"ahler-Einstein current since $K_X = \pi^*K_{X_{can}}$. Furthermore, $\omega_{KE}$ is smooth on $X_{can}^\circ$. If we let  $g_{KE}$ be the smooth K\"ahler-Einstein metric on $X_{can}^\circ$ associated to the K\"ahler-Einstein current $\omega_{KE}$, then it is proved in \cite{S1} that the metric completion of $(X_{can}^\circ, g_{KE})$ is a compact metric space homeomorphic to $X_{can}$.

The following result of \cite{JSS} shows that there always exists a unique cscK metric in a K\"ahler class near the canonical class. 

\begin{lemma} \label{lemcsc} For any K\"ahler class $\mathcal{A}$ of $X$, there exists $\delta_0=\delta_0(\mathcal{A})>0$ such that for any $0< \delta < \delta_0$, there exists a unique cscK metric $g_\delta \in [K_X +\delta \mathcal{A}]$. 

\end{lemma}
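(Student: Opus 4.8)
This is proved in \cite{JSS}; we indicate the line of argument. The plan is to reduce the existence and uniqueness of the cscK metric in $[K_X+\delta\cA]$ to the coercivity of the Mabuchi $K$-energy. Since $K_X$ is big, $X$ is of general type and ${\rm Aut}(X)$ is finite; in particular the connected automorphism group ${\rm Aut}_0(X)$ is trivial. By the Chen--Cheng existence theorem for cscK metrics, a cscK metric then exists in a given Kähler class $\cB$ on $X$ if and only if the $K$-energy $\cM_\cB$ is coercive, i.e. $\cM_\cB(\varphi)\geq \epsilon\, J(\varphi)-C$ for some $\epsilon, C>0$ on the space of Kähler potentials (with $J$ the Aubin functional relative to a fixed reference in $\cB$); and whenever a cscK metric exists it is unique, by the convexity of $\cM_\cB$ along geodesics in the space of Kähler metrics (Berman--Berndtsson) together with the triviality of ${\rm Aut}_0(X)$. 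For every $\delta>0$ the class $[K_X+\delta\cA]=[K_X]+\delta\cA$ is Kähler, since $K_X$ is nef and $\cA$ is Kähler; so the lemma amounts to producing $\delta_0=\delta_0(\cA)>0$ such that $\cM_\delta:=\cM_{[K_X+\delta\cA]}$ is coercive for all $0<\delta<\delta_0$.

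The coercivity is where the negativity of the canonical class enters. Fix a Kähler form $\omega_\cA\in\cA$ and a smooth volume form $\Omega$ with $\ddbar\log\Omega=\chi$ a semipositive representative of $[K_X]$, and recall from \cite{EGZ} the Kähler--Einstein current $\omega_{KE}=\chi+\ddbar\varphi_{KE}$ with $\varphi_{KE}$ bounded and smooth on $X^{\circ}$. Using the Chen--Tian decomposition of $\cM_\delta$ into the entropy term $\int_X\log(\omega_\varphi^n/\Omega)\,\omega_\varphi^n$ and pluricomplex-energy terms, and using the identity $-c_1(X)=[K_X]=[K_X+\delta\cA]-\delta\cA$ to absorb the sign-indefinite Ricci contributions into $\delta$ times an Aubin--Mabuchi energy functional (together with $\chi\geq 0$ and the fact that the average scalar curvature of metrics in $[K_X+\delta\cA]$ is strictly negative for $\delta$ small, since it tends to that of $[K_X]$, which is negative because $K_X$ is big), one reduces the coercivity of $\cM_\delta$ to a lower bound of the form $(\text{entropy term})\geq \epsilon\, J - C$ on $[K_X+\delta\cA]$. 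Such a bound is of the type available on a minimal model of general type: it follows by adapting Tian's argument for the properness of the Mabuchi/Ding functional in the negative case, and its uniformity in $\delta$ up to some $\delta_0(\cA)$ is obtained by tracking the dependence of the constants on $\sup_X|\varphi_{KE}|$, on $\omega_\cA$, and on the semi-ampleness data $K_X=\pi^*K_{X_{can}}$.

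The main obstacle is precisely this entropy/coercivity estimate in the \emph{degenerate} regime $\delta\to 0$, where the limiting class $[K_X]$ is no longer Kähler and the geometry collapses onto $X_{can}$: one must control the reference Ricci potential and the entropy uniformly in $\delta$. This is where the regularity of $\omega_{KE}$ on $X^{\circ}$ and the isomorphism $\pi\colon X^{\circ}\to X_{can}^{\circ}$ are used, so that the coercivity argument may be carried out on the Kähler--Einstein geometry of $X_{can}$ and transferred back to nearby Kähler classes on $X$. Granting the uniform coercivity, existence of $\omega_\delta$ in $[K_X+\delta\cA]$ follows from the Chen--Cheng theorem and uniqueness from geodesic convexity of $\cM_\delta$, which completes the proof of the lemma.
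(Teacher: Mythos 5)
The paper gives no proof of this lemma; it is stated as a citation of \cite{JSS}, and you correctly cite the same reference, so at that level you match. Your outline of the core strategy is also essentially right: reduce existence to coercivity of the Mabuchi $K$-energy and invoke the Chen--Cheng resolution of the properness conjecture, and get uniqueness from geodesic convexity (Berman--Berndtsson) combined with triviality of $\mathrm{Aut}_0(X)$, which holds since $K_X$ is big.

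However, the mechanism you describe for coercivity is not the one used in \cite{JSS}, and some of the ingredients you invoke are extraneous. The K\"ahler--Einstein current $\omega_{KE}$ of \cite{EGZ} and its regularity on $X^\circ$ play no role in establishing coercivity of $\mathcal{M}_\delta$ --- in the present paper those enter only in the \emph{next} lemma, Lemma \ref{lemliu}, via the separate reference \cite{Lx}, when analyzing the $\delta\to0$ limit. Nor is there any ``transfer'' of the argument to $X_{can}$, and the sign of the average scalar curvature is not what drives the estimate. The actual argument is more direct: in the Chen--Tian decomposition $\mathcal{M}=H+J_{-\mathrm{Ric}}$, the entropy $H$ is coercive by the $\alpha$-invariant estimate, while $J_{-\mathrm{Ric}}$ is a $J$-type functional for the nef class $-c_1(X)=[K_X]$. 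One picks a semi-positive representative $\chi\in[K_X]$ and checks that for $\delta$ small the pair $\bigl([\chi],[K_X+\delta\mathcal{A}]\bigr)$ satisfies the Song--Weinkove subsolution/cone condition (take $\omega'=\chi+\delta\omega_\mathcal{A}$ and note that the constant $c_\delta=\frac{[K_X]\cdot[K_X+\delta\mathcal{A}]^{n-1}}{[K_X+\delta\mathcal{A}]^n}\to 1$ because $K_X$ is big), which yields $J_\chi\ge -C$; coercivity of $\mathcal{M}_\delta$ and hence existence by Chen--Cheng follow. The threshold $\delta_0(\mathcal{A})$ is where that cone condition could begin to fail as $\mathcal{A}$ starts to dominate --- not where the K\"ahler--Einstein geometry collapses onto $X_{can}$.
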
 

Let $\omega_\delta$ be the K\"ahler form associated to $g_\delta$ in Lemma \ref{lemcsc} and $\omega_\mathcal{A} \in [\mathcal{A}]$ be a fixed K\"ahler form. Then we can write
$$\omega_\delta = \chi+ \delta \omega_\mathcal{A} + \ddbar \varphi_\delta$$ 
for a unique $\varphi_\delta\in C^\infty(X)$ with $\sup_X \varphi_\delta=0$. 

The following estimates are proved in \cite{Lx}.

\begin{lemma} \label{lemliu} Let  $\mathcal{A}$ be a K\"ahler class  of $X$ and $\delta_0=\delta_0(\mathcal{A})>0$ as in Lemma \ref{lemcsc}. Then there exists $C>0$ such that   for any $0< \delta < \delta_0$,
$$C^{-1}\leq \inf_X \frac{\omega_\delta^n}{\omega_\mathcal{A}^n} \leq \sup_X \frac{\omega_\delta^n}{\omega_\mathcal{A}^n} \leq C, ~  \|\varphi_\delta \|_{L^\infty(X)} \leq C. $$
Furthermore, $\varphi_\delta$ converges to $\varphi_{KE}$ in $C^\infty (X^\circ)$ as $\delta \rightarrow 0$. 

\end{lemma}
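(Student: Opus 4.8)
\medskip
\noindent\textbf{Proof plan.}
The plan is to establish the a priori estimates for the constant scalar curvature equation in the spirit of Chen--Cheng (and of \cite{Lx}), keeping every constant independent of $\delta$ as $\delta\to0$, and then to pass to the limit. Write $\omega_\delta=\chi+\delta\omega_\mathcal{A}+\ddbar\varphi_\delta$ with $\sup_X\varphi_\delta=0$ and set $e^{F_\delta}=\omega_\delta^n/\Omega$. Since $\ric(\Omega)=-\ddbar\log\Omega=-\chi$, the cscK condition $S(\omega_\delta)=\underline S_\delta$ is equivalent to the coupled system
$$\omega_\delta^n=e^{F_\delta}\,\Omega,\qquad \Delta_{\omega_\delta}F_\delta=-\tr_{\omega_\delta}\chi-\underline S_\delta,$$
where $\underline S_\delta=-\,n\,[K_X]\cdot[K_X+\delta\mathcal{A}]^{n-1}/[K_X+\delta\mathcal{A}]^{n}$. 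Because $K_X$ is big and nef, $[K_X]^n>0$, so for $\delta<\delta_0$ the total volume $V_\delta:=[K_X+\delta\mathcal{A}]^n$ stays between two positive constants, $\underline S_\delta\to-n$ (hence $|\underline S_\delta|\le C$), and $[K_X+\delta\mathcal{A}]\cdot[\omega_\mathcal{A}]^{n-1}$ is uniformly bounded; the crucial point is that the twisting form $\chi\ge0$ and the volume form $\Omega$ are \emph{fixed} while $\delta\omega_\mathcal{A}\to0$, so the local analysis never sees the degeneration of the class.

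The first and hardest step is a uniform entropy bound $\int_XF_\delta\,e^{F_\delta}\Omega\le C$. By \cite{JSS} (together with the convexity/uniqueness of the K-energy for K\"ahler classes near $[K_X]$ on a manifold of general type), $\omega_\delta$ minimises the Mabuchi K-energy $\mathcal M_\delta$ on $[K_X+\delta\mathcal{A}]$; decomposing $\mathcal M_\delta$ into its entropy part and the Aubin--Mabuchi energy terms, the entropy of $\omega_\delta$ is controlled by $\mathcal M_\delta(\omega_\delta)$ plus the energy terms evaluated at $\omega_\delta$. Evaluating $\mathcal M_\delta$ on a fixed smoothing of $\omega_{KE}+\delta\omega_\mathcal{A}$ — legitimate because $[K_X]$ is big, so $\varphi_{KE}\in L^\infty(X)$ — bounds $\mathcal M_\delta(\omega_\delta)$ from above uniformly, while uniform properness of $\mathcal M_\delta$ along the family $\{[K_X+\delta\mathcal{A}]\}_{0<\delta<\delta_0}$ (using finiteness of $\mathrm{Aut}(X)$ and bigness of $[K_X]$) bounds the energy terms in terms of $\mathcal M_\delta(\omega_\delta)$. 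Combining these gives the entropy bound.

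Given the entropy bound, the fixed data $\chi\ge0$ and $\Omega$, the uniform bound on $\underline S_\delta$, and the uniform $\alpha$-invariant of a fixed multiple $B\omega_\mathcal{A}$ dominating every $\chi+\delta\omega_\mathcal{A}$ (Corollary \ref{albd}, applied to the bounded subset $\{[K_X+\delta\mathcal{A}]\}$ of the K\"ahler cone), the Chen--Cheng $C^0$ argument for the coupled system produces $\|\varphi_\delta\|_{L^\infty(X)}\le C$ and $\|F_\delta\|_{L^\infty(X)}\le C$; since $\Omega/\omega_\mathcal{A}^n$ is a fixed positive smooth function, the latter is exactly the two-sided ratio bound $C^{-1}\le\omega_\delta^n/\omega_\mathcal{A}^n\le C$, and all constants depend only on the uniform data above. (Alternatively, once $\|F_\delta\|_{L^\infty}\le C$ is known, $\|\varphi_\delta\|_{L^\infty}\le C$ also follows from the Kolodziej-type estimate of Proposition \ref{linf} applied with background $B\omega_\mathcal{A}$ and the representative $\theta_\delta:=\chi+\delta\omega_\mathcal{A}\ge0$, whose envelope $\mathcal V_{\theta_\delta}$ vanishes identically.) On a compact $\mathcal K\subset\subset X^\circ$ the restricted classes carry uniformly K\"ahler reference metrics, so the second-order and Schauder steps of the scheme run on $\mathcal K$ and give uniform $C^k(\mathcal K)$ bounds, hence uniform $C^\infty_{\mathrm{loc}}(X^\circ)$ bounds on $\varphi_\delta$. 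Therefore $\{\varphi_\delta\}$ is precompact in $C^\infty_{\mathrm{loc}}(X^\circ)$ and, being uniformly bounded $\chi$-psh, also in $L^1(X)$; if $\varphi_*$ is any subsequential limit, then subtracting the identity $\tr_{\omega_\delta}\chi=n-\delta\tr_{\omega_\delta}\omega_\mathcal{A}-\Delta_{\omega_\delta}\varphi_\delta$ from the second equation gives $\Delta_{\omega_\delta}(\varphi_\delta-F_\delta)=n+\underline S_\delta-\delta\tr_{\omega_\delta}\omega_\mathcal{A}\to0$ in $L^1(X,\omega_\delta^n)$, so in the limit $\varphi_*-F_*$ is a bounded function harmonic off the proper (hence pluripolar) subvariety $X\setminus X^\circ$, thus constant; consequently $\omega_*^n=\mathrm{const}\cdot e^{\varphi_*}\Omega$ and, after the harmless additive shift, $\varphi_*$ solves the complex Monge--Amp\`ere equation characterising $\varphi_{KE}$. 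By the uniqueness of \cite{EGZ} we get $\varphi_*=\varphi_{KE}$ (the constant being pinned by $\sup_X\varphi_*=0$), and since the limit is subsequence-independent, $\varphi_\delta\to\varphi_{KE}$ in $C^\infty_{\mathrm{loc}}(X^\circ)$.

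The main obstacle is the uniform entropy bound: the classes $[K_X+\delta\mathcal{A}]$ approach the boundary class $[K_X]$, so one must show the Mabuchi K-energy is proper with constants uniform in $\delta$, and — relatedly — verify that no constant in the Chen--Cheng $C^0$-argument secretly depends on the degenerating reference metric rather than only on $\chi\ge0$, $\underline S_\delta$, the entropy bound and the $\alpha$-invariant. Once these uniformity checks are carried out, the remainder of the scheme goes through essentially verbatim.
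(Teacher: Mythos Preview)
The paper does not actually prove this lemma: the sentence immediately preceding it reads ``The following estimates are proved in \cite{Lx}'', and no argument is given. So there is no ``paper's own proof'' to compare against; your plan is in effect a sketch of the argument of \cite{Lx}, and at that level it is correct: the coupled Chen--Cheng system, the uniform entropy bound via properness of the K-energy on the family $[K_X+\delta\mathcal A]$ (this is precisely the input from \cite{JSS}), the Chen--Cheng $C^0$-estimates run with the fixed nonnegative twist $\chi$ and fixed volume form $\Omega$, and then local higher-order estimates on $X^\circ$.

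One step in your limit identification is not justified as written. You assert that $\varphi_*-F_*$ is ``bounded and harmonic off the proper (hence pluripolar) subvariety $X\setminus X^\circ$, thus constant''. Harmonicity here is with respect to the \emph{limiting} metric $\omega_*$, which is only smooth on $X^\circ$ and may be degenerate or singular along $X\setminus X^\circ$; the removable-singularity theorem for bounded harmonic functions you are implicitly invoking requires a fixed smooth metric on the compact manifold, and does not apply directly. A clean fix, which stays at the level of the $\delta$-approximants where everything is smooth on the compact $X$, is to set $u_\delta:=F_\delta-\varphi_\delta-\frac{1}{V_\delta}\int_X(F_\delta-\varphi_\delta)\,\omega_\delta^n$ and integrate by parts:
\[
\int_X|\nabla u_\delta|_{\omega_\delta}^2\,\omega_\delta^n
=\int_X u_\delta\Big[(n+\underline S_\delta)-\delta\,\tr_{\omega_\delta}\omega_{\mathcal A}\Big]\omega_\delta^n
\le C\Big[(n+\underline S_\delta)V_\delta+n\delta\,[\mathcal A]\cdot[K_X+\delta\mathcal A]^{n-1}\Big]=O(\delta),
\]
using $|u_\delta|\le C$ and the explicit formula $n+\underline S_\delta=n\delta\,[\mathcal A]\cdot[K_X+\delta\mathcal A]^{n-1}/[K_X+\delta\mathcal A]^n$. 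Since $\omega_\delta\to\omega_*$ smoothly on $X^\circ$, this forces $\nabla u_*\equiv 0$ there, hence $F_*-\varphi_*$ is constant on the connected set $X^\circ$, and your identification with $\varphi_{KE}$ via \cite{EGZ} goes through.
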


Lemma \ref{lemliu} implies that the cscK metrics $g_\delta$ converge to the K\"ahler-Einstein metrics $g_{KE}$ on $X^\circ$ smoothly. 

\bigskip

\noindent {\it Proof of Theorem \ref{thm:maincsc}.}   The uniform estimates for the diameter, Green's function and volume non-collapsing follows immediately by applying Lemma \ref{lemliu} to Theorem \ref{thm:main1}.  

It remains to prove that $(X, g_\delta)$ converges to $(X_{can}, d_{KE})$ as $\delta \rightarrow 0$, if $X_{can}$ has only isolated singularities.  Suppose not. We can choose a sequence $\delta_j \rightarrow 0$ such that $(X, g_{\delta_j})$ converges to a compact metric space $(Y, d_Y)$ not isomorphic to $(X_{can}, d_{KE})$.   Since $g_{\delta_j}$ converges smoothly to $g_{KE}$ on $X_{can}^\circ$, $(X_{can}^\circ, g_{KE})$ can be embedded in $(Y, d_Y)$ with local isomorphisms. Then there must be a point $p\in Y$ such that 
$$d_Y(p, X_{can}^\circ)>0.$$ 
Otherwise, $(Y, d_Y)$ will be isomorphic to $(X_{can}, d_{KE})$ because $Y$ is compact and $(X_{can}, d_{KE})$ is the compactification of $(X_{can}^\circ, g_{KE})$ by adding finitely many points from $X_{can} \setminus X_{can}^\circ$. We can pick a sequence of points $p_j \in (X, g_{\delta_j})$ such that $p_j$ converges to $p\in Y$ in Gromov-Hausdorff distance. After possibly passing to a sequence, $p_j$ must also converge to a singular point $p_\infty$ of $X_{can}$ with respect to any fixed metric on $X_{can}$ (for example, the Fubini-Study metric from a projective embedding of $X_{can}$). 

We choose an exhaustion of $X_{can}^\circ$ with a sequence of open sets $U_1 \subset\subset U_2 \subset\subset ....\subset\subset U_k \subset\subset ... \subset X_{can}^\circ$ such that 
$$\lim_{k\rightarrow \infty} U_k =X_{can}^\circ, ~ \lim_{k\rightarrow \infty} {\rm Vol}_{g_{KE}}(U_k) = {\rm Vol}_{g_{KE}} (X_{can}^\circ)= [K_X]^n. $$ 
From the above assumption, there exists $\epsilon>0$ such that for any $k>0$, there exists $j>0$ such that 
$$d_{g_{\delta_j}}(p_j, U_k) > \epsilon,  $$
and so $$B_{g_{\delta_j}}(p_j, \epsilon) \subset X\setminus U_k. $$
By the uniform volume non-collapsing, there exists $\delta>0$ such that for any $k>0$, there exists $j>0$ such that 
$$B_{g_{\delta_j}}(p_j, \epsilon) >\delta$$
and $${\rm Vol}_{g_{\delta_j}}(U_k) < [K_X]^n - \delta. $$
This leads to contradiction by choosing sufficiently large $k$.  \qed

%%%%%%%%%%%%%%%%%%%%%%%%%%%%%%%%%%%%%%%%%%%%

\end{document}